\documentclass[10pt]{amsart}
\usepackage[mathscr]{euscript}
\usepackage[utf8]{inputenc}

\usepackage{enumitem} %allows e.g. to change labels in enumerations
\usepackage{tikz} %pictures
\usepackage{tikz-cd} %diagrams
\usetikzlibrary{arrows,calc,decorations.markings}
\usepackage{thmtools,thm-restate}%useful theorem tools
\usepackage{hyperref} %links
\usepackage[capitalise,nosort]{cleveref} %automatic references
\usepackage{amsmath}%useful math symbols
\usepackage{amssymb}%useful math symbols
\usepackage{bbm}
\usepackage{mathtools}
\usepackage[margin=1.3in]{geometry}
\usepackage{stmaryrd} %for the \boxslash
\usepackage{xparse}
\usepackage{blindtext}
\usepackage{cite}

\makeatletter
\newcommand{\@bbify}[1]{
  \ifcsname b#1\endcsname
  \message{WARNING: Overwriting b#1 with blackboard letter!}
  \fi
  \expandafter\edef\csname b#1\endcsname
  {\noexpand\ensuremath{\noexpand\mathbb #1}\noexpand\xspace}}
\newcommand{\@calify}[1]{
  \ifcsname c#1\endcsname
  \message{WARNING: Overwriting c#1 with calligraphic letter!}
  \fi 
  \expandafter\edef\csname c#1\endcsname
  {\noexpand\ensuremath{\noexpand\mathcal #1}\noexpand\xspace}}
\newcommand{\@bfify}[1]{
  \ifcsname bf#1\endcsname
  \message{WARNING: Overwriting c#1 with bold letter!}
  \fi
  \expandafter\edef\csname bf#1\endcsname
  {\noexpand\ensuremath{\noexpand\mathbf #1}\noexpand\xspace}}
% Now we define all the commands
\newcounter{@letter}\stepcounter{@letter}
\loop\@bbify{\Alph{@letter}}\@calify{\Alph{@letter}}\@bfify{\Alph{@letter}}
\ifnum\the@letter<26\stepcounter{@letter}\repeat
\makeatother

%Tikz Pictures environment
\newenvironment{tz}{\begin{center}\begin{tikzpicture}[scale=1]}{\end{tikzpicture}\end{center}}

\tikzstyle{d}=[double distance=.3ex]
\tikzstyle{w}=[preaction={draw=white, -,line width=4pt}]

\tikzset{over/.style={auto=false,fill=white,inner sep=1.5pt, minimum size=0, outer sep=0}, 
pro/.style={postaction={decorate,decoration={
        markings,
        mark=at position .5 with {\node at (0,0) {$\bullet$};}
      }},
      inner sep=1ex,
      },n/.style={double equal sign distance, -implies},t/.style={double distance=2.5pt, -implies, postaction={draw,-}},
  }
\tikzset{%
node distance=1.5cm, la/.style={scale=0.8}, rr/.style={xshift=1.5cm},
space/.style={xshift=.5cm},
    symbol/.style={%
        draw=none,
        every to/.append style={%
            edge node={node [sloped, allow upside down, auto=false]{$#1$}}},
            
    }
}
  
\def\cellslide{0.5}
\def\celllength{.2cm}

\NewDocumentCommand{\cell}{ O{} O{n} O{\cellslide} O{\celllength} m m m }{
  \coordinate (mid) at ($({#5})!{#3}!({#6})$);
  \coordinate (start) at ($(mid)!{#4}!({#5})$);
  \coordinate (end) at ($(mid)!{#4}!({#6})$);
  \draw[#2] (start) to node
  [inner sep=6pt,outer sep=0,minimum size=0,#1]{{#7}} (end);
}

\newtheorem{thm}{Theorem}[subsection] % numbered theorem
\newtheorem*{thm*}{Theorem}
\newtheorem{cor}[thm]{Corollary}
\newtheorem{lemma}[thm]{Lemma}
\newtheorem{prop}[thm]{Proposition}

\theoremstyle{definition}
\newtheorem{defn}[thm]{Definition}
\newtheorem{notation}[thm]{Notation}
\newtheorem{constr}[thm]{Construction}

\theoremstyle{remark}
\newtheorem{rmk}[thm]{Remark}
\newtheorem{recall}[thm]{Recall}
\newtheorem{ex}[thm]{Example}

% Cleveref definitions
\crefname{lem}{Lemma}{Lemmas}
\crefname{thm}{Theorem}{Theorems}
\crefname{defn}{Definition}{Definitions}
\crefname{prop}{Proposition}{Propositions}
\crefname{rmk}{Remark}{Remarks}
\crefname{cor}{Corollary}{Corollaries}
\crefname{ex}{Example}{Examples}
\crefname{notation}{Notation}{Notations}
\crefname{constr}{Construction}{Constructions}
\crefname{recall}{Recall}{Recalls}
\crefname{descr}{Description}{Descriptions}
\crefname{para}{\textsection}{\textsection\textsection}

% Lists with items (i),(ii),(iii), etc.
\newlist{rome}{enumerate}{7}
\setlist[rome]{label=(\roman*)}

% Lists with items (i),(ii),(iii), etc.
\newlist{alphabet}{enumerate}{7}
\setlist[alphabet]{label=(\alph*)}

\newcommand{\pushout}[1]{\node at ($({#1})-(10pt,-10pt)$) {$\ulcorner$};}
\newcommand{\pullback}[1]{\node at ($({#1})+(10pt,-10pt)$) {$\lrcorner$};}
\NewDocumentCommand{\punctuation}{ m m O{5pt} }{\node at ($(#1.east)-(0,#3)$) {#2};}

\newcommand{\defThn}{\ensuremath{\theta}}
\newcommand{\defS}{k}

\NewDocumentCommand\Thn{O{n-1}}{\ensuremath{\Theta}_{{#1}}}
\NewDocumentCommand\Thnsset{O{n-1}}{\mathit{s}\set^{\ensuremath{\Theta_{\scriptscriptstyle{#1}}^{\op}}}}
\NewDocumentCommand\Thnssset{O{n-1}}{\mathit{ss}\set^{\ensuremath{\Theta_{\scriptscriptstyle{#1}}^{\op}}}}
\NewDocumentCommand\CSThn{O{n-1}}{{\ensuremath{\scriptscriptstyle(\infty,{#1})}}}
\NewDocumentCommand\AraThn{O{n-1}}{{\ensuremath{\scriptscriptstyle(\infty,{#1})}}}

\newcommand{\Dnc}{\Delta^{\mathrm{nc}}}

\newcommand{\cat}{\cC\!\mathit{at}}
\newcommand{\set}{\cS\!\mathit{et}}
\newcommand{\sset}{\mathit{s}\set}
\newcommand{\Pcat}{\mathcal{P}\cat}
\newcommand{\DThn}{\Delta\times \Thn}
\newcommand{\DThnS}{\DThn\times \Delta}
\newcommand{\ThnS}{\Thn\times \Delta}
\newcommand{\Dop}{\Delta^{\op}}

\newcommand{\Nec}{\mathcal{N}\mathit{ec}}

\newcommand{\MSspace}{\sset_{\Kan}}

\newcommand{\injThnspace}{(\sset_{\Kan})^{\Thnop}_\inj}
\newcommand{\injsThnspace}{(\sset_{\Kan})^{\ThnSop}_\inj}
\newcommand{\MSThnsset}{\Thnsset_{\CSThn}}
\newcommand{\MSThncat}{\Thnsset_{\CSThn}\text{-}\cat}
\newcommand{\injsThnsset}{(\Thnsset_{\CSThn})^{\Delta^{\op}}_\inj}
\newcommand{\SegsThnsset}{(\Thnsset_{\CSThn})^{\Delta^{\op}}_\Seg}

\newcommand{\Dset}{\set^{\Dop}}

\newcommand{\diag}{\mathrm{diag}}
\newcommand{\Cube}{\mathcal C\textit{ube}}

\newcommand{\SSset}{\mathit{s}\sset}
\newcommand{\MSdiag}{\SSset_{\diag}}
\newcommand{\MSThndiag}{\Thnssset_{\diag,\CSThn}}

\newcommand{\Thnset}{\set^{\Thnop}}
\newcommand{\sThnsset}{\mathit{s}\set^{\Thnop\times \Dop}}
\newcommand{\sThnset}{\set^{\Thnop\times \Dop}}
\newcommand{\Thncat}{\Thnsset\text{-}\cat}
\newcommand{\sThncat}{\Thnssset\text{-}\cat}
\newcommand{\Thnop}{{\Theta_{\scriptscriptstyle n-1}^{\op}}}
\newcommand{\DThnop}{\Dop\times \Thnop}

\newcommand{\ThnSop}{\Thnop\times \Dop}

\newcommand{\pcatThn}{\Pcat(\Thnsset)}
\newcommand{\pcatproj}{\Pcat(\Thnsset_{\CSThn})_\proj}
\newcommand{\pcatinj}{\Pcat(\Thnsset_{\CSThn})_\inj}
\newcommand{\fib}{\mathrm{fib}}

\NewDocumentCommand\Sp{O{m}}{Sp[{#1}]}
\NewDocumentCommand\repD{O{m}}{F[{#1}]}
\NewDocumentCommand\repS{O{\defS}}{\ensuremath{\Delta}[{#1}]}
  
\NewDocumentCommand\repThn{O{\defThn}}%
  {\Thn{[{#1}]}}
\NewDocumentCommand\repDThn{O{m} O{\defThn}}%
  {F[{#1}]\times \Thn{[{#2}]}}
\NewDocumentCommand\repDThnS{O{m} O{\defThn} O{\defS}}%
  {F[{#1}]\times \Thn{[{#2}]}\times \ensuremath{\Delta}[{#3}]}
\NewDocumentCommand\repThnS{O{\defThn} O{\defS}}%
  {\Thn[{#1}]\times \ensuremath{\Delta}[{#2}]}
  
\NewDocumentCommand\HomSh{O{m} O{X}}{{\Hom_{\Sh_{{#1}}{#2}}}}
\NewDocumentCommand\HomSigma{O{m} O{X}}{{\Hom_{\ensuremath{\Sigma}_{{#1}}{#2}}}}

\newcommand{\Ch}{\mathfrak{C}}
\newcommand{\Nh}{\mathfrak{N}}

\DeclareMathOperator{\Ob}{Ob}
\DeclareMathOperator{\colim}{colim}

\DeclareMathOperator{\Mor}{Mor}
\DeclareMathOperator{\Ho}{Ho}

\DeclareMathOperator{\map}{map}
\DeclareMathOperator{\Hom}{Hom}
\DeclareMathOperator{\Map}{Map}
\DeclareMathOperator{\op}{op}
\DeclareMathOperator{\ev}{ev}
\DeclareMathOperator{\id}{id}

\newcommand{\Seg}{\mathrm{Seg}}

\newcommand{\Kan}{\CSThn[0]}
\newcommand{\inj}{\mathrm{inj}}
\newcommand{\proj}{\mathrm{proj}}
\newcommand{\sCat}{{\sset\text{-}\cat}}

\newcommand{\tndnec}[3]{\Nec({#1})_{{#2},{#3}}^{\mathrm{tnd}}}
\newcommand{\catnec}[3]{\Nec({#1})_{{#2},{#3}}}

\newcommand{\Sh}{\mathfrak{S}}

\newcommand{\NL}{N^h}
\newcommand{\CL}{c^h}

\NewDocumentCommand\PP{O{m} O{X} O{Y}}{{P_{#1}({#2}\hookrightarrow {#3})}}
\NewDocumentCommand\inc{O{m} O{X} O{Y}}{{I}}
\NewDocumentCommand\QmXY{O{m} O{X} O{Y}}{{Q}}

\title{A homotopy coherent nerve for $(\infty,n)$-categories}

\author{Lyne Moser}
\address{Fakultät für Mathematik, Universität Regensburg, Regensburg, Germany}
\email{lyne.moser@ur.de}

\author{Nima Rasekh}
\address{Max Planck Institute for Mathematics, Bonn, Germany}
\email{rasekh@mpim-bonn.mpg.de}

\author{Martina Rovelli}
\address{Department of Mathematics and Statistics, University of Massachusetts Amherst, Amherst, USA}
\email{mrovelli@umass.edu}

\keywords{$(\infty,n)$-categories, homotopy coherent nerve, enriched categories, (complete) Segal objects.}
\subjclass[2020]{18N65; 55U35; 18N50.}

\begin{document}

\maketitle

\begin{abstract}
In the case of $(\infty,1)$-categories, the homotopy coherent nerve gives a right Quillen equivalence between the models of simplicially enriched categories and of quasi-categories. This shows that homotopy coherent diagrams of $(\infty,1)$-categories can equivalently be defined as functors of quasi-categories or as simplicially enriched functors out of the homotopy coherent categorifications. 

In this paper, we construct a homotopy coherent nerve for $(\infty,n)$-categories. We show that it realizes a right Quillen equivalence between the models of categories strictly enriched in $(\infty,n-1)$-categories and of Segal category objects in $(\infty,n-1)$-categories. This similarly enables us to define homotopy coherent diagrams of $(\infty,n)$-categories equivalently as functors of Segal category objects or as strictly enriched functors out of the homotopy coherent categorifications. 
\end{abstract}

\setcounter{tocdepth}{1}
\tableofcontents

\section*{Introduction}

\subsection*{The challenge of coherent mathematics}

The concept of equality has firmly established itself as an important part of mathematical foundation and enables us to define a variety of mathematical objects, particularly algebraic ones,  such as groups or rings. However, in recent decades we are more and more confronted with objects whose structure cannot be captured via equalities. A simple example is given by the loop space; we can compose loops, however the two possible compositions of three loops are only homotopic rather than equal.

These encounters have motivated the rise of \emph{coherent mathematical structures}. Intuitively, the notion of a coherent structure is easy to convey: one simply replaces equalities with an appropriately chosen data, which could be a path in a topological space, a quasi-isomorphism of two chain complexes or a term in an identity type in a given type theory. However, making this idea precise turns into a challenge. Indeed, each layer of data that witnesses an equality necessitates one higher layer of data that guarantees all previous choices are appropriately compatible. This can already be witnessed in the definition of a monoidal category whose associator, the isomorphism witnessing associativity, needs to satisfy the pentagon identity. As a result, any effort to explicate coherent structures results in an infinite and interlocked tower of intractable data. 

In certain situations the infinite tower of data that arises in such situations can be tackled effectively via modern machinery, such as operads. For example, we can give a precise definition of a homotopy group via the $A_\infty$-operad and then show loop spaces are an example of such coherent groups. These methods using operadic techniques have, among others, been effectively used by Haugseng and various collaborators, to study a wide range of homotopy coherent settings \cite{gepnerhaugseng2015enriched,haugseng2017morita,chuhaugseng2021segal,elmantohaugseng2023bispans}.

\subsection*{Homotopy coherent nerve}

Despite those advances, we cannot always tackle the issue of defining coherent structures by hand and we need to find a more conceptual approach that can generalize a given (algebraic) structure to its appropriately defined coherent analogue. Here we can benefit from the well-known observation that algebraic structures can be characterized via appropriately chosen functors. For example, the category of monoid objects in a finitely complete category $\cC$ is precisely given via a full subcategory of simplicial objects in $\cC$. This suggests that an important first step towards defining homotopy coherent structures consists of developing an appropriate notion of homotopy coherent functors out of small categories, such as $\Delta$. Similar to above, intuitively a homotopy coherent functor should satisfy functoriality only up to appropriately chosen data. However, again it is challenging to translate this intuition into a precise mathematical definition and there are two broad ways we can approach this problem:

\begin{enumerate}[leftmargin=.6cm]
    \item[(I)] We can adjust a given indexing category in a specific way so that functors out of this category now incorporate the desired coherence.
    \item[(II)] Instead of solving the problem one category at a time, we identify an appropriate homotopy coherent generalization  of the notion of a category itself. Then a coherent diagram would simply be a functor in this generalized setting.
\end{enumerate}

A first comprehensive solution following the line of thinking outlined in (I) was employed by Cordier and Porter \cite{cordier1982coherent,CordierPorterHomotopyCoherent}. They constructed an adjunction
\begin{tz}
\node[](1) {$\Dset$}; 
\node[right of=1,xshift=1.6cm](2) {$\sCat$}; 
\punctuation{2}{,};

\draw[->] ($(2.west)-(0,5pt)$) to node[below,la]{$\NL$} ($(1.east)-(0,5pt)$);
\draw[->] ($(1.east)+(0,5pt)$) to node[above,la]{$\CL$} ($(2.west)+(0,5pt)$);

\node[la] at ($(1.east)!0.5!(2.west)$) {$\bot$};
\end{tz}
known as the \emph{homotopy coherent categorification} and \emph{homotopy coherent nerve} adjunction between simplicial sets and simplicially enriched categories. In particular, for a given ($1$-)category $\cC$, in the simplicially enriched category $c^h N^h \cC$, every pair of composable morphisms is now only related by a path to its original composite in $\cC$, and any original instance of associativity in $\cC$ is now witnessed by higher simplices. Hence a homotopy coherent functor out of $\cC$ can be defined very precisely as a simplicially enriched functor out of $c^h N^h \cC$, resting assured that the simplicial enrichment takes care of the desired coherence. 

A proper development of a fully coherent category theory realizing approach (II) did follow not long after. Starting in the $90$s we saw the rise of various weak models of $(\infty,1)$-categories, prominent among them quasi-categories \cite{joyal2008notes} and complete Segal spaces \cite{rezk2001css}. There the $(\infty,1)$-categories are defined as certain simplicial objects and functors are defined as simplicial morphisms, meaning the coherence is built into the definition of a functor via simplicial identities. In fact early versions of quasi-categories were precisely introduced with the goal of characterizing homotopy coherent data \cite{boardmanvogt1973qcats}.

A priori this suggests two different definitions of a homotopy coherent diagram, however, closing this long developmental arc, it was proved first by Joyal, then Lurie \cite{htt}, and also Dugger--Spivak \cite{DuggerSpivakRigidification,DuggerSpivakMapping} that the adjunction $c^h \dashv N^h$ in fact gives us an equivalence, by establishing a Quillen equivalence of model categories, which in particular means the two notions of homotopy coherent data are appropriately equivalent. As a consequence, every quasi-category is (up to equivalence) of the form $N^h\cC$ for some Kan-enriched category $\cC$ and so for a given simplicial set $K$ a homotopy coherent diagram in sense above $c^h K \to \cC$ is the same as a homotopy coherent diagram in the sense of quasi-categories $K \to N^h\cC$. 

To summarize, as a result of this extensive work, we can now very precisely define a homotopy coherent diagram as a functor of quasi-categories or, equivalently, as a simplicially enriched strict functor out of the categorification of the homotopy coherent nerve, each approach having shown their advantages in a variety of settings.

\begin{enumerate}[leftmargin=0.6cm]
    \item \textbf{Classifying diagrams in $(\infty,1)$-categories:} The homotopy coherent nerve enables us to give explicit descriptions of homotopy coherence via classifying objects. For example, in \cite[\textsection4.4.5]{htt}, Lurie uses the homotopy coherent nerve to construct the \emph{homotopy coherent idempotent classifier} and uses that to prove that homotopy coherent idempotent completion is an infinite operation, meaning (unlike the $1$-categorical case) there are finitely complete categories that are not idempotent complete.   
    \item \textbf{Coherent diagrams valued in spaces:} The same way that the category of sets plays a central role in classical category theory, the $(\infty,1)$-category of spaces plays an analogous role in $(\infty,1)$-category theory, being the natural codomain of representable functors. As a result, defining and studying homotopy coherent diagrams of spaces plays a central role. However, there is no direct non-technical way to construct the quasi-category of spaces given all the higher coherences it entails and the most standard construction is given by the Kan-enriched category of Kan complexes. That means we cannot use method (II) to study homotopy coherent diagrams of spaces, and need the homotopy coherent nerve, an important example being the first construction of the Yoneda embedding for quasi-categories; see \cite[Proposition 5.1.3.1]{htt}.
    \item \textbf{Straightening construction for $(\infty,1)$-categories:} In \cite{htt} Lurie uses the homotopy coherent nerve in an essential manner to define the {\it straightening construction}, which for a given simplicial set $K$ identifies homotopy coherent diagrams out of $c^h K^{\op}$ valued in spaces with right fibrations over~$K$; see \cite[Theorem 2.2.1.2]{htt}. The straightening construction provides us with the most effective method to analyze coherent diagrams and particularly identify representable functors. It is hence the key step in the development of $(\infty,1)$-category theory, such as the study of limits or presentability; see \cite{joyal2008notes} and \cite[\textsection4-5]{htt}.
    \item \textbf{$(\infty,1)$-limits:} When working with $(\infty,1)$-categories modeled by strictly Kan-enriched categories, we can rely on the extensive literature regarding simplicially enriched colimits; see e.g.~\cite{riehl2014categoricalhomotopy}. However, this approach is computationally unfeasible as it necessitates constructing free contractible homotopy coherent diagrams (concretely modeled by the cofibrant replacement of the terminal diagram). Therefore, instead of studying limits for diagrams valued in a category strictly enriched over spaces, one prefers to use the notion of a limit for diagrams valued in the corresponding quasi-category, as defined by Joyal \cite{joyal2008notes}, using quasi-categories of cones.

    While studying limits via cones is much more effective, it creates the possibility of a mismatch between the two possible notions of limits. However, using the interplay between homotopy coherent nerves and homotopy coherent diagrams permits Riehl and Verity \cite{RiehlVerityNCoh} and the third author \cite{rovelli} to describe limits in a quasi-category as a weighted simplicially enriched limits of its corresponding homotopy coherent diagram, which has both demonstrates that the notions agree appropriately  as well as aids with computations. 
\end{enumerate}

\subsection*{Developing a theory of \texorpdfstring{$(\infty,n)$}{(oo,n)}-categories}
While many structures (such as groups) by default assemble into categories, some naturally exhibit more data, the prime example being categories, which assemble into a $2$-category given by categories, functors, and natural transformations. Proceeding inductively we can more generally define a strict $n$-category as consisting of objects, $1$-morphisms, $2$-morphisms between $1$-morphisms, up to $n$-morphisms between $(n-1)$-morphisms, or, more succinctly a category enriched over strict $(n-1)$-categories. Similar to before we are confronted with objects that satisfy equalities only in a coherent manner, an example being monoidal $n$-categories, and hence would like to define and study coherent structures in this setting. 

As before there are two main ways to tackle this problem:
\begin{enumerate}[leftmargin=0.6cm]
    \item[(I)] We can work with a notion of weak $n$-category that is strictly enriched and adjust the chosen diagram so that strictly enriched functors already encode the desired homotopy coherence.
    \item[(II)] We can develop a notion of weak $n$-categories, such that functors are by definition coherent.  
\end{enumerate}

For historical reasons, we will start with approach (II) as it has been developed much more extensively. There is now a wide range of weak models of $(\infty,n)$-categories, explicitly given as presheaves on appropriately chosen diagram categories, such as (saturated) $n$-complicial sets \cite{VerityComplicialAMS,or}, $n$-fold complete Segal spaces \cite{BarwickThesis}, complete Segal $\Thn[n]$-spaces \cite{rezkTheta}, $n$-quasi-categories \cite{ara}, and saturated $n$-comical sets \cite{CKM,DKM}. Hence, relying on the existing literature we can define already homotopy coherent diagrams as functors in these weak models.

The situation regarding approach (I) has as of yet remained unclear. We can generalize simplicially enriched categories (that we used in the $(\infty,1)$-categorical setting) in a way that incorporates $n$-categories, by strictly enriching categories over any of the weak models of $(\infty,n-1)$-categories introduced above. While we know that this strictly enriched model is abstractly equivalent to a weak model via an intricate zig-zag of equivalences \cite{br1,br2}, we currently do not have a homotopy coherent categorification and homotopy coherent nerve adjunction that can help us adjust a given $n$-category in a manner that incorporates homotopy coherence. This is despite the fact that such a construction would be key in obtaining several further results, analogous to the work done for $(\infty,1)$-categories.

\begin{enumerate}[leftmargin=0.6cm]
    \item \textbf{Classifying diagrams in $(\infty,n)$-categories:} Similar to the $(\infty,1)$-categorical situation we would like to have the ability to construct classifying objects for important diagrams with the goal of understanding the data of a diagram by analyzing its classifying object. There are successful examples in $(\infty,2)$-category that managed to avoid the nerve, such as the construction of the free homotopy coherent adjunction due to Riehl and Verity, which benefited from the fact that the free homotopy coherent adjunction happened to be a simplicial computad, which guarantees the required coherence \cite{riehlverity2016adjunctions}. This does not hold for general diagrams of interest (for example the classifying diagram of a bimonad \cite{bv2007hopfmonads}) and hence any further advances in this direction requires a deep understanding of more general coherent diagrams.
    \item \textbf{Coherent diagrams valued in $(\infty,n-1)$-categories:} Arguably the most important $(\infty,n)$-category is the $(\infty,n)$-category of $(\infty,n-1)$-categories and any advance in the theory of $(\infty,n)$-categories, particularly the study of representable functors and the Yoneda embedding, necessitates a conceptual and computational understanding of homotopy coherent diagrams valued in $(\infty,n-1)$-categories. Similar to the case of $(\infty,1)$-categories existing constructions of this $(\infty,n)$-category are given via strict models and so we need a homotopy coherent nerve to be able to define homotopy coherent diagrams valued in $(\infty,n-1)$-categories.
    \item \textbf{Straightening construction for $(\infty,n)$-categories:} Any advances in the theory of $(\infty,n)$-categories necessitates an ability to analyze functors valued in the $(\infty,n)$-category of $(\infty,n-1)$-categories, and particularly computationally feasible criteria when such a functor is representable. As discussed above, in the $(\infty,1)$-categorical context this has mainly been achieved via the straightening construction, which studies presheaves via fibrations. We hence anticipate the existence of a similar straightening construction for $(\infty,n)$-categories, the construction of which should similarly fundamentally hinge on an appropriately defined categorification functor.
    \item \textbf{$(\infty,n)$-limits:} Similar to the $(\infty,1)$-case, the correct notion of a limit for diagrams valued in an $(\infty,n)$-category presented by an enriched category over a model of $(\infty,n-1)$-categories is already established as part of a more general pattern for enriched categories; see \cite{shulman}. 
    However, similar to the $(\infty,1)$-categorical case discussed above, this approach is often computationally unfeasible, suggesting the need for an alternative, more computationally feasible, approach to limits via cones. However, any such approach would need to be compatible with limits in the strictly enriched setting, which similar to the case for $(\infty,1)$-categories necessitates an appropriately defined homotopy coherent nerve.
\end{enumerate}

\subsection*{A homotopy coherent nerve of \texorpdfstring{$(\infty,n)$}{(oo,n)}-categories}
To summarize the previous paragraph, we already have a weak notion of $(\infty,n)$-categories and their corresponding notion of functor. However, we lack the ability to strictify coherent data in a way that gives us an equivalence between weak and strict functors, although having such an ability is a key component towards further advancing $(\infty,n)$-category theory. The goal of this paper is to precisely address these two shortcomings.

 Concretely we construct in \cref{defadjunction} an adjunction $\Ch \dashv \Nh$ consisting of the homotopy coherent categorification and homotopy coherent nerve between a strictly enriched model of $(\infty,n)$-categories (categories strictly enriched over complete Segal $\Thn$-spaces) and a weak model of $(\infty,n)$-categories (Segal category objects in complete Segal $\Thn$-spaces), and show that it is a Quillen equivalence in \cref{ThmMain}.

\begin{thm*}
There is a Quillen equivalence
\begin{tz}
\node[](1) {$\MSThncat$}; 
\node[right of=1,xshift=3.1cm](2) {$\pcatinj$}; 

\draw[->] ($(1.east)-(0,5pt)$) to node[below,la]{$\Nh$} ($(2.west)-(0,5pt)$);
\draw[->] ($(2.west)+(0,5pt)$) to node[above,la]{$\Ch$} ($(1.east)+(0,5pt)$);

\node[la] at ($(1.east)!0.5!(2.west)$) {$\bot$};
\end{tz}
between the model structure $\MSThncat$ of which the fibrant objects are the categories enriched over complete Segal $\Thn$-spaces, and the model structure $\pcatinj$  of which the fibrant objects are the injectively Segal category objects in complete Segal $\Thn$-spaces.
\end{thm*}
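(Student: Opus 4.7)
The plan is to follow the strategy of Joyal--Lurie and Dugger--Spivak in the $(\infty,1)$-case: first establish that $\Ch \dashv \Nh$ is a Quillen adjunction, and then verify that both the derived unit and derived counit are weak equivalences. The new ingredient is the extra $\Thn$-direction, which does not change the overall skeleton of the argument but requires careful bookkeeping at each step.

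For the Quillen adjunction, I would verify that $\Nh$ preserves fibrations and trivial fibrations between fibrant objects. Concretely, for a category $\mathcal{E}$ enriched over $\MSThnsset$, the Segal object $\Nh(\mathcal{E})$ is built from the mapping objects of $\mathcal{E}$ in a Segal-product-compatible manner, so both the Segal condition and injective fibrancy in $\pcatinj$ can be read off from fibrancy of $\mathcal{E}$. For the Quillen equivalence, the key step is to show that for every fibrant enriched category $\mathcal{E}$ the derived counit $\Ch \Nh \mathcal{E} \to \mathcal{E}$ is a Dwyer--Kan weak equivalence: bijective on path components and a pointwise weak equivalence on mapping objects. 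By passing to mapping objects and using the Segal condition, this reduces to an explicit analysis of $\Ch[\repDThn]$ on the generating Segal representables, together with a comparison with the free enriched category on $\repThn$. Once the derived counit is known to be a weak equivalence on fibrant objects, the derived unit follows by a standard two-out-of-three argument, since $\Nh$ reflects weak equivalences between fibrant objects.

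The main obstacle will be the explicit description of the mapping objects of $\Ch[\repDThn]$ and the verification that they have the correct homotopy type in $\MSThnsset$. In Dugger--Spivak, the analogous object in the $(\infty,1)$-case is the nerve of the poset of necklaces inside $\repD$, whose weak contractibility is the heart of the proof. Here one needs a $\Thn$-decorated analogue --- intuitively, necklaces whose beads additionally carry $\Thn$-cells --- and one must show that the resulting mapping object is weakly equivalent to $\repThn$ as a complete Segal $\Thn$-space. Compatibility with face and degeneracy maps in both the simplicial and $\Thn$-directions, and the interaction with the completeness condition, is where the bulk of the technical work will be concentrated; I would expect an inductive structure in $n$ to organize these computations, with the base case $n=1$ recovering the Joyal--Lurie--Dugger--Spivak theorem.
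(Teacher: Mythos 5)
Your overall skeleton is sound, but both halves take a genuinely different route from the paper, and one step has a real gap.

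For the Quillen adjunction, you propose verifying directly that $\Nh$ sends fibrant objects and fibrations to injectively fibrant objects and fibrations in $\pcatinj$, asserting that ``both the Segal condition and injective fibrancy in $\pcatinj$ can be read off from fibrancy of $\mathcal{E}$.'' This is exactly where a gap opens. The Segal condition is indeed easy, but injective fibrancy in the bisimplicial/$\Thn$-direction is emphatically \emph{not} read off from levelwise fibrancy: the paper's \cref{ex:SigmaBI} shows that the \emph{strict} nerve $N\Sigma X$ of a fibrant enriched category fails to be injectively fibrant, precisely because diagonal maps of non-discrete $\Thn$-spaces are not injective fibrations. The homotopy coherent nerve $\Nh$ only avoids this because it passes through the auxiliary bi-space level and the diagonal functor $\diag$; without tracking that structure your argument for injective fibrancy does not go through. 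The paper sidesteps this entirely by instead proving $\Ch$ is \emph{left} Quillen: it preserves cofibrations (\cref{thm:Chprescof}) and weak equivalences (\cref{Cpreswe}), by analyzing the images under $\Ch$ of the explicit generating (trivial) cofibrations $L((\partial\repD\hookrightarrow\repD)\widehat{\times}(X\hookrightarrow Y))$ and $L((\Sp\hookrightarrow\repD)\widehat{\times}(X\hookrightarrow Y))$. This is where the necklace calculus, $1$-ordered simplicial sets, and the weighted-colimit description over $\tndnec{\repD}{0}{m}$ do the work, together with the projective-cofibrancy results of \cref{Projective cofibrancy results}; these replace your ``$\Thn$-decorated necklaces,'' and are made tractable precisely because the necklace category for $\PP$ is a discrete fibration over the necklace category for $\repD$ (\cref{prop:discretefib}).

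For the equivalence step, your plan to show the derived counit $\Ch\Nh\cC\to\cC$ is a Dwyer--Kan equivalence and then run two-out-of-three for the unit is a legitimate strategy, and the paper in fact has all the ingredients to carry it out (\cref{HomChigher} and \cref{lem:eqhtpycat} essentially give it). But the paper takes a cleaner shortcut: it produces a natural map $N\cC\to\Nh\cC$ for fibrant $\cC$ and shows it is a weak equivalence (\cref{NsimeqNh}), so that $\Nh$ factors, up to natural weak equivalence, through the already-known Bergner--Rezk Quillen equivalence $c\dashv N$ on $\pcatproj$ and the identity Quillen equivalence $\pcatproj\rightleftarrows\pcatinj$ (\cref{projQuillen}, \cref{prop:idQE}); two-out-of-three at the level of homotopy categories then finishes (\cref{ThmMain}). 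This avoids any direct derived unit/counit calculation. Also, there is no induction on $n$ in the paper's argument; the $\Thn$-direction is handled uniformly via the diagonal model structure and the levelwise application of Dugger--Spivak's results.

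So: the broad two-stage plan is fine, but (a) the direct verification of right Quillen-ness glosses over injective fibrancy, which is the exact subtlety that distinguishes $\Nh$ from the strict nerve; and (b) the paper's comparison-to-$N$ shortcut is substantially more economical than a from-scratch derived counit computation, while your approach would require re-deriving, in effect, the hard parts of the Bergner--Rezk equivalence.
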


The Quillen equivalence enables us to realize all of the goals outlined above.
\begin{enumerate}[leftmargin=0.6cm]
    \item First of all we can now define a homotopy coherent diagram out of a category $\cC$ as a strictly enriched functor out of $\Ch\Nh\cC$, where the enrichment guarantees the desired homotopy coherence. Moreover, given a (fibrant) $\MSThnsset$-enriched category $\cC$, we can use this explicit Quillen equivalence to represent the same $(\infty,n)$-category as the Segal category object $\Nh\cC$ in complete Segal $\Thn$-spaces. Furthermore, every diagram $W\to \Nh\cC$ can be represented as a diagram $\Ch W\to \cC$. This precisely establishes that the two possible notions of homotopy coherent diagrams coincide with each other.
    \item As a particular application of the previous item, the category of complete Segal $\Thn$-spaces is enriched over itself, meaning we can define a homotopy coherent diagram valued in complete Segal $\Thn$-spaces as a functor of precategory objects valued in its homotopy coherent nerve.
    \item In follow-up work \cite{MRR2} we use the homotopy coherent categorification to construct a straightening construction, which for every $W \in \pcatinj$ constructs an equivalence between strictly enriched functors $\Ch W^{\op} \to \Thnsset$ and double $(\infty,n-1)$-right fibrations over $W$. This is a direct generalization of the $(\infty,1)$-categorical straightening construction in \cite{htt}, and is expected to play a similar fundamental role in all of $(\infty,n)$-category theory. 
    \item In \cite{MRR3}, we develop a notion of limit for $(\infty,n)$-categories via double $(\infty,n-1)$-categorical cones that does correctly coincide with the strict definitions, generalizing work done in the $2$-categorical setting by clingman--Moser \cite{clingmanmoser2022limits}, Grandis \cite{grandis2020limits}, Grandis--Paré \cite{GraPar1999,GraPar2019}, and Verity \cite{Verity}. Combining our results here with work done in \cite{MRR2} we will show in upcoming work that this notion of limit for $(\infty,n)$-categories is independent of the model.
\end{enumerate}

\subsection*{Necklace calculus}

In two seminal papers Dugger and Spivak developed a theory of necklaces, as an effective tool to study hom spaces of homotopy coherent categorifications of quasi-categories \cite{DuggerSpivakMapping,DuggerSpivakRigidification}. The power of the necklace machinery can be witnessed in the widespread applications it has found in several other (related) contexts, such as \cite{hhr2021straightening,cls2022cubical,bs2023segalification,lowenmertens2023templicial}.

As part of our effort to study and construct the homotopy coherent nerve, we describe effective tools to make computations via necklaces in a context suitable for $(\infty,n)$-categories; this \emph{necklace calculus} could be of independent interest. In particular, we characterize a broad class of simplicial sets that play an important role in the study of $(\infty,n)$-categories, the \emph{$1$-ordered simplicial sets}, for which the computation of the hom space via necklaces can be reduced to the colimit over a poset. See 
\cref{DS:computations1ordered} for a more explicit statement.

The theory of $1$-ordered simplicial sets and their associated necklace calculus gives us a concrete method to compute hom objects of homotopy coherent categorifications of relevant objects. For example, given $m\geq 0$ and a $\Theta_{n-1}$-space $X$, one can consider the Segal category object $L(\repD\times X)$, which models an $(\infty,n)$-category with $m+1$ objects $0,1,\ldots,m$ and hom $\Thn$-spaces $X$ between consecutive objects (see \cref{RmkPushout}). Here, the simplicial set $\repD$ models the category $[m]$, and its homotopy coherent categorification $\Ch \repD=\CL\repD$ is classically understood (see \cref{defn:CL}). The canonical projection $L(\repD\times X) \to F[m]$ induces a family of discrete fibrations which relate the categories of necklaces obtained from $L(\repD\times X)$ and the category of necklaces of $\repD$ (see \cref{prop:discretefib}). The necklace calculus developed in this paper allows us to compute the hom objects of $\Ch L(\repD\times X)$ from those of $\Ch\repD$
(see \cref{prop:computationpushprod}), which is a key ingredient for the proof of the main theorem.

\subsection*{Acknowledgments}

The paper benefited from helpful conversations with Julie Bergner and Viktoriya Ozornova. We would also like to thank the anonymous referee for their insightful comments. This material is based upon work supported by the National Science Foundation under Grant No.~DMS-1928930 while the first and third authors participated in a program supported by the Mathematical Sciences Research Institute. The program was held in Summer 2022 in partnership with the Universidad Nacional Autónoma de M\'exico. The third author is grateful for support from the National Science Foundation under Grant No.~DMS-2203915, and the first author for the support of the Max Planck Institute for Mathematics.

\section{Preliminaries and background}

In this section we recall the relevant model structure for $(\infty,n-1)$-categories in \cref{subsec:MSn-1}, the model structure for categories enriched over $(\infty,n-1)$-categories in \cref{subsec:MSenriched}, the model structure for Segal categories in $(\infty,n-1)$-categories in \cref{subsec:MSPcat}, and the diagonal model structure in \cref{subsec:MSdiag}. We also recall in \cref{subsec:strictnerve} the Quillen equivalence between models of $(\infty,n)$-categories given by the strict nerve of categories enriched over $(\infty,n-1)$-categories.

\subsection{Model structures for \texorpdfstring{$(\infty,n-1)$}{(infinity,n-1)}-categories} \label{subsec:MSn-1}

We recall the model structure $\MSThnsset$ on $\Thnsset$ for $(\infty,n-1)$-categories given by Rezk's complete Segal $\Thn$-spaces \cite{rezkTheta}.

For $n\geq1$, recall from \cite{JoyalDisks} Joyal's cell category $\Thn[n]$. For $n=1$, then $\Thn=\Thn[0]$ is the terminal category, and for $n>1$, the category $\Thn$ is the \emph{wreath product} $\Delta\wr \Thn[n-2]$ (see e.g.~\cite[Definition 3.1]{BergerIterated}).

Throughout the paper we will use the following notational conventions.
\begin{notation}
We write:
\begin{itemize}[leftmargin=0.6cm]
\item $\repD\in \set^{\Dop}$ for the representable at $m\geq 0$, and $\Sp\coloneqq \repD[1]\amalg_{\repD[0]}\ldots \amalg_{\repD[0]}\repD[1]$ for the spine of $\repD$,
    \item $\repThn\in \Thnset$ for the representable at $\defThn\in \Thn$,
    \item $\repS\in \sset$ for the representable at $\defS\geq 0$,
    \item $\repThn\times\repS\in \Thnsset$ for the representable at $(\defThn,[\defS])\in \ThnS$, 
    \item $\repDThn\in \sThnset$ for the representable at $([m],\defThn)\in \DThn$, 
    \item $\repDThnS\in \sThnsset$ for the representable at $([m],\defThn,[\defS])\in \DThnS$.
\end{itemize}
The categories $\Dset$, $\Thnset$, $\sset$, $\Thnsset$, and $\sThnset$ are all naturally included into $\sThnsset$, and we regard all the above as objects of it without further specification. We refer to an object of $\Thnsset$ as a \emph{$\Thn$-space}.
\end{notation}

Roughly speaking, we think of $\repD$ as the standard $m$-simplex living in the \emph{categorical direction} and of $\repS$ as the standard $\defS$-simplex living in the \emph{spacial direction}. More generally, we follow the convention that, given any small category $\cA$, the simplicial direction in $\cA^{\Delta^{\op}}$ is considered to be categorical, whereas the simplicial direction in $s\cA$ is considered to be spacial.

The model structure $\MSThnsset$ is defined recursively as a localization of the injective model structure $\injThnspace$ on the category of $\Thn$-presheaves valued in $\MSspace$ with respect to a set $S_{\CSThn}$ of maps in $\Thnset$. 

The set $S_{\CSThn[0]}$ is the empty set, and for $n>1$ the set $S_{\CSThn}$ consists of the following monomorphisms: 
\begin{itemize}[leftmargin=0.6cm]
    \item the \emph{Segal maps} \[ \repThn[1;\theta_1]\amalg_{[0]} \ldots\amalg_{[0]} \repThn[1;\theta_\ell]\hookrightarrow \repThn[\ell;\theta_1,\ldots,\theta_\ell], \] for all $\ell\geq 1$ and $\theta_1,\ldots,\theta_\ell\in \Thn[n-2]$,
    \item the \emph{completeness map}
    \[ \repD[0]\hookrightarrow N \bI \] seen as a map in $\Thnset$ through the inclusion $\Dset\hookrightarrow \Thnset$ induced by pre-composition along the projection $\Thn\to \Delta$ given by $[\ell;\theta_1,\ldots,\theta_l]\mapsto [\ell]$, where $\bI$ denotes the free-living isomorphism, 
    \item the \emph{recursive maps} \[ \repThn[1;A]\hookrightarrow\repThn[1;B], \] where $A\hookrightarrow B\in \Thnsset[n-2]$ ranges over all monomorphisms in $S_{\CSThn[n-2]}$.
\end{itemize}
Note that by \cite[Theorem 8.1]{rezkTheta} the model structure $\MSThnsset$ obtained by localizing the injective model structure $\injThnspace$ with respect to the set $S_{\CSThn}$ is cartesian closed. This is enough to guarantee that the model structure $\MSThnsset$ is excellent in the sense of \cite[Definition A.3.2.16]{htt}.

\subsection{Enriched model structures for \texorpdfstring{$(\infty,n)$}{(infinity,n)}-categories} \label{subsec:MSenriched}

Since the model structure $\MSThnsset$ is excellent, the category $\Thncat$ supports the left proper model structure $\MSThncat$ from \cite[\textsection3.10]{br1}, obtained as a special instance of \cite[Proposition A.3.2.4, Theorem A.3.2.24]{htt}. The main features of this model structure rely on the notion of \emph{homotopy category} from \cite[\textsection~A.3.2]{htt}, which we now recall.

\begin{defn}
Let $\cC$ be a $\Thnsset$-enriched category. The \emph{homotopy category} of $\cC$ is the category $\Ho\cC $ such that
\begin{itemize}[leftmargin=0.6cm]
    \item its set of objects $\Ob(\Ho \cC)$ is $\Ob \cC$,
    \item for $a,b\in \Ob\cC$, its hom set is given by
    \[ (\Ho\cC)(a,b)\coloneqq \Ho(\MSThnsset)(\repS[0],\Hom_\cC(a,b)),\]
    where $\Ho(\MSThnsset)$ is the homotopy category of the model category $\MSThnsset$,
    \item composition is induced from that of $\cC$. 
\end{itemize}
\end{defn}

Finally, we recall some of the data defining the model structure $\MSThncat$.

\begin{recall}
In the model structure $\MSThncat$, a $\Thnsset$-enriched category $\cC$ is \emph{fibrant} if, for all $a,b\in \Ob\cC$, the hom $\Thn$-space $\Hom_\cC(a,b)$ is fibrant in $\MSThnsset$, and a $\Thnsset$-enriched functor $F\colon \cC\to \cD$ is:
\begin{itemize}[leftmargin=0.6cm]
    \item a \emph{weak equivalence}
    if the induced functor $\Ho F\colon \Ho\cC\to \Ho\cD$ between homotopy categories is essentially surjective on objects, and for all $a,b\in\Ob\cC$ the induced map
    \[F_{a,b}\colon\Hom_{\cC}(a,b)\to\Hom_\cD(Fa,Fb)\]
    is a weak equivalence in $\MSThnsset$,
    \item a \emph{fibration between fibrant objects} if it the induced functor $\Ho F\colon \Ho\cC\to \Ho\cD$ between homotopy categories is an isofibration of categories, and for all $a,b\in\Ob\cC$ the induced map
    \[F_{a,b}\colon\Hom_{\cC}(a,b)\to\Hom_\cD(Fa,Fb)\]
    is a fibration in $\MSThnsset$, 
    \item a \emph{trivial fibration} if it is surjective on objects, and for all $a,b\in\Ob\cC$ the induced map
    \[F_{a,b}\colon\Hom_{\cC}(a,b)\to\Hom_\cD(Fa,Fb)\]
   a trivial fibration in $\MSThnsset$.
\end{itemize} 
\end{recall}

The homs of the homotopy category of a fibrant $\MSThnsset$-enriched category admit a more explicit description in terms of $\pi_0\colon \sset\to \set$, the left adjoint to the inclusion $\set\hookrightarrow\sset$. 

\begin{prop} \label{prop:homofHoC}
Let $\cC$ be a fibrant $\MSThnsset$-enriched category. Then, for all $a,b\in \Ob\cC$, there is a natural isomorphism of sets
    \[ (\Ho\cC)(a,b)\cong\pi_0(\Hom_\cC(a,b)_{[0]}).\]
\end{prop}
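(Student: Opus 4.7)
The strategy is to identify both sides with $\pi_0$ of a suitable simplicial mapping space in $\Thnsset$.

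First, I observe that $\repS[0] = \Delta[0]$, viewed in $\Thnsset$ via the inclusion $\sset \hookrightarrow \Thnsset$ as the constant $\Thn$-diagram, is the terminal object of $\Thnsset$. In particular, $\repS[0]$ is cofibrant in $\injThnspace$ (since $\emptyset \to \repS[0]$ is a monomorphism), and hence also cofibrant in the Bousfield localization $\MSThnsset$. Since $\MSThnsset$ is a simplicial model category, $\repS[0]$ is cofibrant, and $\Hom_\cC(a,b)$ is fibrant by the hypothesis on $\cC$, the standard description of hom-sets in the homotopy category of a simplicial model category yields
\[
(\Ho\cC)(a,b) = \Ho(\MSThnsset)(\repS[0], \Hom_\cC(a,b)) \;\cong\; \pi_0\Map_{\Thnsset}(\repS[0], \Hom_\cC(a,b)).
\]

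It then suffices to identify the simplicial mapping space $\Map_{\Thnsset}(\repS[0], Y)$ with $Y_{[0]}$ for any $Y \in \Thnsset$. By the definition of the simplicial enrichment on $\Thnsset$,
\[
\Map_{\Thnsset}(\repS[0], Y)_k = \Hom_{\Thnsset}(\repS[0] \times \Delta[k], Y) = \Hom_{\Thnsset}(\cst\Delta[k], Y),
\]
where $\cst \colon \sset \to \Thnsset$ denotes the constant $\Thn$-diagram functor. Since $[0]$ is terminal in $\Thn[n-1]$ (verified by induction using the wreath product structure $\Thn[n-1] = \Delta \wr \Thn[n-2]$), the functor $\cst$ is left adjoint to evaluation at $[0]$, giving
\[
\Hom_{\Thnsset}(\cst\Delta[k], Y) \cong \Hom_{\sset}(\Delta[k], Y_{[0]}) = (Y_{[0]})_k.
\]
Hence $\Map_{\Thnsset}(\repS[0], Y) \cong Y_{[0]}$ as simplicial sets, and taking $\pi_0$ produces the desired natural isomorphism.

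The main delicate point is the identification $\Map_{\Thnsset}(\repS[0], Y) \cong Y_{[0]}$, which requires carefully tracking the conventions for the simplicial enrichment on $\Thnsset$ (the ``spacial'' direction); everything else follows from the definitions together with general facts about simplicial model categories.
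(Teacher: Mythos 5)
Your proof is correct and takes essentially the same route as the paper's: the paper invokes Hirschhorn's Proposition~9.5.24 for the isomorphism $\Ho(\MSThnsset)(A,X)\cong\pi_0\Map_{\Thnsset}(A,X)$ with $A$ cofibrant and $X$ fibrant, and then asserts $\pi_0\Map_{\Thnsset}(\repS[0],\Hom_\cC(a,b))\cong\pi_0(\Hom_\cC(a,b)_{[0]})$ without comment. You cite the same simplicial-model-category fact and additionally spell out the last identification via the adjunction $\cst\dashv\ev_{[0]}$ coming from $[0]$ being terminal in $\Thn$, which is a reasonable amount of extra care.
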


\begin{proof}
Since the model structure $\MSThnsset$ is simplicial, as a consequence of \cite[Proposition 9.5.24]{Hirschhorn} we have that, for every object $A\in \MSThnsset$ and every fibrant object $X\in \MSThnsset$, an isomorphism of sets
\[ \Ho(\MSThnsset)(A,X)\cong \pi_0\Map_{\Thnsset}(A,X), \]
where $\Map_{\Thnsset}(-,-)$ denotes the hom space functor. Hence, if $\cC$ is fibrant $\MSThncat$, then, for every $a,b\in \Ob\cC$, the hom $\Thn$-space $\Hom_{\cC}(a,b)$ is fibrant in $\MSThnsset$ and so we get an isomorphism of sets
\[ \Ho(\cC)(a,b)\cong \pi_0\Map_{\Thnsset}(\repS[0],\Hom_\cC(a,b))\cong \pi_0(\Hom_\cC(a,b)_{[0]}). \qedhere\]
\end{proof}

Many of the $\Thnsset$-enriched categories that feature in this paper have the following property, so we introduce a terminology that streamlines the exposition.

\begin{defn} 
A $\Thnsset$-enriched category $\cC$ is \emph{directed} if
\begin{itemize}[leftmargin=0.6cm]
    \item its set of objects $\Ob\cC$ is $\{0,1,\ldots,m\}$, for some $m\geq 0$, 
    \item for $0\leq j\leq i\leq m$, the hom $\Thn$-space $\Hom_\cC(i,j)$ is given by
    \[\Hom_\cC(i,j)=\begin{cases}
\emptyset & \text{if} \; j<i \\
\repS[0] & \text{if} \; j=i.
\end{cases}\] 
    \end{itemize}
In particular, composition maps in a directed $\Thnsset$-enriched category $\cC$ involving the above hom $\Thn$-spaces are uniquely determined. Moreover, the value of a $\Thnsset$-enriched functor from a directed $\Thnsset$-enriched category is also uniquely determined on these hom $\Thn$-spaces.
\end{defn}

The assignment $(\cC,a,b)\mapsto\Hom_{\cC}(a,b)$ of the hom $\Thn$-space to every two objects $a$ and $b$ of a $\Thnsset$-enriched category $\cC$ defines a functor $\Hom\colon {}^{\{0,1\}/}\Thncat\to \Thnsset$, where ${}^{\{0,1\}/}\Thncat$ denotes the category of bi-pointed $\Thnsset$-enriched categories. This functor admits a left adjoint, the \emph{suspension} functor $\Sigma\colon\Thnsset\to {}^{\{0,1\}/}\Thncat$. Given an object $X\in \Thnsset$, the $\Thnsset$-enriched category $\Sigma X$ is the directed $\Thnsset$-enriched category with object set $\{0,1\}$ and hom $\Thn$-space given by $\Hom_{\Sigma X}(0,1)=X$.

The model structure $\MSThncat$ is designed so that the adjunction $\Sigma\dashv \Hom$ has good homotopical properties. Here ${}^{\{0,1\}/}\MSThncat$ denotes the slice model structure, in which cofibrations, fibrations, and weak equivalences are created by the forgetful functor to $\MSThncat$. 

\begin{prop} \label{susp:prestrivcof}
The adjunction
\begin{tz}
\node[](1) {$\MSThnsset$}; 
\node[right of=1,xshift=2.8cm](2) {${}^{\{0,1\}/}\MSThncat$};
\punctuation{2}{,};

\draw[->] ($(2.west)-(0,5pt)$) to node[below,la]{$\Hom$} ($(1.east)-(0,5pt)$);
\draw[->] ($(1.east)+(0,5pt)$) to node[above,la]{$\Sigma$} ($(2.west)+(0,5pt)$);

\node[la] at ($(1.east)!0.5!(2.west)$) {$\bot$};
\end{tz}
is a Quillen pair.
\end{prop}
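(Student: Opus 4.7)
The plan is to show that the left adjoint $\Sigma$ preserves cofibrations and trivial cofibrations, thereby establishing the Quillen pair. The argument splits naturally into two independent components: preservation of cofibrations, and preservation of weak equivalences (between objects of $\MSThnsset$). Combined, these yield preservation of trivial cofibrations.

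For the preservation of cofibrations, I would recall from \cite[\textsection 3.10]{br1} (following \cite[Proposition A.3.2.4]{htt}) that a set of generating cofibrations of $\MSThncat$ is given by the maps $\Sigma(i)$ for $i$ a generating cofibration of $\MSThnsset$, together with the single map $\emptyset\hookrightarrow \{x\}$ that adjoins an object. Each $\Sigma A$ is canonically bi-pointed via $\{0,1\}\to \Sigma A$, so $\Sigma(i)$ is already a map in ${}^{\{0,1\}/}\MSThncat$, and it is a cofibration there since the slice model structure has cofibrations created by the forgetful functor to $\MSThncat$. As a left adjoint, $\Sigma$ preserves colimits and retracts, so it preserves the saturation of the generating cofibrations of $\MSThnsset$ under pushouts, transfinite compositions, and retracts, yielding preservation of all cofibrations.

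For the preservation of weak equivalences, I would argue directly using the recalled characterization. If $f\colon X\to Y$ is a weak equivalence in $\MSThnsset$, then $\Sigma f\colon \Sigma X\to \Sigma Y$ is the identity on the object set $\{0,1\}$ --- hence induces the identity on homotopy categories, which is in particular essentially surjective --- while its effect on the unique non-trivial hom $\Thn$-space $\Hom_{\Sigma X}(0,1)\to \Hom_{\Sigma Y}(0,1)$ is exactly $f$, a weak equivalence in $\MSThnsset$. On the remaining hom $\Thn$-spaces $\Sigma f$ is an identity, so by the recalled characterization $\Sigma f$ is a weak equivalence in $\MSThncat$, and hence in ${}^{\{0,1\}/}\MSThncat$. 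Combining with the cofibration preservation established above, $\Sigma$ preserves trivial cofibrations.

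The only non-routine step is invoking the correct description of the generating cofibrations of the enriched model structure $\MSThncat$; the rest is essentially a formal consequence of the two-object structure of the suspension and the left adjoint property of $\Sigma$, so no substantive obstacle arises.
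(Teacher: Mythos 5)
Your proof is correct, but it takes a genuinely different route from the paper's. The paper checks the Quillen pair condition on the right-adjoint side: it cites \cite[Lemma E.2.13]{JoyalVolumeII}, which reduces the verification to showing that $\Hom$ preserves trivial fibrations and fibrations between fibrant objects, and then observes that this is immediate from the local (hom-wise) description of those classes in $\MSThncat$. You instead work on the left-adjoint side: you identify the generating cofibrations of $\MSThncat$ as $\emptyset\to[0]$ and $\Sigma i$ for $i$ a generating cofibration of $\MSThnsset$ (which is indeed what \cite[Proposition A.3.2.4]{htt} gives), conclude that $\Sigma$ preserves cofibrations by saturation, and then show separately that $\Sigma$ preserves \emph{all} weak equivalences by inspecting the two-object structure of the suspension. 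The second half is slightly stronger than is needed for a Quillen pair (left Quillen functors only preserve weak equivalences between cofibrant objects in general), but it holds here and yields preservation of trivial cofibrations cleanly. Both approaches are sound; the paper's is shorter by offloading the criterion to a cited lemma and exploiting that $\Hom$ is by construction easy to evaluate on (trivial) fibrations, while yours is more self-contained and simultaneously records the useful fact that $\Sigma$ preserves weak equivalences on the nose.
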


\begin{proof}
This follows directly from \cite[Lemma E.2.13] {JoyalVolumeII} and the local properties of trivial fibrations and fibrations between fibrant objects.
\end{proof}

The following lemma gives a useful criterion to recognize when a $\Thnsset$-enriched functor is a (trivial) cofibration in $\MSThnsset$.

\begin{lemma}  \label{pushoutlemma}
Let $\cP$ and $\cQ$ be directed $\Thnsset$-enriched categories such that
\begin{itemize}[leftmargin=0.6cm]
    \item they have the same set of objects $\Ob\cP=\{0,1,\ldots,m\}=\Ob\cQ$,
    \item for $0<j-i<m$, they have the same hom $\Thn$-spaces $\Hom_\cP(i,j)=\Hom_\cQ(i,j)$.
\end{itemize} 
Let $F\colon \cP\to \cQ$ be a $\Thnsset$-enriched functor such that
\begin{itemize}[leftmargin=0.6cm]
    \item on objects, it is the identity at $\{0,1,\ldots,m\}$,
    \item for all $0<j-i<m$, the map $F_{i,j}$ on hom $\Thn$-spaces is the identity.
\end{itemize}
Then the following is a pushout in $\Thncat$.
\begin{tz}
\node[](1) {$\Sigma \Hom_{\cP}(0,m)$}; 
\node[below of=1](2) {$\Sigma \Hom_\cQ(0,m)$}; 
\node[right of=1,xshift=1.7cm](3) {$\cP$}; 
\node[below of=3](4) {$\cQ$}; 
\pushout{4};

\draw[->] (1) to node[above,la]{$\iota_{0,m}$} (3);
\draw[->] (1) to node[left,la]{$\Sigma F_{0,m}$} (2);
\draw[->] (3) to node[right,la]{$F$} (4);
\draw[->] (2) to node[below,la]{$\iota_{0,m}$} (4);
\end{tz}
Moreover, if $F_{0,m}$ is a (trivial) cofibration in $\MSThnsset$, then $F\colon \cP\to \cQ$ is a (trivial) cofibration in $\MSThncat$.
\end{lemma}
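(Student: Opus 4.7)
The plan is to prove the two claims in sequence: first verify the pushout square by hand using the universal property, then derive the (trivial) cofibrancy conclusion using the Quillen pair $\Sigma \dashv \Hom$ of \cref{susp:prestrivcof} together with the stability of (trivial) cofibrations under pushout.

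For the pushout square, I would unpack the universal property explicitly. Given a $\Thnsset$-enriched category $\cR$ equipped with functors $H\colon \cP\to \cR$ and $G\colon \Sigma\Hom_\cQ(0,m)\to \cR$ satisfying $H\circ \iota_{0,m}^\cP=G\circ \Sigma F_{0,m}$, I want to construct a unique factorization $K\colon \cQ\to \cR$ with $K\circ F=H$ and $K\circ \iota_{0,m}^\cQ=G$. Since $\cP$ and $\cQ$ share the same object set, the value of $K$ on objects is forced to be $K(i)=H(i)$. On hom $\Thn$-spaces, directedness of $\cQ$ makes $K_{i,j}$ uniquely determined (by unit axioms) whenever $j\leq i$, while the standing hypothesis $\Hom_\cP(i,j)=\Hom_\cQ(i,j)$ for $0<j-i<m$ forces $K_{i,j}=H_{i,j}$ there; finally, $K_{0,m}$ must equal $G_{0,m}$.

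To check that this $K$ is genuinely a $\Thnsset$-enriched functor, I need only verify compatibility with composition maps landing in $\Hom_\cQ(0,m)$, as all other compositions coincide with those in $\cP$ and are handled by the functoriality of $H$. For each $0<i<m$, the composition $\mu_\cQ\colon \Hom_\cQ(i,m)\times \Hom_\cQ(0,i)\to \Hom_\cQ(0,m)$ has the same source as $\mu_\cP$, and functoriality of $F$ gives $F_{0,m}\circ \mu_\cP=\mu_\cQ$. Combining this with the relation $G_{0,m}\circ F_{0,m}=H_{0,m}$ (extracted from the compatibility $H\circ \iota_{0,m}^\cP=G\circ \Sigma F_{0,m}$ at the hom level) and the functoriality of $H$, one computes
\[K_{0,m}\circ \mu_\cQ = G_{0,m}\circ F_{0,m}\circ \mu_\cP = H_{0,m}\circ \mu_\cP = \mu_\cR\circ(H\times H) = \mu_\cR\circ (K\times K),\]
as required. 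Uniqueness of $K$ is immediate from the above case analysis.

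For the moreover part, I would simply invoke \cref{susp:prestrivcof}: since $\Sigma$ is a left Quillen functor from $\MSThnsset$ to ${}^{\{0,1\}/}\MSThncat$ (whose (trivial) cofibrations are created by the forgetful functor to $\MSThncat$), a (trivial) cofibration $F_{0,m}$ in $\MSThnsset$ is sent to a (trivial) cofibration $\Sigma F_{0,m}$ in $\MSThncat$. The first part of the statement identifies $F\colon \cP\to \cQ$ as the pushout of $\Sigma F_{0,m}$ in $\Thncat$, and (trivial) cofibrations are stable under pushout in any model category, so $F$ is itself a (trivial) cofibration in $\MSThncat$.

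The only genuine work is the pushout verification; the main potential obstacle is the bookkeeping of composition maps in a directed enriched category, but directedness ensures that all compositions either factor trivially through identities or share source and target data with the analogous compositions in $\cP$, collapsing the argument to the single consistency check above.
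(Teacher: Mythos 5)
Your proof is correct and matches the paper's argument in both structure and content: you verify the pushout by hand via the universal property, deducing the mediating functor on objects and homs from directedness and the given data, checking the only nontrivial compatibility (compositions landing in $\Hom_\cQ(0,m)$) using functoriality of $F$ and the commuting square at the suspension, and deriving the moreover part from \cref{susp:prestrivcof} plus stability of (trivial) cofibrations under pushout. Aside from variable naming, this is the same proof.
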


\begin{proof}
In order to show that $\cQ$ satisfies the universal property of the desired pushout, we show that there is a unique $\Thnsset$-enriched functor $H\colon \cQ\to\cC$ making the following diagram commute.
\begin{tz}
\node[](1) {$\Sigma \Hom_{\cP}(0,m)$}; 
\node[below of=1](2) {$\Sigma \Hom_\cQ(0,m)$}; 
\node[right of=1,xshift=1.7cm](3) {$\cP$}; 
\node[below of=3](4) {$\cQ$}; 
\node[below right of=4,xshift=.5cm](5) {$\cC$}; 

\draw[->] (1) to node[above,la]{$\iota_{0,m}$} (3);
\draw[->] (1) to node[left,la]{$\Sigma F_{0,m}$} (2);
\draw[->] (3) to node[right,la]{$F$} (4);
\draw[->] (2) to node[below,la]{$\iota_{0,m}$} (4);
\draw[->,bend left] (3) to node[right,la]{$G$} (5);
\draw[->,bend right=20] (2) to node[below,la]{$K$} (5);
\draw[->,dashed] (4) to node[above,la,xshift=2pt]{$H$} (5);
\end{tz}

First, we construct $H$. For $0\leq i\leq m$, we set $H(i)\coloneqq G(i)$, for $0< j-i<m$, we set
\[ H_{i,j}\coloneqq G_{i,j}\colon \Hom_\cQ(i,j)=\Hom_\cP(i,j)\to \Hom_\cC(G(i),G(j)), \]
and we set
\[ H_{0,m}\coloneqq K_{0,1}\colon \Hom_\cQ(0,m)\to \Hom_\cC(G(0),G(m)). \]
The maps $H_{i,j},H_{j,k},H_{i,k}$ are compatible with composition for all $0\leq i<j<k\leq m$ with $k-i<m$ since the corresponding maps of $G$ do. It remains to show that $H_{0,i},H_{i,m},H_{0,m}$ are compatible with composition for all $0\leq i\leq m$. For $0\leq i\leq m$ we have that the following diagram commutes,
\begin{tz}
\node[](1) {$\Hom_\cQ(0,i)\times \Hom_\cQ(i,m)$}; 
\node[below of=1,xshift=-1.5cm](2) {$\Hom_\cP(0,i)\times \Hom_\cP(i,m)$}; 
\draw[d] (1) to (2); 
\node[right of=1,xshift=5cm](3) {$\Hom_\cQ(0,m)$}; 
\node[below of=3,xshift=-1.5cm](4) {$\Hom_\cP(0,m)$}; 

\draw[->] (1) to node[above,la]{$\circ_{0,i,m}$} (3);
\draw[->] (2) to node[above,la]{$\circ_{0,i,m}$} (4);
\draw[->] (4) to node[left,la,xshift=-3pt]{$F_{0,m}$} (3);

\node[below of=2,xshift=1.5cm](5) {$\Hom_\cC(G(0),G(i))\times \Hom_\cC(G(i),G(m))$}; 
\node[right of=5,xshift=5cm](6) {$\Hom_\cC(G(0),G(m))$}; 
\draw[->] (4) to node[left,la]{$G_{0,m}$} (6);
\draw[->] (2) to node[left,la]{$H_{0,i}\times H_{i,m}=G_{0,i}\times G_{i,m}$} (5); 
\draw[->] (5) to node[below,la]{$\circ_{G(0),G(i),G(m)}$} (6);
\draw[->] (3) to node[right,la]{$K_{0,1}=H_{0,m}$} (6);
\end{tz}
where the top rectangle commutes by compatibility of $F$ with composition, the bottom one by compatibility of $G$ with composition, and the right-hand triangle since $G\circ \iota_{0,m}=K_{0,1}\circ \Sigma F_{0,m}$. This shows that $H_{0,i},H_{i,m},H_{0,m}$ are compatible with composition for all $0\leq i\leq m$.
Moreover, observe that $H$ is the unique $\Thnsset$-enriched functor with the desired properties. This shows that $\cQ$ is the pushout
\[\cQ\cong\cP\amalg_{\Sigma\Hom_{\cP}(0,m)}\Sigma\Hom_{\cQ}(0,m).\]

Finally, the ``moreover'' part follows directly from the facts that, if $F_{0,m}$ is a (trivial) cofibration in $\MSThnsset$, then $\Sigma F_{0,m}$ is a (trivial) cofibration in $\MSThncat$ by \cref{susp:prestrivcof}, and that (trivial) cofibrations are closed under pushout. 
\end{proof}

\begin{notation}
For $m\geq 0$ and $X\in\Thnsset$, we denote by $\Sigma_mX$ the pushout of $m$ copies of $\Sigma X$ along consecutive sources and targets:
\[\Sigma_mX\coloneqq\Sigma X\amalg_{[0]} \ldots \amalg_{[0]} \Sigma X.\]
By convention $\Sigma_0 X$ is the terminal enriched category $[0]$. 
This construction extends to a functor $\Sigma_m\colon \Thnsset\to \Thncat$.
\end{notation}

The $\Thnsset$-enriched category $\Sigma_m X$ admits the following description.

\begin{prop} \label{homofSigmam}
Let $m\geq 0$ and $X\in \Thnsset$. Then the $\Thnsset$-enriched category $\Sigma_m X$ is the directed $\Thnsset$-enriched category such that:
\begin{itemize}[leftmargin=0.6cm]
    \item its set of objects $\Ob(\Sigma_m X)$ is $\{0,1,\ldots,m\}$,
    \item for $0\leq i<j\leq m$, the hom $\Thn$-space is $\Hom_{\Sigma_m X}(i,j)= X^{\times(j-i)}$,
    \item for $0\leq i< j< k\leq m$, the composition map is given by
  \begin{tz}
    \node[](1) {$\Hom_{\Sigma_m X}(i,j)\times \Hom_{\Sigma_m X}(j,k)=X^{\times (j-i)}\times X^{\times (k-j)}$}; 
    \node[below of=1, xshift=.42cm](2) {$\Hom_{\Sigma_m X}(i,k)=X^{\times (k-i)}$};
    \punctuation{2}{.};
    
    \draw[->] ($(1.south)-(.5cm,0)$) to node[left,la]{$\circ_{i,j,k}$} ($(2.north)-(.92cm,0)$);
    \draw[->] ($(1.south)+(1.6cm,0)$) to node[right,la]{$\cong$} ($(2.north)+(1.18cm,0)$);
    \end{tz}
\end{itemize}
\end{prop}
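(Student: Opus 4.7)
The strategy is to verify that the explicit $\Thnsset$-enriched category $\cP_m$ described in the statement satisfies the universal property of the iterated pushout $\Sigma_m X = \Sigma X \amalg_{[0]} \cdots \amalg_{[0]} \Sigma X$. To begin, I would check that $\cP_m$ is well-defined as a $\Thnsset$-enriched category: associativity of the concatenation composition $X^{\times(j-i)} \times X^{\times(k-j)} \cong X^{\times(k-i)}$ and unitality with respect to the empty-product identities $\repS[0] = X^{\times 0}$ follow immediately from the universal property of the cartesian product, while directedness is enforced by construction.

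Next, I would construct a comparison functor $\Phi\colon \Sigma_m X \to \cP_m$. For each $1 \le \ell \le m$, let $\iota_\ell\colon \Sigma X \to \cP_m$ be the $\Thnsset$-enriched functor that sends $0 \mapsto \ell-1$, $1 \mapsto \ell$, and is the identity on the hom $\Thn$-space $X$. Since $\iota_\ell(1) = \ell = \iota_{\ell+1}(0)$, these functors agree along the pushout inclusions of $[0]$, so they assemble into $\Phi$ via the universal property defining $\Sigma_m X$.

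To show $\Phi$ is an isomorphism, I would prove directly that $\cP_m$ also satisfies this universal property. Given a $\Thnsset$-enriched category $\cC$ with $\Thnsset$-enriched functors $g_\ell\colon \Sigma X \to \cC$ such that $g_\ell(1) = g_{\ell+1}(0) =: c_\ell$, I would define the required extension $G\colon \cP_m \to \cC$ by $i \mapsto c_i$ and, on homs, by the composite
\[ G_{i,j}\colon X^{\times(j-i)} \xrightarrow{\prod_{\ell=i+1}^{j}(g_\ell)_{0,1}} \prod_{\ell=i+1}^{j}\Hom_\cC(c_{\ell-1},c_\ell) \longrightarrow \Hom_\cC(c_i, c_j), \]
where the second arrow is iterated composition in $\cC$. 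Uniqueness of $G$ is forced because any extension of the $g_\ell$ must agree with $G$ on the hom $X^{\times(j-i)}$, which is the concatenation in $\cP_m$ of its restrictions to consecutive hom spaces.

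The only substantive obstacle is verifying that $G$ preserves composition for every triple $0 \le i < j < k \le m$, i.e., that the diagram comparing $G_{i,j} \times G_{j,k}$ followed by composition in $\cC$ with the concatenation isomorphism followed by $G_{i,k}$ commutes. This reduces via induction on $k-i$ to the associativity and unitality of composition in $\cC$, together with the analogous associativity of the cartesian product isomorphisms defining the composition in $\cP_m$.
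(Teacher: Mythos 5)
The paper offers no proof for this proposition, evidently treating the computation of the iterated pushout $\Sigma X \amalg_{[0]}\cdots\amalg_{[0]}\Sigma X$ along terminal objects as routine, so there is no argument of record to compare against. Your verification via the universal property is the natural route and is correct: you check that the candidate category $\cP_m$ is well-defined, build the comparison $\Thnsset$-enriched functor $\Sigma_m X\to\cP_m$ from the pushout data, and then show $\cP_m$ satisfies the same universal property. Two small presentational remarks. First, the uniqueness step is cleanest if you observe explicitly that the concatenation isomorphism
\[ \Hom_{\cP_m}(i,i+1)\times\cdots\times\Hom_{\cP_m}(j-1,j)\xrightarrow{\ \cong\ }\Hom_{\cP_m}(i,j) \]
is exactly the $(j-i)$-fold composition map of $\cP_m$; functoriality of any extension $G'$ then forces $G'_{i,j}$ to be the composite in $\cC$ of the $G'_{\ell-1,\ell}$, which are determined. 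Second, the functoriality check for $G$ really amounts only to associativity of composition in $\cC$ matched against associativity of the tuple isomorphisms in $\Thnsset$, so the induction on $k-i$ can be compressed to a single display. Neither point affects the correctness of what you wrote.
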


\subsection{Weakly enriched model structures for \texorpdfstring{$(\infty,n)$}{(infinity,n)}-categories} \label{subsec:MSPcat}

Let $\pcatThn$ denote the full subcategory of $\sThnsset$ spanned by those $(\DThn)$-spaces $W$ such that $W_0$ is discrete, i.e., such that $W_0$ in the image of $\set\hookrightarrow \Thnsset$. As also mentioned in \cite[\textsection7]{br1}, one sees that the inclusion $I\colon \pcatThn\to \sThnsset$ admits a left adjoint $L$, so there is an adjunction
\begin{tz}
\node[](2) {$\sThnsset$}; 
\node[right of=2,xshift=2.7cm](3) {$\pcatThn$}; 
\punctuation{3}{.};

\draw[->] ($(3.west)-(0,5pt)$) to node[below,la]{$I$} ($(2.east)-(0,5pt)$);
\draw[->] ($(2.east)+(0,5pt)$) to node[above,la]{$L$} ($(3.west)+(0,5pt)$);

\node[la] at ($(2.east)!0.5!(3.west)$) {$\bot$};
\end{tz}

In \cite{br1}, Bergner--Rezk construct two model structures on the category $\pcatThn$: the ``projective-like'' and the ``injective-like'' model structures. Here, we denote these two model structures by $\pcatproj$ and $\pcatinj$. As shown in \cite[Proposition 7.1]{br1}, these model structures are Quillen equivalent via the identity functor.

\begin{prop}\label{prop:idQE}
The adjunction
\begin{tz}
\node[](1) {$\pcatproj$}; 
\node[right of=1,xshift=3.6cm](2) {$\pcatinj$}; 

\draw[->] ($(2.west)-(0,5pt)$) to node[below,la]{$\id$} ($(1.east)-(0,5pt)$);
\draw[->] ($(1.east)+(0,5pt)$) to node[above,la]{$\id$} ($(2.west)+(0,5pt)$);

\node[la] at ($(1.east)!0.5!(2.west)$) {$\bot$};
\end{tz}
is a Quillen equivalence. 
\end{prop}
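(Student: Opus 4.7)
The plan is to exploit the standard principle that an identity adjunction between two model structures on the same underlying category is a Quillen equivalence as soon as it is a Quillen pair and the two classes of weak equivalences coincide. So the proof will reduce to verifying two facts about the Bergner--Rezk model structures $\pcatproj$ and $\pcatinj$: that they have the same weak equivalences, and that every projective cofibration is an injective cofibration.

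First I would recall the construction of the two model structures from \cite{br1}. Both arise as Bousfield localizations of a ``background'' model structure on $\sThnsset$ (respectively the projective and the injective one, with fiberwise weak equivalences given by $\MSThnsset$) with respect to the same localizing set encoding the Segal condition and the discreteness of $W_0$. Since Bousfield localization at the same set of maps of two model structures that already have the same weak equivalences produces two localized model structures with the same weak equivalences, the first task is to verify that the projective and injective background model structures on $\sThnsset$ share their weak equivalences; this is classical for the injective/projective comparison and follows from \cite[Proposition A.2.8.2]{htt} combined with the fact that the weak equivalences in $\MSThnsset$ are local with respect to the generating localizing set used to construct $\MSThnsset$.

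Second I would observe that the identity $\id\colon\pcatproj\to\pcatinj$ is left Quillen. By construction, the cofibrations of the projective model structure form a subclass of those of the injective model structure (where the latter are precisely the monomorphisms), so $\id$ preserves cofibrations. Combined with the coincidence of weak equivalences established above, $\id$ also preserves trivial cofibrations, hence is left Quillen.

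Third I would conclude that it is a Quillen equivalence. Given a cofibrant object $X\in\pcatproj$ and a fibrant object $Y\in\pcatinj$, a map $X\to Y$ is a weak equivalence in $\pcatinj$ if and only if it is a weak equivalence in $\pcatproj$, since the two classes coincide; this is exactly the Quillen equivalence criterion. Equivalently, for any $W\in\pcatThn$ the derived unit $W\to R_\inj W$ and derived counit $Q_\proj W\to W$ are weak equivalences in either model structure, as they are weak equivalences in the common sense.

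The main obstacle is purely bookkeeping: confirming that Bergner--Rezk's two localized model structures really are localizations of the background projective and injective model structures at literally the same set of maps (so that the comparison of weak equivalences reduces to the background case). Once this is pinned down, everything else is formal manipulation of identity Quillen functors.
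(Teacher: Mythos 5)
The paper does not actually prove this statement; it simply cites \cite[Proposition 7.1]{br1}, where Bergner and Rezk establish the Quillen equivalence. So the fair question is whether your outline, if filled in, would correctly reproduce the essential content of that reference.

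Your high-level strategy is the right one: establish that $\pcatproj$ and $\pcatinj$ share their class of weak equivalences, observe that every projective cofibration is an injective cofibration (the latter being precisely the monomorphisms), and conclude that the identity adjunction between two model structures with the same weak equivalences and nested cofibration classes is automatically a Quillen equivalence. However, the justification you offer for the coincidence of weak equivalences contains a real gap. You assert that both $\pcatproj$ and $\pcatinj$ ``arise as Bousfield localizations of a background model structure on $\sThnsset$'' with respect to a single localizing set ``encoding the Segal condition and the discreteness of $W_0$.'' This is not how these model structures are constructed. The category $\pcatThn$ is a proper full subcategory of $\sThnsset$, cut out by the \emph{pointwise} condition that $W_0$ be discrete; it is not a presheaf category, and discreteness of $W_0$ is not a condition one can impose by Bousfield localization at a set of maps in $\sThnsset$. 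In \cite{br1}, the two model structures are built directly on $\pcatThn$, using a more delicate construction adapted to this reflective (but not coreflective) subcategory, with generating (trivial) cofibrations of the form $L(-)$ as recalled in \cref{rem:gencof} and \cref{rem:genanodyne}. Your reduction to a comparison of projective and injective model structures on $\sThnsset$ therefore does not go through as stated, and without that step you have not actually established that the two model structures share their weak equivalences. That fact is true and is part of what \cite[Proposition 7.1]{br1} encodes, but it requires a different argument tied to the actual Bergner--Rezk construction.
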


We now describe the main features of the injective-like model structure $\pcatinj$: the fibrant objects, a set of generating cofibrations, a fibrant replacement, and weak equivalences between fibrant objects. Let $\injsThnsset$ denote the injective model structure on the category $(\Thnsset)^{\Dop}\cong \sThnsset$ of simplicial objects in $\MSThnsset$.

\begin{recall} \label{recall:fibrant}
An object $W$ is fibrant in $\pcatinj$ if $W$ is fibrant in $\injsThnsset$ and the Segal map 
\[ W_m\to W_1\times^{(h)}_{W_0}\ldots \times^{(h)}_{W_0} W_1 \]
is a weak equivalence in $\MSThnsset$, for all $m\geq 1$. Here, the ordinary pullbacks are homotopy pullbacks because they are taken over the discrete object $W_0$ (see \cite[\textsection 4.1]{br1}).
\end{recall}

Recall that $L\colon \sThnsset\to \pcatThn$ denotes the left adjoint functor to the inclusion.

\begin{notation}
Let $A\to B$ and $X\to Y$ be two maps in a presheaf category. We denote by 
$(A\to B)\widehat{\times} (X\to Y)$ the pushout-product map
\[ (A\to B)\widehat{\times} (X\to Y)\coloneqq (A\times Y\amalg_{A\times X} B\times X\to B\times Y). \]
\end{notation}

\begin{recall} \label{rem:gencof}
By \cite[\textsection 6.1]{br1}, a set of generating cofibrations for the injective-like model structure $\pcatinj$ is given by the set containing the map
\[ \emptyset\to \repD[0] \]
and all maps of the form
\[ L((\partial\repD\hookrightarrow \repD)\widehat{\times} (\partial\repThn\hookrightarrow \repThn)\widehat{\times}(\partial \repS\hookrightarrow \repS)) \]
for $m\geq 1, \defThn\in \Thn, k\geq 0$.
\end{recall}

\begin{recall} \label{rem:genanodyne}
Using standard model categorical techniques, we see that a fibrant replacement functor 
\[(-)^{\fib}\colon \pcatThn\to \pcatThn\]
for the injective-like model structure $\pcatinj$ can be realized by running the small object argument to the set containing all maps of the form
\[L((\Sp  \hookrightarrow \repD)\widehat{\times} (\partial\repThn\hookrightarrow \repThn)\widehat{\times}(\partial \repS\hookrightarrow \repS))\]
for $m\geq 1, \defThn\in \Thn, k\geq 0$ and all maps of the form
\[ L((\partial\repD\hookrightarrow \repD)\widehat{\times} (X\hookrightarrow Y)) \]
for $m\geq 1$, $X\hookrightarrow Y\in \cJ$, where $\cJ$ is a set of generating trivial cofibrations for $\MSThnsset$. This is briefly mentioned in \cite[\textsection 6.7]{br1} and is discussed explicitly in \cite[\textsection 5]{br1} for the case $n=1$. In particular, for $W\in \pcatThn$, the fibrant replacement map $W\to W^{\fib}$ is a transfinite composition of pushouts of the above maps.
\end{recall}

The notion of weak equivalences in $\pcatinj$ relies on the notion of \emph{Dwyer-Kan equivalences} from \cite[\textsection 3.12]{br1}, which are in turn phrased in terms of the homotopy category and mapping objects for objects of $\pcatinj$. We briefly recall these.

\begin{defn}
Let $W$ be a fibrant object in $\pcatinj$. For $a,b\in W_0$, the \emph{mapping $\Thn$-space} $\Map_W(a,b)$ is the following pullback in $\Thnsset$. 
\begin{tz}
\node[](1) {$\Map_W(a,b)$}; 
\node[below of=1](2) {$\repS[0]$}; 
\node[right of=1,xshift=1.5cm](3) {$W_1$}; 
\node[below of=3](4) {$W_0\times W_0$}; 
\pullback{1};

\draw[->] (1) to (3);
\draw[->] (1) to (2);
\draw[->] (3) to (4);
\draw[->] (2) to node[below,la]{$(a,b)$} (4);
\end{tz}
\end{defn}

The following description of the homotopy category for an object in $\pcatinj$ can be extracted from \cite[Lemma 7.5]{br1} and a similar argument to \cref{prop:homofHoC}.

\begin{defn}
Let $W$ be a fibrant object in $\pcatinj$. The \emph{homotopy category} of~$W$ is the category $\Ho W$ such that
\begin{itemize}[leftmargin=0.6cm]
    \item its set of objects $\Ob(\Ho W)$ is $W_{0}$,
    \item for $a,b\in W_0$, its hom set is given by $(\Ho W)(a,b)\coloneqq \pi_0 (\Map_{W}(a,b)_{[0]})$,
    \item composition comes from the Segal maps.
\end{itemize}
\end{defn}

The weak equivalences between fibrant objects have a similar flavor to the weak equivalences in the enriched setting and are given by the \emph{Dwyer-Kan equivalences} from \cite[Definition 3.15]{br1}.

\begin{defn}
A map $f\colon W\to Z$ between fibrant objects in $\pcatinj$ is a \emph{Dwyer-Kan equivalence} if the induced functor $\Ho W\to \Ho Z$ is an equivalence of categories and, for all $a,b\in W_0$, the induced map
\[\Map_W(a,b)\to \Map_Z(fa,fb)\]
is a weak equivalence in $\MSThnsset$.
\end{defn}

Having discussed a construction for a fibrant replacement, and having fixed the weak equivalences between fibrant objects, the weak equivalences between ordinary objects are then enforced.  

\begin{recall} \label{weinpcat}
    A map $f\colon W\to Z$ in $\pcatThn$ (with $W,Z$ not necessarily fibrant) is a \emph{weak equivalence} in $\pcatinj$ if and only if the induced map $f^{\fib}\colon W^\fib\to Z^\fib$ is a Dwyer-Kan equivalence.
\end{recall}

\subsection{The strict nerve} \label{subsec:strictnerve}

There is a canonical inclusion $N\colon \Thncat\to \pcatThn$ that admits a left adjoint $c\colon \pcatThn\to \Thncat$.

At a $\Thnsset$-enriched category $\cC$, the strict nerve $N\cC$ is the $(\DThn)$-space given at $m= 0$ by $(N\cC)_0=\Ob\cC$ -- the set of objects of $\cC$ seen as an object in $\Thnsset$ -- and at $m\geq 1$ by the object in $\Thnsset$
\begin{align*}
    (N\cC)_m &\textstyle\coloneqq\coprod_{c_0,\ldots,c_m\in \Ob\cC}\Hom_\cC(c_0,c_1)\times\Hom_\cC(c_1,c_2)\times\ldots\times \Hom_\cC(c_{m-1},c_m) \\
    &\cong \Mor\cC\times_{\Ob\cC}\Mor\cC\times_{\Ob\cC}\ldots\times_{\Ob\cC}\Mor\cC,
\end{align*}
where $\Mor\cC$ is the object of $\Thnsset$ given by $\Mor\cC\coloneqq\coprod_{c_0,c_1\in \Ob\cC}\Hom_{\cC}(c_0,c_1)$.

The following appears as \cite[Theorem 7.6]{br1}. 

\begin{prop} \label{projQuillen}
The adjunction
\begin{tz}
\node[](1) {$\pcatproj$}; 
\node[right of=1,xshift=3.2cm](2) {$\MSThncat$}; 

\draw[->] ($(2.west)-(0,5pt)$) to node[below,la]{$N$} ($(1.east)-(0,5pt)$);
\draw[->] ($(1.east)+(0,5pt)$) to node[above,la]{$c$} ($(2.west)+(0,5pt)$);

\node[la] at ($(1.east)!0.5!(2.west)$) {$\bot$};
\end{tz}
is a Quillen equivalence. 
\end{prop}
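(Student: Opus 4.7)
The plan is to verify the Quillen equivalence in two stages: first that $(c, N)$ forms a Quillen pair, and then that the derived unit is a weak equivalence on cofibrant objects (the derived counit will be essentially immediate from the fact that the ordinary counit is an isomorphism).

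For the Quillen pair property, I would verify that $N$ preserves fibrations between fibrant objects, trivial fibrations, and fibrant objects. Given a $\Thnsset$-enriched category $\cC$, the nerve $N\cC$ is tautologically a Segal precategory object: the Segal maps are isomorphisms, and the mapping $\Thn$-space $\Map_{N\cC}(a,b)$ coincides on the nose with $\Hom_\cC(a,b)$. If $\cC$ is fibrant in $\MSThncat$, then each $\Hom_\cC(a,b)$ is fibrant in $\MSThnsset$, so $N\cC$ is fibrant in $\pcatproj$. The descriptions of (trivial) fibrations between fibrant objects on both sides are given in terms of level-wise conditions on mapping $\Thn$-spaces together with an isofibration/surjectivity condition on homotopy categories, and since these conditions match across the adjunction, $N$ preserves both classes.

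For the Quillen equivalence property, the key initial observation is that the nerve $N\colon \Thncat \to \pcatThn$ is fully faithful, with essential image the precategory objects whose Segal maps are isomorphisms. Consequently the ordinary counit $cN\cC \to \cC$ is an isomorphism, which in particular handles the derived counit on fibrant objects. It remains to show that for every cofibrant $W \in \pcatproj$, the derived unit $W \to N((cW)^{\fib})$ is a weak equivalence in $\pcatproj$. Factoring this as $W \to NcW \to N((cW)^{\fib})$, the second map is a weak equivalence because $N$ preserves weak equivalences between fibrant objects. The remaining task is therefore to show that $W \to NcW$ becomes a weak equivalence after fibrant replacement on both sides.

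The main obstacle lies in this last step, which amounts to showing that $cW$ and $W^{\fib}$ present the same $(\infty,n)$-category. One viable approach is cellular: choose a filtration of the cofibrant $W$ by pushouts of generating cofibrations, and show inductively that $c$ sends each Segal-type anodyne extension from \cref{rem:genanodyne} to a trivial cofibration in $\MSThncat$, while simultaneously tracking how $NcW$ compares to $W^{\fib}$. In practice this reduces to an explicit computation on the generators $\repD\times\repThn\times\repS$, whose image under $c$ is controlled by the suspension construction $\Sigma_m$ of \cref{homofSigmam}. An alternative route is to invoke a Dugger-style presentation argument to reduce the Quillen equivalence to the assertion that the homotopy categories and mapping $\Thn$-spaces of $(cW)^{\fib}$ and $W^{\fib}$ coincide, with the latter comparison again hinging on an analysis of how Segalification interacts with the nerve-realization adjunction.
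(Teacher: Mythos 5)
The paper does not prove this proposition at all: it is recorded as a recall of an existing result, with the line ``The following appears as \cite[Theorem 7.6]{br1}.'' immediately following. So there is no ``paper's own proof'' to compare against; Bergner--Rezk's theorem is simply invoked.

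Your proposal, by contrast, attempts to prove the statement from scratch. The outline is reasonable as far as it goes: identifying $N$ as the fully faithful inclusion of strict Segal objects, so that the ordinary counit $cN\cC\to\cC$ is an isomorphism; checking that $N$ preserves fibrant objects, fibrations between fibrant objects, and trivial fibrations (the relevant recognition criterion for right Quillen functors); and reducing the Quillen equivalence to the derived unit $W\to N((cW)^{\fib})$ being a weak equivalence for cofibrant $W$. However, you yourself flag that the ``main obstacle'' is this last step, and what follows is a pair of proof \emph{strategies} (cellular induction over generating cofibrations, or a Dugger-style presentation argument) rather than an argument. That step is precisely the substantive content of Bergner--Rezk's Theorem 7.6 — their proof builds the comparison through the intermediate Segal category model and requires nontrivial work with the projective (as opposed to injective) model structure, for exactly the reason illustrated by \cref{ex:SigmaBI}: the nerve of a fibrant enriched category need not be injectively fibrant, so the projective structure is essential here. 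Since your proposal leaves that heart of the argument unfinished, it should be regarded as an incomplete sketch of a self-contained proof, whereas the paper simply cites the result.
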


However, the following example shows that the analog statement fails when replacing the projective with the injective model structure.

\begin{rmk}
The adjunction
\begin{tz}
\node[](1) {$\pcatinj$}; 
\node[right of=1,xshift=3.1cm](2) {$\MSThncat$}; 

\draw[->] ($(2.west)-(0,5pt)$) to node[below,la]{$N$} ($(1.east)-(0,5pt)$);
\draw[->] ($(1.east)+(0,5pt)$) to node[above,la]{$c$} ($(2.west)+(0,5pt)$);

\node[la] at ($(1.east)!0.5!(2.west)$) {$\bot$};
\end{tz}
is \emph{not} a Quillen pair. Indeed, given a fibrant $\MSThnsset$-enriched category $\cC$, \cref{ex:SigmaBI} shows that the nerve $N\cC$ is generally not fibrant in $\injsThnsset$. 
\end{rmk}

\begin{ex} \label{ex:SigmaBI}
Let $X$ be a fibrant object in $\MSThnsset$ that is not in the image of the inclusion $\Thnset\hookrightarrow\Thnsset$. The $\Thnsset$-enriched category $\Sigma X$ is by construction fibrant in $\MSThncat$, however its strict nerve $N\Sigma X$ is not fibrant in $\injsThnsset$. To see this, we first observe that the model structure $\injsThnsset$ is enriched over $\MSThnsset$ (see e.g.~\cite[Theorem 5.4]{Moserinj}), and we denote by $\Hom_{\sThnsset}(-,-)$ its hom $\Thn$-space functor. Now, the map $\partial\repD[2]\hookrightarrow \repD[2]$ is a cofibration in $\injsThnsset$, but the map
\[ \Hom_{\sThnsset}(\repD[2],N\Sigma X)\to\Hom_{\sThnsset}(\partial\repD[2],N\Sigma X),\]
is isomorphic to the map
\[ \repS[0]\amalg X\amalg X\amalg\repS[0]\to \repS[0]\amalg (X\times X)\amalg (X\times X)\amalg\repS[0],\]
induced by the diagonal map of the non-discrete $\Thn$-space $X$. As the diagonal map of a $\Thn$-space $X$ is a fibration in $\injThnspace$ if and only if the $\Thn$-space $X$ is discrete, the above map is not a fibration in $\injThnspace$ and hence also not a fibration in $\MSThnsset$. This contradicts the fact that the model structure  $\injsThnsset$ is enriched in $\MSThnsset$.
\end{ex}

\subsection{Diagonal model structures} \label{subsec:MSdiag}

Now consider the diagonal functor $\delta\colon \Delta\to \Delta\times \Delta$ given by sending $[\defS]\mapsto ([\defS],[\defS])$ and either projection $\pi\colon \Delta \times \Delta \to \Delta$. These induce adjunctions 
\begin{tz}
\node[](1) {$\SSset$}; 
\node[right of=1,xshift=1.2cm](2) {$\sset$}; 

\draw[->] ($(2.west)-(0,5pt)$) to node[below,la]{$\delta_*$} ($(1.east)-(0,5pt)$);
\draw[->] ($(1.east)+(0,5pt)$) to node[above,la]{$\delta^*$} ($(2.west)+(0,5pt)$);

\node[la] at ($(1.east)!0.5!(2.west)$) {$\bot$};

 \node[right of=2,xshift=1cm](1) {$\sset$}; 
 \node[right of=1,xshift=1.2cm](2) {$\SSset$}; 

 \draw[->] ($(2.west)-(0,5pt)$) to node[below,la]{$\pi_*$} ($(1.east)-(0,5pt)$);
 \draw[->] ($(1.east)+(0,5pt)$) to node[above,la]{$\pi^*$} ($(2.west)+(0,5pt)$);

 \node[la] at ($(1.east)!0.5!(2.west)$) {$\bot$};
 \end{tz}
 where $\SSset$ is the category of bisimplicial sets. We think of both simplicial directions in $\SSset$ as spacial directions.

We now lift these adjunctions to Quillen equivalences. Let $\MSdiag$ be the diagonal model structure on $\SSset$ from \cite[Theorem~2.11]{rasekh2023left}, in which the cofibrations are the monomorphisms and the weak equivalences are created by the functor $\delta^*\colon\SSset\to \MSspace$. By construction, it is a localization of the injective model structure $(\MSspace)^{\Dop}_\inj$. By \cite[Theorem~2.13]{rasekh2023left} we have the following result.

 \begin{prop} \label{deltapiQE}
 The adjunctions 
 \begin{tz}
\node[](1) {$\MSdiag$}; 
\node[right of=1,xshift=1.8cm](2) {$\MSspace$}; 

\draw[->] ($(2.west)-(0,5pt)$) to node[below,la]{$\delta_*$} ($(1.east)-(0,5pt)$);
\draw[->] ($(1.east)+(0,5pt)$) to node[above,la]{$\delta^*$} ($(2.west)+(0,5pt)$);

\node[la] at ($(1.east)!0.5!(2.west)$) {$\bot$};

 \node[right of=2,xshift=1.5cm](1) {$\MSspace$}; 
 \node[right of=1,xshift=1.8cm](2) {$\MSdiag$}; 

 \draw[->] ($(2.west)-(0,5pt)$) to node[below,la]{$\pi_*$} ($(1.east)-(0,5pt)$);
 \draw[->] ($(1.east)+(0,5pt)$) to node[above,la]{$\pi^*$} ($(2.west)+(0,5pt)$);

 \node[la] at ($(1.east)!0.5!(2.west)$) {$\bot$};
 \end{tz}
are Quillen equivalences.
 \end{prop}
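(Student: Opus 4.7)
The plan is to treat the two adjunctions in turn, establishing the statement for $\delta^*\dashv\delta_*$ first and then leveraging it to deduce the result for $\pi^*\dashv\pi_*$. The key observations are that $\MSdiag$ is by construction a Bousfield localization of $(\MSspace)^{\Dop}_{\inj}$ engineered so that $\delta^*$ creates the weak equivalences, and that $\delta^*\circ\pi^*$ is the identity on $\sset$.

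First I would verify that both $\delta^*$ and $\pi^*$ are left Quillen. Since the cofibrations of $\MSdiag$ and of $\MSspace$ are in each case the monomorphisms, both functors preserve cofibrations. The functor $\delta^*$ preserves weak equivalences by the very definition of $\MSdiag$. For $\pi^*$, the identity $\delta^*\pi^*=\id_{\sset}$ shows that $\delta^*(\pi^*f)=f$ is a weak equivalence in $\sset$ whenever $f$ is, so that $\pi^*f$ is automatically a weak equivalence in $\MSdiag$ by the definition of that model structure.

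The main step is to prove that $\delta^*\dashv\delta_*$ is a Quillen equivalence. Since $\delta^*$ creates weak equivalences, for any cofibrant $A\in\MSdiag$ and fibrant $X\in\MSspace$ the adjunct pair $f\colon\delta^*A\to X$ and $f^{\sharp}\colon A\to\delta_*X$ satisfies $f=\epsilon_X\circ\delta^*(f^{\sharp})$, where $\epsilon_X$ denotes the counit. Using that $\delta^*$ both preserves and reflects weak equivalences, the Quillen equivalence condition reduces by 2-out-of-3 to showing that $\epsilon_X\colon\delta^*\delta_*X\to X$ is a weak equivalence in $\sset$ whenever $X$ is fibrant. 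This is what I expect to be the main obstacle; it requires unpacking the description of the right Kan extension along the diagonal and then invoking the classical fact that the diagonal of a suitably fibrant bisimplicial set computes the homotopy colimit along $\Dop$, from which the desired weak equivalence follows.

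For the remaining adjunction, I would use the 2-out-of-3 property for Quillen equivalences. The identity $\delta^*\circ\pi^*=\id_{\sset}$ exhibits the (trivially Quillen equivalent) identity adjunction on $\sset$ as the composite of the left Quillen functors $\pi^*$ and $\delta^*$. Since $\delta^*$ is a left Quillen equivalence by the previous paragraph, the 2-out-of-3 property forces $\pi^*\dashv\pi_*$ to be one as well, completing the proof.
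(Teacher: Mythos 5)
Your argument is a genuine proof sketch, whereas the paper does not prove this result at all --- it simply cites \cite[Theorem~2.13]{rasekh2023left}. Your route is sound: the reduction of the Quillen-equivalence condition for $\delta^*\dashv\delta_*$ to the counit being a weak equivalence on fibrant objects is correct (using that all objects of $\MSdiag$ are cofibrant and that $\delta^*$ creates, hence reflects, weak equivalences), as is the two-out-of-three argument deducing $\pi^*\dashv\pi_*$ from $\delta^*\dashv\delta_*$ and $\delta^*\pi^*=\id$. The one step you leave genuinely vague is the heart of the matter, namely that $\epsilon_X\colon\delta^*\delta_*X\to X$ is a weak equivalence for $X$ Kan; invoking ``the diagonal computes the homotopy colimit'' gestures in the right direction but does not by itself produce the counit as the comparison map. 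A clean way to close this is to observe that $(\delta_*X)_{\bullet,l}\cong X^{\repS[l]}$, so the transpose $\pi^*X\to\delta_*X$ of $\id_X$ (sending a $k$-simplex of $X$ to its composite with the projection $\repS[k]\times\repS[l]\to\repS[k]$) is a levelwise weak equivalence of bisimplicial sets when $X$ is Kan; the realization lemma then makes $\delta^*(\pi^*X\to\delta_*X)\colon X\to\delta^*\delta_*X$ a weak equivalence, and the triangle identity $\epsilon_X\circ\delta^*(\text{transpose of }\id_X)=\id_X$ finishes by two-out-of-three. With that detail supplied, your proof stands as a self-contained alternative to the paper's external citation, which has the advantage of making visible exactly which classical input (the realization lemma for bisimplicial sets) drives the result.
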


They induce by post-composition adjunctions
\begin{tz}
\node[](1) {$\SSset^{\Thnop}$}; 
\node[right of=1,xshift=1.9cm](2) {$\sset^{\Thnop}$}; 

\draw[->] ($(2.west)-(0,5pt)$) to node[below,la]{$(\delta_*)_*$} ($(1.east)-(0,5pt)$);
\draw[->] ($(1.east)+(0,5pt)$) to node[above,la]{$\diag\coloneqq (\delta^*)_*$} ($(2.west)+(0,5pt)$);

\node[la] at ($(1.east)!0.5!(2.west)$) {$\bot$};

 \node[right of=2,xshift=1.5cm](1) {$\sset^{\Thnop}$}; 
 \node[right of=1,xshift=1.9cm](2) {$\SSset^{\Thnop}$}; 

 \draw[->] ($(2.west)-(0,5pt)$) to node[below,la]{$(\pi_*)_*$} ($(1.east)-(0,5pt)$);
 \draw[->] ($(1.east)+(0,5pt)$) to node[above,la]{$\iota\coloneqq (\pi^*)_*$} ($(2.west)+(0,5pt)$);

 \node[la] at ($(1.east)!0.5!(2.west)$) {$\bot$};
 \end{tz}
 
 We denote by $(\MSdiag)^{\Thnop}_{\inj}$ the injective model structure on the category of $\Thn$ -presheaves valued in $\MSdiag$. As a consequence of \cite[Remark~A.2.8.6]{htt} and \cref{deltapiQE}, we obtain: 

 \begin{prop}\label{diagiotaQEinj}
 The adjunctions
 \begin{tz}
\node[](1) {$(\MSdiag)^{\Thnop}_{\inj}$}; 
\node[right of=1,xshift=2.8cm](2) {$\injThnspace$}; 

\draw[->] ($(2.west)-(0,5pt)$) to node[below,la]{$(\delta_*)_*$} ($(1.east)-(0,5pt)$);
\draw[->] ($(1.east)+(0,5pt)$) to node[above,la]{$\diag$} ($(2.west)+(0,5pt)$);

\node[la] at ($(1.east)!0.5!(2.west)$) {$\bot$};

\node[right of=2,xshift=2cm](1) {$\injThnspace$}; 
\node[right of=1,xshift=2.8cm](2) {$(\MSdiag)^{\Thnop}_{\inj}$}; 

\draw[->] ($(2.west)-(0,5pt)$) to node[below,la]{$(\pi_*)_*$} ($(1.east)-(0,5pt)$);
\draw[->] ($(1.east)+(0,5pt)$) to node[above,la]{$\iota$} ($(2.west)+(0,5pt)$);

\node[la] at ($(1.east)!0.5!(2.west)$) {$\bot$};
\end{tz}
are Quillen equivalences.
 \end{prop}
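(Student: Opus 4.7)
The plan is to deduce \cref{diagiotaQEinj} as a formal consequence of \cref{deltapiQE} via the general principle cited in \cite[Remark A.2.8.6]{htt}: given a Quillen equivalence $F\dashv G$ between combinatorial model categories $\mathcal{M}$ and $\mathcal{N}$ and any small category $\mathcal{A}$, post-composition with $F$ and $G$ descends to a Quillen equivalence $F_*\dashv G_*$ between the injective model structures on $\mathcal{M}^{\mathcal{A}^{\op}}$ and $\mathcal{N}^{\mathcal{A}^{\op}}$. Taking $\mathcal{A}=\Thn$, the two Quillen equivalences of \cref{deltapiQE} produce precisely the two Quillen equivalences of \cref{diagiotaQEinj}, since the functors $(\delta_*)_*$, $\diag=(\delta^*)_*$, $\iota=(\pi^*)_*$, $(\pi_*)_*$ are by definition the post-composition functors.

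To invoke this principle, I would first check that $\MSspace$ and $\MSdiag$ are combinatorial. For $\MSspace$ this is the standard fact about the Kan--Quillen model structure. For $\MSdiag$, the paper defines it as a localization of the injective model structure on bisimplicial sets at an explicit set of maps; since both sides of that localization are combinatorial, so is $\MSdiag$. I would also briefly verify that the adjunctions of \cref{deltapiQE} lift component-wise in the expected way: the units and counits transport through the post-composition construction, and the identification of $\diag$ with $(\delta^*)_*$ (respectively $\iota$ with $(\pi^*)_*$) is immediate from the definitions.

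The only mildly delicate point — and what I would call out as the main thing to check — is that $\MSdiag$ is indeed combinatorial, as this is what makes the injective model structure $(\MSdiag)^{\Thnop}_{\inj}$ exist and makes \cite[Remark A.2.8.6]{htt} applicable. Once this is confirmed, the rest of the argument is a direct application of the cited principle, applied separately to the two Quillen equivalences of \cref{deltapiQE}. There is no genuine obstacle beyond unpacking the definitions of $\diag$ and $\iota$ as post-composition functors.
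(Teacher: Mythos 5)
Your proposal matches the paper's proof: the paper deduces the statement by applying \cite[Remark~A.2.8.6]{htt} to the Quillen equivalences of \cref{deltapiQE}, exactly as you describe. Your additional remarks about checking combinatoriality of $\MSspace$ and $\MSdiag$ are a reasonable elaboration of the hypotheses of that remark, but the underlying argument is the same.
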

 
 We denote by $\MSThndiag$ the localization of the model structure $(\MSdiag)^{\Thnop}_{\inj}$ with respect to the maps in $S_{\CSThn}$ from \cref{subsec:MSn-1}. As a consequence of \cite[Theorem 3.3.20(1)(b)]{Hirschhorn} and \cref{diagiotaQEinj}, we have:

\begin{prop}\label{diagiotaQE}
 The adjunctions
 \begin{tz}
\node[](1) {$\MSThndiag$}; 
\node[right of=1,xshift=2.5cm](2) {$\MSThnsset$}; 

\draw[->] ($(2.west)-(0,5pt)$) to node[below,la]{$(\delta_*)_*$} ($(1.east)-(0,5pt)$);
\draw[->] ($(1.east)+(0,5pt)$) to node[above,la]{$\diag$} ($(2.west)+(0,5pt)$);

\node[la] at ($(1.east)!0.5!(2.west)$) {$\bot$};

\node[right of=2,xshift=2cm](1) {$\MSThnsset$}; 
\node[right of=1,xshift=2.5cm](2) {$\MSThndiag$}; 

\draw[->] ($(2.west)-(0,5pt)$) to node[below,la]{$(\pi_*)_*$} ($(1.east)-(0,5pt)$);
\draw[->] ($(1.east)+(0,5pt)$) to node[above,la]{$\iota$} ($(2.west)+(0,5pt)$);

\node[la] at ($(1.east)!0.5!(2.west)$) {$\bot$};
\end{tz}
are Quillen equivalences.
 \end{prop}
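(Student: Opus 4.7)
The result is an immediate application of \cite[Theorem 3.3.20(1)(b)]{Hirschhorn}, a standard fact asserting that a Quillen equivalence between left proper cellular model categories descends to a Quillen equivalence between left Bousfield localizations, provided the left adjoint sends the localizing set on its domain to (cofibrant representatives of) the localizing set on its codomain. The plan is to apply this theorem to each of the two Quillen equivalences already established in \cref{diagiotaQEinj}.

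First I would recall that, by construction, $\MSThnsset$ and $\MSThndiag$ are the left Bousfield localizations of $\injThnspace$ and $(\MSdiag)^{\Thnop}_{\inj}$, respectively, at the \emph{same} set $S_{\CSThn}$ of monomorphisms in $\Thnset$; these are regarded as sets of maps in the ambient categories via the canonical inclusions of discrete $\Thn$-presheaves into each.

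Next, I would verify that the left Quillen functors $\diag$ and $\iota$ both restrict to the identity on these subcategories of discrete $\Thn$-presheaves. Indeed, the diagonal of a bisimplicial set that is constant in both directions is the corresponding constant simplicial set, and precomposition along the projection $\pi$ sends a constant simplicial set to the constant bisimplicial set with the same underlying set. Consequently $\diag(S_{\CSThn}) = S_{\CSThn}$ in $\injThnspace$ and $\iota(S_{\CSThn}) = S_{\CSThn}$ in $(\MSdiag)^{\Thnop}_{\inj}$; moreover, these are still monomorphisms, hence cofibrations on both sides, so no cofibrant replacement is required in the statement of Hirschhorn's theorem.

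Applying \cite[Theorem 3.3.20(1)(b)]{Hirschhorn} once to $\diag \dashv (\delta_*)_*$ and once to $\iota \dashv (\pi_*)_*$ then yields the two claimed Quillen equivalences. The only thing left to verify is that the underlying model structures satisfy the left properness and cellularity hypotheses of Hirschhorn's theorem; this is standard for injective model structures on presheaf categories valued in left proper cellular model categories and is preserved under left Bousfield localization, so there is no substantive obstacle.
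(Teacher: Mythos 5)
Your proposal is correct and follows exactly the same route as the paper: the paper introduces $\MSThndiag$ as the localization of $(\MSdiag)^{\Thnop}_{\inj}$ at $S_{\CSThn}$ and then states the proposition as an immediate consequence of \cite[Theorem~3.3.20(1)(b)]{Hirschhorn} applied to the Quillen equivalences of \cref{diagiotaQEinj}. Your additional observations — that $\diag$ and $\iota$ restrict to the identity on constant $\Thn$-presheaves and hence send $S_{\CSThn}$ to $S_{\CSThn}$, and that no cofibrant replacement is needed — are exactly the details implicit in the paper's citation.
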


\section{The homotopy coherent categorification and its description}

This section is devoted to constructing the homotopy coherent categorification-nerve adjunction
\begin{tz}
\node[](1) {$\pcatThn$}; 
\node[right of=1,xshift=2.6cm](2) {$\Thncat$}; 

\draw[->] ($(2.west)-(0,5pt)$) to node[below,la]{$\Nh$} ($(1.east)-(0,5pt)$);
\draw[->] ($(1.east)+(0,5pt)$) to node[above,la]{$\Ch$} ($(2.west)+(0,5pt)$);

\node[la] at ($(1.east)!0.5!(2.west)$) {$\bot$};
\end{tz}
and describing the left adjoint $\Ch$. To this end, building on work by Dugger--Spivak, we introduce the notion of a \emph{$1$-ordered} simplicial set in \cref{subsec:necklace} and study its category of necklaces. In \cref{subsec:classicalhtpynerve} we recall the classical homotopy coherent categorification $\CL$ by Cordier--Porter, and the description of its hom spaces in terms of necklaces. In \cref{subsec:htpycohcat} we define the desired functor $\Ch$ using $\CL$, and in \cref{subsec:homsofC} (resp.~\cref{subsec:HoofC}) we give explicit formulas for the hom $\Thn$-spaces (resp.~the homotopy category) of the homotopy coherent categorification $\Ch$.

\subsection{Necklaces and 1-ordered simplicial sets}\label{subsec:necklace}

We recall the main terminology about necklaces, introduced in \cite[\textsection 3]{Dugger}. 

A \emph{necklace} is a simplicial set, i.e., an object in $\Dset$, given by a wedge of representables 
\[ T=\repD[m_1]\vee \ldots \vee \repD[m_t] \] 
obtained by gluing the final vertex $m_i\in \repD[m_i]$ to the initial vertex $0\in\repD[m_{i+1}]$ for all $1\leq i\leq t-1$. By convention, if $t>1$, then $m_i>0$ for all $1\leq i\leq t$. We say that $\repD[m_i]$ is a \emph{bead} of $T$, and an initial or a final vertex in some bead is a \emph{joint} of $T$. We write $B(T)$ for the set of beads of $T$; in particular, we have that $|B(T)|=t$.

We consider the necklace $T$ to be a bi-pointed simplicial set $(T,\alpha,\omega)$ where $\alpha$ is the initial vertex $\alpha=0\in \repD[m_0]\hookrightarrow T$ and $\omega$ is the final vertex $\omega=m_t\in \repD[m_t]\hookrightarrow  T$.
We write $\Nec$ for the full subcategory of the category $\Dset_{*,*}$ of bi-pointed simplicial sets spanned by the necklaces.

 Given a simplicial set $K$ and $a,b\in K_0$, we denote by $K_{a,b}$ the simplicial set bi-pointed at $(a,b)\colon \repD[0]\amalg\repD[0]\to K$. A \emph{necklace} in $K_{a,b}$ is a bi-pointed map $T\to K_{a,b}$. We denote by $\catnec{K}{a}{b}\coloneqq \Nec_{/K_{a,b}}$ the category of necklaces $T\to K_{a,b}$ in $K$ from $a$ to $b$, obtained as a full subcategory of the slice category ${\Dset_{*,*}}_{/K_{a,b}}$.

\begin{defn}
Let $K$ be a simplicial set and $a,b\in K_0$. A necklace
\[ f\colon T=\repD[m_1]\vee\ldots\vee \repD[m_t]\to K_{a,b}\]
is \emph{totally non-degenerate} if, for all $0\leq i \leq t$, the restriction of $f$ to the $i$-th bead
\[ \repD[m_i]\hookrightarrow \repD[m_1]\vee\ldots\vee \repD[m_k]=T \xrightarrow{f} K\]
is a non-degenerate $m_i$-simplex of $K$.
\end{defn}

We write $\tndnec{K}{a}{b}$ for the full subcategory of $\catnec{K}{a}{b}$ spanned by the totally non-degenerate necklaces. 

We now recall the notion of ordered simplicial sets presented in \cite[\textsection 3.1]{DuggerSpivakRigidification} and introduce the weaker notion of \emph{$1$-ordered} simplicial sets.

\begin{notation} 
Let $K$ be a simplicial set. Denote by $\preceq_K$ the relation on the set of $0$-simplices~$K_0$ given by $x\preceq_Ky$ if and only if there is a necklace of the form $f\colon\Sp=\repD[1]\vee\ldots\vee\repD[1]\to K$ such that $f(\alpha)=x$ and $f(\omega)=y$ for some $m\geq 0$.
\end{notation}

\begin{defn}
    A simplicial set $K$ is 
    \begin{itemize}[leftmargin=0.6cm]
        \item \emph{ordered} if the relation $\preceq_K$ is antisymmetric and the canonical map $K_m\to K_0^{\times (m+1)}$ is injective, for all $m\geq 1$,
        \item \emph{$1$-ordered} if the relation $\preceq_K$ is antisymmetric and, for all $m\geq 1$, the restriction of the Segal map to the set $K_m^{\mathrm{nd}}$ of non-degenerate $m$-simplices of $K$
        \[ K_m^{\mathrm{nd}}\subseteq K_m\to K_1\times_{K_0}\ldots \times_{K_0} K_1 \]
        is injective and, for every non-degenerate $m$-simplex $\repD\to K$, its restriction along the inclusion $\Sp\hookrightarrow \repD$ is a monomorphism $\Sp\hookrightarrow K$.
    \end{itemize}
\end{defn}

\begin{rmk} 
Note that the definition of ordered simplicial sets coincides with that of Dugger--Spivak from \cite[Definition 3.2]{DuggerSpivakRigidification}. 
\end{rmk}

\begin{lemma} \label{lem:orderedis1ordered}
    Every ordered simplicial set is $1$-ordered.
\end{lemma}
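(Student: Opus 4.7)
The claim breaks into two parts: the Segal injectivity on non-degenerate simplices, and the spine-restriction monomorphism condition (antisymmetry is common to both notions and requires nothing).

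The Segal part is essentially free. The vertex map $K_m\to K_0^{\times(m+1)}$ factors as the Segal map $K_m\to K_1\times_{K_0}\cdots\times_{K_0}K_1$ followed by the canonical ``endpoints'' map to $K_0^{\times(m+1)}$. In an ordered simplicial set the composite is injective by hypothesis, so the Segal map itself is injective on all of $K_m$, and in particular on the subset $K_m^{\mathrm{nd}}$.

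The spine-restriction condition is the main obstacle. The plan is to first prove the intermediate claim that in an ordered simplicial set, every non-degenerate simplex $\sigma\colon\repD\to K$ has pairwise distinct vertices. Suppose to the contrary that $\sigma(i)=\sigma(j)$ for some $i<j$. The consecutive edges $\sigma|_{[k,k+1]}$ for $i\leq k<j$ give a spine $\Sp[j-i]\to K$ witnessing $\sigma(i)\preceq_K\sigma(k)\preceq_K\sigma(j)=\sigma(i)$ for each such $k$, so by antisymmetry all $\sigma(k)$ coincide with $\sigma(i)$; in particular $\sigma(i)=\sigma(i+1)$. The edge $\sigma|_{[i,i+1]}\in K_1$ has both endpoints equal to $\sigma(i)$, hence by injectivity of $K_1\to K_0^{\times 2}$ it must agree with the degenerate edge $s_0\sigma(i)$. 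One then promotes this to a factorization of $\sigma$ through the degeneracy $\sigma^i\colon\repD\to\repD[m-1]$ collapsing $i$ and $i{+}1$: for any monotone $f\colon[k]\to[m]$, any $g$ obtained from $f$ by swapping occurrences of $i$ and $i{+}1$ (with $g$ still monotone) satisfies $\sigma^i\circ f=\sigma^i\circ g$, and $\sigma\circ f$ and $\sigma\circ g$ have the same vertex sequence (since $\sigma(i)=\sigma(i+1)$), hence agree by injectivity of $K_k\to K_0^{\times(k+1)}$. This forces $\sigma$ to factor through $\sigma^i$, contradicting non-degeneracy.

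With distinct vertices in hand, the spine-restriction claim is straightforward. Given a non-degenerate $\sigma\colon\repD\to K$, consider the composite $\Sp\hookrightarrow\repD\xrightarrow{\sigma}K$. A $k$-simplex of $\Sp$ is a monotone $\phi\colon[k]\to[m]$ with image in some $\{i,i{+}1\}$; if two such $\phi,\psi$ satisfy $\sigma\phi=\sigma\psi$ in $K_k$, then by injectivity on vertices $\sigma(\phi(j))=\sigma(\psi(j))$ for every $j$, and by the distinct-vertex property $\phi(j)=\psi(j)$. Hence $\Sp\hookrightarrow K$ is injective in each degree, i.e.\ a monomorphism.
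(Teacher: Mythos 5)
Your proof is correct, and the core ideas coincide with the paper's: the Segal condition follows by factoring the vertex map through the Segal map, and the spine condition reduces to showing that non-degenerate simplices have pairwise distinct vertices, which you prove via antisymmetry of $\preceq_K$ together with injectivity of $K_m\to K_0^{\times(m+1)}$. The differences are in execution, and both make your proof longer than necessary. For the degeneracy step, once $\sigma(i)=\sigma(i{+}1)$ is known, the simplices $\sigma$ and $\sigma\circ d^i\circ s^i$ have the same vertex tuple, so injectivity of $K_m\to K_0^{\times(m+1)}$ immediately gives $\sigma=\sigma\circ d^i\circ s^i$, exhibiting $\sigma$ as degenerate in one line; your general factorization criterion (that $\sigma^i f=\sigma^i g$ forces $\sigma f=\sigma g$) is this one-liner applied to the pair $(\mathrm{id}_{[m]},\,d^i s^i)$, so the detour through arbitrary $(f,g)$ buys nothing. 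For the spine condition, the paper observes up front that injectivity of $K_1\to K_0\times K_0$ reduces the monomorphism requirement on $\Sp\hookrightarrow K$ to the monomorphism requirement on $\coprod_{m+1}\repD[0]\to K$, which is exactly the distinct-vertex claim, avoiding your explicit degree-by-degree check on $k$-simplices of $\Sp$.
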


\begin{proof}
    Suppose that $K$ is an ordered simplicial set. For $m\geq 1$, consider the following commutative triangle
    \begin{tz}
        \node[](1) {$K_m^\mathrm{nd}\subseteq K_m$}; 
        \node[right of=1,xshift=2cm](3) {$K_1\times_{K_0}\ldots \times_{K_0} K_1$}; 
        \node[below of=3](4) {$K_0^{\times(m+1)}$}; 
        \draw[->] (1) to (4);
        \draw[->](1) to (3); 
        \draw[->](3) to (4);
    \end{tz}
    where the composite $K_m^\mathrm{nd}\subseteq K_m\to K_0^{\times (m+1)}$ is injective by assumption. Then the top map $K_m^{\mathrm{nd}}\subseteq K_m\to K_1\times_{K_0}\ldots \times_{K_0} K_1$ is also injective by cancellation of injective maps.

    Next, we show that, for every non-degenerate $m$-simplex $\repD\to K$, its restriction along $\Sp\hookrightarrow \repD$ is a monomorphism $\Sp\hookrightarrow K$. By the injectivity of the map $K_1\to K_0\times K_0$, it suffices to prove that its restriction along $\coprod_{m+1} \repD[0]\hookrightarrow\repD$ is a monomorphism $\coprod_{m+1} \repD[0]\hookrightarrow K$. We prove this by contraposition. 
    
    Let $\sigma\colon \repD\to K$ be an $m$-simplex whose restriction $(\sigma(0),\ldots,\sigma(m))\colon \coprod_{m+1}\repD[0]\to K$ is not a monomorphism. Then we have an ordered tuple $\sigma(0)\preceq_K \ldots \preceq_K \sigma(m)$ and, as $(\sigma(0),\ldots,\sigma(m))$ is not a monomorphism, there is $0\leq i\leq m-1$ such that $\sigma(i)= \sigma(i+1)$. Consider the $m$-simplex given by $\sigma\circ d^i\circ s^i\colon \repD\to K$. Then the image of $\sigma\circ d^i\circ s^i$ under $K_m\hookrightarrow K_0^{\times (m+1)}$ is also $(\sigma(0),\ldots,\sigma(m))$. Hence, by injectivity of $K_m\hookrightarrow K_0^{\times (m+1)}$, we get that $\sigma=\sigma\circ d^i\circ s^i$ is degenerate. 
\end{proof}

\begin{rmk} \label{examplesof1ordered}
By \cite[Lemma 3.3]{DuggerSpivakRigidification}, we have that every necklace is ordered and that every simplicial subset of an ordered simplicial set is ordered. Hence, it follows from \cref{lem:orderedis1ordered} that the simplicial sets $\repD$, $\partial \repD$, and $\Sp$, for $m\geq 0$, and all necklaces are $1$-ordered.
\end{rmk}

We now aim to characterize the totally non-degenerate necklaces of a $1$-ordered simplicial set as the monomorphisms. For this, we first need the following. 

\begin{lemma} \label{nondegsimplexareinjective}
    Let $K$ be a $1$-ordered simplicial set. Then an $m$-simplex $\sigma\colon \repD\to K$ is non-degenerate if and only if it is a monomorphism.
\end{lemma}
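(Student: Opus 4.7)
The statement is a biconditional, and I treat each direction separately.

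For the easy direction $(\Leftarrow)$, suppose $\sigma\colon \repD \to K$ is a monomorphism of simplicial sets. Then in particular $\sigma$ is injective on $0$-simplices. On the other hand, any degenerate $m$-simplex is of the form $\sigma = \tau \circ \rho$ for some monotone surjection $\rho\colon [m] \to [m-1]$ and some $(m-1)$-simplex $\tau$, and such a $\rho$ necessarily satisfies $\rho(i) = \rho(i+1)$ for some $i$, whence $\sigma(i) = \sigma(i+1)$. So a degenerate $m$-simplex cannot be injective on vertices, forcing $\sigma$ to be non-degenerate.

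For the non-trivial direction $(\Rightarrow)$, suppose $\sigma\colon \repD \to K$ is non-degenerate. Since $K$ is $1$-ordered, the restriction of $\sigma$ along the spine inclusion $\Sp \hookrightarrow \repD$ is a monomorphism $\Sp \hookrightarrow K$ of simplicial sets. Looking at $0$-simplices, this yields that $\sigma(0), \sigma(1), \ldots, \sigma(m)$ are pairwise distinct elements of $K_0$. I will promote this vertex-injectivity to the statement that $\sigma$ itself is a monomorphism using the combinatorics of the representable $\repD$: any $k$-simplex of $\repD$ is a monotone map $[k] \to [m]$, and such a map is fully determined by its restriction to vertices. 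Thus, given two $k$-simplices $\tau_1, \tau_2\colon \repD[k] \to \repD$ with $\sigma \circ \tau_1 = \sigma \circ \tau_2$, evaluating at each vertex $\ell \in [k]$ yields $\sigma(\tau_1(\ell)) = \sigma(\tau_2(\ell))$, hence $\tau_1(\ell) = \tau_2(\ell)$ by vertex-injectivity; we conclude $\tau_1 = \tau_2$, so $\sigma$ is injective on $k$-simplices for every $k$.

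The argument is essentially direct once the $1$-ordered hypothesis is invoked, and I do not anticipate any substantial obstacle. The only subtlety is to confirm that the monomorphism condition on $\sigma|_\Sp$ supplies vertex-injectivity, which is immediate from unpacking what a monomorphism of simplicial sets does on $0$-simplices together with the observation that $\Sp$ and $\repD$ share the same set of vertices. Note that the other clause in the definition of $1$-ordered (injectivity of the Segal map on non-degenerate simplices) is not needed here.
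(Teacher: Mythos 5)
Your proof is correct, and it takes a somewhat more streamlined route than the paper's. Both arguments hinge on the same two facts: the $1$-ordered hypothesis gives vertex-injectivity of non-degenerate simplices (via the spine condition), and $\repD$ is ordered, so its simplices are determined by their vertices. The paper proves the contrapositive form (``degenerate iff not mono'') and, for the non-trivial direction, supposes $\sigma$ is not a monomorphism, selects a minimal dimension $m'$ at which monomorphisms $\alpha\neq\beta\colon \repD[m']\hookrightarrow\repD$ witness the failure, and derives a contradiction when $m'\geq 1$, concluding that vertex-injectivity fails and hence (by the $1$-ordered hypothesis) $\sigma$ is degenerate. You instead argue directly: from vertex-injectivity of a non-degenerate $\sigma$, you check injectivity of $\sigma_k$ at every level $k$ by evaluating on vertices and invoking that monotone maps $[k]\to[m]$ are determined by their values. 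This is the same underlying content expressed as a direct implication rather than via contraposition and a minimal-counterexample argument, and it also makes the implicit ``can find monos witnessing non-injectivity'' step of the paper's argument unnecessary. One small omission: you mention only the Segal-map clause as the unused part of the $1$-ordered definition, but the antisymmetry of $\preceq_K$ is also not used in this lemma.
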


\begin{proof}
    We show that an $m$-simplex $\sigma\colon \repD\to K$ is degenerate if and only if it is not a monomorphism. First note that, if an $m$-simplex $\sigma\colon \repD\to K$ is degenerate, then $\sigma$ is not a monomorphism as it factors through a map $\repD\to \repD[m']$ with $m'<m$ that is not a monomorphism.

    Now, suppose that $\sigma\colon \repD\to K$ is not a monomorphism. We show that its restriction to $0$-simplices $(\sigma(0),\ldots,\sigma(m))\colon \coprod_{m+1}\repD[0]\to K$ is not a monomorphism, showing that the induced map $\Sp\hookrightarrow \repD\xrightarrow{\sigma} K$ is also not a monomorphism. As $K$ is $1$-ordered, this implies that $\sigma$ is degenerate. 

    Since $\sigma\colon \repD\to K$ is not a monomorphism, we can choose $0\leq m'\leq m$ the smallest integer such that there are monomorphisms $\alpha,\beta\colon \repD[m']\hookrightarrow \repD$ with $\alpha\neq \beta$ and $\sigma\circ \alpha=\sigma\circ \beta$. Suppose by contradiction that $m'\geq 1$. As $\sigma\circ \alpha=\sigma\circ \beta$, we have that $\sigma(\alpha(i))=\sigma(\beta(i))$ for all $0\leq i\leq m'$. As $\sigma$ is injective on $0$-simplices by minimality of $m'$, we get that $\alpha(i)=\beta(i)$ for all $0\leq i\leq m'$. Hence $\alpha,\beta$ are two $m'$-simplices of $\repD$ such that their restrictions to $0$-simplices are equal, and so we must have $\alpha=\beta$ as $\repD$ is an ordered simplicial set. This gives a contradiction and shows that $m'=0$, as desired. 
\end{proof}

 \begin{lemma} \label{losndsubss}
     Let $K$ be a $1$-ordered simplicial set and $x\in K_0$. Let $K_{\preceq x}$ and $K_{\succeq x}$ be the simplicial subsets of $K$ given at $m\geq 0$ by 
     \begin{align*}
     (K_{\preceq x})_m &= \{ \sigma\in K_m\mid \sigma(i)\preceq_K x \text{ for all } 0\leq i\leq m\}, \\(K_{\succeq x})_m &= \{ \sigma\in K_m\mid x\preceq_K\sigma(i) \text{ for all } 0\leq i\leq m\}. 
     \end{align*}
     Then the map $K_{\preceq x} \vee K_{\succeq x} \to K$ induced by the canonical inclusions is a monomorphism.
 \end{lemma}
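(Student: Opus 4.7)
My plan is to recast the statement degree by degree. The wedge $K_{\preceq x}\vee K_{\succeq x}$ is the pushout of $K_{\preceq x}\leftarrow \{x\}\to K_{\succeq x}$, so at level $m$ it is the quotient of $(K_{\preceq x})_m\sqcup (K_{\succeq x})_m$ that identifies only the single iterated degeneracy of $x$ on each side. Because each of the two subsets $K_{\preceq x}$ and $K_{\succeq x}$ is by construction a simplicial subset of $K$, so that the inclusions into $K$ are injective, the canonical map $K_{\preceq x}\vee K_{\succeq x}\to K$ can fail to be injective at level $m$ only if there exist $\sigma\in(K_{\preceq x})_m$ and $\tau\in(K_{\succeq x})_m$ with a common image $\rho\in K_m$, where $\rho$ is not the iterated degeneracy of $x$. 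The core task is therefore to show that any such common image $\rho$ is forced to be the iterated degeneracy at $x$.

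To run this reduction, I first use antisymmetry. By definition of the two subsets, every vertex $\rho(i)$ satisfies both $\rho(i)\preceq_K x$ and $x\preceq_K \rho(i)$, so antisymmetry of $\preceq_K$, which is part of the $1$-ordered hypothesis, forces $\rho(i)=x$ for all $0\leq i\leq m$. It then suffices to prove the following sub-claim: any $m$-simplex $\rho\colon\repD\to K$ in a $1$-ordered simplicial set $K$ whose vertices all equal a single $0$-simplex $x$ coincides with the iterated degeneracy of $x$.

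I plan to prove this sub-claim by induction on $m$, with $m=0$ being tautological. For $m\geq 1$, the fact that $\rho$ has repeated vertices shows that $\rho$ is not a monomorphism, so \cref{nondegsimplexareinjective} yields that $\rho$ is degenerate. Hence $\rho$ factors through some codegeneracy $\repD\to \repD[m-1]$ followed by some $\rho'\colon\repD[m-1]\to K$, and since the codegeneracy is surjective on $0$-simplices every vertex of $\rho'$ is again $x$. The inductive hypothesis applied to $\rho'$ exhibits $\rho'$, and hence $\rho$, as an iterated degeneracy of $x$.

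Putting the three steps together, whenever $\sigma\in (K_{\preceq x})_m$ and $\tau\in(K_{\succeq x})_m$ become equal in $K_m$, both must coincide there with the iterated degeneracy of $x$, so they already agree in the pushout $K_{\preceq x}\vee K_{\succeq x}$. The only place where a genuine obstacle could lurk is the sub-claim in the previous paragraph, but \cref{nondegsimplexareinjective} does all the heavy lifting, so the argument is a short and formal application of the $1$-ordered hypothesis.
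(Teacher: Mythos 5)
Your proof is correct and follows essentially the same route as the paper: reduce via antisymmetry of $\preceq_K$ to showing that a simplex all of whose vertices equal $x$ must be the iterated degeneracy at $x$, and prove that sub-claim by induction on the dimension using \cref{nondegsimplexareinjective}.
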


\begin{proof}
    Since $K_{\preceq x}$ and $K_{\succeq x}$ are simplicial subsets of $K$, to establish the desired monomorphism, we only need to prove that for $m\geq 0$, except for the degenerate $m$-simplex at the $0$-simplex $x$, no $m$-simplex of $K$ lies in the image of both simplicial subsets. If such an $m$-simplex $\sigma$ of $K$ existed, then we must have $x\preceq_K \sigma(i) \preceq_K x$, for all $0\leq i\leq m$, and so $\sigma(i) = x$, for all $0\leq i\leq m$, by antisymmetry of $\preceq_K$. 

    Hence, in order to finish the proof it suffices to show that, for every $\sigma\colon\repD \to K$ such that $\sigma(i)=x$, for all $0\leq i\leq m$, then $\sigma$ is the degenerate $m$-simplex $\repD\to \repD[0]\xrightarrow{x} K$. If $m=0$, there is nothing to prove. Now, let $m\geq 1$. As $\sigma\colon \repD\to K$ is not a monomorphism and $K$ is $1$-ordered, by \cref{nondegsimplexareinjective}, we have that $\sigma$ is degenerate. Hence it factors as 
    \[ \sigma\colon \repD\to \repD[m']\xrightarrow{\tau} K \]
    for some $0\leq m'<m$. As $\tau(i)=x$ for all $0\leq i\leq m'$, then by induction $\tau$ is the degenerate $m'$-simplex $\repD[m']\to \repD[0]\xrightarrow{x} K$. Hence $\sigma$ is the degenerate $m$-simplex constant at $x$, as desired.
\end{proof}

\begin{lemma} \label{tndvsmono}
Let $K$ be a $1$-ordered simplicial set. Then a necklace $T\to K_{a,b}$ is totally non-degenerate if and only if it is a monomorphism. 
\end{lemma}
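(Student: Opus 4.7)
The plan is to prove the two directions separately. The reverse implication is immediate: if $T\to K_{a,b}$ is a monomorphism, then for each bead, the composite $\repD[m_i]\hookrightarrow T\to K$ is a monomorphism (being a composite of monos), and by \cref{nondegsimplexareinjective} this monomorphism corresponds to a non-degenerate $m_i$-simplex of $K$.

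For the forward implication, I would argue by induction on the number $t$ of beads of $T=\repD[m_1]\vee\ldots\vee\repD[m_t]$. The base case $t=1$ is again a direct consequence of \cref{nondegsimplexareinjective}. For the inductive step with $t\geq 2$, I would split $T=T'\vee\repD[m_t]$ where $T'=\repD[m_1]\vee\ldots\vee\repD[m_{t-1}]$, and let $x\in K_0$ denote the image of the joint shared between $T'$ and the last bead. By the inductive hypothesis, $T'\to K$ is a monomorphism, and by \cref{nondegsimplexareinjective}, the last bead $\repD[m_t]\to K$ is a monomorphism.

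The key step is to show that the image of $T'$ in $K$ lands inside the simplicial subset $K_{\preceq x}$ from \cref{losndsubss}, and dually the image of $\repD[m_t]$ lands inside $K_{\succeq x}$. For this, observe that any simplex of $T'$ factors through some bead $\repD[m_i]$ with $i\leq t-1$, and the spine of this bead yields a chain $\sigma_i(0)\preceq_K\sigma_i(1)\preceq_K\ldots\preceq_K\sigma_i(m_i)$ of $0$-simplices of $K$. Since $\sigma_i(m_i)$ is the image of the joint $x_i$, and since the joints $x_i\preceq_K x_{i+1}\preceq_K\ldots\preceq_K x_{t-1}=x$ form a chain via the spines of the remaining beads, every vertex of every simplex of $T'$ is $\preceq_K x$. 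The same argument applied to the last bead shows that its vertices are all $\succeq_K x$.

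Combining the two monomorphisms $T'\hookrightarrow K_{\preceq x}$ and $\repD[m_t]\hookrightarrow K_{\succeq x}$, which agree at the joint mapping to $x$, yields a map from the wedge $T=T'\vee \repD[m_t]\to K_{\preceq x}\vee K_{\succeq x}$. This wedge map is a monomorphism because both constituents are monomorphisms whose images intersect only in the common vertex $x$ (using antisymmetry of $\preceq_K$ exactly as in the proof of \cref{losndsubss}). Post-composing with the monomorphism $K_{\preceq x}\vee K_{\succeq x}\to K$ of \cref{losndsubss} yields the desired conclusion that $T\to K$ is a monomorphism. The main obstacle I anticipate is the bookkeeping required to verify that the images of $T'$ and $\repD[m_t]$ really do factor through $K_{\preceq x}$ and $K_{\succeq x}$, but this reduces to transitivity of $\preceq_K$ along the joints of the necklace.
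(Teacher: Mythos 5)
Your proof is correct and follows essentially the same route as the paper's: the equivalence of non-degeneracy and injectivity for single beads comes from \cref{nondegsimplexareinjective}, the hard direction is an induction on the number $t$ of beads via the decomposition $T=T'\vee\repD[m_t]$, and the inductive step factors $T\to K$ through $K_{\preceq x}\vee K_{\succeq x}\hookrightarrow K$ using \cref{losndsubss}. The one place you go beyond the paper is in spelling out why the images of $T'$ and $\repD[m_t]$ actually land in $K_{\preceq x}$ and $K_{\succeq x}$ respectively, by chaining $\preceq_K$ relations along the spines of the beads; the paper simply asserts the factorization. That detail is genuinely needed and your argument for it is sound. One minor over-elaboration: to see that the wedge $T'\vee\repD[m_t]\to K_{\preceq x}\vee K_{\succeq x}$ is a monomorphism, you do not need antisymmetry — a wedge of two monomorphisms over a compatible joint is automatically a monomorphism (monos are stable under pushout along monos in a presheaf category); antisymmetry is only needed in the proof of \cref{losndsubss} itself.
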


\begin{proof}
First, if a necklace $f\colon T\to K_{a,b}$ is not totally non-degenerate, then there is a bead $\repD[m_i]$ of $T$ such that the induced map 
\[ \repD[m_i]\hookrightarrow T\xrightarrow{f} K\]
is a non-degenerate $m_i$-simplex of $K$. Then the above map is not a monomorphism by \cref{nondegsimplexareinjective}, and so $f$ is also not a monomorphism. 

We now show that if a necklace $f\colon T\to K_{a,b}$ is totally non-degenerate, then it is a monomorphism. We do this by induction on the number of beads $t$ of $T$. If $t=1$, this follows directly from the definition of totally non-degenerate necklaces and \cref{nondegsimplexareinjective}.

Now, let $t>1$. We can write $T=T'\vee \repD[m_t]$, where $T'$ is a necklace with $t-1$ beads. As $f\colon T\to K$ is totally non-degenerate, so are the induced necklaces 
\[ T'\hookrightarrow T\xrightarrow{f} K \quad \text{and} \quad \repD[m_t]\hookrightarrow T\xrightarrow{f} K. \]
By induction, the above maps are monomorphisms. Then $f$ factors as the composite of two monomorphisms 
\[ T=T' \vee \repD[m_t] \hookrightarrow K_{\preceq f(i)} \vee K_{\succeq f(i)} \hookrightarrow K, \]
where $i$ is the last vertex of $T'$, and the second map is a monomorphism by \cref{losndsubss}. Hence $f$ is a monomorphism. 
\end{proof}

Using this characterization of totally non-degenerate necklaces in $1$-ordered simplicial sets and results by Dugger--Spivak, we show that the inclusion $\tndnec{K}{a}{b}\hookrightarrow \catnec{K}{a}{b}$ is final, in the sense of \cite[\textsection IX.3]{MacLane}.

\begin{rmk} \label{rem:DSepi}
    Let $K$ be a simplicial set and $a,b\in K_0$. As explained in the paragraph before Proposition 4.7 in \cite[\textsection 4]{DuggerSpivakRigidification}, for every necklace $T\to K_{a,b}$, there is a totally non-degenerate $\overline{T}\to K_{a,b}$ and an epimorphism of simplicial sets $T\to \overline{T}$ over $K_{a,b}$.
\end{rmk}

\begin{prop} \label{prop:final}
Let $K$ be a $1$-ordered simplicial set and $a,b\in K_0$. Then the inclusion functor
\[ J\colon \tndnec{K}{a}{b}\to \catnec{K}{a}{b} \]
is final. 
\end{prop}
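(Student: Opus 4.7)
The plan is to apply the standard finality criterion: the functor $J$ is final if and only if, for every object $f\colon T\to K_{a,b}$ of $\catnec{K}{a}{b}$, the comma category $f\downarrow J$ is non-empty and connected. Non-emptiness is immediate from \cref{rem:DSepi}, which provides a totally non-degenerate necklace $\overline{T}\to K_{a,b}$ together with an epimorphism of simplicial sets $\varepsilon\colon T\twoheadrightarrow \overline{T}$ over $K_{a,b}$; the pair $(\overline{T},\varepsilon)$ is then an object of $f\downarrow J$.

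For connectedness, I would show that $(\overline{T},\varepsilon)$ is weakly initial. Consider any other object $(X,u\colon T\to X)$ of $f\downarrow J$, with $X\in \tndnec{K}{a}{b}$. Since $K$ is $1$-ordered, \cref{tndvsmono} implies that both $\overline{T}\to K$ and $X\to K$ are monomorphisms, so $T\xrightarrow{\varepsilon}\overline{T}\hookrightarrow K$ is the epi-mono factorization of the underlying simplicial map $T\to K$. Because $f$ also factors as $T\xrightarrow{u}X\hookrightarrow K$ through a monomorphism, the universal property of the image produces a unique simplicial map $g\colon \overline{T}\to X$ over $K$ with $g\circ\varepsilon = u$. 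Since $g$ is compatible with the mono $X\hookrightarrow K$, it automatically preserves the base points $\alpha$ and $\omega$ (which are determined by their images $a,b$ in $K$); hence $g$ is a morphism of bi-pointed simplicial sets, and thus a morphism in $\tndnec{K}{a}{b}$ from $(\overline{T},\varepsilon)$ to $(X,u)$, establishing weak initiality.

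A weakly initial object connects every other object via a single morphism, so $f\downarrow J$ is connected, completing the verification. The argument is essentially formal once one has the characterization of totally non-degenerate necklaces as monomorphisms (\cref{tndvsmono}) and the epi-mono factorization provided by Dugger--Spivak (\cref{rem:DSepi}); the only mildly delicate point is verifying that the lift $g$ is a map of necklaces, which reduces to base-point preservation and is automatic from the injectivity of $X\hookrightarrow K$.
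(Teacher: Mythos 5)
Your proof is correct. You verify, as the paper does, that for every object $f\colon T\to K_{a,b}$ of $\catnec{K}{a}{b}$ the comma category $f\downarrow J$ is non-empty and connected, and you rely on the same two ingredients: \cref{rem:DSepi} for non-emptiness and \cref{tndvsmono} (via $1$-orderedness of $K$) for the mono/image bookkeeping. The difference is in how you establish connectedness. The paper applies \cref{rem:DSepi} a second time to the map $U\to T_{\alpha,\omega}$, uses $1$-orderedness of the necklace $T$ as well as of $K$ to identify a totally non-degenerate necklace $\overline{T}\hookrightarrow T\hookrightarrow K_{a,b}$, and then cites Dugger--Spivak's Proposition 4.7(b) to produce the connecting map $\overline{U}\to\overline{T}$. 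You bypass both the auxiliary intermediate $\overline{T}$ and the citation: since $T\twoheadrightarrow\overline{T}\hookrightarrow K$ is the epi--mono factorization of the underlying map $T\to K$ (epi by \cref{rem:DSepi}, mono by \cref{tndvsmono}) and $X\hookrightarrow K$ is mono by \cref{tndvsmono}, the orthogonality of (epi, mono) in $\Dset$ gives the unique lift $g\colon\overline{T}\to X$ directly, and its bi-pointedness is forced by injectivity of $X\hookrightarrow K$. This actually shows $(\overline{T},\varepsilon)$ is \emph{initial} in $f\downarrow J$, a cleaner statement than mere connectedness, and only invokes $1$-orderedness of $K$ (not of the auxiliary necklace). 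Both arguments are valid; yours is a touch more self-contained and economical, essentially inlining what the cited Dugger--Spivak proposition provides.
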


\begin{proof}
We show that, for every necklace $U\to K_{a,b}$ in $\catnec{K}{a}{b}$, the comma category $U\downarrow J$ is non-empty and connected.

We first show that the category $U\downarrow J$ is non-empty. Using \cref{rem:DSepi} for the necklace $U\to K_{a,b}$, there is a totally non-degenerate necklace $\overline{U}\to K_{a,b}$ and an epimorphism $U\to \overline{U}$ over~$K_{a,b}$. This defines a map $U\to \overline{U}$ in $\catnec{K}{a}{b}$ from the given necklace $U\to K_{a,b}$ to a totally non-degenerate necklace $\overline{U}\to K_{a,b}$. Hence the comma category $U\downarrow J$ is non-empty.

We now show that the category $U\downarrow J$ is connected. Let $U\to T$ be a map in $\catnec{K}{a}{b}$ from the necklace $U\to K_{a,b}$ to a totally non-degenerate necklace $T\to K_{a,b}$. Using \cref{rem:DSepi} for the necklace $U\to T_{\alpha,\omega}$, there is 
a totally non-degenerate necklace $\overline{T}\to T_{\alpha,\omega}$ and an epimorphism $U\to \overline{T}$ over $T_{\alpha,\omega}$. By \cref{examplesof1ordered}, the necklace $T$ is $1$-ordered, so by \cref{tndvsmono} the map $\overline{T}\to T$ is a monomorphism of simplicial sets. Moreover, the simplicial set $K$ is $1$-ordered by assumption, so by \cref{tndvsmono} the map $T\to K$ is a monomorphism, too. Hence the composite $\overline{T}\hookrightarrow T\hookrightarrow K_{a,b}$ is also a monomorphism, and by \cref{tndvsmono} it defines a totally non-degenerate necklace $\overline{T}\to K_{a,b}$. By \cite[Proposition~4.7(b)]{DuggerSpivakRigidification}, there is a map $\overline{U}\to \overline{T}$ making the following diagram commute.
\begin{tz}
\node[](1) {$U$}; 
\node[right of=1](2) {$\overline{T}$}; 
\node[right of=2](5) {$T$}; 
\node[below of=1](3) {$\overline{U}$}; 
\node[below of=2](4) {$K_{a,b}$}; 
\draw[->>] (1) to (2); 
\draw[right hook->] (2) to (4); 
\draw[->>] (1) to (3);
\draw[right hook->] (3) to (4);
\draw[->,dashed] (3) to (2);
\draw[right hook->] (2) to (5); 
\draw[right hook->] (5) to (4);
\draw[->,bend left] (1) to (5);
\end{tz}
Then the composite $\overline{U}\to \overline{T}\to T$ defines a map in $U\downarrow J$ from the totally non-degenerate necklace $\overline U\to K_{a,b}$ to the totally non-degenerate necklace $T\to K_{a,b}$, which shows that the comma category $U\downarrow J$ is connected.
\end{proof}

\subsection{The classical homotopy coherent categorification-nerve} \label{subsec:classicalhtpynerve}

We first recall the homotopy coherent nerve construction by Cordier--Porter \cite{CordierPorterHomotopyCoherent}.

\begin{defn} \label{defn:CL}
Let $m\geq 0$. Define $\CL[m]$ to be the directed $\sset$-enriched category such that 
\begin{itemize}[leftmargin=0.6cm]
    \item its set of objects $\Ob(\CL[m])$ is $\{0,1,\ldots,m\}$, 
    \item for $0\leq i<j\leq m$, the hom space is 
    \[ \textstyle\Hom_{\CL[m]}(i,j)\coloneqq \prod_{[i+1,j-1]}\repS[1], \] 
    where $[i+1,j-1]\subseteq \{0,1,\ldots,m\}$ denotes the interval between $i+1$ and $j-1$,
    \item for $0\leq i< j< k\leq m$, the composition map is given by
    \end{itemize}
    \begin{tz}
    \node[](1) {$\Hom_{\CL[m]}(i,j)\times \Hom_{\CL[m]}(j,k)=\prod_{[i+1,j-1]}\repS[1]\times \prod_{[j+1,k-1]}\repS[1]$}; 
    \node[below of=1,xshift=-.05cm](2) {$\Hom_{\CL[m]}(i,k)=\prod_{[i+1,k-1]}\repS[1]$};
    \punctuation{2}{.};
    
    \draw[->] ($(1.south)-(1.5cm,0)$) to node[left,la]{$\circ_{i,j,k}$} ($(2.north)-(1.45cm,0)$);
    \draw[->] ($(1.south)+(1.5cm,0)$) to node[right,la]{$\prod_{[i+1,j-1]}\id_{\repS[1]}\times \langle 1\rangle \times \prod_{[j+1,k-1]} \id_{\repS[1]}$} ($(2.north)+(1.55cm,0)$);
    \end{tz}
\end{defn}

\begin{rmk} \label{CLoncofaces}
    By \cite[Definition 1.1.5.3]{htt}, the assignment $[m]\mapsto \CL[m]$ extends to a cosimplicial object $\Delta\to \sCat$. In particular, by unpacking definitions, the coface map $d^\ell\colon [m-1]\to [m]$ for $0\leq \ell\leq m$ is sent to the $\sset$-enriched functor $\CL d^\ell\colon \CL[m-1]\to \CL[m]$ given on objects by 
    \[ d^\ell\colon \{0,1,\ldots,m-1\} \to \{0,1,\ldots, m\} \]
    and on hom spaces, for $0\leq i<j\leq m-1$, by the identity if $j< \ell$ or $i\geq \ell$, and by 
   \begin{tz}
    \node[](1) {$\Hom_{\CL[m-1]}(i,j)\cong \prod_{[i+1,j]\setminus \{\ell\}}\repS[1]$}; 
    \node[below of=1,xshift=-.4cm](2) {$\Hom_{\CL[m]}(i,j+1)\cong\prod_{[i+1,j]}\repS[1]$};
    \punctuation{2}{.};
    
    \draw[->] ($(1.south)-(1.4cm,0)$) to node[left,la]{$(\CL d^\ell)_{i,j}$} ($(2.north)-(1cm,0)$);
    \draw[->] ($(1.south)+(1.3cm,0)$) to node[right,la]{$(\prod_{[i+1,\ell-1]}\id_{\repS[1]})\times \langle 0\rangle\times (\prod_{[\ell+1,j]}\id_{\repS[1]})$} ($(2.north)+(1.7cm,0)$);
    \end{tz}
    if $i< \ell \leq j$.
\end{rmk}

By taking the left Kan extension of the assignment $\Delta\to \sCat$ given by $[m]\mapsto \CL[m]$, we obtain an adjunction
\begin{tz}
\node[](1) {$\Dset$}; 
\node[right of=1,xshift=1.6cm](2) {$\sCat$}; 
\punctuation{2}{.};

\draw[->] ($(2.west)-(0,5pt)$) to node[below,la]{$\NL$} ($(1.east)-(0,5pt)$);
\draw[->] ($(1.east)+(0,5pt)$) to node[above,la]{$\CL$} ($(2.west)+(0,5pt)$);

\node[la] at ($(1.east)!0.5!(2.west)$) {$\bot$};
\end{tz}

Dugger--Spivak provide in \cite[Proposition~4.3]{DuggerSpivakMapping} the following explicit description of the hom spaces of the categorification $\CL$ in terms of necklaces.

 \begin{thm} \label{DS:computationhoms}
 Let $K$ be a simplicial set and $a,b\in K_0$. Then there is a natural isomorphism in $\sset$
 \[ \Hom_{\CL K}(a,b)\cong \colim_{T\in \catnec{K}{a}{b}} \Hom_{\CL T}(\alpha,\omega). \]
 \end{thm}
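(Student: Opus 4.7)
The strategy is to leverage that $\CL \colon \Dset \to \sCat$ is a left Kan extension of $[m] \mapsto \CL[m]$ along the Yoneda embedding, and hence preserves colimits. Writing any simplicial set $K$ as a colimit of its simplices yields $\CL K \cong \colim_{\sigma \colon \repD \to K} \CL[m]$ in $\sCat$, indexed over $\Delta/K$. The main technical issue is that the hom functor $\Hom_{(-)}(a,b)\colon \sCat \to \sset$ does not preserve colimits, so one must describe hom spaces in a colimit of $\sset$-enriched categories explicitly, and then identify the resulting formula with a colimit over necklaces.

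Concretely, for a diagram $F \colon \cI \to \sCat$, a $k$-simplex of $\Hom_{\colim_\cI F}(a,b)$ is represented by a composable sequence
\[ a = x_0 \to x_1 \to \cdots \to x_r = b, \]
where each arrow $x_{i-1} \to x_i$ is a $k$-simplex of $\Hom_{F(\sigma_i)}(y_{i-1}^i, y_i^i)$ for some $\sigma_i \in \cI$ with $F(\sigma_i)(y_{i-1}^i) = x_{i-1}$ and $F(\sigma_i)(y_i^i) = x_i$ in $\colim F$. Such representatives are identified modulo the equivalence relation generated by the functoriality of $F$ and by composition within each $F(\sigma)$.

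Applied to our diagram $\Delta/K \to \sCat$, $\sigma \mapsto \CL[m]$: after using face maps in $\Delta/K$ to restrict each $\sigma_i \colon \repD[m_i] \to K$ to its subsimplex spanned by $y_{i-1}^i$ and $y_i^i$, one may assume $y_{i-1}^i = 0$ and $y_i^i = m_i$. Gluing the $\repD[m_i]$ along the shared vertices $x_i$ produces a necklace $T = \repD[m_1] \vee \cdots \vee \repD[m_r] \to K_{a,b}$, that is, an object of $\catnec{K}{a}{b}$. Since $\CL$ preserves pushouts along $[0]$, one has $\CL T \cong \CL[m_1] \amalg_{[0]} \cdots \amalg_{[0]} \CL[m_r]$, so
\[\Hom_{\CL T}(\alpha,\omega) \cong \prod_{i=1}^r \Hom_{\CL[m_i]}(0, m_i),\]
and the tuple of $k$-simplices we started with assembles into a single $k$-simplex of $\Hom_{\CL T}(\alpha,\omega)$. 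This defines a canonical map from the right-hand side of the formula to the left-hand side.

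The hard part is matching the equivalence relations: the remaining identifications in $\Hom_{\CL K}(a,b)$ — coming from degeneracies of simplices of $K$ (which collapse a bead) and from composition within each $\CL[m_i]$ (which merges consecutive beads into a single wedge summand via the insertion of the basepoint $\langle 1 \rangle$ recalled in \cref{CLoncofaces}) — must be shown to correspond exactly to the morphisms of necklaces in $\catnec{K}{a}{b}$. A morphism $T \to T'$ of necklaces over $K_{a,b}$ encodes precisely these operations: it allows a bead $\repD[m_i]$ to be refined into a subnecklace (an inclusion of beads), or consecutive beads to be merged via a simplex map of $\Delta$. Once this dictionary is spelled out, naturality in $K$ follows, and the colimit formula is established.
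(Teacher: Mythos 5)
The paper does not prove this result; it cites it directly from Dugger--Spivak \cite[Proposition~4.3]{DuggerSpivakMapping}, where it is deduced from the combinatorial machinery developed in \cite{DuggerSpivakRigidification}. Your outline reproduces the shape of that argument: use that $\CL$ preserves colimits to write $\CL K\cong\colim_{\Delta/K}\CL[m]$; describe hom-simplices of such a colimit of simplicially enriched categories by composable chains; normalize each link to run from first to last vertex of its simplex; assemble the chain into a necklace $T\to K_{a,b}$; and match the remaining identifications with the morphisms of $\catnec{K}{a}{b}$. The formal ingredients you invoke are correct, in particular the identification $\Hom_{\CL T}(\alpha,\omega)\cong\prod_i\Hom_{\CL[m_i]}(0,m_i)$ coming from $\CL$ preserving wedge pushouts, and the existence of the canonical comparison map from the necklace colimit to $\Hom_{\CL K}(a,b)$.

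The gap is that the ``hard part'' flagged in your final paragraph is not a bookkeeping step but the entire content of the theorem. Your description of $k$-simplices of $\Hom$ in a colimit of simplicial categories as ``composable chains modulo the congruence generated by functoriality and composition'' is merely a presentation; it is not a priori effective, and for general diagrams such quotients admit no usable normal form. To complete the argument one must show that the congruence is generated \emph{exactly} by the two moves you name (degenerating a bead via a degeneracy in $\Delta/K$, and replacing a chain that factors through a single $\CL[m]$ by its composite, i.e.\ including a subnecklace of a bead into that bead), that closing under composition produces no further hidden identifications, and that these relations coincide precisely with the ones induced on the colimit by the morphisms of $\catnec{K}{a}{b}$ --- neither more nor fewer. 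This is a genuinely delicate combinatorial verification, and it is the substance of \cite[\textsection 4]{DuggerSpivakRigidification}; it cannot be dispatched with ``once this dictionary is spelled out.'' As written, your sketch at best exhibits a surjection from the necklace colimit onto $\Hom_{\CL K}(a,b)$, not an isomorphism. Either carry out the verification or, as the paper does, cite the result.
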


 In the case of $1$-ordered simplicial sets, the above result refines to a description in terms of totally non-degenerate necklaces.
 
 \begin{cor} \label{DS:computations1ordered}
Let $K$ be a $1$-ordered simplicial set and $a,b\in K_0$. Then there is a natural isomorphism in $\sset$
 \[ \Hom_{\CL K}(a,b)\cong \colim_{T\in \tndnec{K}{a}{b}} \Hom_{\CL T}(\alpha,\omega). \]
 \end{cor}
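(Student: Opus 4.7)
The plan is to combine the two main ingredients that have already been established in the preceding material: the necklace formula for $\Hom_{\CL K}(a,b)$ due to Dugger--Spivak (\cref{DS:computationhoms}) and the finality statement proved in \cref{prop:final}. Starting from
\[ \Hom_{\CL K}(a,b)\cong \colim_{T\in \catnec{K}{a}{b}} \Hom_{\CL T}(\alpha,\omega), \]
I would like to replace the indexing category $\catnec{K}{a}{b}$ by its full subcategory $\tndnec{K}{a}{b}$ of totally non-degenerate necklaces without changing the colimit.

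The mechanism that makes this replacement legal is the standard fact that a final functor $J\colon \cC'\to \cC$ (in the sense of \cite[\textsection IX.3]{MacLane}, meaning that the comma category $c\downarrow J$ is non-empty and connected for every $c\in \cC$) induces, for any functor $F\colon \cC\to \cD$ into a cocomplete category, a natural isomorphism $\colim_{\cC'} F\circ J \cong \colim_{\cC} F$. Since $K$ is assumed to be $1$-ordered, \cref{prop:final} applies and guarantees that the inclusion
\[ J\colon \tndnec{K}{a}{b}\hookrightarrow \catnec{K}{a}{b} \]
is final. Applying this to the functor $T\mapsto \Hom_{\CL T}(\alpha,\omega)$ from $\catnec{K}{a}{b}$ to $\sset$ gives
\[ \colim_{T\in \catnec{K}{a}{b}} \Hom_{\CL T}(\alpha,\omega)\cong \colim_{T\in \tndnec{K}{a}{b}} \Hom_{\CL T}(\alpha,\omega). \]
Chaining this with the isomorphism of \cref{DS:computationhoms} yields the desired formula, and the naturality in $(K,a,b)$ follows from the naturality of both isomorphisms being composed.

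There is essentially no obstacle here: the two ingredients are already in place, and what remains is just the standard categorical fact that final functors preserve colimits after pre-composition. The only point worth being slightly careful about is that the finality statement of \cref{prop:final} uses the $1$-orderedness of $K$ in a crucial way, so this hypothesis cannot be dropped even though \cref{DS:computationhoms} holds for arbitrary simplicial sets.
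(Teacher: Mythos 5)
Your proof is correct and takes exactly the same route as the paper: combine \cref{DS:computationhoms} with the finality of $\tndnec{K}{a}{b}\hookrightarrow \catnec{K}{a}{b}$ from \cref{prop:final}, invoking the standard result that precomposition with a final functor preserves colimits (\cite[Theorem IX.3.1]{MacLane}).
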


 \begin{proof}
     This follows from \cref{prop:final,DS:computationhoms} together with \cite[Theorem IX.3.1]{MacLane}.
 \end{proof}

 We denote by $\Dset_{\CSThn[1]}$ Joyal's model structure on simplicial sets, in which the fibrant objects are the quasi-categories. Then, in the case of a quasi-category, \cite[Corollary~5.3]{DuggerSpivakMapping} shows that the hom spaces of its categorification $\CL$ are further related to its mapping spaces, as follows.

\begin{thm} \label{thm:zigzag2}
Let $K$ be a fibrant object in $\Dset_{\CSThn[1]}$ and $a,b\in K_0$. Then there is a natural zig-zag of weak equivalences in $\MSspace$ connecting the spaces
\[\Hom_{\CL K}(a,b)\sim\map_K(a,b), \]
where $\map_K(a,b)=\{a\}\times_K\times K^{\repD[1]}\times_K \{b\}$.
\end{thm}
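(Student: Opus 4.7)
The plan is to treat this as an application of the cited result of Dugger--Spivak \cite[Corollary 5.3]{DuggerSpivakMapping} and sketch the strategy one would reproduce if proving it from scratch. The starting point is the necklace formula of \cref{DS:computationhoms}, which expresses
\[ \Hom_{\CL K}(a,b) \cong \colim_{T \in \catnec{K}{a}{b}} \Hom_{\CL T}(\alpha,\omega). \]
The goal is to produce a natural zig-zag between this colimit and $\map_K(a,b)$ whenever $K$ is a quasi-category.

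The first step is to construct a comparison: for each necklace $T = \repD[m_1] \vee \ldots \vee \repD[m_t] \to K_{a,b}$, the hom space $\Hom_{\CL T}(\alpha,\omega)$ has the explicit cubical description coming from \cref{defn:CL}, and a point of it together with the structure map $T \to K_{a,b}$ can be assembled into an edge $\repD[1] \to K$ from $a$ to $b$ (after filling inner horns). More precisely, one defines for each necklace an intermediate mapping space that receives maps from both $\Hom_{\CL T}(\alpha,\omega)$ and $\map_K(a,b)$, organized so that the construction is functorial in $T$ and hence passes to the colimit.

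The main obstacle is showing that the resulting map is a weak equivalence, and this is where the quasi-category hypothesis enters essentially. Conceptually, one invokes the Quillen equivalence $\CL \dashv \NL$ between Joyal's model structure $\Dset_{\CSThn[1]}$ and the Bergner model structure on $\sCat$; since $K$ is fibrant, the derived unit $K \to \NL \CL K$ is a weak equivalence of quasi-categories, and taking mapping spaces on both sides identifies $\Hom_{\CL K}(a,b)$ with $\map_K(a,b)$ up to the desired zig-zag. Concretely, one argues inductively over the beads of necklaces: inner horn fillers in $K$ allow one to contract the space of necklace refinements, reducing the colimit over $\catnec{K}{a}{b}$ to the case $T = \repD[1]$, whose hom space $\Hom_{\CL[1]}(0,1) = \repS[0]$ assembled with the structure map recovers $\map_K(a,b)$.

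Finally, naturality of the zig-zag in $(K,a,b)$ is automatic from the naturality of the colimit in \cref{DS:computationhoms} and the naturality of the comparison maps in the inductive contraction argument. Because the statement is a direct reference to \cite[Corollary~5.3]{DuggerSpivakMapping}, in practice the proof in the paper will simply cite that result rather than re-run the argument above.
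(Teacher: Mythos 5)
You correctly identify that the paper offers no proof of this statement: it is stated as a recall of Dugger--Spivak's result and the only ``proof'' is the citation to \cite[Corollary~5.3]{DuggerSpivakMapping}. Since your final paragraph says exactly this, your answer to ``what is the paper's proof'' is right.

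The intermediate sketches you offer, however, do not accurately describe Dugger--Spivak's actual argument, and one of them carries a circularity risk. Dugger--Spivak do not proceed by ``contracting the space of necklace refinements'' via inner horn fillers; rather, they construct a chain of explicit intermediate mapping spaces (the right/left pinched, cylinder, and two-sided models) and prove by hand, using the necklace formula and direct combinatorial manipulations with necklaces in products and in quasi-categories, that each comparison map is a weak equivalence. There is no step that reduces the colimit over all necklaces to the single necklace $\repD[1]$ -- that would collapse $\Hom_{\CL K}(a,b)$ to a set, which is wrong. As for your second route, invoking the Quillen equivalence $\CL \dashv \NL$ is delicate: the statement being proved (that $\Hom_{\CL K}(a,b)$ computes the mapping space of the quasi-category $K$) is essentially the key input needed to \emph{verify} that the derived unit is a Dwyer--Kan equivalence, so an argument that starts from the Quillen equivalence must be careful to invoke a proof of it (e.g.\ Lurie's via straightening) that does not itself rest on the mapping-space comparison. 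Dugger--Spivak's Corollary 5.3 is precisely valuable because it is an independent, necklace-theoretic proof that does not presuppose the Quillen equivalence. In a paper one would, as you say, simply cite; but if you were asked to reproduce the argument, neither of your sketches would stand as written.
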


\subsection{The homotopy coherent categorification-nerve} \label{subsec:htpycohcat}

The adjunction $\CL\dashv \NL$ from \cref{subsec:classicalhtpynerve} induces by post-composition an adjunction
\begin{tz}
\node[](1) {$(\Dset)^{\DThnop}$}; 
\node[right of=1,xshift=3.3cm](2) {$(\sCat)^{\DThnop}$}; 
    \punctuation{2}{.};

\draw[->] ($(2.west)-(0,5pt)$) to node[below,la]{$\NL_*$} ($(1.east)-(0,5pt)$);
\draw[->] ($(1.east)+(0,5pt)$) to node[above,la]{$\CL_*$} ($(2.west)+(0,5pt)$);

\node[la] at ($(1.east)!0.5!(2.west)$) {$\bot$};
\end{tz}

We consider the category $\Thnssset$ of $\Thn$-bi-spaces and also the category $\sThncat$ of $\Thnssset$-enriched categories. 

Recall that the category of $\Thnssset$-enriched categories can equivalently be seen as the full subcategory of $(\sCat)^{\DThnop}$ spanned by those functors $\cC\colon \ThnSop\to \sCat$ that are constant at the level of objects, i.e., such that $\Ob(\cC_{\defThn,\defS})=\Ob(\cC_{0,0})$ for all $\defThn\in \Thn$ and $\defS\geq 0$. The inclusion can be implemented in a similar way to \cite[\textsection3.6]{riehl2014categoricalhomotopy}.

Moreover, recall that $\Pcat(\Thnsset)$ is the full subcategory of $\sThnsset$ spanned by those $(\DThn)$-spaces $W$ such that $W_{0,\defThn,\defS}=W_{0,0,0}$ for all $\defThn\in \Thn$ and $\defS\geq 0$. Moreover, observe that there is an identification $\sThnsset\cong (\Dset)^{\DThnop}$, which sends $W\in \sThnsset$ to $\widehat W\colon \Thnop\times \Dop\to \set^{\Dop}$ given at $\defThn\in \Thn$ and $m,k\geq 0$ by $(\widehat W_{\defThn,\defS})_m\coloneqq W_{m,\defThn,\defS}$. So we can regard $\Pcat(\Thnsset)$ as a full subcategory of $(\Dset)^{\DThnop}$.

Then the above adjunction restricts to an adjunction
\begin{tz}
\node[](1) {$\pcatThn$}; 
\node[right of=1,xshift=2.7cm](2) {$\sThncat$}; 
    \punctuation{2}{.};

\draw[->] ($(2.west)-(0,5pt)$) to node[below,la]{$\NL_*$} ($(1.east)-(0,5pt)$);
\draw[->] ($(1.east)+(0,5pt)$) to node[above,la]{$\CL_*$} ($(2.west)+(0,5pt)$);

\node[la] at ($(1.east)!0.5!(2.west)$) {$\bot$};
\end{tz}

Next, recall the functor $\diag\colon \Thnssset\to \Thnsset$ from \cref{subsec:MSdiag}. Given that it is also a right adjoint functor, it preserves products, hence the adjunction $\diag\dashv (\delta_*)_*$ induces by base-change an adjunction between enriched categories
\begin{tz}
\node[](1) {$\sThncat$}; 
\node[right of=1,xshift=2.5cm](2) {$\Thncat$}; 
    \punctuation{2}{.};

\draw[->] ($(2.west)-(0,5pt)$) to node[below,la]{$((\delta_*)_*)_*$} ($(1.east)-(0,5pt)$);
\draw[->] ($(1.east)+(0,5pt)$) to node[above,la]{$\diag_*$} ($(2.west)+(0,5pt)$);

\node[la] at ($(1.east)!0.5!(2.west)$) {$\bot$};
\end{tz}

\begin{defn} \label{defadjunction}
We define the homotopy coherent categorification-nerve adjunction to be the following composite of adjunctions.
\begin{tz}
\node[](1) {$\pcatThn$}; 
\node[right of=1,xshift=2.7cm](2) {$\sThncat$}; 
\node[right of=2,xshift=2.5cm](3) {$\Thncat$}; 

\node at ($(1.west)-(6pt,.3pt)$) {$\Ch\colon$};
\node at ($(3.east)+(7pt,-1.5pt)$) {$\colon\! \Nh$};

\draw[->] ($(3.west)-(0,5pt)$) to node[below,la]{$((\delta_*)_*)_*$} ($(2.east)-(0,5pt)$);
\draw[->] ($(2.east)+(0,5pt)$) to node[above,la]{$\diag_*$} ($(3.west)+(0,5pt)$);

\node[la] at ($(1.east)!0.5!(2.west)$) {$\bot$};

\draw[->] ($(2.west)-(0,5pt)$) to node[below,la]{$\NL_*$} ($(1.east)-(0,5pt)$);
\draw[->] ($(1.east)+(0,5pt)$) to node[above,la]{$\CL_*$} ($(2.west)+(0,5pt)$);

\node[la] at ($(2.east)!0.5!(3.west)$) {$\bot$};
\end{tz} 
\end{defn}

In order to develop intuition on the action of $\Ch$, we compute here some of its values. 

\begin{ex}
    As a first example, we can see that, for $m\geq 0$, we have 
    \[ \Ch \repD= \CL [m], \]
    where $\CL [m]$ is the $\sset$-enriched category from \cref{defn:CL} seen as a $\Thnsset$-enriched category through base-change along the canonical inclusion $\sset\hookrightarrow \Thnsset$. 
   \begin{itemize}[leftmargin=0.6cm]
   \item When $m=0$, we get that $\Ch \repD[0]$ is the terminal $\Thnsset$-enriched category $[0]$. 
   \item When $m=1$, we get that $\Ch \repD[1]$ is the directed $\Thnsset$-enriched category with object set $\{0,1\}$ and hom $\Thn$-space $\Hom_{\Ch \repD[1]}(0,1)=\Delta[0]$ and so it is the $\Thnsset$-enriched category generated by the following data
    \begin{tz}
        \node[](0) {$0$}; 
        \node[right of=0](1) {$1$}; 
        \draw[->] (0) to node[above,la]{$f$} (1);
    \end{tz}
    \item When $m=2$, we get that $\Ch \repD[2]$ is the directed $\Thnsset$-enriched category with object set $\{0,1,2\}$ and hom $\Thn$-spaces 
    \[ \Hom_{\Ch \repD[2]}(0,1)=\Hom_{\Ch \repD[2]}(1,2)=\Delta[0] \quad \text{and} \quad \Hom_{\Ch \repD[2]}(0,2)=\Delta[1] \]
    and so it is the $\Thnsset$-enriched category generated by the following data
    \begin{tz}
        \node[](0) {$0$}; 
        \node[above right of=0](1) {$1$};
        \node[below right of=1](2) {$2$}; 
        \draw[->] (0) to node[left,la,yshift=3pt]{$f$} (1); 
        \draw[->] (1) to node[right,la,yshift=3pt]{$g$} (2); 
        \draw[->] (0) to node[below,la]{$h$} (2);
    \end{tz}
    together with a homotopy between $h$ and the composite $gf$.
    \end{itemize}
\end{ex}

\begin{ex}
    We also study the $\Thnsset$-enriched category $\Ch L(\repD[m]\times \repThn[1;0])$ for small values of $m\geq 0$, where we recall that $L\colon \sThnsset\to \pcatThn$ denotes the left adjoint functor to the inclusion. These can be computed using the techniques developed later in \cref{Section 3}.
    \begin{itemize}[leftmargin=0.6cm]
    \item When $m=0$, as $L(\repD[m]\times \repThn[1;0])=\repD[0]$, we get that $\Ch L(\repD[0]\times \repThn[1;0])=\Ch \repD[0]=[0]$. 
    \item When $m=1$, using \cref{lem:Sh1vsSigma1} applied to the case where $X=\repThn[1;0]$, we get that $\Ch L(\repD[1]\times \repThn[1;0])$ is the directed $\Thnsset$-enriched category $\Sigma \repThn[1;0]$ with object set $\{0,1\}$ and hom $\Thn$-space \[ \Hom_{\Ch L(\repD[1]\times \repThn[1;0])}(0,1)=\repThn[1;0] \]
    and so it is the $\Thnsset$-enriched category generated by the following data
    \begin{tz}
        \node[](1) {$0$}; 
        \node[right of=1,xshift=0.5cm](2) {$1$}; 
        \draw[->,bend left] (1) to node(a)[above,la]{$f$} (2);
        \draw[->,bend right] (1) to node(b)[below,la]{$f'$} (2);
        \cell[right,la][n][0.53]{a}{b}{$\alpha$};
    \end{tz}
    \item When $m=2$, using \cref{prop:computationpushprod} applied to the case where $m=2$ and $X\hookrightarrow Y$ is the identity at $\repThn[1;0]$, we get that $\Ch L(\repD[2]\times \repThn[1;0])$ is the directed $\Thnsset$-enriched category with object set $\{0,1,2\}$ and hom $\Thn$-spaces 
    \[ \Hom_{\Ch L(\repD[2]\times \repThn[1;0])}(0,1)=\Hom_{\Ch L(\repD[2]\times \repThn[1;0])}(1,2)=\repThn[1;0]\]
    and 
    \[ \Hom_{\Ch L(\repD[2]\times \repThn[1;0])}(0,2)=(\repThn[1;0]\times \repThn[1;0]) \amalg_{\repThn[1;0]} \repThn[1;0]\times \Delta[1] \]
    and so it is the $\Thnsset$-enriched category generated by the following data
    \begin{tz}
        \node[](0) {$0$}; 
        \node[above right of=0,xshift=1cm,yshift=1cm](1) {$1$};
        \node[below right of=1,xshift=1cm,yshift=-1cm](2) {$2$}; 
        \draw[->,bend left=20] (0) to node(a)[left,la,yshift=3pt]{$f$} (1); 
        \draw[->,bend right=20] (0) to node(b)[right,la,yshift=-3pt]{$f'$} (1); 
        \cell[above,la,xshift=3pt][n][0.53]{a}{b}{$\alpha$};
        \draw[->,bend left=20] (1) to node(a)[right,la,yshift=3pt]{$g$} (2);
        \draw[->,bend right=20] (1) to node(b)[left,la,yshift=-3pt]{$g'$} (2);
        \cell[above,la,xshift=-3pt,yshift=-3pt][n][0.53]{a}{b}{$\beta$};
        \draw[->,bend left=15] (0) to node(a)[above,la]{$h$} (2);
        \draw[->,bend right=15] (0) to node(b)[below,la]{$h'$} (2);
        \cell[right,la][n][0.53]{a}{b}{$\gamma$};
    \end{tz}
    together with a homotopy between $\gamma$ and the horizontal composite $\beta\alpha$, which in particular gives homotopies between $h$ and the composite $gf$ and between $h'$ and the composite $g'f'$. 
    \end{itemize}
\end{ex}

\subsection{The homs of the homotopy coherent categorification} \label{subsec:homsofC}

Using the description of the hom spaces of the homotopy coherent categorification $\CL$ of a simplicial set, we can compute explicitly the hom $\Thn$-spaces of the homotopy coherent categorification $\Ch$ of an object in $\pcatThn$. The following two results are obtained by applying level-wise \cref{DS:computationhoms,DS:computations1ordered}.

 \begin{prop} \label{cor:computationhomsC}
 Let $W$ be an object in $\pcatThn$ and $a,b\in W_0$. Then there is a natural isomorphism in $\Thnsset$
 \[ \Hom_{\Ch W}(a,b)\cong \diag( \colim_{T\in \catnec{W_{-,\star,\star}}{a}{b}} \Hom_{\CL T}(\alpha,\omega)) \]
 where $\colim_{T\in \catnec{W_{-,\star,\star}}{a}{b}} \Hom_{\CL T}(\alpha,\omega)$ is the $\Thn$-bi-space given at $\defThn\in\Thn$ and $\defS\geq 0$ by the colimit in $\sset$
 \[  \colim_{T\in \catnec{W_{-,\defThn,\defS}}{a}{b}} \Hom_{\CL T}(\alpha,\omega). \]
 \end{prop}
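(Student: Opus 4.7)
The plan is to trace through the two-step definition of $\Ch$ given in \cref{defadjunction} and reduce the claim to the classical formula of \cref{DS:computationhoms} applied levelwise in the $\Thn$- and spacial directions. Recall that $\Ch = \diag_* \circ \CL_*$, where $\CL_*$ is obtained by postcomposing the classical Cordier--Porter categorification with the presheaf structure, and $\diag_*$ is the base change of enrichment along the product-preserving right adjoint $\diag \colon \Thnssset \to \Thnsset$.

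First I would observe that for any $\Thnssset$-enriched category $\cD$ and any objects $a,b \in \Ob\cD$, the base-changed hom $\Thn$-space is, by construction,
\[
\Hom_{\diag_*\cD}(a,b) \;\cong\; \diag\!\left(\Hom_{\cD}(a,b)\right).
\]
Applying this to $\cD = \CL_* W$, we reduce the problem to computing, for each bidegree $(\defThn,\defS) \in \Thn \times \Delta$, the hom space
\[
\Hom_{\CL_* W}(a,b)_{\defThn,\defS} \;=\; \Hom_{\CL(W_{-,\defThn,\defS})}(a,b)
\]
of the classical categorification of the simplicial set $W_{-,\defThn,\defS}$ bi-pointed at $a,b \in W_{0,\defThn,\defS} = W_0$ (the latter equality uses that $W$ lies in $\pcatThn$, so $W_0$ is discrete and constant in $(\defThn,\defS)$).

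Next I would invoke \cref{DS:computationhoms} at each bidegree, which yields a natural isomorphism in $\sset$
\[
\Hom_{\CL(W_{-,\defThn,\defS})}(a,b) \;\cong\; \colim_{T \in \catnec{W_{-,\defThn,\defS}}{a}{b}} \Hom_{\CL T}(\alpha,\omega).
\]
Assembling these isomorphisms as $(\defThn,\defS)$ varies gives the claimed identification of $\Thn$-bi-spaces $\Hom_{\CL_* W}(a,b) \cong \colim_{T\in \catnec{W_{-,\star,\star}}{a}{b}} \Hom_{\CL T}(\alpha,\omega)$ in $\Thnssset$. Applying $\diag$ to both sides and combining with the first step yields the desired formula.

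The only subtlety worth highlighting is the naturality and well-definedness of the right-hand side as a functor on $\Thn \times \Delta$: the categories $\catnec{W_{-,\defThn,\defS}}{a}{b}$ vary functorially in $(\defThn,\defS)$ via precomposition along necklaces with the transition maps of $W$, and both the assignment $T \mapsto \Hom_{\CL T}(\alpha,\omega)$ and the formation of colimits are natural, so no further verification beyond bookkeeping is needed. There is no serious obstacle here beyond matching notation between the two constructions, as the heavy lifting is entirely contained in the Dugger--Spivak result already quoted.
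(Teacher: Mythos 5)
Your proposal is correct and takes essentially the same approach as the paper: the paper explicitly states that this result (together with its refinement for $1$-ordered simplicial sets) is obtained by applying \cref{DS:computationhoms} level-wise in the $\Thn$- and spacial directions, which is precisely the reduction you carry out, including the correct observations that base change along $\diag$ acts on homs as $\Hom_{\diag_*\cD}(a,b)\cong\diag\Hom_{\cD}(a,b)$ and that $W_0$ being discrete allows $a,b$ to serve as basepoints uniformly across all bidegrees.
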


 \begin{cor} \label{cor:computationshomC1ordered}
Let $W$ be an object in $\pcatThn$ and $a,b\in W_0$. Suppose that, for all $\defThn\in \Thn$ and $\defS\geq 0$, the simplicial set $W_{-,\defThn,\defS}$ is $1$-ordered. Then there is a natural isomorphism in $\Thnsset$
\[ \Hom_{\Ch W}(a,b)\cong \diag( \colim_{T\in \tndnec{W_{-,\star,\star}}{a}{b}} \Hom_{\CL T}(\alpha,\omega)) \]
where $\colim_{T\in \tndnec{W_{-,\star,\star}}{a}{b}} \Hom_{\CL T}(\alpha,\omega)$ is the $\Thn$-bi-space given at $\defThn\in\Thn$ and $\defS\geq 0$ by the colimit in $\sset$
 \[  \colim_{T\in \tndnec{W_{-,\defThn,\defS}}{a}{b}} \Hom_{\CL T}(\alpha,\omega). \]
 \end{cor}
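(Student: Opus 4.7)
The plan is to derive this statement as an immediate consequence of the preceding \cref{cor:computationhomsC}, by refining the indexing category of the colimit from all necklaces to just the totally non-degenerate ones. The key input is \cref{prop:final}, which tells us that the inclusion $J\colon \tndnec{K}{a}{b}\hookrightarrow \catnec{K}{a}{b}$ is a final functor whenever $K$ is a $1$-ordered simplicial set.

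First, I would apply \cref{cor:computationhomsC} to obtain the natural isomorphism
\[ \Hom_{\Ch W}(a,b)\cong \diag\bigl( \colim_{T\in \catnec{W_{-,\star,\star}}{a}{b}} \Hom_{\CL T}(\alpha,\omega)\bigr). \]
Since the colimit on the right-hand side and the functor $\diag$ are both computed pointwise in $\defThn\in\Thn$ and $\defS\geq 0$, and the $1$-orderedness hypothesis holds levelwise on $W$ by assumption, the proof reduces to showing the levelwise identification
\[ \colim_{T\in \catnec{W_{-,\defThn,\defS}}{a}{b}} \Hom_{\CL T}(\alpha,\omega) \;\cong\; \colim_{T\in \tndnec{W_{-,\defThn,\defS}}{a}{b}} \Hom_{\CL T}(\alpha,\omega). \]

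For each fixed pair $(\defThn,\defS)$, the simplicial set $W_{-,\defThn,\defS}$ is $1$-ordered by hypothesis, so \cref{prop:final} gives that the inclusion $\tndnec{W_{-,\defThn,\defS}}{a}{b}\hookrightarrow\catnec{W_{-,\defThn,\defS}}{a}{b}$ is final. By \cite[Theorem IX.3.1]{MacLane}, final functors preserve colimits of any diagram, so restricting the diagram $T\mapsto \Hom_{\CL T}(\alpha,\omega)$ to the totally non-degenerate subcategory does not change its colimit. Assembling these isomorphisms across all $(\defThn,\defS)$ and applying $\diag$ yields the desired natural isomorphism.

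There is no substantive obstacle here: the argument is entirely parallel to the proof of \cref{DS:computations1ordered}, with the only addition being that we run it one bidegree at a time and package the result through $\diag$. Naturality in $a,b$ and functoriality in $W$ are inherited from the corresponding properties of \cref{cor:computationhomsC} and the naturality of the finality statement in \cref{prop:final}.
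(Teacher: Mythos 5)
Your argument is correct and matches the paper's: the paper derives both \cref{cor:computationhomsC} and \cref{cor:computationshomC1ordered} by applying \cref{DS:computationhoms} and \cref{DS:computations1ordered} levelwise in $(\defThn,\defS)$, and your proof simply unfolds the levelwise application of \cref{DS:computations1ordered} into its constituent steps (\cref{prop:final} plus the final-functor theorem). No substantive difference.
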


 We now aim to compare the hom $\Thn$-spaces of the categorification $\Ch W$ with the mapping $\Thn$-spaces of $W$ in the case where $W$ is fibrant. For this, we first need the following.

\begin{lemma} \label{fibrantpcat}
Let $W$ be a fibrant object in $\pcatinj$. For every $\defThn\in \Thn$ and $\defS\geq 0$, the simplicial set $W_{-,\defThn,\defS}$ is fibrant in $\Dset_{\CSThn[1]}$.
\end{lemma}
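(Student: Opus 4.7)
The plan is to verify directly that $W_{-,\theta,k}$ satisfies the inner horn lifting property. Translating this along the adjunctions relating $\sset$, $\sThnsset$, and $\pcatThn$, we will identify the relevant inner horn inclusion with a trivial cofibration in $\pcatinj$ built from the generating trivial cofibrations recalled in \cref{rem:genanodyne}.

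First, I would translate the lifting problem. By Yoneda and the adjunction $L\dashv I\colon \pcatThn\to \sThnsset$, there is a natural identification
\[ \Hom_{\sset}(K, W_{-,\theta,k})\cong\Hom_{\pcatThn}(L(K\times \repThn\times \repS), W) \]
for every $K\in\sset$. Consequently, filling an inner horn $\Lambda^i[m]\hookrightarrow\repD$, for $0<i<m$, inside $W_{-,\theta,k}$ is equivalent to a lifting problem in $\pcatThn$ against $W$ along the map $L((\Lambda^i[m]\hookrightarrow\repD)\times \repThn\times \repS)$. Since $W$ is fibrant in $\pcatinj$, it suffices to verify that this map is a trivial cofibration in $\pcatinj$.

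Next, I would invoke Joyal's classical theorem that the inner horn inclusion $\Lambda^i[m]\hookrightarrow\repD$ lies in the weak saturation of the spine inclusions $\Sp[m']\hookrightarrow\repD[m']$ for $m'\geq 2$. Since both the inclusion $\sset\hookrightarrow \sThnsset$ (given by levelwise restriction) and the functor $-\times \repThn\times \repS\colon \sThnsset\to \sThnsset$ preserve colimits, the map $(\Lambda^i[m]\hookrightarrow \repD)\times \repThn\times \repS$ lies in the weak saturation of the family $(\Sp[m']\hookrightarrow \repD[m'])\times \repThn\times \repS$.

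Third, I would use cellular filtrations to rewrite these maps using the generators of \cref{rem:genanodyne}. The map $(\Sp[m']\hookrightarrow \repD[m'])\times \repThn\times \repS$ coincides with the pushout-product $(\Sp[m']\hookrightarrow \repD[m'])\widehat{\times}(\emptyset\hookrightarrow\repThn)\widehat{\times}(\emptyset\hookrightarrow \repS)$. The skeletal filtrations of the representables show that $\emptyset\hookrightarrow\repThn$ and $\emptyset\hookrightarrow\repS$ lie in the weak saturations of the boundary inclusions $\partial\repThn[\theta']\hookrightarrow\repThn[\theta']$ and $\partial\repS[k']\hookrightarrow \repS[k']$ respectively. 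Since pushout-products distribute over cobase change and transfinite composition in each variable, we conclude that $(\Sp[m']\hookrightarrow\repD[m'])\times \repThn\times \repS$ lies in the weak saturation of the pushout-products $(\Sp[m']\hookrightarrow\repD[m'])\widehat{\times}(\partial\repThn[\theta']\hookrightarrow\repThn[\theta'])\widehat{\times}(\partial\repS[k']\hookrightarrow\repS[k'])$. Applying $L$, which preserves colimits, yields precisely the generating trivial cofibrations from \cref{rem:genanodyne}. Hence $L((\Lambda^i[m]\hookrightarrow\repD)\times \repThn\times \repS)$ is a trivial cofibration in $\pcatinj$, and the inner horn filling exists by fibrancy of $W$, so $W_{-,\theta,k}$ is a quasi-category.

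The main obstacle I expect is the bookkeeping in the third step: tracking how pushout-products, weak saturations, cellular filtrations, and the left adjoint $L$ interact. This is essentially a Leibniz computation, routine but requiring some care.
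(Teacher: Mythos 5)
Your second step is where the argument breaks down. The assertion that the inner horn inclusion $\Lambda^t[m]\hookrightarrow\repD$ lies in the weak saturation of the spine inclusions is not Joyal's theorem. What Joyal proves is the reverse containment: spine inclusions are inner anodyne, i.e., they lie in the weak saturation of inner horns. The statement the paper actually invokes, \cite[Lemma~3.5]{JT}, is that a \emph{saturated class of monomorphisms which additionally satisfies the right cancellation property} and contains the spine inclusions must contain the inner horn inclusions. The right cancellation hypothesis is not redundant: the Joyal--Tierney argument shows that $\Sp\hookrightarrow\Lambda^t[m]$ lies in such a class by gluing in inner horns of lower dimension, and then cancels against the factorization $\Sp\hookrightarrow\Lambda^t[m]\hookrightarrow\repD$ to extract $\Lambda^t[m]\hookrightarrow\repD$. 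A mere weak saturation (closure under cobase change, transfinite composition, and retracts) provides no mechanism for this cancellation, and indeed $\Lambda^t[m]\hookrightarrow\repD$ does \emph{not} lie in the weak saturation of spines: a pushout of a spine inclusion always attaches its new diagonals and faces freely and can never identify them with simplices already present in a horn.

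The paper avoids this pitfall by working in the localized model structure $\SegsThnsset$ instead of with the weak saturation of generators directly. The class of trivial cofibrations of $\SegsThnsset$ is a saturated class of monomorphisms that contains the Segal maps and, crucially, \emph{does} satisfy right cancellation, since if $gf$ and $f$ are monic weak equivalences and $g$ is a monomorphism then $g$ is a weak equivalence by two-out-of-three. Joyal--Tierney then gives that the inner horns are trivial cofibrations in $\SegsThnsset$, cartesianness of $\SegsThnsset$ handles the factor $\repThn\times\repS$, and fibrancy of $W$ in $\SegsThnsset$ supplies the lift. Your adjunction translation in step one and the Leibniz bookkeeping in step three are both sound, but without a substitute for the right cancellation input that the weak saturation lacks, the argument as proposed cannot reach the inner horns.
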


\begin{proof}
Recall that, if $W$ is fibrant in $\pcatinj$, then it is fibrant in $\injsThnsset$ and it satisfies the Segal condition, i.e., it is fibrant in the localization $\SegsThnsset$ of $\injsThnsset$ with respect to the Segal maps 
\[ (\Sp \hookrightarrow\repD)\times \repThn\]
for all $m\geq 1$ and $\defThn\in \Thn$. By \cite[Lemma 3.5]{JT}, if a saturated class of monomorphisms satisfying the right cancellation property contains the Segal maps $\Sp \hookrightarrow \repD$ -- which is the case of the class of trivial cofibrations of $\SegsThnsset$ -- then it must contain the inner horn inclusions $L^t[m]\hookrightarrow \repD$, for all $m\geq 2$ and $0<t<m$. Furthermore, as the model structure $\SegsThnsset$ is cartesian closed by \cite[Theorem~5.2]{br2}, the maps
\[ (L^t[m]\hookrightarrow \repD)\times \repThn\times \repS \] 
are trivial cofibrations in $\SegsThnsset$, for all $m\geq 2$, $0<t<m$, $\defThn\in \Thn$, and $\defS\geq 0$.

Now, for all $m\geq 2$, $0<t<m$, $\defThn\in \Thn$, and $\defS\geq 0$, a lift in the below left diagram in~$\Dset$ corresponds to a lift in the below right diagram in $\sThnsset$, which exists by the above discussion. 
\begin{tz}
    \node[](1) {$L^t[m]$}; 
    \node[below of=1](2) {$\repD$}; 
    \node[right of=1,xshift=.7cm](3) {$W_{-,\defThn,\defS}$}; 

    \draw[right hook->] (1) to (2); 
    \draw[->] (1) to (3); 
    \draw[->,dashed] (2) to (3);

    \node[right of=3,xshift=2.7cm](1) {$L^t[m]\times \repThn\times \repS$}; 
    \node[below of=1](2) {$\repD\times \repThn\times \repS$}; 
    \node[right of=1,xshift=1.8cm](3) {$W$}; 

    \draw[right hook->] (1) to (2); 
    \draw[->] (1) to (3); 
    \draw[->,dashed] ($(2.north)+(1.4cm,0)$) to (3);
\end{tz}
This shows that $W_{-,\defThn,\defS}$ is fibrant in $\Dset_{\CSThn[1]}$, as desired. 
\end{proof}

\begin{defn} \label{def:homW}
Let $W$ be a fibrant object in $\pcatinj$. For $a,b\in W_0$, we define $\hom_W(a,b)$ to be the following pullback in $\sThnsset$. 
\begin{tz}
\node[](1) {$\hom_W(a,b)$}; 
\node[below of=1](2) {$\repS[0]$}; 
\node[right of=1,xshift=1.8cm](3) {$W^{\repD[1]}$}; 
\node[below of=3](4) {$W\times W$}; 
\pullback{1};

\draw[->] (1) to (3);
\draw[->] (1) to (2);
\draw[->] (3) to (4);
\draw[->] (2) to node[below,la]{$(a,b)$} (4);
\end{tz}
\end{defn}

\begin{rmk}
    Since $\hom_W(a,b)$ is homotopically constant, i.e., for every $m\geq 0$, the map $\hom_W(a,b)_0\to \hom_W(a,b)_m$ is a weak equivalence in $\MSThnsset$, we equivalently regard it as an object of $\Thnssset$ through the canonical isomorphism $\sThnsset\cong \Thnssset$. 
\end{rmk}

Note that $\Map_W(a,b)=\hom_W(a,b)_0$. We then have the following. 

\begin{prop} \label{cor:Homvsdiaghom}
Let $W$ be a fibrant object in $\pcatinj$ and $a,b\in W_0$. Then there is a natural zig-zag of weak equivalences in $(\MSspace)^{\Thnop}_\inj$ connecting the $\Thn$-spaces
\[\Hom_{\Ch W}(a,b)\sim \diag \hom_W(a,b). \]
\end{prop}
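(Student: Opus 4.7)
The plan is to reduce the statement, level by level in the $\Thn$-direction and then in the second spacial direction, to the Dugger--Spivak comparison theorem for quasi-categories. First I would identify, for each $\defThn \in \Thn$ and $\defS \geq 0$, what the two $\Thn$-bi-spaces in question look like level-wise. On the left, combining \cref{cor:computationhomsC} with \cref{DS:computationhoms} gives
\[
\Hom_{\Ch W}(a,b) \;\cong\; \diag\bigl(\mathcal{H}\bigr),
\qquad
\mathcal{H}_{\defThn,\defS} = \Hom_{\CL W_{-,\defThn,\defS}}(a,b).
\]
On the right, since the internal hom in $\sThnsset$ by the representable $\repD[1] \in \Dset \hookrightarrow \sThnsset$ is computed level-wise in the $\Thnop \times \Dop$-direction, we have $(W^{\repD[1]})_{-,\defThn,\defS} = (W_{-,\defThn,\defS})^{\repD[1]}$, and taking the pullback in \cref{def:homW} level-wise identifies
\[
\hom_W(a,b)_{\defThn,\defS} \;\cong\; \map_{W_{-,\defThn,\defS}}(a,b).
\]

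Next I would appeal to \cref{fibrantpcat}, which ensures that $W_{-,\defThn,\defS}$ is a quasi-category for every $(\defThn,\defS)$. Applying \cref{thm:zigzag2} to this quasi-category produces a natural zig-zag of weak equivalences of Kan complexes
\[
\Hom_{\CL W_{-,\defThn,\defS}}(a,b) \;\sim\; \map_{W_{-,\defThn,\defS}}(a,b).
\]
The naturality in the input simplicial set, combined with the functoriality of $W_{-,\defThn,\defS}$ in $(\defThn,\defS) \in \ThnSop$, promotes this to a zig-zag $\mathcal{H} \sim \hom_W(a,b)$ in $(\MSspace)^{\ThnSop}_\inj$, that is, a zig-zag of $\Thn$-bi-spaces whose maps are level-wise weak equivalences of spaces.

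Finally I would apply $\diag$ to the whole zig-zag. The functor $\diag = (\delta^*)_*$ is obtained by post-composition with the diagonal $\delta^* \colon \SSset \to \sset$, which preserves level-wise weak equivalences of bisimplicial sets (this is the classical fact that the diagonal sends pointwise Kan weak equivalences to weak equivalences, reflected model-categorically in \cref{diagiotaQE}). Hence the resulting zig-zag
\[
\Hom_{\Ch W}(a,b) \;\cong\; \diag\mathcal{H} \;\sim\; \diag\hom_W(a,b)
\]
is a zig-zag of weak equivalences in $(\MSspace)^{\Thnop}_\inj$, as required.

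The only genuine point to verify carefully is that the Dugger--Spivak zig-zag from \cref{thm:zigzag2} is natural enough in the input quasi-category to produce the required natural transformations of $\Thn$-bi-space-valued functors on $\ThnSop$; the rest is bookkeeping between the various incarnations of the ambient presheaf category and the identification of pullbacks and internal homs under the isomorphism $\sThnsset \cong \Thnssset$. This naturality follows from the explicit construction of the comparison zig-zag in \cite{DuggerSpivakMapping}, each step of which is natural in maps of simplicial sets.
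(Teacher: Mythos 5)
Your proposal is correct and follows essentially the same route as the paper: identify both sides level-wise over $\ThnSop$, invoke \cref{fibrantpcat} to check each level $W_{-,\defThn,\defS}$ is a quasi-category, apply the natural Dugger--Spivak zig-zag from \cref{thm:zigzag2} level-wise (with $\mathcal{H}$ being exactly $\Hom_{\CL_*W}(a,b)$), and then apply $\diag$, which preserves level-wise weak equivalences. The paper cites \cref{diagiotaQEinj} rather than \cref{diagiotaQE} for the last step, which is the cleaner reference since it is the unlocalized statement that directly handles level-wise weak equivalences, but this is a cosmetic difference.
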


\begin{proof}
For $\defThn\in \Thn$ and $\defS\geq 0$, as $W_{-,\defThn,\defS}$ is fibrant in $\Dset_{\CSThn[1]}$ by \cref{fibrantpcat}, by \cref{thm:zigzag2} we have a natural zig-zag of weak equivalences in $\MSspace$
\[ \Hom_{\CL_* W}(a,b)_{\defThn,\defS}\cong \Hom_{\CL(W_{-,\defThn,\defS})}(a,b)\sim \map_{W_{-,\defThn,\defS}}(a,b)\cong\hom_W(a,b)_{-,\defThn,\defS}. \]
Hence, we obtain a natural zig-zag of weak equivalences in $\injsThnspace$
\[ \Hom_{\CL_* W}(a,b)\sim \hom_W(a,b), \]
and so in its localization $(\MSdiag)^{\Thnop}_\inj$. As $\diag\colon (\MSdiag)^{\Thnop}_\inj\to \injThnspace$ preserves weak equivalences by \cref{diagiotaQEinj}, we obtain the desired natural zig-zag of weak equivalences in $\injThnspace$
\[ \Hom_{\Ch W}(a,b)= \diag \Hom_{\CL_* W}(a,b)\sim \diag \hom_W(a,b). \qedhere \]
\end{proof}

Thanks to the previous result, in order to compare $\Hom_{\Ch W}(a,b)$ with $\Map_W(a,b)$, it is enough to compare $\Map_W(a,b)$ with $\diag \hom_W(a,b)$. 

\begin{prop} \label{prop:Mapvsdiaghom}
Let $W$ be a fibrant object in $\pcatinj$ and $a,b\in W_0$. Then there is a natural weak equivalence in $(\MSspace)^{\Thnop}_\inj$ 
\[ \Map_W(a,b)\xrightarrow{\sim} \diag\hom_W(a,b). \]
\end{prop}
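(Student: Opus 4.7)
The map in the statement is obtained from the identification $\Map_W(a,b)=\hom_W(a,b)_0$: there is a canonical inclusion $\iota\Map_W(a,b)\to\hom_W(a,b)$ in $\sThnsset\cong\SSset^{\Thnop}$, where $\iota=(\pi^*)_*$ from \cref{subsec:MSdiag} regards a $\Thn$-space as the constant simplicial object in the outer direction, and the map is induced by the unique degeneracies $[m]\to[0]$. Applying $\diag$ and using the identity $\diag\iota=\mathrm{id}_{\Thnsset}$ then produces the map $\Map_W(a,b)\to\diag\hom_W(a,b)$.

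Since weak equivalences in $\injThnspace$ are detected at each $\theta\in\Thn$, the plan is to fix such a $\theta$ and analyze the bisimplicial set $\hom_W(a,b)_{-,\theta,-}$, whose zeroth row is $\Map_W(a,b)_\theta$ and whose diagonal is $\diag\hom_W(a,b)_\theta$. The key input is a pointwise strengthening of the homotopic constancy property from the remark following \cref{def:homW}: for each $\theta\in\Thn$ and each $m\geq 0$, the structure map
\[\hom_W(a,b)_{0,\theta,-}\to\hom_W(a,b)_{m,\theta,-}\]
is a weak equivalence in $\MSspace$. I would prove this by combining the injective fibrancy of $W$ in $\injsThnsset$ with the Segal condition (both inherited from fibrancy in $\pcatinj$, see \cref{recall:fibrant}): using the prism triangulation of $\Delta[m]\times\Delta[1]$ into $(m+1)$-simplices, one expresses $\hom_W(a,b)_{m,\theta,-}$ as an iterated homotopy pullback of copies of $W_{1,\theta,-}$ over the discrete $W_0$, subject to the endpoint constraints $(a,\ldots,a,b,\ldots,b)$. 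The identity-at-$a$ and identity-at-$b$ components become discrete points (since $W_0$ is discrete), and the remaining factor reduces to $\Map_W(a,b)_\theta$, giving the desired weak equivalence.

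With pointwise homotopic constancy in hand, the classical diagonal lemma for bisimplicial sets (a levelwise weak equivalence in one direction induces a weak equivalence on diagonals, cf.\ Goerss--Jardine IV.1.7) yields that $\Map_W(a,b)_\theta\to\diag\hom_W(a,b)_\theta$ is a weak equivalence in $\MSspace$ for every $\theta$, whence the map $\Map_W(a,b)\to\diag\hom_W(a,b)$ is a weak equivalence in $\injThnspace$. The hardest step will be the upgrade of homotopic constancy from $\MSThnsset$ to pointwise $\MSspace$: the remark only records the former, so one must argue directly via the Segal decomposition at each individual $\theta$, without passing through the localization at $S_{\CSThn}$.
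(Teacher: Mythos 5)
Your argument has the same skeleton as the paper's: both reduce to showing that $\Map_W(a,b)=\hom_W(a,b)_0\to\hom_W(a,b)$ is a weak equivalence in $\injsThnspace$ (i.e.\ levelwise in $\theta$ and in the categorical simplicial degree), and then pass to the diagonal. The paper packages the diagonal step by noting this gives a weak equivalence in $(\MSdiag)^{\Thnop}_\inj$ and using that $\diag$ preserves weak equivalences (\cref{diagiotaQEinj}); your appeal to the Goerss--Jardine diagonal lemma is the same fact in different clothing.

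The two proofs diverge only in how they establish that levelwise weak equivalence, and there your sketch leaves a real gap. The paper resolves it entirely by citation to the $\Thn$-left fibration machinery: $\{a\}\times_W W^{\repD[1]}\to W$ is a $\Thn$-left fibration, these are stable under pullback, so $\hom_W(a,b)\to\repD[0]$ is one, and the reference's Lemma~2.6 says precisely that for a $\Thn$-left fibration over a point the $0$-th level inclusion is a weak equivalence in $\injsThnspace$. Your plan to re-derive this directly from the Segal condition is a legitimate, more self-contained route, but it is not carried out, and there are two points it elides. First, the Segal maps from \cref{recall:fibrant} are weak equivalences in $\MSThnsset$, not levelwise; to use them at each $\theta$ you need the Segal target $W_1\times_{W_0}\cdots\times_{W_0}W_1$ to be fibrant in $\MSThnsset$ (true, because $W_0$ is discrete and $W$ is injectively fibrant, but an argument is needed) so that a localization equivalence between fibrant objects becomes a genuine levelwise one. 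Second, $\repD\times\repD[1]$ is the nerve of the poset $[m]\times[1]$, which is not a chain, so a map out of it is not simply an iterated pullback of $W_1$'s along a spine; after collapsing the two ends, one has $m+1$ vertical edges tied together through $2$-cells and identities, and reducing this to a single $\Map_W(a,b)_\theta$ factor (controlling composition against identities) is essentially a self-contained proof of the ``left fibration over a point'' lemma the paper cites. You correctly flag this as the hardest step, but as written you defer it rather than complete it.
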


\begin{proof}
As $W$ is a fibrant object $\pcatinj$ (see \cref{recall:fibrant}), by \cite[Theorem~2.30]{RasekhD} the map $\pi\colon \{a\}\times_W W^{\repD[1]} \to W$ in $\sThnsset$ is a $\Thn$-left fibration in the sense of \cite[Definition~2.1]{RasekhD}. By \cite[Lemma~2.10]{RasekhD}, $\Thn$-left fibrations are stable under pullbacks. So the pullback of $\pi$ along $b\colon \repD[0]\to W$, which is by \cref{def:homW} precisely
    \[\hom_W(a,b)\to\repD[0],\]
    is a $\Thn$-left fibration. It then follows from \cite[Lemma~2.6]{RasekhD} that the map
    \[ \Map_W(a,b)=\hom_W(a,b)_0\to \hom_W(a,b) \]
    is a weak equivalence in $\injsThnspace$, and hence also in its localization $(\MSdiag)^{\Thnop}_\inj$. As $\diag\colon (\MSdiag)^{\Thnop}_\inj\to \injThnspace$ preserves weak equivalences by \cref{diagiotaQEinj}, we obtain the desired weak equivalence in $\injThnspace$
\[ \Map_W(a,b)=\diag \Map_W(a,b)\xrightarrow{\sim} \diag\hom_W(a,b). \qedhere \]
\end{proof}

Combining \cref{cor:Homvsdiaghom,prop:Mapvsdiaghom}, we get the following. 

\begin{cor} \label{HomChigher}
Let $W$ be a fibrant object in $\pcatinj$ and $a,b\in W_0$. Then there is a natural zig-zag of weak equivalences in $(\MSspace)^{\Thnop}_\inj$ connecting the $\Thn$-spaces
\[\Hom_{\Ch W}(a,b)\sim \Map_W(a,b).\]
\end{cor}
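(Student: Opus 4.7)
The plan is to simply concatenate the two zig-zags produced in Propositions~\ref{cor:Homvsdiaghom} and~\ref{prop:Mapvsdiaghom}, since they both land in the common $\Thn$-space $\diag\hom_W(a,b)$.

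More explicitly, since $W$ is fibrant in $\pcatinj$, Proposition~\ref{cor:Homvsdiaghom} supplies a natural zig-zag of weak equivalences
\[ \Hom_{\Ch W}(a,b) \sim \diag\hom_W(a,b) \]
in $(\MSspace)^{\Thnop}_\inj$, while Proposition~\ref{prop:Mapvsdiaghom} supplies a natural weak equivalence
\[ \Map_W(a,b) \xrightarrow{\sim} \diag\hom_W(a,b) \]
in the same model category. Splicing the latter at the right end of the former yields the desired zig-zag
\[ \Hom_{\Ch W}(a,b) \sim \diag\hom_W(a,b) \xleftarrow{\sim} \Map_W(a,b). \]

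There is no real obstacle here beyond checking that the two zig-zags live in the same homotopical context; both are explicitly stated in $(\MSspace)^{\Thnop}_\inj$, and their naturality in $a,b \in W_0$ (and in fibrant $W$) is inherited from the naturality of the constructions $\Hom_{\Ch W}(a,b)$, $\Map_W(a,b)$, and $\hom_W(a,b)$ that enter each of the two propositions. Hence the corollary is immediate.
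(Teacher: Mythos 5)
Your proof is correct and matches the paper exactly: the paper introduces this corollary with the sentence ``Combining \cref{cor:Homvsdiaghom,prop:Mapvsdiaghom}, we get the following,'' which is precisely the splicing of the two zig-zags at $\diag\hom_W(a,b)$ that you describe.
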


\subsection{The homotopy category of the homotopy coherent categorification} \label{subsec:HoofC}

We now compare the homotopy category of $\Ch W$ with that of $W$ in the case where $W$ is fibrant.

\begin{lemma} \label{zigzagforhtpycat}
Let $W$ be a fibrant object in $\pcatinj$ and $a,b\in W_0$. If $\Ch W\to (\Ch W)^{\fib}$ is a fibrant replacement in $\injThnspace\text{-}\cat$, then there is a natural zig-zag of weak equivalences in $\injThnspace$ between the $\Thn$-spaces
\[ \Hom_{(\Ch W)^{\fib}}(a,b)\sim \Map_W(a,b). \]
\end{lemma}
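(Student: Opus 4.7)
The plan is to combine \cref{HomChigher} with the observation that a fibrant replacement in the enriched model structure $\injThnspace\text{-}\cat$ induces level-wise weak equivalences on hom $\Thn$-spaces in $\injThnspace$. So the work consists of bridging the already-available comparison $\Hom_{\Ch W}(a,b)\sim \Map_W(a,b)$ with $(\Ch W)^{\fib}$ via the fibrant replacement map.

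First, I would recall that the model structure $\injThnspace\text{-}\cat$ is obtained from the excellent model structure $\injThnspace$ via \cite[Proposition~A.3.2.4]{htt}, so that its weak equivalences are the Dwyer--Kan equivalences: maps that are essentially surjective on homotopy categories and induce weak equivalences in $\injThnspace$ on all hom $\Thn$-spaces. Moreover, in Lurie's construction, a set of generating trivial cofibrations is produced by applying suspensions of enriched categories to a generating set of trivial cofibrations of the base $\injThnspace$. As these generating trivial cofibrations do not add new objects, the fibrant replacement $\Ch W\to (\Ch W)^{\fib}$ produced by the small object argument can be chosen to be the identity on the object set $W_0=\Ob(\Ch W)$.

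Second, since the fibrant replacement $\Ch W\to (\Ch W)^{\fib}$ is a weak equivalence in $\injThnspace\text{-}\cat$ and is the identity on objects, by the characterization of weak equivalences recalled above, for every pair $a,b\in W_0$ the induced map
\[ \Hom_{\Ch W}(a,b)\to \Hom_{(\Ch W)^{\fib}}(a,b) \]
is a weak equivalence in $\injThnspace$.

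Finally, concatenating this weak equivalence with the zig-zag of weak equivalences in $(\MSspace)^{\Thnop}_\inj=\injThnspace$ provided by \cref{HomChigher}, we obtain the desired zig-zag
\[ \Hom_{(\Ch W)^{\fib}}(a,b)\;\sim\; \Hom_{\Ch W}(a,b)\;\sim\;\Map_W(a,b) \]
in $\injThnspace$, which is natural in $W$ and in the pair $(a,b)$. The main subtlety, and the only point that needs verification beyond invoking earlier results, is the claim that the fibrant replacement can be arranged to be the identity on objects; this is standard for enriched model structures constructed à la Lurie, but is worth spelling out since the identification $\Ob((\Ch W)^{\fib})=W_0$ underlies the very formulation of the hom $\Thn$-space on the left-hand side.
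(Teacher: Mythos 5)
Your overall strategy matches the paper's: combine \cref{HomChigher} with the observation that the fibrant replacement $\Ch W\to(\Ch W)^{\fib}$ induces weak equivalences on hom $\Thn$-spaces in $\injThnspace$, and concatenate. However, your detour about arranging the fibrant replacement to be the identity on objects is both unnecessary and not quite correctly justified. It is unnecessary because the hom-wise condition in the definition of weak equivalences for the enriched model structure (see the Recall in \textsection 1.2) already gives, for \emph{any} weak equivalence $F\colon\cC\to\cD$ and any $a,b\in\Ob\cC$, that $F_{a,b}\colon\Hom_{\cC}(a,b)\to\Hom_{\cD}(Fa,Fb)$ is a weak equivalence in the base; this does not require $F$ to be a bijection on objects. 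Since a fibrant replacement is in particular a weak equivalence, the map $\Hom_{\Ch W}(a,b)\to\Hom_{(\Ch W)^{\fib}}(a,b)$ (with $a,b$ identified with their images) is a weak equivalence in $\injThnspace$, and this is all the paper uses. It is imprecisely justified because Lurie's generating trivial cofibrations for enriched categories are \emph{not} all suspensions of generating trivial cofibrations of the base; the construction of the model structure is obtained via Smith's theorem rather than a transfer, and the class of trivial cofibrations includes interval-type maps of the form $[0]\to\cE$ that do add objects. (It is true that a fibrant replacement that is the identity on objects exists, since fibrancy in this model structure only requires the hom objects to be fibrant, but that argument is different from the one you gave and, again, is not needed here.)
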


\begin{proof}
    This follows from \cref{HomChigher} and the fact that by definition of the fibrant replacement $(\Ch W)^\fib$ the map $\Hom_{\Ch W}(a,b)\xrightarrow{\sim}\Hom_{(\Ch W)^\fib}(a,b)$ is a weak equivalence in $\injThnspace$.
\end{proof}

\begin{lemma} \label{lemmaFibRepPcat}
Let $W$ be a fibrant object in $\pcatinj$. If $\Ch W\to (\Ch W)^{\fib}$ is a fibrant replacement in $\injThnspace\text{-}\cat$, then $(\Ch W)^{\fib}$ is in fact fibrant in $\MSThncat$.
\end{lemma}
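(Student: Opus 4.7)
The plan is to reduce fibrancy in $\MSThncat$ to a level-wise check on the hom $\Thn$-spaces, and then exploit the zig-zag established in \cref{zigzagforhtpycat} to transfer fibrancy from $\Map_W(a,b)$ to $\Hom_{(\Ch W)^{\fib}}(a,b)$. Concretely, by the description of fibrant objects in $\MSThncat$ recalled in \cref{subsec:MSenriched}, a $\Thnsset$-enriched category $\cC$ is fibrant there exactly when each hom $\Thn$-space $\Hom_{\cC}(a,b)$ is fibrant in $\MSThnsset$. Since $(\Ch W)^{\fib}$ is by hypothesis already fibrant in $\injThnspace\text{-}\cat$, its hom $\Thn$-spaces are fibrant in $\injThnspace$, so it suffices to upgrade this to fibrancy in the Bousfield localization $\MSThnsset$, i.e.\ to verify that each $\Hom_{(\Ch W)^{\fib}}(a,b)$ is $S_{\CSThn}$-local.

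First I would check that $\Map_W(a,b)$ is fibrant in $\MSThnsset$ whenever $W$ is fibrant in $\pcatinj$. Using the pullback defining $\Map_W(a,b)$ together with the facts that $W_1$ is fibrant in $\MSThnsset$ (as $W$ is fibrant in $\injsThnsset$ and the model structure is localized from the injective one) and that $W_0$ is discrete, the map $W_1\to W_0\times W_0$ is a fibration between fibrant objects in $\MSThnsset$, so the ordinary pullback against $(a,b)\colon \repS[0]\to W_0\times W_0$ computes the homotopy pullback in $\MSThnsset$ and is fibrant there.

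Next, by \cref{zigzagforhtpycat} there is a natural zig-zag of weak equivalences in $\injThnspace$ connecting $\Hom_{(\Ch W)^{\fib}}(a,b)$ and $\Map_W(a,b)$. Since $\MSThnsset$ is obtained from $\injThnspace$ by left Bousfield localization with respect to the set $S_{\CSThn}$, any weak equivalence in $\injThnspace$ is also a weak equivalence in $\MSThnsset$, and being $S_{\CSThn}$-local (equivalently: being fibrant in $\MSThnsset$, granted fibrancy in $\injThnspace$) is invariant under zig-zags of $\injThnspace$-weak equivalences between $\injThnspace$-fibrant objects, as locality can be characterized by mapping into the object and weak equivalences induce weak equivalences between derived mapping spaces. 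Since $\Map_W(a,b)$ is fibrant in $\MSThnsset$ by the previous step, it is $S_{\CSThn}$-local, hence so is $\Hom_{(\Ch W)^{\fib}}(a,b)$, and combined with its $\injThnspace$-fibrancy this says it is fibrant in $\MSThnsset$.

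Applying this to every pair $a,b\in \Ob((\Ch W)^{\fib})$ then yields that all hom $\Thn$-spaces of $(\Ch W)^{\fib}$ are fibrant in $\MSThnsset$, which by the recalled characterization is exactly fibrancy in $\MSThncat$. The main conceptual point to justify carefully will be the invariance of locality under zig-zags of injective-weak equivalences; everything else is assembling previously established results and standard properties of left Bousfield localizations.
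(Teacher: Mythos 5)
Your proof is correct and takes essentially the same approach as the paper: both reduce to the hom-level characterization of fibrancy in $\MSThncat$, use the zig-zag from \cref{zigzagforhtpycat}, observe that $\Map_W(a,b)$ is fibrant in $\MSThnsset$, and transfer $S_{\CSThn}$-locality along the zig-zag of $\injThnspace$-weak equivalences. The paper packages the last step as a citation to \cite[Lemma 3.2.1]{Hirschhorn} and explicitly notes that the intermediate objects of the zig-zag may be fibrantly replaced first, a small technical point you should make explicit if you invoke locality-transfer only between fibrant objects.
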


\begin{proof}
Let $a,b\in W_0$. Since $(\Ch W)^\fib$ is a fibrant object in $\injThnspace\text{-}\cat$ and $W$ is a fibrant object in $\pcatinj$, then $\Hom_{(\Ch W)^{\fib}}(a,b)$ and $\Map_W(a,b)$ are fibrant in $\injThnspace$. Moreover, by \cref{zigzagforhtpycat}, we have a zig-zag of weak equivalence in $\injThnspace$
\[ \Hom_{(\Ch W)^{\fib}}(a,b)\sim \Map_W(a,b). \]
As both $\Thn$-spaces are fibrant in $\injThnspace$, we can assume that the above zig-zag only passes through fibrant objects of $\injThnspace$ (by fibrantly replacing the intermediate objects if necessary). As $\Map_W(a,b)$ is further fibrant in $\MSThnsset$, then by \cite[Lemma 3.2.1]{Hirschhorn} we have that $\Hom_{(\Ch W)^{\fib}}(a,b)$ is also fibrant in $\MSThnsset$. It follows that $(\Ch W)^{\fib}$ is fibrant in $\MSThncat$, as desired.
\end{proof}

\begin{prop} \label{lem:eqhtpycat}
Let $W$ be a fibrant object in $\pcatinj$. Then there is a natural isomorphism of categories
 \[  \Ho(\Ch W)\cong \Ho W.\]
\end{prop}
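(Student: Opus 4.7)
The goal is to produce a natural isomorphism of ordinary categories, so the proof should proceed by matching objects, matching hom sets, and then checking compatibility with composition and identities. Throughout, I would exploit the zig-zag of weak equivalences already assembled in the preceding lemmas.

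Matching objects is immediate: by construction $\Ch = \diag_*\circ \CL_*$ is the identity on objects, so $\Ob(\Ho(\Ch W)) = \Ob(\Ch W) = W_{0,0,0} = \Ob(\Ho W)$.

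For the hom sets, I would route everything through the fibrant replacement $(\Ch W)^{\fib}$ in $\injThnspace\text{-}\cat$, which is in fact fibrant in $\MSThncat$ by \cref{lemmaFibRepPcat}. The map $\Ch W\to (\Ch W)^{\fib}$ is the identity on objects and a weak equivalence on hom $\Thn$-spaces in $\injThnspace$; since $\MSThnsset$ is a localization of $\injThnspace$, these remain weak equivalences in $\MSThnsset$ and therefore induce an isomorphism $\Ho(\Ch W)(a,b)\cong \Ho((\Ch W)^{\fib})(a,b)$ via the definition of $\Ho$ in terms of $\Ho(\MSThnsset)(\repS[0],-)$. Now \cref{prop:homofHoC} applied to the fibrant $(\Ch W)^{\fib}$ identifies this with $\pi_0 \Hom_{(\Ch W)^{\fib}}(a,b)_{[0]}$. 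Finally, \cref{zigzagforhtpycat} gives a zig-zag of weak equivalences in $\injThnspace$ connecting $\Hom_{(\Ch W)^{\fib}}(a,b)$ and $\Map_W(a,b)$; evaluating the zig-zag at $[0]\in \Thn$ gives a zig-zag of weak equivalences in $\MSspace$, yielding bijections on $\pi_0$. Chaining these identifications produces a bijection
\[(\Ho(\Ch W))(a,b)\cong \pi_0 (\Map_W(a,b)_{[0]}) = (\Ho W)(a,b),\]
which is natural in $(a,b)$.

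The main obstacle is verifying that this bijection respects composition. Composition in $\Ho W$ is defined via the Segal maps $W_2\to W_1\times_{W_0} W_1$ (which are weak equivalences since $W$ is Segal and the pullback is over the discrete $W_0$), while composition in $\Ho(\Ch W)$ is the $\pi_0$-image of the enriched composition of $(\Ch W)^{\fib}$. To bridge the two, I would use that every step in the zig-zag above is natural in the bi-pointed object and compatible with the intermediate composition structures: the object $\diag\hom_W(a,b)$ inherits composition from path concatenation in $W^{\repD[1]}$, the map $\Map_W(a,b)\to\diag\hom_W(a,b)$ of \cref{prop:Mapvsdiaghom} intertwines the Segal composition with path composition, and the map $\Hom_{\Ch W}(a,b)\to \diag \hom_W(a,b)$ of \cref{cor:Homvsdiaghom} arises by applying the Dugger–Spivak comparison (\cref{thm:zigzag2}) level-wise, which is known to be compositionally compatible. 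Taking $\pi_0$ of the $[0]$-level of this common target then identifies both composition structures.

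Finally, compatibility with identities is immediate since each of the intermediate objects carries distinguished basepoints coming from the degenerate $0$-simplices of $W$, and all comparison maps in the zig-zag preserve these basepoints. Packaging everything together yields the required isomorphism $\Ho(\Ch W)\cong \Ho W$, natural in $W$ among fibrant objects of $\pcatinj$.
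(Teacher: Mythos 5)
Your proof takes essentially the same route as the paper's: match objects ($W_0$ on both sides), route through the fibrant replacement $(\Ch W)^{\fib}$ which is fibrant in $\MSThncat$ by \cref{lemmaFibRepPcat}, apply \cref{prop:homofHoC} to identify $\Ho((\Ch W)^{\fib})(a,b)$ with $\pi_0(\Hom_{(\Ch W)^{\fib}}(a,b)_{[0]})$, then use the zig-zag from \cref{zigzagforhtpycat} evaluated at $[0]\in\Thn$ and take $\pi_0$ to get the bijection with $\Ho W(a,b)$. The only difference is one of emphasis: the paper simply declares that matching objects and hom sets suffices, whereas you flag the need to verify compatibility with composition and identities and sketch an argument. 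That concern is legitimate in principle, but your resolution (asserting that the Dugger--Spivak comparison is ``known to be compositionally compatible,'' that $\diag\hom_W(a,b)$ inherits composition from path concatenation, etc.) is at the same level of hand-waving as the paper's silence, so it does not change the substance of the argument; it just makes the implicit step explicit without actually closing it. In short: correct, same approach, slightly more verbose about a step the paper leaves tacit.
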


\begin{proof}
By construction, the homotopy categories $\Ho(\Ch W)$ and $\Ho W$ have the same set of objects~$W_0$, hence it is enough to show that their hom sets are isomorphic. 

Let $\Ch W\to (\Ch W)^{\fib}$ be a fibrant replacement in $\injThnspace\text{-}\cat$. By \cref{lemmaFibRepPcat} we have that $(\Ch W)^{\fib}$ is in fact a fibrant replacement in $\MSThncat$. As $\Ch W\to (\Ch W)^{\fib}$ is a Dwyer-Kan equivalence, we have an equivalence of categories $\Ho(\Ch W)\simeq\Ho((\Ch W)^{\fib})$.

Let $a,b\in W_0$. By \cref{zigzagforhtpycat}, we have a natural zig-zag of weak equivalences in $\injThnspace$
\[ \Hom_{(\Ch W)^\fib}(a,b)\sim \Map_W(a,b). \]
As weak equivalences in $\injThnspace$ are level-wise, we get a natural zig-zag of weak equivalences in $\MSspace$
\[ \Hom_{(\Ch W)^\fib}(a,b)_{[0]}\sim \Map_W(a,b)_{[0]}. \]
As $\pi_0\colon \sset\to \set$ sends weak equivalences in $\MSspace$ to isomorphisms, we obtain
\begin{align*}
    \Ho(\Ch W)(a,b) &\cong \Ho((\Ch W)^{\fib})(a,b)\cong \pi_0 (\Hom_{(\Ch W)^{\fib}}(a,b)_{[0]})
 \\ &\cong \pi_0(\Map_W(a,b)_{[0]})\cong \Ho (W)(a,b),
\end{align*}
where the isomorphism $\Ho((\Ch W)^{\fib})(a,b)\cong \pi_0 (\Hom_{(\Ch W)^{\fib}}(a,b)_{[0]})$ holds by \cref{prop:homofHoC}. This concludes the proof.
\end{proof}

\section{Explicit computations of the homotopy coherent categorification} \label{Section 3}

In order to show in \cref{Section 4} that $\Ch$ is left Quillen, we need to understand the image of the (trivial) cofibrations in $\pcatinj$ from \cref{rem:gencof,rem:genanodyne}, which are of the form 
\[ \PP\hookrightarrow L(\repD\times Y)\coloneqq L((\partial \repD\hookrightarrow \repD)\widehat{\times}(X\hookrightarrow Y)) \] 
\[ \text{and} \quad L((\Sp\hookrightarrow \repD)\widehat{\times}(X\hookrightarrow Y)) \]
for $m\geq 1$ and $X\hookrightarrow Y$ a monomorphism in $\Thnsset$, where $L\colon \sThnsset\to \pcatThn$ denotes the left adjoint to the inclusion. In this section, we collect the technical results regarding these maps, and the reader is encouraged to skip this section on a first read.

In \cref{Study of PP} we introduce $\PP$ and in \cref{Necklaces in PP} we describe the category of necklaces in $\PP$. In \cref{Auxiliary results about weighted colimits} we discuss how the category of necklaces in $\PP$ is a discrete fibration over the category of necklaces in $\repD$, and then describe the hom $\Thn$-spaces of $\Ch\PP$ as a certain weighted colimit. This relies on results that will be postponed until \cref{Projective cofibrancy results}. In \cref{Study of CPP} we use this to describe the $\Thnsset$-enriched category $\Ch\PP$ and study the $\Thnsset$-enriched functor $\Ch(\PP\hookrightarrow L(\repD\times Y))$. Finally, in \cref{Study of Sigmam} we construct and study a $\Thnsset$-enriched functor $\Ch(L(\repD\times X))\to \Sigma_m X$, related to the image under $\Ch$ of the second type of monomorphisms.

\subsection{Study of \texorpdfstring{$\PP$}{P(X,Y)}} \label{Study of PP}

We denote by $\pi_0\colon \Thnsset\to \set$ the left adjoint to the inclusion $\set\hookrightarrow \Thnsset$. Also recall the left adjoint $L\colon \sThnsset\to \pcatThn$ to the inclusion. We get the following description.

\begin{lemma} \label{RmkPushout}
For $m\geq 1$ and $X\in \Thnsset$, we can compute $L(\repD\times X)$ and $L(\partial\repD\times X)$ as the following pushouts in $\sThnsset$.
\begin{tz}
\node[](1) {$\coprod_{m+1} X$};
\node[below of=1](2) {$\coprod_{m+1} \pi_0 X$}; 
\node[right of=1,xshift=1.9cm](3) {$\repD\times X$}; 
\node[below of=3](4) {$L(\repD\times X)$}; 
\pushout{4};

\draw[->] (1) to (3);
\draw[->] (1) to (2);
\draw[->] (3) to (4);
\draw[->] (2) to (4);

\node[right of=3,xshift=2cm](1) {$\coprod_{m+1} X$};
\node[below of=1](2) {$\coprod_{m+1} \pi_0 X$}; 
\node[right of=1,xshift=2cm](3) {$\partial\repD\times X$}; 
\node[below of=3](4) {$L(\partial\repD\times X)$}; 
\pushout{4};

\draw[->] (1) to (3);
\draw[->] (1) to (2);
\draw[->] (3) to (4);
\draw[->] (2) to (4);
\end{tz}
\end{lemma}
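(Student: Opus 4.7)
The plan is to verify both pushout squares by directly unwinding the definition of $L$ as the left adjoint to the inclusion $I\colon \pcatThn\hookrightarrow\sThnsset$. Once the $0$-th categorical simplicial level of the pushout is identified as a discrete object, the universal property follows automatically from the adjunction $\pi_0\dashv (\set\hookrightarrow \Thnsset)$.

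\textbf{Step 1: identify the $0$-level of $\repD\times X$ in $\sThnsset$.} Since $\repD\in\sThnsset$ is the presheaf $([k],\theta,[m'])\mapsto \Hom_\Delta([m'],[m])$ and $X\in\sThnsset$ is the presheaf $([k],\theta,[m'])\mapsto X_{\theta,[k]}$, at categorical simplicial level $m'=0$ one has $(\repD\times X)_{-,\star,0}\cong \coprod_{m+1}X$. The map $\coprod_{m+1}X\to\repD\times X$ appearing in the pushout is the obvious inclusion picking out the $m+1$ vertices. The same holds with $\partial\repD$ in place of $\repD$, since $(\partial\repD)_0=\{0,1,\ldots,m\}$ when $m\geq 1$.

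\textbf{Step 2: compute the pushout level-wise and check discreteness.} Since colimits in $\sThnsset$ are computed level-wise, at categorical simplicial level $0$ the pushout becomes
\[\coprod_{m+1}X\amalg_{\coprod_{m+1}X}\coprod_{m+1}\pi_0 X\cong \coprod_{m+1}\pi_0 X,\]
which is a discrete $\Thn$-space. Hence the pushout lies in $\pcatThn$. Call the pushout $P$; I would do the same for $P'$ built from $\partial\repD$.

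\textbf{Step 3: verify the universal property.} Given $W\in\pcatThn$, I will show that the canonical map induces a bijection $\Hom_{\sThnsset}(P,W)\cong \Hom_{\sThnsset}(\repD\times X,W)$. A map $\repD\times X\to W$ restricts, at categorical simplicial level $0$, to a map $\coprod_{m+1}X\to W_0$ in $\Thnsset$. Since $W\in\pcatThn$ means $W_0$ is discrete, each component $X\to W_0$ factors uniquely through $\pi_0 X\to W_0$ by the adjunction $\pi_0\dashv(\set\hookrightarrow\Thnsset)$; thus $\coprod_{m+1}X\to W_0$ factors uniquely through $\coprod_{m+1}\pi_0 X\to W_0$. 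This unique extension, together with the original map $\repD\times X\to W$, precisely assembles into a unique map out of the pushout $P$, giving the required natural bijection. By the Yoneda lemma this identifies $P$ with $L(\repD\times X)$. The verification for $L(\partial\repD\times X)$ is identical once one uses $(\partial\repD)_0=\{0,1,\ldots,m\}$ for $m\geq 1$.

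There is no real obstacle here: the only subtle point is making sure the $0$-level of $\partial\repD$ agrees with that of $\repD$ for $m\geq 1$ (which is why the hypothesis $m\geq 1$ is needed), and being careful that $\pi_0$ is applied in the $\Thn$-space (i.e.~$(\theta,k)$) direction while the pushout is taken in the categorical simplicial direction. Everything else is a formal application of adjointness.
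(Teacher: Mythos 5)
Your proof is correct; the paper states the lemma without a proof, and the argument you give — computing the pushout level-wise to see it already lies in $\pcatThn$, then verifying the universal property of $L$ via the adjunction $\pi_0\dashv(\set\hookrightarrow\Thnsset)$ — is the natural one that the authors evidently leave to the reader. The only step left implicit in your Step 3 is that a map in $\sThnsset$ out of a categorically constant object (such as $\coprod_{m+1}X$ or $\coprod_{m+1}\pi_0 X$) is determined by its categorical level $0$ (since $[0]$ is initial in $\Delta$); this is what promotes the factorization $\coprod_{m+1}\pi_0 X\to W_0$ to a genuine map $\coprod_{m+1}\pi_0 X\to W$ in $\sThnsset$ and ensures the resulting cocone on $P$ commutes on $\coprod_{m+1}X$, but it is standard and closes no real gap.
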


In this section we want to understand the object $\PP$ that we now define.

\begin{notation} \label{notationPP}
For $m\geq 1$ and $X\hookrightarrow Y$ a monomorphism in $\Thnsset$, we write $\PP$ for the following pushout in $\pcatThn$ (hence also in $\sThnsset$).
\begin{tz}
\node[](1) {$L(\partial\repD\times X)$};
\node[below of=1](2) {$L(\partial\repD\times Y)$}; 
\node[right of=1,xshift=1.9cm](3) {$L(\repD\times X)$}; 
\node[below of=3](4) {$\PP$}; 
\node[below right of=4,xshift=2cm](5) {$L(\repD\times Y)$}; 
\pushout{4};

\draw[->] (1) to (3);
\draw[->] (1) to (2);
\draw[->] (3) to (4);
\draw[->] (2) to (4);
\draw[->,bend left] (3) to (5);
\draw[->,bend right=15] (2) to (5);
\draw[->,dashed] (4) to node[below,la,xshift=-5pt]{$\inc$} (5);
\end{tz}
By the universal property of pushout, it comes with a map $\inc$ in~$\pcatThn$ as depicted above.

Note that, if we consider the identity $Y\hookrightarrow Y$, then $\PP[m][Y]\cong L(\repD\times Y)$.
\end{notation}

\begin{lemma} \label{lem:PPpushout}
Let $m\geq 1$ and $X\hookrightarrow Y$ be a monomorphism in $\Thnsset$. Then $\PP$ is the following pushout in $\sThnsset$. 
\begin{tz}
\node[](1) {$\coprod_{m+1} Y$};
\node[below of=1](2) {$\coprod_{m+1} \pi_0 Y$}; 
\node[right of=1,xshift=2.9cm](3) {$\partial\repD\times Y\amalg_{\partial\repD\times X} \repD\times X$}; 
\node[below of=3](4) {$\PP$}; 
\pushout{4};

\draw[->] (1) to (3);
\draw[->] (1) to (2);
\draw[->] (3) to (4);
\draw[->] (2) to (4);
\end{tz}
\end{lemma}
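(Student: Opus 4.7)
The strategy is to combine two facts: the functor $L\colon\sThnsset\to\pcatThn$ is a left adjoint (reflection), hence preserves all colimits, and the description of $L$ underlying \cref{RmkPushout} extends to the general formula $LV\cong V\amalg_{V_0}\pi_0(V_0)$ for any $V\in\sThnsset$, where $V_0$ denotes the $\Thn$-space of $0$-simplices of $V$ regarded as a constant simplicial $\Thn$-space and $\pi_0(V_0)$ is its reflection into $\set$.

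First, I would commute $L$ past the pushout defining $\PP$. Since $L$ preserves colimits and since $L$ acts as the identity on objects of $\pcatThn$, the defining pushout in $\pcatThn$ can be rewritten as the value of $L$ on a pushout taken in $\sThnsset$:
$$\PP \;\cong\; L\bigl(\partial\repD\times Y \amalg_{\partial\repD\times X} \repD\times X\bigr).$$
Call this inner pushout $V$. Because pushouts in $\sThnsset$ are computed objectwise and $(\partial\repD)_0=(\repD)_0=\{0,1,\ldots,m\}$ for $m\geq 1$, a direct calculation yields
$$V_0 \;\cong\; \coprod_{m+1}Y \amalg_{\coprod_{m+1}X}\coprod_{m+1}X \;\cong\; \coprod_{m+1}Y,$$
so that $\pi_0(V_0)\cong\coprod_{m+1}\pi_0 Y$. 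Applying the formula $LV\cong V\amalg_{V_0}\pi_0(V_0)$ to this $V$ then produces exactly the pushout square in the statement.

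The only substantive point to justify is the general formula for $L$, but this is precisely the calculation used to prove \cref{RmkPushout}: the reflection into $\pcatThn$ universally forces the $0$-space to become discrete, which is achieved by pushing out along the unit $V_0\to\pi_0(V_0)$ of the adjunction $\pi_0\dashv(\set\hookrightarrow\Thnsset)$. No real obstacle is anticipated; the argument is pure bookkeeping with adjunctions and level-wise colimits in the presheaf category $\sThnsset$.
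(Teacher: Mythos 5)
Your proof is correct and amounts to the same underlying colimit manipulation as the paper's one-line argument (``pushouts commuting with pushouts, using \cref{RmkPushout}''), just packaged more cleanly: you factor the $3\times 3$ pushout shuffle through two abstract facts, namely that the reflector $L$ preserves colimits and that it admits the pushout formula $LV\cong V\amalg_{V_0}\pi_0(V_0)$, the latter being exactly what \cref{RmkPushout} records in the two needed special cases. Both steps are justified, including the observation that the defining pushout of $\PP$ in $\pcatThn$ agrees with $L(V)$ for $V=\partial\repD\times Y\amalg_{\partial\repD\times X}\repD\times X$, and the computation $V_0\cong\coprod_{m+1}Y$ is correct since $(\partial\repD)_0=(\repD)_0$ for $m\geq 1$.
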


\begin{proof}
    This is an instance of pushouts commuting with pushouts, using \cref{RmkPushout}.
\end{proof}

\begin{rmk} \label{rem:PPtoFm}
For $m\geq 1$ and $X\hookrightarrow Y$ a monomorphism in $\Thnsset$, the map
\[ \partial\repD\times Y\amalg_{\partial\repD\times X} \repD\times X\to \partial\repD\amalg_{\partial\repD}\repD =\repD\]
induced by the projection maps gives a commutative square 
\begin{tz}
\node[](1) {$\coprod_{m+1} Y$};
\node[below of=1](2) {$\coprod_{m+1} \pi_0 Y$}; 
\node[right of=2,xshift=1.4cm](2') {$\coprod_{m+1} \repD[0]$};
\node[right of=1,xshift=3.9cm](3) {$\partial\repD\times Y\amalg_{\partial\repD\times X} \repD\times X$}; 
\node[below of=3](4) {$\repD$}; 
\punctuation{4}{.};

\draw[->] (1) to (3);
\draw[->] (1) to (2);
\draw[->] (3) to (4);
\draw[->] (2) to (2');
\draw[->] (2') to (4);
\end{tz}
By \cref{lem:PPpushout}, as $\PP$ is the pushout of the above span, we get an induced map
\[ \QmXY\colon \PP\to \repD.\] 
\end{rmk}

We particularly care to study $\PP$ in the case where $Y$ is \emph{connected}.

\begin{defn}
A $\Thn$-space $Y$ is \emph{connected} if there is an isomorphism of sets $\pi_0 Y\cong \{*\}$.
\end{defn}

\begin{rmk} \label{rmk:monoconnectedtarget}
For $\defThn\in \Thn$ and $\defS\geq 0$, the representable $\repThn\times\repS$ is a connected $\Thn$-space. In particular, this says that all monomorphisms in $\Thnsset$ of the form 
\[ (\partial\repThn\hookrightarrow \repThn)\widehat{\times} (\partial\repS\hookrightarrow \repS) \]
are monomorphisms with connected target.
\end{rmk}

In the case where $Y$ is connected, we can describe $\PP$ as follows.

\begin{lemma} \label{PP0}
Let $m\geq 1$, $Y$ be a connected $\Thn$-space, and $X\hookrightarrow Y$ be a monomorphism in $\Thnsset$. Then there is an isomorphism in $\Thnsset$
\[ \PP_0\cong \{0,1,\ldots,m\}. \]
\end{lemma}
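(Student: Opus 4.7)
The plan is to apply the evaluation functor $(-)_0 \colon \sThnsset \to \Thnsset$ at $[0]\in\Delta$ to the pushout square for $\PP$ given in \cref{lem:PPpushout}. Since $\sThnsset = \Thnsset^{\Dop}$ is a presheaf category, colimits are computed pointwise, so evaluation at $[0]$ preserves pushouts. Hence $\PP_0$ will be the pushout in $\Thnsset$ of the four corners of \cref{lem:PPpushout} evaluated at $[0]$.

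Next I would compute each corner at level $0$. The objects $\coprod_{m+1} Y$ and $\coprod_{m+1} \pi_0 Y$ are viewed as constant simplicial objects in $\sThnsset$ via $\Thnsset \hookrightarrow \sThnsset$, so they are unchanged by evaluating at $[0]$. For the top-right corner, since $(\partial\repD)_0 = \{0,1,\ldots,m\} = (\repD)_0$, one gets $(\partial\repD\times Y)_0 \cong \coprod_{m+1}Y$ and $(\repD\times X)_0\cong (\partial\repD\times X)_0\cong \coprod_{m+1}X$, so that
\[
(\partial\repD\times Y \amalg_{\partial\repD\times X} \repD\times X)_0 \;\cong\; \coprod_{m+1} Y \amalg_{\coprod_{m+1} X} \coprod_{m+1} X \;\cong\; \coprod_{m+1} Y,
\]
and the map from $(\coprod_{m+1}Y)_0 = \coprod_{m+1}Y$ to this corner is the identity (induced by the vertex inclusions into $\partial\repD$ and $\repD$). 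Therefore $\PP_0$ is the pushout of the span
\[
\coprod_{m+1}\pi_0 Y \;\xleftarrow{\amalg \mathrm{proj}}\; \coprod_{m+1} Y \;\xrightarrow{\id}\; \coprod_{m+1} Y,
\]
which is simply $\coprod_{m+1} \pi_0 Y$.

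Finally, connectedness of $Y$ gives $\pi_0 Y \cong \{*\}$, hence $\PP_0 \cong \coprod_{m+1}\{*\} \cong \{0,1,\ldots,m\}$, as desired. The only mild subtlety, and the one worth flagging, is the bookkeeping identification of the top-right corner at level $0$ as $\coprod_{m+1}Y$ with the top-left map being the identity; once that is unpacked, the pushout collapses trivially and the result follows from $\pi_0 Y \cong \{*\}$.
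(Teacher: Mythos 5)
Your proof is correct and takes essentially the same route as the paper: apply the colimit-preserving functor $(-)_0$ to the pushout from \cref{lem:PPpushout}, identify the top-right corner at level $0$ as $\coprod_{m+1}Y$, and conclude using connectedness of $Y$. You simply spell out the identification of the top-right corner and the collapse of the pushout in more detail than the paper does.
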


\begin{proof}
First note that, as $X\subseteq Y$, we have an isomorphism in $\Thnsset$
\[ \textstyle (\partial\repD\times Y\amalg_{\partial\repD\times X} \repD\times X)_0 \cong \coprod_{m+1} Y. \]
By applying the (colimit-preserving) functor $(-)_0\colon \sThnsset\to \Thnsset$ to the pushout of \cref{lem:PPpushout}, we obtain that $\PP_0\cong \coprod_{m+1} \repD[0]\cong \{0,1,\ldots,m\}$.
\end{proof}

\begin{rmk} \label{PPthetakpushout}
    Let $m\geq 1$, $Y$ be a connected $\Thn$-space, and $X\hookrightarrow Y$ be a monomorphism in $\Thnsset$. By \cref{lem:PPpushout}, we obtain that, for $\defThn\in \Thn$ and $\defS\geq 0$, the simplicial set
    $\PP_{-,\defThn,\defS}
 $ is the following pushout in $\set^{\Dop}$.
 \begin{tz}
\node[](1) {$\coprod_{m+1} \coprod_{Y_{\defThn,\defS}} \repD[0]$};
\node[below of=1](2) {$\coprod_{m+1} \repD[0]$}; 
\node[right of=1,xshift=3.8cm](3) {$(\coprod_{Y_{\defThn,\defS}\setminus X_{\defThn,\defS}} \partial\repD) \amalg (\coprod_{X_{\defThn,\defS}} \repD)$}; 
\node[below of=3](4) {$\PP_{-,\defThn,\defS}$}; 
\pushout{4};

\draw[->] (1) to (3);
\draw[->] (1) to (2);
\draw[->] (3) to (4);
\draw[->] (2) to (4);
\end{tz}
Moreover, the component $ \QmXY_{-,\defThn,\defS}\colon \PP_{-,\defThn,\defS}\to \repD_{-,\defThn,\defS}=\repD$ of the map from \cref{rem:PPtoFm} is induced by the fold map 
\[  \textstyle (\coprod_{Y_{\defThn,\defS}\setminus X_{\defThn,\defS}} \partial\repD) \amalg (\coprod_{X_{\defThn,\defS}} \repD)\to \partial\repD\amalg\repD\hookrightarrow \repD\amalg\repD\to \repD.  \]
\end{rmk}

The object $\PP$ satisfies the following useful property introduced in \cref{subsec:necklace}.

\begin{prop} \label{prop:PP1ordered}
Let $m\geq 1$, $Y$ be a connected $\Thn$-space, and $X\hookrightarrow Y$ be a monomorphism in $\Thnsset$. For all $\defThn\in \Thn$ and $\defS\geq 0$, the simplicial set $\PP_{-,\defThn,\defS}$ is $1$-ordered.
\end{prop}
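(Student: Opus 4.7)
The plan is to verify the three defining conditions of $1$-orderedness for $\PP_{-,\defThn,\defS}$ directly, exploiting the explicit pushout description in \cref{PPthetakpushout}.

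The key preliminary observation is that the pushout in \cref{PPthetakpushout} is computed levelwise and only identifies $0$-simplices (together with their degeneracies), so it exhibits $\PP_{-,\defThn,\defS}$ as a union of copies of $\partial \repD$ (one for each $y \in Y_{\defThn,\defS}\setminus X_{\defThn,\defS}$) and of $\repD$ (one for each $x \in X_{\defThn,\defS}$), meeting only along the common vertex set $\{0,1,\ldots,m\}$ (as given by \cref{PP0}). Concretely, each such copy embeds monomorphically in $\PP_{-,\defThn,\defS}$, and every non-degenerate positive-dimensional simplex of $\PP_{-,\defThn,\defS}$ comes from a unique such copy. In particular, two non-degenerate edges coming from different copies remain distinct in $\PP_{-,\defThn,\defS}$ even if they share the same endpoints.

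Next I would establish antisymmetry of $\preceq_{\PP_{-,\defThn,\defS}}$ by a transport argument: the projection $\QmXY_{-,\defThn,\defS}\colon \PP_{-,\defThn,\defS}\to \repD$ of \cref{rem:PPtoFm} restricts to the identity on the $0$-simplex set $\{0,1,\ldots,m\}$, so post-composition with $\QmXY_{-,\defThn,\defS}$ shows that $a\preceq_{\PP_{-,\defThn,\defS}} b$ implies $a\preceq_{\repD} b$. Since $\repD$ is ordered and hence $1$-ordered by \cref{examplesof1ordered,lem:orderedis1ordered}, with antisymmetric $\preceq_{\repD}$, antisymmetry for $\PP_{-,\defThn,\defS}$ follows. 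For the two remaining conditions, let $\sigma\colon \repD[m']\to \PP_{-,\defThn,\defS}$ be a non-degenerate simplex with $m'\geq 1$. By the preliminary observation, $\sigma$ factors uniquely through the embedding of some copy, and within that copy it is a non-degenerate $m'$-simplex of $\partial\repD$ or $\repD$. As these copies are $1$-ordered, \cref{nondegsimplexareinjective} implies that $\sigma$ is a monomorphism into its copy, and hence its restriction to $\Sp[m']$ is also a monomorphism into the copy; composing with the embedding of the copy into $\PP_{-,\defThn,\defS}$ yields the desired monomorphism $\Sp[m']\hookrightarrow \PP_{-,\defThn,\defS}$, establishing condition (iii). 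For the Segal injectivity condition (ii), if $\tau\colon \repD[m']\to \PP_{-,\defThn,\defS}$ is another non-degenerate simplex with the same spine as $\sigma$, then the spine edges, being non-degenerate in $\PP_{-,\defThn,\defS}$, determine the copy in which they all lie; so $\tau$ lives in the same copy as $\sigma$, and by $1$-orderedness of the copy one concludes $\sigma=\tau$.

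The main obstacle is the careful levelwise analysis of the pushout in the preliminary observation: one must confirm that different summands contribute disjoint sets of non-degenerate positive-dimensional simplices and that each copy embeds as a monomorphism. Once this structural fact is secured, the three $1$-orderedness conditions reduce to the corresponding properties already known for $\repD$ and $\partial\repD$ via \cref{examplesof1ordered,lem:orderedis1ordered,nondegsimplexareinjective}.
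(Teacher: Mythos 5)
Your proposal is correct and takes essentially the same route as the paper: both rely on the pushout description of $\PP_{-,\defThn,\defS}$ (\cref{PPthetakpushout}) to identify the source copy of $(\partial)\repD$ containing a given non-degenerate positive-dimensional simplex, and then transfer the two remaining $1$-orderedness conditions from the already-known $1$-orderedness of $\partial\repD$ and $\repD$. The only genuine variation is in the antisymmetry step: you transport antisymmetry along the projection $\QmXY_{-,\defThn,\defS}\colon\PP_{-,\defThn,\defS}\to\repD$, whereas the paper observes directly, from $\PP_{0,\defThn,\defS}=\{0,\ldots,m\}$ and the fact that every $1$-simplex goes from $i$ to $j$ with $i\leq j$, that $\preceq_{\PP_{-,\defThn,\defS}}$ is literally the linear order; both work, with the paper's being marginally more self-contained since it avoids invoking $\QmXY$. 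Your "preliminary observation" about disjointness of non-degenerate simplices from different summands, which you flag as the main obstacle, is indeed the crux, but it is exactly the content the paper extracts from \cref{PPthetakpushout}: the pushout along $\coprod_{m+1}\coprod_{Y_{\defThn,\defS}}\repD[0]\to\coprod_{m+1}\repD[0]$ is computed levelwise and only merges vertices (and their degeneracies), so each summand embeds monomorphically and the summands share only $\{0,\ldots,m\}$; the paper uses this implicitly in both remaining steps (for the Segal injectivity it states the equivalent fact that the map from the coproduct is injective on $1$-simplices).
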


\begin{proof}
    By \cref{PP0}, we have that $\PP_{0,\defThn,\defS} = \{0,1,\ldots,m\}$ and by construction every $1$-simplex goes from $i$ to $j$ where $i \leq j$. Hence the relation $\preceq_{\PP_{-,\defThn,\defS}}$ is precisely the linear order $0 \leq 1\leq \ldots \leq m$, and so it is in particular anti-symmetric.

    For $m'\geq 1$, we first show that, for every $m'$-simplex $\repD[m']\to \PP_{-,\defThn,\defS}$, its restriction along the inclusion $\Sp[m']\hookrightarrow \repD[m']$ is a monomorphism $\Sp[m']\to \PP_{-,\defThn,\defS}$. Let $\sigma\colon \repD[m']\to \PP_{-,\defThn,\defS}$ be a non-degenerate $m'$-simplex of $\PP_{-,\defThn,\defS}$. By the description of $\PP_{-,\defThn,\defS}$ given in \cref{PPthetakpushout}, such an $m'$-simplex comes from a non-degenerate $m'$-simplex
    \[  \textstyle \overline{\sigma}\colon \repD[m']\to (\partial)\repD\hookrightarrow(\coprod_{Y_{\defThn,\defS}\setminus X_{\defThn,\defS}} \partial\repD) \amalg (\coprod_{X_{\defThn,\defS}} \repD). \]
    Since the simplicial sets $\partial\repD$ and $\repD$ are $1$-ordered by \cref{examplesof1ordered}, it follows that the induced map $\Sp[m']\hookrightarrow\repD[m']\xrightarrow{\overline{\sigma}} (\partial)\repD$ is a monomorphism. Now, as the composite
    \[ \textstyle (\partial) \repD\hookrightarrow (\coprod_{Y_{\defThn,\defS}\setminus X_{\defThn,\defS}} \partial\repD) \amalg (\coprod_{X_{\defThn,\defS}} \repD)\to \PP_{-,\defThn,\defS} \]
    is also a monomorphism, it follows that the induced map $\Sp[m']\hookrightarrow\repD[m']\xrightarrow{\sigma} \PP_{-,\defThn,\defS}$ is the composite of monomorphisms 
    \[\Sp[m']\hookrightarrow\repD[m']\xrightarrow{\overline{\sigma}} (\partial)\repD\hookrightarrow\PP_{-,\defThn,\defS}\]
    and so is also a monomorphism. 
    
    Next, we show that the restriction of the Segal map 
    \[ \PP_{m',\defThn,\defS}\to \PP_{1,\defThn,\defS}\times_{\PP_{0,\defThn,\defS}} \ldots \times_{\PP_{0,\defThn,\defS}}  \PP_{1,\defThn,\defS}\]
    to the subset $\PP_{m',\defThn,\defS}^{\mathrm{nd}}$ of non-degenerate $m'$-simplices of $\PP_{-,\defThn,\defS}$ is injective. Let $\sigma,\tau\colon \repD[m']\to \PP_{-,\defThn,\defS}$ be non-degenerate $m'$-simplices of $\PP_{-,\defThn,\defS}$ such that their restrictions along $\Sp[m']\hookrightarrow \repD[m']$ coincide. As before, they come from non-degenerate $m'$-simplices 
    \[ \overline{\sigma}\colon \repD[m']\to \{y\}\times(\partial)\repD \quad \text{and} \quad  \overline{\tau}\colon \repD[m']\to \{y'\}\times (\partial)\repD, \]
    where $y,y'\in Y_{\defThn,\defS}$, and  $\{y\}\times (\partial)\repD$, $\{y'\}\times (\partial)\repD$ are the corresponding factors of the coproduct $(\coprod_{Y_{\defThn,\defS}\setminus X_{\defThn,\defS}} \partial\repD) \amalg (\coprod_{X_{\defThn,\defS}} \repD)$. As the restrictions of $\sigma$ and $\tau$ along $\Sp[m']\hookrightarrow \repD[m']$ coincide, and the map 
    \[ \textstyle (\coprod_{Y_{\defThn,\defS}\setminus X_{\defThn,\defS}} \partial\repD) \amalg (\coprod_{X_{\defThn,\defS}} \repD)\to \PP_{-,\defThn,\defS}\]
    is injective on $1$-simplices, it follows that $y=y'$. So $\overline{\sigma}$ and $\overline{\tau}$ are two non-degenerate $m'$-simplices of $\{y\}\times (\partial)\repD$ whose restrictions along $\Sp[m']\hookrightarrow \repD[m']$ coincide. Hence, as $\partial\repD$ and $\repD$ are $1$-ordered by \cref{examplesof1ordered}, it follows that $\overline{\sigma}=\overline{\tau}$ and so $\sigma=\tau$.
\end{proof}

\subsection{Study of necklaces in \texorpdfstring{$\PP$}{P(X,Y)}} \label{Necklaces in PP}

In this subsection, we study the category of necklaces in $\PP$ in the case where $Y$ is connected.

\begin{rmk}
Let $m\geq 1$, $Y$ be a connected $\Thn$-space, and $X\hookrightarrow Y$ be a monomorphism in $\Thnsset$. As $\PP_{-,\defThn,\defS}$ is $1$-ordered for all $\defThn\in\Thn$ and $\defS\geq 0$ by \cref{prop:PP1ordered}, in order to study the homotopy coherent categorification of $\PP$, by \cref{cor:computationshomC1ordered} it is enough to study the totally non-degenerate necklaces in $\PP_{-,\defThn,\defS}$.
\end{rmk}

\begin{lemma} \label{lemmanecklacePPvsLY}
Let $m\geq 1$, $Y$ be a connected $\Thn$-space, and $X\hookrightarrow Y$ be a monomorphism in $\Thnsset$. For all $\defThn\in \Thn$, $\defS\geq 0$, and all $0<j-i<m$, then the canonical map $\inc\colon \PP\to L(\repD\times Y)$ induces a natural isomorphism of categories
\[ \tndnec{\PP_{-,\defThn,\defS}}{i}{j}\cong \tndnec{L(\repD\times Y)_{-,\defThn,\defS}}{i}{j}. \]
\end{lemma}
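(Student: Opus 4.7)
The plan is to exhibit $\inc_{-,\defThn,\defS}$ as a monomorphism of simplicial sets and then show that every totally non-degenerate necklace from $i$ to $j$ in $L(\repD\times Y)_{-,\defThn,\defS}$ lifts uniquely through it, from which the claimed isomorphism of categories follows formally.

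First, I would invoke \cref{PPthetakpushout} to write $\PP_{-,\defThn,\defS}$ as the pushout
\[ \coprod_{m+1}\repD[0] \;\leftarrow\; \coprod_{m+1}\coprod_{Y_{\defThn,\defS}}\repD[0] \;\to\; \Bigl(\coprod_{Y_{\defThn,\defS}\setminus X_{\defThn,\defS}}\partial\repD\Bigr)\amalg\Bigl(\coprod_{X_{\defThn,\defS}}\repD\Bigr), \]
and describe $L(\repD\times Y)_{-,\defThn,\defS}$ as the same pushout specialized to $X=Y$ (so the right-hand term becomes $\coprod_{Y_{\defThn,\defS}}\repD$). Under these descriptions, $\inc_{-,\defThn,\defS}$ is induced by the identity on the two left terms and by the coproduct of boundary inclusions $\partial\repD\hookrightarrow\repD$ (on the $(Y\setminus X)$-indexed parts) together with identities (on the $X$-indexed parts) on the right-hand term. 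Since the span apex is a discrete simplicial set, the pushout only identifies degenerate simplices in positive dimensions; hence $\inc_{-,\defThn,\defS}$ is injective on non-degenerate simplices in every dimension, and therefore a monomorphism.

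Next, given a totally non-degenerate necklace $f\colon T=\repD[m_1]\vee\ldots\vee\repD[m_t]\to L(\repD\times Y)_{-,\defThn,\defS}$ from $i$ to $j$, I would observe using \cref{prop:PP1ordered} (with $X=Y$) that $L(\repD\times Y)_{-,\defThn,\defS}$ is $1$-ordered, and then \cref{tndvsmono} makes $f$ a monomorphism. Consequently the joints of $T$ map to a strictly increasing sequence $i=j_0<j_1<\ldots<j_t=j$ inside $L(\repD\times Y)_{0,\defThn,\defS}=\{0,1,\ldots,m\}$, all contained in $\{i,i+1,\ldots,j\}$. Each bead $\repD[m_\ell]\to L(\repD\times Y)_{-,\defThn,\defS}$ is by hypothesis a non-degenerate $m_\ell$-simplex, so by the pushout description it factors through a unique component $\{y_\ell\}\times\repD$ of $\coprod_{Y_{\defThn,\defS}}\repD$, corresponding to an increasing sequence $\{a_0<\ldots<a_{m_\ell}\}$ with $a_0=j_{\ell-1}$ and $a_{m_\ell}=j_\ell$. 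Because this sequence lies in the interval $\{i,\ldots,j\}$ of cardinality $j-i+1\leq m<m+1$, it is a proper subset of $\{0,\ldots,m\}$, and hence the $m_\ell$-simplex already lies in $\{y_\ell\}\times\partial\repD$.

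Combining the two observations, each bead of $f$ admits a unique lift through the right leg of the pushout describing $\PP_{-,\defThn,\defS}$, and compatibility of the lifts on shared joints follows from the identification of vertices in the pushout; they therefore assemble into a unique factorization $\widetilde f\colon T\to \PP_{-,\defThn,\defS}$ of $f$ along $\inc_{-,\defThn,\defS}$. This $\widetilde f$ is totally non-degenerate because monomorphisms reflect non-degeneracy of simplices, and it is bi-pointed from $i$ to $j$ because $\inc_{-,\defThn,\defS}$ is the identity on $0$-simplices. On morphisms, the monomorphism property of $\inc_{-,\defThn,\defS}$ immediately gives full faithfulness of the functor induced by $\inc$; naturality in $\defThn$, $\defS$, $i$, $j$ is clear from the construction. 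The main obstacle will be the precise bookkeeping of the vertices of each bead: the hypothesis $j-i<m$ must be leveraged exactly so as to force the entire vertex set of every bead into a proper subset of $\{0,\ldots,m\}$, which in turn relies crucially on the $1$-ordered structure (via \cref{tndvsmono}) to control the positions of the joints.
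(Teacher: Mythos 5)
Your argument is essentially correct, and it exploits the same core observation as the paper — that for $0<j-i<m$ the interval $\{i,\ldots,j\}$ is a proper subset of $\{0,\ldots,m\}$, so nothing distinguishes $\partial\repD$ from $\repD$ over it — but the organization is genuinely different. The paper proceeds by showing directly that the simplicial subsets $(\PP_{-,\defThn,\defS})_{[i,j]}$ and $(L(\repD\times Y)_{-,\defThn,\defS})_{[i,j]}$ spanned by the $0$-simplices $i,\ldots,j$ are isomorphic (using \cref{PPthetakpushout} and the fact that $\partial\repD_{[i,j]}\cong\repD_{[i,j]}$), and then uses $1$-orderedness only to confine necklaces from $i$ to $j$ to these subsets; the isomorphism of necklace categories then follows formally. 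You instead first establish that $\inc_{-,\defThn,\defS}$ is a monomorphism, then lift each totally non-degenerate necklace individually. The paper's route is the more economical one: the mono property of $\inc$, essential surjectivity, and full faithfulness are all delivered simultaneously by the isomorphism of simplicial subsets, whereas you must argue each separately.

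The one place where your write-up is too loose is the claim that $\inc_{-,\defThn,\defS}$ is a monomorphism. "The pushout only identifies degenerate simplices in positive dimensions; hence $\inc$ is injective on non-degenerate simplices, and therefore a monomorphism" is not airtight: injectivity on non-degenerate simplices alone does not imply mono — one must also check that non-degenerate simplices map to non-degenerate simplices, and the pushout description you quote does not immediately make this transparent. The cleanest justification is the pasting lemma for pushouts: since both $\PP_{-,\defThn,\defS}$ and $L(\repD\times Y)_{-,\defThn,\defS}$ are pushouts of the same left-hand span, $\inc_{-,\defThn,\defS}$ is itself a pushout of the inclusion $(\coprod_{Y\setminus X}\partial\repD)\amalg(\coprod_X\repD)\hookrightarrow\coprod_Y\repD$, and pushouts of monomorphisms are monomorphisms in a presheaf category. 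The conclusion is correct; you should just replace the informal count of identified simplices with this argument. Also note that you never need this fact in the paper's version, which is part of why it is cleaner.
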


\begin{proof} 
    Recall that $\PP[m][Y]\cong L(\repD\times Y)$ and that by \cref{PP0}
    \[ \PP_{0,\defThn,\defS}\cong \{0,1,\ldots,m\}\cong \PP[m][Y]_{0,\defThn,\defS}\cong L(\repD\times Y)_{0,\defThn,\defS}. \]
    We denote respectively by $(\PP_{-,\defThn,\defS})_{[i,j]}$ and $(L(\repD\times Y)_{-,\defThn,\defS})_{[i,j]}$ the simplicial subsets of $\PP_{-,\defThn,\defS}$ and $L(\repD\times Y)_{-,\defThn,\defS}$ spanned by the $0$-simplices $i,i+1,\dots,j$. Using the description of $\PP_{-,\defThn,\defS}$ given in \cref{PPthetakpushout}, we get that $(\PP_{-,\defThn,\defS})_{[i,j]}$ is the following pushout in $\set^{\Dop}$. 
     \begin{tz}
\node[](1) {$\coprod_{j-i+1} \coprod_{Y_{\defThn,\defS}} \repD[0]$};
\node[below of=1](2) {$\coprod_{j-i+1} \repD[0]$}; 
\node[right of=1,xshift=4.3cm](3) {$(\coprod_{Y_{\defThn,\defS}\setminus X_{\defThn,\defS}} \partial\repD_{[i,j]}) \amalg (\coprod_{X_{\defThn,\defS}} \repD_{[i,j]})$}; 
\node[below of=3](4) {$(\PP_{-,\defThn,\defS})_{[i,j]}$}; 
\pushout{4};

\draw[->] (1) to (3);
\draw[->] (1) to (2);
\draw[->] (3) to (4);
\draw[->] (2) to (4);
\end{tz}
Similarly, as $L(\repD\times Y)_{-,\defThn,\defS}\cong \PP[m][Y]_{-,\defThn,\defS}$, we get that $(L(\repD\times Y)_{-,\defThn,\defS})_{[i,j]}$ is the following pushout in $\set^{\Dop}$.
\begin{tz}
\node[](1) {$\coprod_{j-i+1} \coprod_{Y_{\defThn,\defS}} \repD[0]$};
\node[below of=1](2) {$\coprod_{j-i+1} \repD[0]$}; 
\node[right of=1,xshift=2.9cm](3) {$\coprod_{Y_{\defThn,\defS}} \repD_{[i,j]}$}; 
\node[below of=3](4) {$(L(\repD\times Y)_{-,\defThn,\defS})_{[i,j]}$}; 
\pushout{4};

\draw[->] (1) to (3);
\draw[->] (1) to (2);
\draw[->] (3) to (4);
\draw[->] (2) to (4);
\end{tz}
As $\partial\repD_{[i,j]}\cong \repD_{[i,j]}$ as $0<j-i<m$,  there is an isomorphism in $\set^{\Dop}$
\[ \textstyle (\coprod_{Y_{\defThn,\defS}\setminus X_{\defThn,\defS}} \partial\repD_{[i,j]}) \amalg (\coprod_{X_{\defThn,\defS}} \repD_{[i,j]})\cong\coprod_{Y_{\defThn,\defS}} \repD_{[i,j]}, \]
and so the two pushouts must be isomorphic. This gives an isomorphism in $\set^{\Dop}$
    \[ (\PP_{-,\defThn,\defS})_{[i,j]}\cong (L(\repD\times Y)_{-,\defThn,\defS})_{[i,j]}.\]
    The desired isomorphism of categories follows from the fact that the order $\preceq_{\PP_{-,\defThn,\defS}}$ (resp. $\preceq_{L(\repD\times Y)_{-,\defThn,\defS}}$) are given by $0\leq 1\leq \ldots \leq m$, and so every necklace from $i$ to $j$ has to be fully contained in $(\PP_{-,\defThn,\defS})_{[i,j]}$ (resp.~$(L(\repD\times Y)_{-,\defThn,\defS})_{[i,j]}$).
    \end{proof}

    We now aim to show that the projection $\QmXY_{-,\defThn,\defS}\colon \PP_{-,\defThn,\defS}\to \repD$ induces a functor between their categories of totally non-degenerate necklaces and that this functor is a discrete fibration. We first need the following.

\begin{lemma} \label{Qnondegsimplex}
Let $m\geq 1$, $Y$ be a connected $\Thn$-space, and $X\hookrightarrow Y$ be a monomorphism in $\Thnsset$. For $\defThn\in \Thn$ and $\defS\geq 0$, the map 
\[ \QmXY_{-,\defThn,\defS}\colon \PP_{-,\defThn,\defS}\to \repD \]
sends a non-degenerate simplex of $\PP_{-,\defThn,\defS}$ to a non-degenerate simplex of $\repD$.
\end{lemma}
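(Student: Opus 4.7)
The plan is to combine the pushout description of $\PP_{-,\defThn,\defS}$ with that of $\QmXY_{-,\defThn,\defS}$ recalled in \cref{PPthetakpushout}. From this pushout I can read off that only $0$-simplices get identified---namely the $m+1$ vertices of each summand $\partial\repD$ or $\repD$ on the right are glued to the vertices $0,1,\ldots,m$ coming from $\coprod_{m+1}\repD[0]$---so the canonical map from each summand into $\PP_{-,\defThn,\defS}$ remains a monomorphism and no positive-dimensional simplex becomes degenerate through the colimit.

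The key step I would carry out next is to classify the non-degenerate simplices of $\PP_{-,\defThn,\defS}$: either the simplex is one of the vertices $0,1,\ldots,m$, or it lies in exactly one summand---$\partial\repD$ for some $y\in Y_{\defThn,\defS}\setminus X_{\defThn,\defS}$, or $\repD$ for some $y\in X_{\defThn,\defS}$---and is non-degenerate there. This is the same observation used implicitly in the proof of \cref{prop:PP1ordered}, and reduces to the fact that pushouts in $\set^{\Dop}$ along maps concentrated in degree $0$ do not create or destroy positive-dimensional non-degenerate simplices.

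With this classification the conclusion is immediate: on each summand the map $\QmXY_{-,\defThn,\defS}$ restricts either to the inclusion $\partial\repD\hookrightarrow\repD$ or to the identity $\repD\to\repD$, both of which are monomorphisms and therefore preserve non-degenerate simplices; and on vertices it sends $i$ to the vertex $i\in\repD$, which is itself non-degenerate. The main obstacle is nothing deep, merely the careful bookkeeping of the pushout combinatorics needed to justify the classification; this is routine once one checks the pushout level-wise in $\set^{\Dop}$.
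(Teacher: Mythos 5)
Your argument is correct and takes essentially the same route as the paper's proof: both reduce to the observation that a positive-dimensional non-degenerate simplex of $\PP_{-,\defThn,\defS}$ lifts (uniquely) to a non-degenerate simplex in one summand $(\partial)\repD$ of the pushout from \cref{PPthetakpushout}, on which $\QmXY_{-,\defThn,\defS}$ restricts to a monomorphism into $\repD$. If anything, you are slightly more explicit than the paper in spelling out the classification of non-degenerate simplices (in particular the vertex case), which the paper leaves implicit.
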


\begin{proof}
    Consider an $m'$-simplex $\repD[m']\to \PP_{-,\defThn,\defS}$. By the description of $\PP_{-,\defThn,\defS}$ given in \cref{PPthetakpushout}, this amounts to an $m'$-simplex of the form
\[ \textstyle \repD[m']\to\coprod_{Y_{\defThn,\defS}\setminus X_{\defThn,\defS}}\partial\repD\quad\text{ or }\quad\repD[m']\to \coprod_{X_{\defThn,\defS}}\repD. \]
By \cref{PPthetakpushout}, these are sent by $\QmXY_{-,\defThn,\defS}$ to an $m'$-simplex
\[\textstyle \repD[m']\to\coprod_{Y_{\defThn,\defS}\setminus X_{\defThn,\defS}}\partial\repD\to\partial\repD\hookrightarrow\repD\quad\text{ or }\quad\repD[m']\to \coprod_{X_{\defThn,\defS}}\repD\to\repD. \]
In particular, an $m'$-simplex of $\PP_{-,\defThn,\defS}$ is non-degenerate if and only if its image under $\QmXY_{-,\defThn,\defS}$ is non-degenerate in $\repD$.
\end{proof}

\begin{prop}
Let $m\geq 1$, $Y$ be a connected $\Thn$-space, and $X\hookrightarrow Y$ be a monomorphism in $\Thnsset$. For $\defThn\in \Thn$ and $\defS\geq 0$, the map $\QmXY_{-,\defThn,\defS}\colon \PP_{-,\defThn,\defS}\to \repD$ induces by post-composition a functor
\[ (\QmXY_{-,\defThn,\defS})_!\colon \tndnec{\PP_{-,\defThn,\defS}}{0}{m}\to \tndnec{\repD}{0}{m}. \]
 \end{prop}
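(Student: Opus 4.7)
The plan is to verify that post-composition with $\QmXY_{-,\defThn,\defS}$ respects the two conditions defining an object of $\tndnec{\PP_{-,\defThn,\defS}}{0}{m}$: the bi-pointing at $(0,m)$ and total non-degeneracy. Functoriality of the assignment on morphisms will then follow immediately, since morphisms in these categories are bi-pointed simplicial maps commuting with the structure maps to the ambient simplicial set, and such commuting triangles are stable under post-composition.

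First, I would verify that $\QmXY_{-,\defThn,\defS}\colon \PP_{-,\defThn,\defS}\to\repD$ acts as the identity on $0$-simplices, under the identification $\PP_{0,\defThn,\defS}\cong\{0,1,\ldots,m\}$ from \cref{PP0}. This amounts to tracing the construction of $\QmXY$ in \cref{rem:PPtoFm} through evaluation at $([0],\defThn,\defS)$: using the pushout description of $\PP_{-,\defThn,\defS}$ in \cref{PPthetakpushout}, the induced map on $0$-simplices factors through the fold map $\coprod_{m+1}\repD[0]\to\repD$, which sends the $i$-th copy of $\repD[0]$ to the vertex $i\in\repD$. Consequently, any bi-pointed necklace $T\to (\PP_{-,\defThn,\defS})_{0,m}$ is sent under post-composition to a bi-pointed necklace $T\to\repD_{0,m}$ with the correct bi-pointing for an object of $\tndnec{\repD}{0}{m}$.

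Second, for total non-degeneracy, given a totally non-degenerate necklace $T=\repD[m_1]\vee\ldots\vee\repD[m_t]\to\PP_{-,\defThn,\defS}$, each bead restricts to a non-degenerate $m_i$-simplex of $\PP_{-,\defThn,\defS}$. By \cref{Qnondegsimplex}, the composite
\[ \repD[m_i]\hookrightarrow T\to\PP_{-,\defThn,\defS}\xrightarrow{\QmXY_{-,\defThn,\defS}}\repD \]
remains a non-degenerate simplex of $\repD$, so the image of $T$ in $\repD$ is a totally non-degenerate necklace.

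The result is essentially a repackaging of \cref{PP0,Qnondegsimplex} together with the definitional description of $\QmXY$ on $0$-simplices, and I do not anticipate a substantive obstacle. The only thing to keep track of carefully is the matching of vertex labels between $\PP_{0,\defThn,\defS}$ and $\repD_{0}$, both identified with $\{0,1,\ldots,m\}$, so that the bi-pointing at $(0,m)$ is preserved and the induced functor $(\QmXY_{-,\defThn,\defS})_!$ is well-defined on the appropriate slice categories.
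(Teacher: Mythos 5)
Your proof is correct and takes essentially the same approach as the paper: both proofs show that post-composition with $\QmXY_{-,\defThn,\defS}$ takes bi-pointed necklaces in $\PP_{-,\defThn,\defS}$ to bi-pointed necklaces in $\repD$, and then invoke \cref{Qnondegsimplex} to conclude that total non-degeneracy is preserved. The one place you are more explicit than the paper is in spelling out why the bi-pointing at $(0,m)$ is preserved (the identity behavior of $\QmXY$ on $0$-simplices under the identification from \cref{PP0}); the paper takes this as implicit when it asserts post-composition already gives a functor $\catnec{\PP_{-,\defThn,\defS}}{0}{m}\to\catnec{\repD}{0}{m}$ before restricting to totally non-degenerate necklaces.
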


\begin{proof}
    By post-composing with the canonical map $\QmXY_{-,\defThn,\defS}\colon \PP_{-,\defThn,\defS}\to \repD$, we get a functor
    \[ (\QmXY_{-,\defThn,\defS})_!\colon \catnec{\PP_{-,\defThn,\defS}}{0}{m}\to \catnec{\repD}{0}{m}. \] 
    Furthermore, by \cref{Qnondegsimplex}, the map $\QmXY_{-,\defThn,\defS}$ sends a non-degenerate simplex of $\PP_{-,\defThn,\defS}$ to a non-degenerate simplex of $\repD$. It then follows that $(\QmXY_{-,\defThn,\defS})_!$ sends a totally non-degenerate necklace of $(\PP_{-,\defThn,\defS})_{0,m}$ to a totally non-degenerate necklace of $\repD_{0,m}$. Hence $(\QmXY_{-,\defThn,\defS})_!$ restricts to a functor
    \[ (\QmXY_{-,\defThn,\defS})_!\colon \tndnec{\PP_{-,\defThn,\defS}}{0}{m}\to \tndnec{\repD}{0}{m}, \]
    as desired.
\end{proof}

Recall from e.g.~\cite[Definition 2.1.1]{LRfib} the notion of a discrete fibration.

\begin{prop} \label{prop:discretefib}
Let $m\geq 1$, $Y$ be a connected $\Thn$-space, and $X\hookrightarrow Y$ be a monomorphism in $\Thnsset$. For $\defThn\in \Thn$ and $\defS\geq 0$, the functor
\[ (\QmXY_{-,\defThn,\defS})_!\colon \tndnec{\PP_{-,\defThn,\defS}}{0}{m}\to \tndnec{\repD}{0}{m} \]
is a discrete fibration.
\end{prop}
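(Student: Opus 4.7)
The plan is to directly verify the universal property of a discrete fibration. I fix an object $f\colon T \to \PP_{-,\defThn,\defS}$ of $\tndnec{\PP_{-,\defThn,\defS}}{0}{m}$, with image $\bar f \coloneqq \QmXY_{-,\defThn,\defS} \circ f$ in $\tndnec{\repD}{0}{m}$, together with a morphism $g\colon (U,\bar h) \to (T,\bar f)$ in $\tndnec{\repD}{0}{m}$, and I will construct a unique lift $\tilde g\colon (\tilde U, \tilde h) \to (T, f)$ in $\tndnec{\PP_{-,\defThn,\defS}}{0}{m}$ whose image under $(\QmXY_{-,\defThn,\defS})_!$ is $g$.

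Since $(\QmXY_{-,\defThn,\defS})_!$ is defined by post-composition with $\QmXY_{-,\defThn,\defS}$, it acts as the identity on underlying bi-pointed simplicial maps of necklaces. Hence uniqueness is automatic: any such lift $\tilde g$ must have underlying necklace $U$ and underlying bi-pointed simplicial map $g$, and the requirement that it be a morphism in $\tndnec{\PP_{-,\defThn,\defS}}{0}{m}$ forces the structure map $\tilde h\colon U \to \PP_{-,\defThn,\defS}$ to be $f \circ g$. I therefore take these as the definition of the lift, and the only remaining content of the proof is to verify that $\tilde h = f \circ g$ defines a totally non-degenerate necklace in $\PP_{-,\defThn,\defS}$.

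For each bead $\repD[m_i] \hookrightarrow U$, the restriction of $\tilde h$ to it is a simplex of $\PP_{-,\defThn,\defS}$ whose image under $\QmXY_{-,\defThn,\defS}$ is the restriction of $\bar h = \bar f \circ g$ to that same bead, which is non-degenerate in $\repD$ since $\bar h$ is totally non-degenerate by assumption. To conclude, I would then invoke the \emph{converse} of \cref{Qnondegsimplex}, namely that a simplex of $\PP_{-,\defThn,\defS}$ is non-degenerate whenever its image in $\repD$ is non-degenerate. This converse is the main technical point, but it is already contained in the argument establishing \cref{Qnondegsimplex}: the description from \cref{PPthetakpushout} expresses every simplex of $\PP_{-,\defThn,\defS}$ as a simplex of $\partial\repD$ or $\repD$ in a single summand of the coproduct, on which $\QmXY_{-,\defThn,\defS}$ acts as the identity followed by the inclusion into $\repD$, so degeneracy is detected exactly. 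The main obstacle is therefore not logical but bibliographical: extracting the if-and-only-if from the proof of \cref{Qnondegsimplex} rather than from its formal statement.
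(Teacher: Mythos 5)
Your proposal is correct, and it takes essentially the same route as the paper: lift a morphism $g\colon U\to T$ in $\tndnec{\repD}{0}{m}$ by post-composing $g$ with the structure map $T\to\PP_{-,\defThn,\defS}$ and note that nothing else could work. The paper's proof is actually terser than yours — it simply asserts that the composite $U\to T\to\PP_{-,\defThn,\defS}$ \emph{is} the unique lift, without spelling out why the resulting necklace is totally non-degenerate — so the content of your proposal is precisely to justify the step the paper leaves implicit.

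Your justification is sound: the if-and-only-if you extract from the proof of \cref{Qnondegsimplex} really is established there (the displayed decomposition of simplices via \cref{PPthetakpushout} identifies $\QmXY_{-,\defThn,\defS}$ on each summand with the identity followed by a monomorphism, so degeneracy is reflected, not merely preserved). One could however sidestep the bibliographical awkwardness by using machinery the paper has already set up formally rather than extracting from a proof. Since $\repD$ and $\PP_{-,\defThn,\defS}$ are both $1$-ordered (\cref{examplesof1ordered,prop:PP1ordered}), \cref{tndvsmono} gives that $f\colon T\to\PP_{-,\defThn,\defS}$ and $\bar h\colon U\to\repD$ are monomorphisms; by cancellation $g\colon U\to T$ is then a monomorphism, so $f\circ g\colon U\to\PP_{-,\defThn,\defS}$ is a composite of monomorphisms and hence totally non-degenerate by \cref{tndvsmono} again. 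This variant appeals only to cited statements and uses the same global characterization of total non-degeneracy that the surrounding section is built around; your bead-by-bead argument is equally valid, just more local.
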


\begin{proof}
Let $T\to (\PP_{-,\defThn,\defS})_{0,m}$ be an object in $\tndnec{\PP_{-,\defThn,\defS}}{0}{m}$ and consider its image $T\to \repD_{0,m}$ under $(\QmXY_{-,\defThn,\defS})_!$. Given a map $f\colon U\to T$ in $\tndnec{\repD}{0}{m}$, the composite
\[ U\xrightarrow{f} T\to (\PP_{-,\defThn,\defS})_{0,m},\]is the unique lift of $f$ via $(\QmXY_{-,\defThn,\defS})_!$. Hence $(\QmXY_{-,\defThn,\defS})_!$ is a discrete fibration.
\end{proof}

As a consequence, to study the category of totally non-degenerate necklaces in $\PP_{-,\defThn,\defS}$, it is enough to study the category $\tndnec{\repD}{0}{m}$ and compute the fibers of the discrete fibration $(Q_{-,\defThn,\defS})_!$, which we now do.

\begin{rmk}\label{rem:NecFmposet}
Recall from \cref{examplesof1ordered} that $\repD$ is a $1$-ordered simplicial set, and so every totally non-degenerate necklace $T\to \repD_{0,m}$ in $\tndnec{\repD}{0}{m}$ is a monomorphism by \cref{tndvsmono}. Then every map in $\tndnec{\repD}{0}{m}$ has to be a monomorphism as well by the cancellation property of monomorphisms. Hence $\tndnec{\repD}{0}{m}$ is a poset. For a combinatorial description of this category, see \cref{subsec:Cube}.
\end{rmk}

\begin{rmk} \label{Bonarrows}
Recall from \cref{subsec:necklace} that $B(T)$ denotes the set of beads of a necklace $T\in \Nec$. Now, given a monomorphism $f\colon U\hookrightarrow T$ in $\Nec$, by \cite[Lemma 3.3]{DuggerSpivakRigidification} each bead $\repD[m_i]$ of $U$ is mapped into a unique bead of $T$, which we denote $B(f)(\repD[m_i])$. So we get a well-defined map of sets $B(f)\colon B(U)\to B(T)$, and the assignment $f\mapsto B(f)$ is functorial in all monomorphisms $f$. Note that the assignment $B(f)$ is not well-defined in general, because if $f$ is not a monomorphism, it might map a whole bead of $U$ to a joint of $T$, which does not belong to a unique bead of $T$. 

As a consequence, since every map in $\tndnec{\repD}{0}{m}$ is a monomorphism by \cref{rem:NecFmposet}, we get a functor 
\[ B\colon \tndnec{\repD}{0}{m}\to \set, \quad (T\hookrightarrow\repD_{0,m})\mapsto B(T). \] 
\end{rmk}

\begin{prop} \label{computefibers}
 Let $m\geq 1$, $Y$ be a connected $\Thn$-space, and $X\hookrightarrow Y$ be a monomorphism in $\Thnsset$. For $\defThn\in \Thn$ and $\defS\geq 0$, the fiber of the discrete fibration 
 \[ (\QmXY_{-,\defThn,\defS})_!\colon \tndnec{\PP_{-,\defThn,\defS}}{0}{m}\to \tndnec{\repD}{0}{m} \]
 at an object $T\hookrightarrow \repD_{0,m}$
 in $\tndnec{\repD}{0}{m}$ is given by the set
 \[\mathrm{fib}_{T\hookrightarrow\repD_{0,m}}((\QmXY_{-,\defThn,\defS})_!)\cong\begin{cases}
\prod_{B(T)} Y_{\defThn,\defS} & \text{if} \;\; T\neq \repD \\
X_{\defThn,\defS} & \text{if} \;\; T=\repD.
\end{cases}\] 
\end{prop}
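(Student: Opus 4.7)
The plan is to unpack the fiber directly using the explicit pushout description of $\PP_{-,\defThn,\defS}$ from \cref{PPthetakpushout}. Given an object $T\hookrightarrow\repD_{0,m}$, a point in the fiber is a totally non-degenerate necklace $U\to\PP_{-,\defThn,\defS}$ whose post-composition with $\QmXY_{-,\defThn,\defS}$ equals the specified inclusion; by the discrete fibration property established in \cref{prop:discretefib}, such a lift is uniquely determined by the data of a map of simplicial sets $T\to\PP_{-,\defThn,\defS}$ covering $T\hookrightarrow\repD$. Writing $T=\repD[m_1]\vee\ldots\vee\repD[m_t]$, this map is determined by its restrictions $\sigma_i\colon\repD[m_i]\to\PP_{-,\defThn,\defS}$ to the beads, agreeing at joints. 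By \cref{PP0}, we have $\PP_{0,\defThn,\defS}=\{0,1,\ldots,m\}$ and $\QmXY_{-,\defThn,\defS}$ is a bijection on $0$-simplices, so the joint-agreement conditions are automatic once each $\sigma_i$ is forced to cover the $i$-th bead of $T\subseteq\repD$.

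Next, I would enumerate the choices for each $\sigma_i$. By the pushout formula of \cref{PPthetakpushout}, a non-degenerate $m_i$-simplex of $\PP_{-,\defThn,\defS}$ is precisely a non-degenerate $m_i$-simplex of the disjoint union
\[ \textstyle (\coprod_{Y_{\defThn,\defS}\setminus X_{\defThn,\defS}}\partial\repD)\amalg(\coprod_{X_{\defThn,\defS}}\repD), \]
i.e., the choice of a component (an element of $Y_{\defThn,\defS}\setminus X_{\defThn,\defS}$ or of $X_{\defThn,\defS}$) together with a non-degenerate $m_i$-simplex of $\partial\repD$ or $\repD$. Combining with \cref{Qnondegsimplex}, the projection under $\QmXY_{-,\defThn,\defS}$ is obtained by post-composing with the canonical map to $\repD$, and requiring this projection to equal the $i$-th bead of $T$ pins down the internal simplicial data uniquely, leaving only the index as a free parameter.

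Finally, I would split into the two cases. If $T\neq\repD$, every bead $\repD[m_i]$ is a proper sub-simplex of $\repD$, hence factors through $\partial\repD\hookrightarrow\repD$, so $\sigma_i$ can be realized in either a $\partial\repD$-component or a $\repD$-component; the index therefore ranges freely over $(Y_{\defThn,\defS}\setminus X_{\defThn,\defS})\amalg X_{\defThn,\defS}=Y_{\defThn,\defS}$, producing the fiber $\prod_{B(T)}Y_{\defThn,\defS}$. If $T=\repD$, then $t=1$ and the unique bead is $\repD$ itself, which is not a simplex of $\partial\repD$; hence $\sigma_1$ is forced to come from a $\repD$-component, yielding the fiber $X_{\defThn,\defS}$. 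The main subtlety is verifying that the joint conditions between consecutive beads impose no hidden constraint on the choice of index at each bead; this is precisely ensured by \cref{PP0}, which collapses the overlap data to a single element of $\{0,1,\ldots,m\}$ common to all components of the pushout.
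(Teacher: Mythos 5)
Your proof is correct and follows essentially the same route as the paper's: unwind the pushout description of $\PP_{-,\defThn,\defS}$ from Remark~\ref{PPthetakpushout}, observe that a lift of the necklace is determined bead-by-bead by choosing, for each bead, the index $y_i$ of the component it lands in, and then do the case split on whether each bead factors through $\partial\repD$. You spell out one point the paper leaves implicit --- that the joint-matching constraints between consecutive beads impose no condition beyond the covering requirement, because $\QmXY_{-,\defThn,\defS}$ is a bijection on $0$-simplices by Lemma~\ref{PP0} --- which is a helpful addition but not a different argument.
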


\begin{proof}
    Let $T=\repD[m_1]\vee\ldots\vee\repD[m_t]\hookrightarrow \repD_{0,m}$ be a totally non-degenerate necklace in $\repD$. If $T\neq \repD$, we show that there is an isomorphism of sets \[ \textstyle \mathrm{fib}_{T\hookrightarrow \repD_{0,m}}((Q_{-,\defThn,\defS})_!)\cong \prod_{B(T)} Y_{\defThn,\defS}. \] 
    Given a totally non-degenerate necklace $T\hookrightarrow (\PP_{-,\defThn,\defS})_{0,m}$ which is sent by $(Q_{-,\defThn,\defS})_!$ to the totally non-degenerate necklace $T\hookrightarrow \repD_{0,m}$, then, for each $1\leq i\leq t$, the restriction of $T\to (\PP_{-,\defThn,\defS})_{0,m}$ to the bead $\repD[m_i]$ corresponds by the description of $\PP_{-,\defThn,\defS}$ given in \cref{PPthetakpushout} to a non-degenerate $m_i$-simplex
    \[  \textstyle  \repD[m_i]\to \{y_i\}\times \partial\repD\hookrightarrow(\coprod_{Y_{\defThn,\defS}\setminus X_{\defThn,\defS}} \partial\repD) \amalg (\coprod_{X_{\defThn,\defS}} \repD), \]
    for some $y_i\in Y_{\defThn,\defS}$. Then the data $(T\hookrightarrow \repD_{0,m},(y_i)_{1\leq i\leq t})$ uniquely determine the necklace $T\to (\PP_{-,\defThn,\defS})_{0,m}$, hence giving the desired isomorphism.
    
    Now, if $T=\repD$, necessarily $T=\repD\to\repD_{0,m}$ is the identity, and we show that there is an isomorphism of sets 
    \[ \mathrm{fib}_{T\hookrightarrow \repD_{0,m}}((Q_{-,\defThn,\defS})_!)\cong X_{\defThn,\defS}. \]
    This follows from the fact that a non-degenerate $m$-simplex of $\PP_{-,\defThn,\defS}$ comes from a non-degenerate $m$-simplex $\repD\to \{x\}\times \repD\hookrightarrow \coprod_{X_{\defThn,\defS}} \repD$, for some $x\in X_{\defThn,\defS}$, and a similar argument to the one above.
\end{proof}

We further record the following. 

\begin{prop} \label{prop:Necofspine}
Let $m\geq 1$ and $X$ be a connected $\Thn$-space. For $\defThn\in \Thn$ and $\defS\geq 0$, there is an isomorphism of categories
\[ \tndnec{L(\Sp \times X)_{-,\defThn,\defS}}{0}{m}\cong X_{\defThn,\defS}^{\times m}, \]
where the set $X_{\defThn,\defS}^{\times m}$ is seen as a discrete category.
\end{prop}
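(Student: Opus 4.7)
My plan is a direct computation building on \cref{RmkPushout} and colimit preservation of $L$.

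First, I would explicitly describe the simplicial set $L(\Sp\times X)_{-,\theta,k}$. Since $L$ is a left adjoint and hence preserves colimits, the wedge decomposition $\Sp \cong \repD[1]\amalg_{\repD[0]}\cdots\amalg_{\repD[0]}\repD[1]$ together with \cref{RmkPushout} applied to $\repD[1]$ and the hypothesis that $X$ is connected (so $\pi_0 X \cong \ast$) imply that $L(\Sp \times X)$ has its $0$-th level given by $\{0,1,\ldots,m\}$ and its $\ell$-th level given by $\Sp_\ell \times X$ for $\ell \geq 1$. Consequently, $L(\Sp\times X)_{-,\theta,k}$ has vertex set $\{0,1,\ldots,m\}$, has exactly $|X_{\theta,k}|$ parallel non-degenerate $1$-simplices from $i-1$ to $i$ for each $1\leq i\leq m$ (coming from the $m$ non-degenerate $1$-simplices of $\Sp$), and has no non-degenerate simplices of dimension $\geq 2$ (since $\Sp$ does not).

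Second, I would classify totally non-degenerate necklaces from $0$ to $m$ in $L(\Sp\times X)_{-,\theta,k}$. Each bead must map to a non-degenerate positive-dimensional simplex, hence to a $1$-simplex; thus every such necklace is of the form $\Sp[t] = \repD[1]\vee\cdots\vee\repD[1]$ for some $t \geq 1$. Tracking joints along the linearly ordered vertex set $\{0, 1, \ldots, m\}$ forces $t = m$ and forces the $j$-th bead to map to one of the $|X_{\theta,k}|$ parallel $1$-simplices from $j-1$ to $j$, giving a canonical bijection between the objects of $\tndnec{L(\Sp\times X)_{-,\theta,k}}{0}{m}$ and the set $X_{\theta,k}^{\times m}$.

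Finally, I would verify that all morphisms in this category are identities. Any morphism is a bi-pointed simplicial endomorphism of $\Sp$ over $L(\Sp\times X)_{-,\theta,k}$; but the only such endomorphism is the identity, because $f(0)=0$ and $f(m)=m$ together with the constraint $f(j)\in\{f(j-1),f(j-1)+1\}$ (forced by $f$ sending each non-degenerate edge of $\Sp$ either to a non-degenerate edge or to a degenerate one) force $f(j)=j$ for all $j$. Thus morphisms exist only between equal necklaces and are identities, establishing the claimed isomorphism of (discrete) categories. The only subtlety is the bookkeeping in the first step, which is entirely handled by \cref{RmkPushout} together with colimit-preservation of $L$.
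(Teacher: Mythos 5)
Your proof is correct but takes a genuinely different route from the paper. The paper's proof is a two-line consequence of the machinery already in place: it observes that $\tndnec{\Sp}{0}{m}$ is the terminal category, identifies $\tndnec{L(\Sp\times X)_{-,\defThn,\defS}}{0}{m}$ with the pullback of the discrete fibration $(\QmXY_{-,\defThn,\defS})_!$ from \cref{prop:discretefib} along the inclusion $\{\Sp\hookrightarrow\repD\}\hookrightarrow\tndnec{\repD}{0}{m}$, and then reads off the answer from the fiber computation in \cref{computefibers} (applied to $X\hookrightarrow X$). Your argument instead classifies totally non-degenerate necklaces in $L(\Sp\times X)_{-,\defThn,\defS}$ directly from the description of its non-degenerate simplices, and then checks by hand that the category has no non-identity morphisms. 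Your route is more elementary and self-contained, and effectively rederives the $\Sp$-fiber of \cref{computefibers} in this special case; the paper's route is shorter given that the discrete-fibration machinery was already established for other purposes (in particular for the pushout-product maps $\PP$).

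One small inaccuracy worth flagging: the $\ell$-th level of $L(\Sp\times X)$ for $\ell\geq 1$ is not literally $\Sp_\ell\times X$. The $L$-construction pushes out along $\coprod_{m+1}X\to\coprod_{m+1}\pi_0 X$, which also collapses the degenerate (constant) $\ell$-simplices lying over each vertex from $X$ down to $\pi_0 X=\ast$. The correct description is $L(\Sp\times X)_{\ell,\defThn,\defS}\cong(\Sp_\ell^{\mathrm{nc}}\times X_{\defThn,\defS})\amalg\Sp_\ell^{\mathrm{const}}$, where $\Sp_\ell^{\mathrm{nc}}$ and $\Sp_\ell^{\mathrm{const}}$ are the non-constant and constant $\ell$-simplices of $\Sp$. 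This does not affect your argument, because your classification of necklaces only uses the set of non-degenerate simplices, and those are still exactly $\{(\sigma,x):\sigma\in\Sp_\ell^{\mathrm{nd}},\,x\in X_{\defThn,\defS}\}$ — so in particular $m\cdot|X_{\defThn,\defS}|$ non-degenerate edges and nothing in dimension $\geq 2$, as you say. But as written, the claim about the $\ell$-th level is not quite right and should be replaced by the claim about non-degenerate simplices only.
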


\begin{proof}
Using that there is an isomorphism of categories 
\[ \tndnec{\Sp}{0}{m} = \{\id_{\Sp}\} \xrightarrow{\cong} \{\Sp \hookrightarrow \repD\} \]
and by \cref{computefibers} applied to the identity map $X\hookrightarrow X$, there are isomorphisms of categories
\begin{align*}
\tndnec{L(\Sp \times X)_{-,\defThn,\defS}}{0}{m}&\cong\tndnec{\Sp}{0}{m}\times_{\tndnec{\repD}{0}{m}} \tndnec{L(\repD\times X)_{-,\defThn,\defS}}{0}{m}\\
&\cong \fib_{\Sp \to \repD[m]}(\QmXY_{-,\defThn,\defS})_! \cong X^{\times m}_{\defThn, \defS}. \qedhere
 \end{align*}
\end{proof}

\subsection{Auxiliary results about weighted colimits over \texorpdfstring{$\tndnec{\repD}{0}{m}$}{Nec}} \label{Auxiliary results about weighted colimits}

Recall from \cite[Theorem 2.1.2]{LRfib} that there is an equivalence between the categories of functors $(\tndnec{\repD}{0}{m})^{\op}\to \set$ and of discrete fibrations over $\tndnec{\repD}{0}{m}$. We now identify the set-valued functor corresponding to the discrete fibration $(Q_{-,\defThn,\defS})_!$ under this equivalence.

\begin{notation}
    Let $m\geq 1$, $Y$ be a connected $\Thn$-space, and $X\hookrightarrow Y$ be a monomorphism in $\Thnsset$. We define a functor
\[ G(X\hookrightarrow Y)\colon (\tndnec{\repD}{0}{m})^{\op}\to \Thnsset \] 
given on objects by
\[(T\hookrightarrow \repD_{0,m})\mapsto\begin{cases}
\prod_{B(T)} Y & \text{if} \;\; T\neq \repD \\
X\; (\cong \prod_{B(\repD)} X)  & \text{if} \;\; T=\repD,
\end{cases}\] 
and on morphisms by 
\[(f\colon U\hookrightarrow T)\mapsto\begin{cases}
 B(f)^*\colon \prod_{B(T)} Y\to \prod_{B(U)} Y & \text{if} \;\; U,T\neq \repD\\
X\hookrightarrow Y\xrightarrow{B(f)^*} \prod_{B(U)}Y & \text{if} \;\; U\neq \repD,  \; T=\repD,
\end{cases}\] 
where $B(f)^*$ is given by pre-composition with $B(f)\colon B(U)\to B(T)$ from \cref{Bonarrows}. 

For $\defThn\in \Thn$ and $\defS\geq 0$, we write $G(X\hookrightarrow Y)_{\defThn,\defS}$ for the composite
\[ G(X\hookrightarrow Y)_{\defThn,\defS}\colon (\tndnec{\repD}{0}{m})^{\op}\xrightarrow{G(X\hookrightarrow Y)} \Thnsset\xrightarrow{(-)_{\defThn,\defS}} \set. \]
\end{notation}

\begin{prop}\label{prop:Qdiscfib}
    Let $m\geq 1$, $Y$ be a connected $\Thn$-space, and $X\hookrightarrow Y$ be a monomorphism in $\Thnsset$. For $\defThn\in \Thn$ and $\defS\geq 0$, the discrete fibration 
\[ (Q_{-,\defThn,\defS})_!\colon \tndnec{\PP_{-,\defThn,\defS}}{0}{m}\to \tndnec{\repD}{0}{m} \]
corresponds to the functor
\[ G(X\hookrightarrow Y)_{\defThn,\defS}\colon (\tndnec{\repD}{0}{m})^{\op}\to \set. \]
\end{prop}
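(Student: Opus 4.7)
The plan is to use the standard equivalence between discrete fibrations over a small category $\mathcal{C}$ and presheaves $\mathcal{C}^{\op}\to\set$, under which a discrete fibration $p\colon\mathcal{E}\to\mathcal{C}$ corresponds to the functor sending each object $c$ to its fiber $\fib_c(p)$ and each morphism $f\colon c'\to c$ to the restriction map determined by unique lifts. Since \cref{prop:discretefib} has already established that $(\QmXY_{-,\defThn,\defS})_!$ is a discrete fibration, it only remains to match the two functors on objects and on morphisms.

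The object-level match is immediate from \cref{computefibers}, which identifies $\fib_{T\hookrightarrow\repD_{0,m}}((\QmXY_{-,\defThn,\defS})_!)$ with $\prod_{B(T)}Y_{\defThn,\defS}$ if $T\neq\repD$, and with $X_{\defThn,\defS}$ if $T=\repD$, agreeing with the definition of $G(X\hookrightarrow Y)_{\defThn,\defS}$ on objects. The key step is then to verify functoriality. Given a morphism $f\colon U\hookrightarrow T$ in $\tndnec{\repD}{0}{m}$ (which is necessarily a monomorphism by \cref{rem:NecFmposet}) and a totally non-degenerate necklace $\sigma\colon T\hookrightarrow\PP_{-,\defThn,\defS}$ in the fiber over $T$, the induced restriction sends $\sigma$ to $\sigma\circ f\colon U\hookrightarrow\PP_{-,\defThn,\defS}$. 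I would then unwind this using the description from the proof of \cref{computefibers}: the necklace $\sigma$ is determined by a tuple $(y_i)_{i\in B(T)}$ such that the bead $\repD[m_i]$ of $T$ maps to the non-degenerate $m_i$-simplex of $\{y_i\}\times\partial\repD$ (or $\{y_i\}\times\repD$ when $T=\repD$) inside the coproduct appearing in the description of $\PP_{-,\defThn,\defS}$ given in \cref{PPthetakpushout}.

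By \cite[Lemma 3.3]{DuggerSpivakRigidification} as recalled in \cref{Bonarrows}, each bead $\repD[m'_j]$ of $U$ is sent by $f$ into a unique bead $B(f)(\repD[m'_j])=\repD[m_{i(j)}]$ of $T$, so the composite $\sigma\circ f$ restricted to the $j$-th bead of $U$ factors as $\repD[m'_j]\hookrightarrow\repD[m_{i(j)}]\to\{y_{i(j)}\}\times\partial\repD\hookrightarrow\PP_{-,\defThn,\defS}$. Hence the tuple classifying $\sigma\circ f$ is exactly $(y_{B(f)(j)})_{j\in B(U)}=B(f)^*((y_i)_{i\in B(T)})$, matching the definition of $G(X\hookrightarrow Y)_{\defThn,\defS}$ on morphisms in the case $U,T\neq\repD$. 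The case $U\neq\repD$, $T=\repD$ is handled identically, with the observation that the unique $y\in X_{\defThn,\defS}$ is regarded as an element of $Y_{\defThn,\defS}$ via the inclusion $X\hookrightarrow Y$ before applying $B(f)^*$, which exactly matches the definition of $G(X\hookrightarrow Y)_{\defThn,\defS}(f)$ in that case.

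The main obstacle is purely notational: making sure the factorization through the coproduct summand indexed by $y_i$ is correctly transported along the bead-to-bead map $B(f)$, and correctly handling the degenerate case $T=\repD$ where the fiber involves $X$ rather than $Y$. Once the compatibility between the coproduct description of $\PP_{-,\defThn,\defS}$ and the bead functor $B$ is spelled out, the verification is essentially mechanical.
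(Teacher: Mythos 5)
Your proposal is correct and follows the same route as the paper: establish the object-level bijection via \cref{computefibers}, recall from the proof of \cref{prop:discretefib} that the action on fibers is pre-composition, and then match this against the definition of $G(X\hookrightarrow Y)_{\defThn,\defS}$ on morphisms using the bead-to-bead map $B(f)$. The only difference is that the paper leaves the final compatibility check as a ``direct computation,'' whereas you spell out that computation explicitly by tracking which coproduct summand each bead of $U$ factors through; this is a faithful elaboration rather than a different argument.
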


\begin{proof}
    We show that there is a natural isomorphism between the functor obtained from $(Q_{-,\defThn,\defS})_!$ by taking fibers and the functor $G(X\hookrightarrow Y)_{\defThn,\defS}$. In \cref{computefibers}, we have shown that, for every $T\hookrightarrow \repD_{0,m}$ in $\tndnec{\repD}{0}{m}$, there is an isomorphism of sets
    \[ \mathrm{fib}_{T\hookrightarrow \repD_{0,m}}((\QmXY_{-,\defThn,\defS})_!)\cong\begin{cases}
\prod_{B(T)} Y_{\defThn,\defS} & \text{if} \;\; T\neq \repD \\
X_{\defThn,\defS} & \text{if} \;\; T=\repD 
\end{cases} = G(X\hookrightarrow Y)_{\defThn,\defS}(T\hookrightarrow \repD_{0,m}). \]
    It remains to show that these isomorphisms are natural. For this, note that if $f\colon U\hookrightarrow T$ is a map in $\tndnec{\repD}{0}{m}$, then by the proof of \cref{prop:discretefib}, the map $f$ acts on the fibers of $(Q_{-,\defThn,\defS})_!$ by pre-composition
    \[ f^*\colon \mathrm{fib}_{T\to\repD_{0,m}}((Q_{-,\defThn,\defS})_!)\to \mathrm{fib}_{U\to\repD_{0,m}}((Q_{-,\defThn,\defS})_!). \] 
    A direct computation using this description and the definition of $G(X\hookrightarrow Y)_{\defThn,\defS}$ on morphisms shows that the isomorphisms of \cref{computefibers} assemble into a natural isomorphism. 
\end{proof}

We now use this to express the hom $\Thn$-spaces of $\Ch\PP$ in terms of certain weighted colimits. We refer the reader to e.g.~\cite[\textsection~3.1]{Kelly} for an account on the theory of weighted colimits.

\begin{rmk} \label{defnofweightedcolimit}
    Here we will be interested in two cases of weighted colimits: the ordinary weighted colimits and the simplicially enriched weighted colimits. We recall the definition of these weighted colimits in our case of interest. 
    
    Let $\cA$ and $\cD$ be small categories. Given functors $W\colon \cA^{\op}\to (s)\set$ and $F\colon \cA\to (s)\set^{\cD^{\op}}$, the weighted colimit of $F$ by $W$ can be computed using \cite[(3.70)]{Kelly} as the coequalizer in $(s)\set^{\cD^{\op}}$
    \[ \textstyle\colim^W_\cA F\cong \mathrm{coeq}(\coprod_{a\to a'\in \cA} F(a)\times W(a')\rightrightarrows \coprod_{a\in \cA} F(a)\times W(a)). \]
\end{rmk}

 We first introduce the following notation.

\begin{notation}
For $m\geq 1$, we define a functor
\[H_m\colon \tndnec{\repD}{0}{m}\to \sset\] 
given on objects by 
\[  (T\hookrightarrow \repD_{0,m} )\mapsto \Hom_{\CL T}(\alpha,\omega) \]
and on morphisms by 
\[ (f\colon U\hookrightarrow T)\mapsto ((\CL f)_{\alpha,\omega}\colon \Hom_{\CL U}(\alpha,\omega)\to \Hom_{\CL T}(\alpha,\omega)). \]
\end{notation}

Given the description of the hom $\Thn$-spaces of $\Ch$ from \cref{cor:computationshomC1ordered}, we are interested in understanding the colimit featured in the following proposition.

\begin{prop} \label{prop:homasweighted1}
Let $m\geq 1$, $Y$ be a connected $\Thn$-space, and $X\hookrightarrow Y$ be a monomorphism in $\Thnsset$. Then there is an isomorphism in $\Thnsset$
\[ \colim_{T\in \tndnec{\PP_{-,\star,\star}}{0}{m}} \Hom_{\CL T}(\alpha,\omega)\cong \colim^{G(X\hookrightarrow Y)_{\star,\star}}_{\tndnec{\repD}{0}{m}} H_m,   \]
where $\colim_{T\in \catnec{\PP_{-,\star,\star}}{a}{b}} \Hom_{\CL T}(\alpha,\omega)$ is the $\Thn$-bi-space given at $\defThn\in\Thn$ and $\defS\geq 0$ by the colimit in $\sset$
\[\colim_{T\in \catnec{\PP_{-,\defThn,\defS}}{a}{b}} \Hom_{\CL T}(\alpha,\omega), \] and $\colim^{G(X\hookrightarrow Y)_{\star,\star}}_{\tndnec{\repD}{0}{m}} H_m$ is the $\Thn$-bi-space given at $\defThn\in \Thn$ and $\defS\geq 0$ by the colimit in~$\sset$ of $H_m$ weighted by $G(X\hookrightarrow Y)_{\defThn,\defS}$. 
\end{prop}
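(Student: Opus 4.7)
The plan is to reduce the claim to a level-wise isomorphism at each $(\defThn,\defS)\in \Thn\times \Delta$ and then invoke a general ``Fubini'' identification between colimits over a Grothendieck construction and weighted colimits on the base.

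First I would observe that the functor
\[ \tndnec{\PP_{-,\defThn,\defS}}{0}{m}\to \sset,\qquad (T\to \PP_{-,\defThn,\defS})\mapsto \Hom_{\CL T}(\alpha,\omega), \]
depends only on the underlying bi-pointed necklace $T$, and therefore factors as $H_m\circ (Q_{-,\defThn,\defS})_!$, where $(Q_{-,\defThn,\defS})_!$ is the discrete fibration from \cref{prop:discretefib}. By \cref{prop:Qdiscfib}, this discrete fibration exhibits $\tndnec{\PP_{-,\defThn,\defS}}{0}{m}$ as the category of elements (Grothendieck construction) of the set-valued presheaf $G(X\hookrightarrow Y)_{\defThn,\defS}\colon (\tndnec{\repD}{0}{m})^{\op}\to \set$.

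Next, I would use the general fact that for any discrete fibration $\pi\colon \cE\to \cB$ corresponding to a presheaf $W\colon \cB^{\op}\to \set$ and any functor $F\colon \cB\to \sset$ there is a canonical isomorphism
\[ \colim_{\cE}(F\circ \pi)\cong \colim^{W}_{\cB} F. \]
This can be verified directly from the coequalizer presentation in \cref{defnofweightedcolimit}: the objects of $\cE$ identify with pairs $(b,w)$ with $b\in\cB$ and $w\in W(b)$, so $\coprod_{e\in \cE} F(\pi(e))\cong \coprod_{b\in\cB} F(b)\times W(b)$, and similarly on arrows, and the two coequalizer relations coincide (a morphism $(b,w)\to(b',w')$ in $\cE$ is precisely a morphism $b\to b'$ in $\cB$ together with the witness $w=W(b\to b')(w')$). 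Applying this with $\cE=\tndnec{\PP_{-,\defThn,\defS}}{0}{m}$, $\cB=\tndnec{\repD}{0}{m}$, $W=G(X\hookrightarrow Y)_{\defThn,\defS}$, and $F=H_m$ yields the desired isomorphism at the level $(\defThn,\defS)$.

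Finally, naturality in $(\defThn,\defS)$ is automatic from the functoriality of all the constructions involved, and assembles the level-wise isomorphisms into the claimed isomorphism of $\Thn$-bi-spaces. The only mildly delicate ingredient is the Fubini-style identification of the two coequalizers, but this is routine given \cref{prop:Qdiscfib} and the elementary description of the category of elements of a set-valued presheaf; no new homotopical content enters the argument.
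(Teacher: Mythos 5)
Your proposal is correct and follows the same strategy as the paper: fix $(\defThn,\defS)$, use \cref{prop:Qdiscfib} to recognize $(Q_{-,\defThn,\defS})_!$ as the category of elements of $G(X\hookrightarrow Y)_{\defThn,\defS}$, identify the functor $T\mapsto\Hom_{\CL T}(\alpha,\omega)$ as $H_m\circ(Q_{-,\defThn,\defS})_!$, and then apply the standard identification of the ordinary colimit over the category of elements with the weighted colimit on the base. The paper simply cites this last identification (from Riehl's \emph{Categorical Homotopy Theory}) while you verify it directly from the coequalizer presentation in \cref{defnofweightedcolimit}; that is a harmless and essentially equivalent choice.
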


\begin{proof}
Let $\defThn\in \Thn$ and $\defS\geq 0$. Recall from \cref{prop:Qdiscfib} that the category of elements of the functor $G(X\hookrightarrow Y)_{\defThn,\defS}$ is given by the discrete fibration
\[ (Q_{-,\defThn,\defS})_!\colon \tndnec{\PP_{-,\defThn,\defS}}{0}{m}\to \tndnec{\repD}{0}{m}.\] 
So by \cite[(7.1.8)]{RiehlCHT} we have isomorphisms in $\sset$
\begin{align*}
 \colim_{T\in \tndnec{\PP_{-,\defThn,\defS}}{0}{m}} \Hom_{\CL T}(\alpha,\omega) &\cong \colim_{\tndnec{\PP_{-,\defThn,\defS}}{0}{m}} H_m \circ  (Q_{-,\defThn,\defS})_! \\ &\cong \colim^{G(X\hookrightarrow Y)_{\defThn,\defS}}_{\tndnec{\repD}{0}{m}} H_m. \qedhere
 \end{align*}
\end{proof}

\begin{lemma} \label{lem:invertingweights}
Let $\cA$ and $\cD$ be small categories, and $F\colon \cA\to \sset$ and $W\colon \cA^{\op}\to \set^{\cD^{\op}}$ be functors. Write $\iota\colon \set^{\cD^{\op}}\hookrightarrow \sset^{\cD^{\op}}$ for the canonical inclusion and note that $\sset^{\cD^{\op}}$ is canonically enriched over $\sset$. Then there is an isomorphism in $\sset^{\cD^{\op}}$
\[ \colim^{W_\star}_{\cA} F \cong \colim^{F}_{\cA^{\op}} \iota W \]
where $\colim^{W_\star}_{\cA} F\colon \cD^{\op}\to \sset$ is the functor sending an object $d\in \cD$ to the colimit of the functor~$F$ weighted by 
\[ W_d\coloneqq \cA^{\op}\xrightarrow{W} \set^{\cD^{\op}}\xrightarrow{\ev_d} \set \]
and $\colim^{F}_{\cA^{\op}} \iota W$ is the $\sset$-enriched colimit of $\iota W\colon \cA^{\op}\to \sset^{\cD^{\op}}$ weighted by $F$.
\end{lemma}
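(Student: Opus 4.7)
The plan is to express both weighted colimits as coequalizers using \cref{defnofweightedcolimit} and observe that, after evaluating at a point of $\cD$, the two resulting diagrams in $\sset$ coincide up to the symmetry of cartesian product. The key observation is that the enriched weighted colimit on the right is computed levelwise in $\sset^{\cD^{\op}}$, using the natural tensoring $K\otimes X=K\times X$ of $\sset^{\cD^{\op}}$ over $\sset$, which is just the objectwise cartesian product.

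First, I would fix $d\in\cD$ and apply \cref{defnofweightedcolimit} directly to rewrite the LHS evaluated at $d$ as
\[ (\colim^{W_\star}_\cA F)(d) = \colim^{W_d}_\cA F \cong \mathrm{coeq}\left(\coprod_{a\to a'\in\cA} F(a)\times W_d(a') \rightrightarrows \coprod_{a\in\cA} F(a)\times W_d(a)\right). \]
Then for the RHS, I would apply the same remark with $\cA^{\op}$ in place of $\cA$, where now the diagram is $\iota W\colon\cA^{\op}\to\sset^{\cD^{\op}}$ and the weight is $F\colon(\cA^{\op})^{\op}=\cA\to\sset$. Because colimits in $\sset^{\cD^{\op}}$ are computed levelwise and $(\iota W(a))(d)=W_d(a)$, after reindexing arrows $a\to a'$ in $\cA^{\op}$ as arrows $a'\to a$ in $\cA$ (relabeling $b=a'$ and $c=a$), the RHS evaluated at $d$ becomes
\[ (\colim^F_{\cA^{\op}} \iota W)(d) \cong \mathrm{coeq}\left(\coprod_{b\to c\in\cA} W_d(c)\times F(b) \rightrightarrows \coprod_{b\in\cA} W_d(b)\times F(b)\right). \]

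The natural swap $F(a)\times W_d(a')\cong W_d(a')\times F(a)$ then gives an isomorphism of these two coequalizer diagrams: a direct check shows that the map $F(a\to a')\times\id$ on the LHS, which lands in the $a'$-summand, corresponds under the symmetry to the map $\id\times F(b\to c)$ on the RHS landing in the $c$-summand; similarly $\id\times W_d(a\to a')$ corresponds to $W_d(b\to c)\times\id$. Hence the coequalizers are isomorphic, and since all constructions are natural in $d$, the pointwise isomorphisms assemble into the desired isomorphism in $\sset^{\cD^{\op}}$.

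The only real obstacle is notational rather than mathematical: one must be careful with the variance of $F$ when viewed as a weight on $\cA^{\op}$ and keep track of which summand each map lands in under the reindexing. Once the coequalizer formula is applied on both sides, the statement reduces to the symmetry of $\times$ in $\sset$, so no substantial computation is required.
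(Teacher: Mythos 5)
Your proposal is correct and follows essentially the same route as the paper's proof: both apply the coequalizer formula from \cref{defnofweightedcolimit} to each side, observe that the colimit on the left can be computed pointwise in $d\in\cD$ and assembles to an object of $\sset^{\cD^{\op}}$, and then note that the two coequalizer diagrams agree after reindexing $\cA^{\op}$-arrows as $\cA$-arrows and invoking the symmetry of the cartesian product. The only (inessential) difference is that you spell out the compatibility of the two parallel maps under the swap, which the paper leaves implicit.
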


\begin{proof}

Using \cref{defnofweightedcolimit}, for every $d\in \cD$, there is an isomorphism in $\sset$
\begin{align*}
    \colim^{W_d}_{\cA} F & \textstyle\cong \mathrm{coeq}\left(\coprod_{a\to a'\in \cA} W_d(a)\times F(a') \rightrightarrows \coprod_{a\in \cA} W_d(a)\times F(a)\right)
\end{align*} 
natural in $d\in \cD$. Hence this yields an isomorphism in $\sset^{\cD^{\op}}$
\[ \textstyle \colim^{W_\star}_{\cA} F\cong \mathrm{coeq}\left(\coprod_{a\to a'\in \cA} W(a)\times F(a') \rightrightarrows \coprod_{a\in \cA} W(a)\times F(a)\right).  \]
On the other hand, again by \cref{defnofweightedcolimit}, we have an isomorphism in $\sset^{\cD^{\op}}$
\begin{align*}  \colim^{F}_{\cA^{\op}} \iota W & \textstyle\cong \mathrm{coeq}\left(\coprod_{a'\to a\in \cA^{\op}} F(a')\times W(a) \rightrightarrows \coprod_{a\in \cA} F(a)\times W(a)\right) 
 \\ 
 &\textstyle \cong \mathrm{coeq}\left(\coprod_{a\to a'\in \cA} W(a)\times F(a') \rightrightarrows \coprod_{a\in \cA} W(a)\times F(a)\right).
\end{align*}
Hence, we get the desired isomorphism. 
\end{proof}

\begin{rmk} \label{rmk:enrichment}
Let $\varphi\colon \sset^{\DThnop}\cong \Thnssset$ be one of the two canonical isomorphism, and consider the inclusion
\[ \iota\colon\Thnsset\cong \set^{\DThnop}\hookrightarrow \sset^{\DThnop} \stackrel{\varphi}{\cong} \Thnssset. \]
Then $\sset^{\DThnop}$ is canonically enriched over $\sset$ and we consider the $\sset$-enrichment of $\Thnssset$ via $\varphi\colon \sset^{\DThnop}\cong \Thnssset$.
\end{rmk}

\begin{prop} \label{homweights}
Let $m\geq 1$, $Y$ be a connected $\Thn$-space, and $X\hookrightarrow Y$ be a monomorphism in $\Thnsset$. We have the following isomorphism in $\Thnssset$
\[ \colim_{T\in \tndnec{\PP_{-,\defThn,\defS}}{0}{m}} \Hom_{\CL T}(\alpha,\omega)\cong  \colim^{H_m}_{(\tndnec{\repD}{0}{m})^{\op}} \iota G(X\hookrightarrow Y),   \]
where $\colim^{H_m}_{(\tndnec{\repD}{0}{m})^{\op}} \iota G(X\hookrightarrow Y)$ is the $\sset$-enriched colimit of $\iota G(X\hookrightarrow Y)$ weighted by~$H_m$. 
\end{prop}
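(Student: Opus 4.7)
The plan is to obtain this as a direct consequence of the two preceding results in the subsection, namely \cref{prop:homasweighted1} and \cref{lem:invertingweights}, with essentially no new computation. First I would apply \cref{prop:homasweighted1} to rewrite the left-hand side: it identifies
\[ \colim_{T\in \tndnec{\PP_{-,\star,\star}}{0}{m}} \Hom_{\CL T}(\alpha,\omega) \cong \colim^{G(X\hookrightarrow Y)_{\star,\star}}_{\tndnec{\repD}{0}{m}} H_m \]
as an object of $\Thnssset$, where the right-hand side is interpreted levelwise: at $(\defThn, [\defS])\in \ThnS$, it is the weighted colimit in $\sset$ of $H_m$ by $G(X\hookrightarrow Y)_{\defThn,\defS}$.

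The next step is to invoke \cref{lem:invertingweights} with $\cA=\tndnec{\repD}{0}{m}$, with $\cD$ chosen so that $\set^{\cD^{\op}}\cong \Thnsset$ (so $\cD=\ThnS$), with $F=H_m\colon \cA\to \sset$, and with $W=G(X\hookrightarrow Y)\colon \cA^{\op}\to \Thnsset$. Here one needs only to note that for each $d=(\defThn,[\defS])\in \ThnS$, the composite $\ev_d\circ G(X\hookrightarrow Y)$ agrees, by construction, with the functor $G(X\hookrightarrow Y)_{\defThn,\defS}\colon \cA^{\op}\to \set$ defined in the previous subsection, so the notation $W_\star$ of \cref{lem:invertingweights} matches $G(X\hookrightarrow Y)_{\star,\star}$. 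The lemma then gives the isomorphism
\[ \colim^{G(X\hookrightarrow Y)_{\star,\star}}_{\tndnec{\repD}{0}{m}} H_m \cong \colim^{H_m}_{(\tndnec{\repD}{0}{m})^{\op}} \iota G(X\hookrightarrow Y) \]
in $\sset^{\ThnS^{\op}}$, and composing with the first isomorphism yields the claim.

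The only point that deserves a careful sanity check, and which I expect to be the main (minor) obstacle, is that the symbol $\iota$ on the right-hand side has two a priori distinct meanings: the canonical inclusion $\set^{\cD^{\op}}\hookrightarrow \sset^{\cD^{\op}}$ from \cref{lem:invertingweights}, and the inclusion $\Thnsset\cong \set^{\DThnop}\hookrightarrow \sset^{\DThnop}\stackrel{\varphi}{\cong}\Thnssset$ from \cref{rmk:enrichment}. I would verify that these agree under the canonical identification $\sset^{\ThnS^{\op}}\cong \Thnssset$ (both are obtained by regarding a $\Thn$-space as a levelwise-constant $\Thn$-bi-space in the new simplicial direction), so that the two sides of the displayed isomorphism in the proposition live in the same category and the composition of the two isomorphisms above makes sense verbatim.
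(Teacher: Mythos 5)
Your argument is exactly the paper's: apply \cref{prop:homasweighted1} to rewrite the left-hand side as a $G(X\hookrightarrow Y)_{\star,\star}$-weighted colimit of $H_m$, then invoke \cref{lem:invertingweights} with $\cA=\tndnec{\repD}{0}{m}$, $\cD=\ThnS$, $F=H_m$, and $W=G(X\hookrightarrow Y)$. The extra sanity check you flag about the two uses of $\iota$ agreeing under $\sset^{\ThnS^{\op}}\cong\Thnssset$ is correct and a welcome clarification, but does not change the route.
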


\begin{proof}
This is obtained by taking in \cref{lem:invertingweights} $\cA=\tndnec{\repD}{0}{m}$, $\cD=\ThnS$, $F=H_m$, and $W=G(X\hookrightarrow Y)$ and combining with \cref{prop:homasweighted1}.
\end{proof}

We further compute the colimit of the functor $G(X\hookrightarrow X)$, which will be useful to describe the hom $\Thn$-spaces of the categorification $\Ch(L(\Sp\times X))$.

\begin{prop} \label{homstrictasweighted0}
Let $m\geq 1$ and $X$ be a connected $\Thn$-space. Then there is an isomorphism in $\Thnsset$
\[ \colim_{\tndnec{\repD}{0}{m}} G(X\hookrightarrow X)\cong \colim^{\repS[0]}_{(\tndnec{\repD}{0}{m})^{\op}} G(X\hookrightarrow X)\cong X^{\times m}. \]
\end{prop}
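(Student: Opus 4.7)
The first isomorphism is immediate from \cref{defnofweightedcolimit}: since $\repS[0]$ is terminal in $\sset$, each factor $F(a)\times\repS[0]$ in the coequalizer collapses to $F(a)$, so the weighted colimit reduces to the ordinary colimit of the contravariant diagram $G(X\hookrightarrow X)$.

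For the substantive isomorphism $\colim G(X\hookrightarrow X)\cong X^{\times m}$, I plan to construct an explicit universal cocone with tip $G(X\hookrightarrow X)(\Sp[m])=X^{\times m}$. For a totally non-degenerate necklace $T=\repD[m_1]\vee\cdots\vee\repD[m_t]\hookrightarrow\repD_{0,m}$ with joints $0=j_0<j_1<\cdots<j_t=m$, define $\pi_T\colon\{1,\ldots,m\}\to B(T)$ by sending $i$ to the unique bead of $T$ whose joint interval $[j_{k-1},j_k]$ contains the edge $\{i-1,i\}$, and set $\lambda_T\coloneqq\pi_T^{*}\colon X^{B(T)}\to X^{\times m}$. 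The cocone identity $\lambda_U\circ B(f)^{*}=\lambda_T$ for a morphism $f\colon U\hookrightarrow T$ amounts to $B(f)\circ\pi_U=\pi_T$, which will hold because $f$ forces each bead interval of $U$ to lie inside the bead interval of $T$ that $B(f)$ picks out.

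For universality, given any cocone $(\mu_T\colon X^{B(T)}\to Z)_T$, the only possible factorization is $\phi\coloneqq\mu_{\Sp[m]}$, since $\pi_{\Sp[m]}=\id$ implies $\lambda_{\Sp[m]}=\id$. To verify $\mu_T=\phi\circ\lambda_T$ I would split into two cases. If $T$ already contains all $m+1$ vertices of $\repD[m]$, then the canonical inclusion $\Sp[m]\hookrightarrow T$ is a morphism in $\tndnec{\repD}{0}{m}$ with $B(\Sp[m]\hookrightarrow T)=\pi_T$, and applying the cocone identity to this morphism closes the case. For general $T$, I would pass to the refinement $T\hookrightarrow T^{\mathrm{ref}}$ obtained by adding back to each bead all missing vertices of $\repD[m]$ lying between its joints; this is a morphism in $\tndnec{\repD}{0}{m}$ inducing a bijection on beads with $\pi_T=\pi_{T^{\mathrm{ref}}}$, reducing the verification to the full-vertex-set case.

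The main obstacle will be the combinatorial description of morphisms in $\tndnec{\repD}{0}{m}$: once the $1$-ordered structure of $\repD[m]$ from \cref{examplesof1ordered,tndvsmono} is unpacked to see that a morphism $U\hookrightarrow T$ corresponds to a vertex-set inclusion together with the bead-nesting condition that determines $B(f)$, the functoriality of $\pi_T$ and both parts of the universal property follow routinely.
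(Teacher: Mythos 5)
Your proof is correct and follows essentially the same route as the paper's: both arguments build the universal cocone with summit $X^{\times m}$ by sending a necklace $T$ to the map $\pi_T^*$ obtained by recording, for each edge $\{i-1,i\}$ of the spine, the bead of $T$ containing it, and both verify universality by evaluating at $\Sp$ and reducing an arbitrary $T$ to its "vertex-completion" $T^{\mathrm{ref}}$ (the paper's $\overline{T}$). The only cosmetic difference is that you phrase the cocone legs directly via $\pi_T$ rather than via the composite $B(T)\cong B(\overline T)\xrightarrow{B(j)^*}B(\Sp)$, but these are the same map.
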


\begin{proof}
Consider the canonical inclusion $\Sp\hookrightarrow \repD_{0,m}$. Then $|B(\Sp)|=m$ and there is a canonical isomorphism $\prod_{B(\Sp)} X\cong X^{\times m}$ in $\Thnsset$. 

We define a natural cone $\gamma$ under $G(X\hookrightarrow X)$ with summit $X^{\times m}$ as follows. Given a necklace $T\hookrightarrow \repD_{0,m}$ of $(\tndnec{\repD}{0}{m}$, we construct a necklace $\overline{T}\hookrightarrow \repD_{0,m}$ with the same set of joints as $T$, but with vertex set all vertices of 
$\repD_{0,m}$. Then there are canonical inclusions $j\colon \Sp\hookrightarrow \overline{T}$ and $T\hookrightarrow \overline{T}$, and moreover $B(T)\cong B(\overline{T})$. We define the component $\gamma_T$ to be the composite
\[ \textstyle \gamma_T\coloneqq \left( G(X\hookrightarrow X)(T)=\prod_{B(T)} X\cong \prod_{B(\overline{T})} X\xrightarrow{B(j)^*} \prod_{B(\Sp)} X\cong X^{\times m}\right). \]
Note that $\gamma$ is natural in $T\hookrightarrow \repD_{0,m}$ in $\tndnec{\repD}{0}{m}$. Indeed, this follows from the fact that, if $f\colon U\hookrightarrow T$ is a map in $(\tndnec{\repD}{0}{m}$, then it induces a map $\overline{f}\colon \overline{U}\hookrightarrow\overline{T}$ under $\Sp$.

We show that $\gamma$ is a colimit cone. Let $\delta$ be a cone under $G(X\hookrightarrow X)$ with summit $Y\in \Thnsset$. Define a map $d\colon X^{\times m}\to Y$ to be the following composite
\[ \textstyle d\coloneqq \left( X^{\times m}\cong \prod_{B(\Sp)} X=G(X\hookrightarrow X)(\Sp)\xrightarrow{\delta_{\Sp}} Y\right). \]
Then, by naturality of $\delta$, we have that $d\circ \gamma=\delta$. Moreover, the map $d$ is the unique map $X^{\times m} \to Y$ with this property as $\gamma_{\Sp}$ is given by the canonical isomorphism $\prod_{B(\Sp)} X\cong X^{\times m}$.
\end{proof}

Finally, we record here the following useful facts that depend on results postponed to \cref{Projective cofibrancy results}.

 \begin{prop} \label{wcobyHmLQ}
For $m\geq 1$, the functor
\[ \colim^{H_m}_{(\tndnec{\repD}{0}{m})^{\op}} (-)\colon (\MSThndiag)^{(\tndnec{\repD}{0}{m})^{\op}}_\inj\to \MSThndiag \]
given by taking the $\sset$-enriched colimit weighted by $H_m$ is left Quillen. 
\end{prop}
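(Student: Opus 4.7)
The plan is to reduce the problem to a projective cofibrancy statement about the weight $H_m$ itself, which will then let us conclude via standard enriched model category techniques. First, I would observe that the functor $\colim^{H_m}_{(\tndnec{\repD}{0}{m})^{\op}}(-)$ is automatically a left adjoint: its right adjoint is the $H_m$-weighted power (cotensor), which at an object $Z \in \MSThndiag$ produces the functor $T \mapsto Z^{H_m(T)}$ (using the $\sset$-enrichment of $\MSThndiag$ recalled in \cref{rmk:enrichment}). So the task is only to verify that it preserves cofibrations and trivial cofibrations from the injective model structure $(\MSThndiag)^{(\tndnec{\repD}{0}{m})^{\op}}_\inj$.

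The key input, which is the substance of \cref{Projective cofibrancy results}, is that $H_m\colon \tndnec{\repD}{0}{m}\to \sset$ is \emph{projectively cofibrant}. Granting this, $H_m$ can be presented as a transfinite composition of pushouts of generating projective cofibrations, i.e.~cells of the form
\[ \partial\repS[k]\times \mathrm{Hom}_{\tndnec{\repD}{0}{m}}(T,-) \hookrightarrow \repS[k]\times \mathrm{Hom}_{\tndnec{\repD}{0}{m}}(T,-), \]
indexed by $k\geq 0$ and $T\in \tndnec{\repD}{0}{m}$. The coend/coequalizer description of the weighted colimit recalled in \cref{defnofweightedcolimit} is cocontinuous in the weight variable, and by the enriched Yoneda lemma on a representable weight $\mathrm{Hom}_{\tndnec{\repD}{0}{m}}(T,-)$ it returns evaluation at $T$. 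Consequently, for any map $f\colon F\to G$ in $(\MSThndiag)^{(\tndnec{\repD}{0}{m})^{\op}}$, the map $\colim^{H_m}_{(\tndnec{\repD}{0}{m})^{\op}} f$ can be built as a transfinite composition of pushouts of the simplicial pushout–product maps
\[ (\partial\repS[k]\hookrightarrow\repS[k])\,\widehat{\times}\,(f(T)\colon F(T)\to G(T)). \]

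Now I would invoke that $\MSThndiag$ is a simplicial monoidal model category (as a Bousfield localization of the injective model structure on $\Thn$-presheaves valued in the diagonal model structure on bisimplicial sets). Whenever $f$ is an injective cofibration (respectively a trivial injective cofibration), each evaluation $f(T)$ is a cofibration (resp.~trivial cofibration) in $\MSThndiag$, and so each pushout–product above is a cofibration (resp.~trivial cofibration) in $\MSThndiag$ by the simplicial SM7 axiom. Since cofibrations and trivial cofibrations are closed under pushouts and transfinite composition, $\colim^{H_m}_{(\tndnec{\repD}{0}{m})^{\op}} f$ has the desired property, establishing that the functor is left Quillen.

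The substantive step that I am deferring is exactly the projective cofibrancy of $H_m$. This is the main obstacle, because $\tndnec{\repD}{0}{m}$ is not a Reedy category in an obvious way and $H_m(T) = \Hom_{\CL T}(\alpha,\omega)$ varies nontrivially with $T$. I expect the proof in \cref{Projective cofibrancy results} to exploit the identification of $\tndnec{\repD}{0}{m}$ with a cube-like poset (alluded to in \cref{subsec:Cube} and \cref{rem:NecFmposet}) together with the explicit cubical description of $\Hom_{\CL T}(\alpha,\omega)$ for a necklace $T$ from Dugger--Spivak. Concretely, one filters $H_m$ by the number of beads (or by an analogous cube-degree filtration on $\tndnec{\repD}{0}{m}$) and checks that each latching map is a monomorphism onto a freely adjoined cell, which is precisely what projective cofibrancy amounts to on such a direct/Reedy-like shape.
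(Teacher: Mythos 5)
Your argument is correct and follows the same two-step structure as the paper: the key input is the projective cofibrancy of $H_m$ (proved in \cref{Hmprojcof}), from which the left Quillen property of the weighted colimit follows formally, using the $\MSspace$-enrichment of $\MSThndiag$ from \cref{rmk:enrichment}. The paper cites \cite[Theorem~3.3]{Gambino} for the second step, whereas you unpack that theorem's proof by hand (cell decomposition of $H_m$, cocontinuity in the weight variable, enriched Yoneda on representables, and the SM7 pushout--product axiom), which is the standard argument underlying Gambino's result.
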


\begin{proof}
As we will see in \cref{Hmprojcof}, the functor $H_m\colon \tndnec{\repD}{0}{m}\to \MSspace$ is projectively cofibrant, and so the result follows from \cite[Theorem~3.3]{Gambino} by considering the model structure $\MSThndiag$ as enriched over $\MSspace$ in the correct variable as in \cref{rmk:enrichment}. 
\end{proof}

 \begin{prop} \label{GXXLQE}
 Let $m\geq 1$ and $X$ be a connected $\Thn$-space. The functor 
\[ \colim^{(-)}_{(\tndnec{\repD}{0}{m})^{\op}} \iota G(X\hookrightarrow X)\colon (\MSspace)^{(\tndnec{\repD}{0}{m})^{\op}}_\inj\to \MSThndiag \]
given by taking the $\sset$-enriched colimit of the functor $\iota G(X\hookrightarrow X)$ weighted by a functor $\tndnec{\repD}{0}{m}\to \sset$
is left Quillen. 
 \end{prop}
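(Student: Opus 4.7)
The plan runs parallel to that of \cref{wcobyHmLQ}, but with the roles of the weight and the diagram interchanged: instead of fixing the weight as $H_m$, we now fix the diagram as $\iota G(X\hookrightarrow X)$ and vary the weight. By the symmetry of the coend formula in \cref{defnofweightedcolimit} (up to variance), the same Gambino-style mechanism that makes $\colim^{H_m}_{(\tndnec{\repD}{0}{m})^{\op}}(-)$ left Quillen will apply here, provided we establish the dual cofibrancy condition.

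Specifically, I would invoke the dual form of \cite[Theorem~3.3]{Gambino}: if the fixed argument of a weighted colimit is projectively cofibrant, then varying the other argument yields a left Quillen functor from the injective model structure on the variable slot to the target. Applying this dual statement with $\iota G(X \hookrightarrow X)\colon (\tndnec{\repD}{0}{m})^{\op} \to \MSThndiag$ as the fixed diagram, and $\MSThndiag$ enriched over $\MSspace$ in the appropriate simplicial slot (as in \cref{rmk:enrichment}), the proposition reduces to showing that $\iota G(X \hookrightarrow X)$ is projectively cofibrant as a $\MSThndiag$-valued presheaf on $\tndnec{\repD}{0}{m}$.

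The projective cofibrancy of $\iota G(X \hookrightarrow X)$ will be established in \cref{Projective cofibrancy results}, paralleling the proof of projective cofibrancy of $H_m$ (see \cref{Hmprojcof}). The expected argument filters the diagram $T \mapsto X^{B(T)}$ along the poset structure on $\tndnec{\repD}{0}{m}$ (recall \cref{rem:NecFmposet}): at each step of the filtration, the inclusion is exhibited as a pushout along a boundary inclusion, where the ``latching object'' at $T$ is built from the values at proper sub-necklaces $U \hookrightarrow T$ via the maps $B(U) \to B(T)$ of bead-sets. These boundary inclusions are monomorphisms in $\MSThndiag$, and hence cofibrations.

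The main obstacle is the combinatorial verification that the latching maps are indeed monomorphisms at each stage of the filtration, which requires a careful analysis of how beads of sub-necklaces embed into those of larger necklaces. The connectedness hypothesis on $X$ ensures that the values $X^{B(T)}$ behave uniformly as $B(T)$ varies over $\tndnec{\repD}{0}{m}$, and the combinatorial description of the poset (developed in \cref{subsec:Cube}) controls the inductive step. Once this projective cofibrancy is in place, the left Quillen conclusion follows immediately from the dual Gambino theorem stated above.
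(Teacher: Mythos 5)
Your proof of the proposition itself follows the paper: cite \cite[Theorem~3.3]{Gambino} and reduce to the projective cofibrancy of $\iota G(X\hookrightarrow X)$, which the paper establishes in \cref{iotaGXprojcof}. Your ``dual form of Gambino'' language is fine — it is the same left Quillen bifunctor theorem applied to the other variable. But your sketch of how the cofibrancy is to be proved has the variance backwards. The functor $G(X\hookrightarrow X)$ is contravariant on $\tndnec{\repD}{0}{m}$, sending $f\colon U\hookrightarrow T$ to $B(f)^*\colon X^{B(T)}\to X^{B(U)}$. Hence, for a projectively cofibrant diagram on the direct category $(\tndnec{\repD}{0}{m})^{\op}\cong \Cube_m^{\op}$, the latching object at $T$ is the colimit over non-identity arrows $U\to T$ in the opposite poset, i.e., over proper \emph{super}-necklaces $T\hookrightarrow U$, giving the latching map $\colim_{T\subsetneq U}X^{B(U)}\to X^{B(T)}$. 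Your description in terms of sub-necklaces $U\hookrightarrow T$ produces maps $X^{B(T)}\to X^{B(U)}$ pointing \emph{out of} the target, so it does not form a latching cone at all.

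You also omit the paper's key structural step, which is what makes the latching verification tractable: rather than filtering all of $\Cube_m^{\op}$, the paper uses a retraction $r\colon\Cube_m\to\cP_m$ onto the subposet of pairs $(I,\{1,\ldots,m-1\})$, together with a section $\sigma$, and shows in \cref{Galphaisiso} that the unit $\alpha\colon\id\to\sigma r$ is carried to an isomorphism by $\cG(X)$. By \cref{cor:nattransfoutofGX} this reduces the lifting problem to $\cP_m^{\op}$, where the degree $I\mapsto m-1-|I|$ gives a clean reverse induction with latching maps $\colim_{I\subsetneq J}\cG(X)(\sigma J)\hookrightarrow\cG(X)(\sigma I)$ controlled by \cref{lem:colimcofib}. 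Without this reduction the parameter $S$ also varies across the latching diagram, and your plan does not indicate how to handle that. Two further slips: the argument does \emph{not} parallel the proof of $H_m$'s projective cofibrancy (\cref{Hmprojcof}), which uses the coproduct-of-representables criterion \cref{thm:criterionprojcof} — a $\sset$-specific technique unavailable for the $\Thnsset$-valued $\cG(X)$; and the connectedness of $X$ is a standing hypothesis inherited from the definition of $G(X\hookrightarrow Y)$, not an ingredient that makes the values ``behave uniformly'' in the lifting argument. You also need to record the transfer of cofibrancy from $(\MSThnsset)^{\Cube_m^{\op}}_\proj$ to $(\MSThndiag)^{\Cube_m^{\op}}_\proj$ via the left Quillen functor $\iota_*$ (\cref{iotastarLQP}).
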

 
 \begin{proof}
As we will see in \cref{iotaGXprojcof}, the functor 
\[ \iota G(X\hookrightarrow X)\colon {(\tndnec{\repD}{0}{m})^{\op}}\to \MSThndiag \]
is projectively cofibrant, and so the result follows from \cite[Theorem~3.3]{Gambino} by considering the model structure $\MSThndiag$ as enriched over $\MSspace$ in the correct variable as in \cref{rmk:enrichment}.
\end{proof}

\subsection{Study of \texorpdfstring{$\Ch\PP\to \Sh_m Y$}{CP(X,Y)->ShY}} \label{Study of CPP}

We are now ready to give an explicit description of $\Ch\PP$ and study the image under $\Ch$ of the canonical map $I\colon \PP\to L(\repD\times Y)$. 

\begin{notation}
For $m\geq 0$, we write $\Sh_m$ for the functor
\[ \Sh_m\colon \Thnsset \xrightarrow{\repD\times (-)} \sThnsset \xrightarrow{L} \pcatThn\xrightarrow{\Ch} \Thncat \]
and $\partial\Sh_m$ for its \emph{boundary} 
\[ \partial\Sh_m\colon \Thnsset \xrightarrow{\partial\repD\times (-)} \sThnsset \xrightarrow{L} \pcatThn\xrightarrow{\Ch} \Thncat. \]
\end{notation}

By applying $\Ch$ to the diagram of \cref{notationPP}, as $\Ch$ commutes with colimits, we have the following diagram in $\Thncat$.
\begin{tz}
\node[](1) {$\partial \Sh_m X$};
\node[below of=1](2) {$\partial \Sh_m Y$}; 
\node[right of=1,xshift=1.4cm](3) {$\Sh_m X$}; 
\node[below of=3](4) {$\Ch \PP$};
\node[below right of=4,xshift=1.5cm](5) {$\Sh_m Y$}; 
\pushout{4};

\draw[->] (1) to (3);
\draw[->] (1) to (2);
\draw[->] (3) to (4);
\draw[->] (2) to (4);
\draw[->,bend left] (3) to (5);
\draw[->,bend right=20] (2) to (5);
\draw[->,dashed] (4) to node[below,la,xshift=-5pt]{$\Ch\inc$} (5);
    \end{tz}

\begin{prop} \label{prop:computationpushprod}
Let $m\geq 1$, $Y$ be a connected $\Thn$-space, and $X\hookrightarrow Y$ be a monomorphism in $\Thnsset$. Then the $\Thnsset$-enriched category $\Ch\PP$ is the directed $\Thnsset$-enriched category such that: 
\begin{itemize}[leftmargin=0.6cm]
    \item its set of objects $\Ob(\Ch\PP)$ is $\{0,1,\ldots,m\}$, 
    \item for $0<j-i<m$, the hom $\Thn$-space $\Hom_{\Ch\PP}(i,j)$ is given by
    \[ \Hom_{\Ch\PP}(i,j)\cong \Hom_{\Sh_m Y}(i,j), \]
    \item the hom $\Thn$-space $\Hom_{\Ch\PP}(0,m)$ is given by
    \[ \Hom_{\Ch\PP}(0,m)\cong\diag( \colim^{H_m}_{(\tndnec{\repD}{0}{m})^{\op}} \iota G(X\hookrightarrow Y)) \]
\end{itemize}
\end{prop}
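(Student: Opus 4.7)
I will decompose the claim into three parts --- identifying the set of objects, verifying that $\Ch\PP$ is directed, and computing the hom $\Thn$-spaces --- and assemble them from the machinery built in the previous subsections.

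For the objects: since $\Ch = \diag_* \circ \CL_*$ from \cref{defadjunction}, and both $\CL_*$ and $\diag$ preserve object sets at the level of enriched precategories, we have $\Ob(\Ch W) = W_0$ for every $W \in \pcatThn$. Combined with \cref{PP0}, this gives $\Ob(\Ch\PP) = \{0,1,\ldots,m\}$. For the directed structure, by \cref{prop:PP1ordered} each $\PP_{-,\theta,k}$ is $1$-ordered, with the relation $\preceq_{\PP_{-,\theta,k}}$ coinciding with the linear order on $\{0,1,\ldots,m\}$, as inspected in the proof of that result. Hence for $j<i$ the category $\tndnec{\PP_{-,\theta,k}}{i}{j}$ is empty, and for $j=i$ it consists only of the constant $0$-simplex; by \cref{cor:computationshomC1ordered} this forces $\Hom_{\Ch\PP}(i,j) = \emptyset$ when $j<i$ and $\Hom_{\Ch\PP}(i,i) = \repS[0]$.

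For $0 < j - i < m$, applying \cref{cor:computationshomC1ordered} yields
\[
\Hom_{\Ch\PP}(i,j) \cong \diag\bigl(\colim_{T \in \tndnec{\PP_{-,\star,\star}}{i}{j}} \Hom_{\CL T}(\alpha,\omega)\bigr).
\]
By \cref{lemmanecklacePPvsLY}, applied naturally in $\theta\in\Thn$ and $k\geq 0$, the canonical map $\inc\colon \PP \to L(\repD \times Y)$ induces an isomorphism between the indexing categories of totally non-degenerate necklaces, whence an isomorphism of the colimits above with the analogous colimits for $L(\repD \times Y) \cong \PP[m][Y]$. This yields $\Hom_{\Ch\PP}(i,j) \cong \Hom_{\Ch L(\repD \times Y)}(i,j) = \Hom_{\Sh_m Y}(i,j)$. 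For the top hom $(i,j)=(0,m)$, combining \cref{cor:computationshomC1ordered} with \cref{homweights} directly produces
\[
\Hom_{\Ch\PP}(0,m) \cong \diag\bigl(\colim^{H_m}_{(\tndnec{\repD}{0}{m})^{\op}} \iota G(X\hookrightarrow Y)\bigr),
\]
which is the asserted formula.

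The proof is essentially pure assembly, since the necklace calculus for $1$-ordered simplicial sets and the identification of weighted colimits via \cref{prop:Qdiscfib} do the real work. The only delicate point is ensuring that the isomorphisms above are natural in $\theta\in\Thn$ and $k\geq 0$ before applying $\diag$, so that they genuinely assemble into isomorphisms in $\Thnsset$ after applying $\diag_*$. This naturality is already built into the discrete-fibration description of \cref{prop:Qdiscfib} and into the construction underlying \cref{lemmanecklacePPvsLY}, so nothing further is required.
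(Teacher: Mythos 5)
Your proof is correct and follows essentially the same route as the paper's: identify objects via \cref{PP0}, invoke $1$-orderedness (\cref{prop:PP1ordered}) to apply \cref{cor:computationshomC1ordered}, use \cref{lemmanecklacePPvsLY} for the intermediate homs, and \cref{homweights} for the top hom. The one thing you do beyond the paper is explicitly verify the directedness (empty homs for $j<i$, $\repS[0]$ for $j=i$) from the linear order $\preceq_{\PP_{-,\theta,k}}$, which the paper leaves implicit in calling $\Ch\PP$ a directed enriched category; that is a worthwhile addition, not a deviation.
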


\begin{proof}
By \cref{PP0}, we have that $\Ob(\Ch\PP) = \PP_0 =\{0,1,\ldots,m \}$. Now recall from \cref{prop:PP1ordered} that $\PP_{-,\defThn,\defS}$ is $1$-ordered for all $\defThn\in\Thn$ and $\defS\geq 0$. Hence, we can apply \cref{cor:computationshomC1ordered} and so we get that, for all $0\leq i<j\leq m$, 
\[ \Hom_{\Ch\PP}(i,j)\cong \diag(\colim_{T\in \tndnec{\PP_{-,\star,\star}}{i}{j}} \Hom_{\CL T}(\alpha,\omega)). \]
As $L(\repD\times Y)\cong \PP[m][Y]$, we also get that, for all $0\leq i<j\leq m$, 
\[ \Hom_{\Sh_m Y}(i,j)\cong \diag(\colim_{T\in \tndnec{L(\repD\times Y)_{-,\star,\star}}{i}{j}} \Hom_{\CL T}(\alpha,\omega)). \]
Now, if $0<j-i<m$, by \cref{lemmanecklacePPvsLY}, we have a natural isomorphism of categories
\[ \tndnec{\PP_{-,\star,\star}}{i}{j}\cong \tndnec{L(\repD\times Y)_{-,\star,\star}}{i}{j} \]
so that $\Hom_{\Ch\PP}(i,j)\cong \Hom_{\Sh_m Y}(i,j)$. Finally, by \cref{homweights}, we get that
\begin{align*} \Hom_{\Ch\PP}(0,m) &\cong \diag(\colim_{T\in \tndnec{\PP_{-,\star,\star}}{0}{m}} \Hom_{\CL T}(\alpha,\omega)) \\
    &\cong\diag( \colim^{H_m}_{(\tndnec{\repD}{0}{m})^{\op}} \iota G(X\hookrightarrow Y))
    \end{align*}
which concludes the proof.
\end{proof}

 \begin{prop} \label{prop:lasthom}
 Let $m\geq 1$, $Y$ be a connected $\Thn$-space, and $X\hookrightarrow Y$ be a (trivial) cofibration in $\Thnsset$. Then the map
 \[ \Hom_{\Ch\PP}(0,m)\to \Hom_{\Sh_m Y}(0,m) \]
 is a (trivial) cofibration in $\MSThnsset$.
 \end{prop}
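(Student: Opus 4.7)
The strategy is to identify both hom $\Thn$-spaces via the weighted-colimit description from \cref{prop:computationpushprod} and then apply two successive left Quillen functors to a level-wise (trivial) cofibration.

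First, I would use \cref{prop:computationpushprod} applied both to the monomorphism $X \hookrightarrow Y$ and to the identity $Y \hookrightarrow Y$ (in which case $\PP[m][Y]\cong L(\repD\times Y)=\Sh_m Y$) to identify
\[ \Hom_{\Ch\PP}(0,m)\cong \diag\!\left(\colim^{H_m}_{(\tndnec{\repD}{0}{m})^{\op}} \iota G(X\hookrightarrow Y)\right)\]
and
\[ \Hom_{\Sh_m Y}(0,m)\cong \diag\!\left(\colim^{H_m}_{(\tndnec{\repD}{0}{m})^{\op}} \iota G(Y\hookrightarrow Y)\right),\]
and observe that, by naturality of the construction in \cref{prop:computationpushprod}, the map in question is induced by the natural transformation $\iota G(X\hookrightarrow Y)\to \iota G(Y\hookrightarrow Y)$ coming from the map of spans defining $G$.

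Next, I would examine this natural transformation object-wise in $(\tndnec{\repD}{0}{m})^{\op}$. By definition of $G$, at every necklace $T\hookrightarrow\repD_{0,m}$ with $T\neq \repD$, its component is the identity $\prod_{B(T)} Y\to \prod_{B(T)} Y$, and at $T=\repD$, its component is the map $X\hookrightarrow Y$ itself. Since $X\hookrightarrow Y$ is a (trivial) cofibration in $\MSThnsset$ and cofibrations in $\MSThndiag$ are monomorphisms (inherited from $\MSdiag$ through the localization $(\MSdiag)^{\Thnop}_{\inj}\to \MSThndiag$), and since $\iota$ is the right adjoint in the Quillen equivalence $\diag \dashv \iota$ of \cref{diagiotaQE} that sends $\MSThnsset$-(trivial cofibrations) to $\MSThndiag$-(trivial cofibrations) on homotopically constant objects (as both conditions are checked level-wise), the natural transformation $\iota G(X\hookrightarrow Y)\to \iota G(Y\hookrightarrow Y)$ is a (trivial) cofibration in $(\MSThndiag)^{(\tndnec{\repD}{0}{m})^{\op}}_\inj$.

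Finally, I would apply the left Quillen functor
\[ \colim^{H_m}_{(\tndnec{\repD}{0}{m})^{\op}}(-)\colon (\MSThndiag)^{(\tndnec{\repD}{0}{m})^{\op}}_\inj\to \MSThndiag \]
from \cref{wcobyHmLQ} to obtain a (trivial) cofibration in $\MSThndiag$, and then apply the left Quillen functor $\diag\colon \MSThndiag\to \MSThnsset$ from \cref{diagiotaQE} to conclude that the induced map $\Hom_{\Ch\PP}(0,m)\to \Hom_{\Sh_m Y}(0,m)$ is a (trivial) cofibration in $\MSThnsset$. The main subtlety is verifying that $\iota$ carries $\MSThnsset$-(trivial) cofibrations between objects of $\Thnsset$ (seen as homotopically constant $\Thn$-bi-spaces) to (trivial) cofibrations in $\MSThndiag$; this is where one must unpack the diagonal model structure carefully, but it follows from the level-wise nature of cofibrations and weak equivalences in $(\MSdiag)^{\Thnop}_{\inj}$ together with the definition of $\MSdiag$ as a localization of the injective model structure on bisimplicial sets.
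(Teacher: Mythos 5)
Your overall strategy matches the paper's proof: identify both hom $\Thn$-spaces via \cref{prop:computationpushprod}, observe that the map is induced by the natural transformation $\iota G(X\hookrightarrow Y)\to \iota G(Y\hookrightarrow Y)$ (which is the identity away from the top necklace $T=\repD$ and is $X\hookrightarrow Y$ there), and then apply the two left Quillen functors $\colim^{H_m}$ and $\diag$. Your identification of the natural transformation as $\iota G(X\hookrightarrow Y)\to \iota G(Y\hookrightarrow Y)$ is in fact clearer than the paper's text, which at one point writes $\iota G(X\hookrightarrow X)\to \iota G(X\hookrightarrow Y)$ before applying $\colim^{H_m}$ to $\iota G(X\hookrightarrow Y)\to \iota G(Y\hookrightarrow Y)$ — presumably a typo.

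However, there is a genuine error in your justification for why $\iota(X\hookrightarrow Y)$ is a (trivial) cofibration in $\MSThndiag$. You claim that $\iota$ is ``the right adjoint in the Quillen equivalence $\diag\dashv\iota$.'' This is false: by \cref{diagiotaQE} the two adjunctions are $\diag\dashv(\delta_*)_*$ and $\iota\dashv(\pi_*)_*$, so $\diag$ and $\iota$ are both \emph{left} Quillen functors (in different Quillen pairs), and there is no adjunction $\diag\dashv\iota$. More importantly, a right adjoint in a Quillen pair does \emph{not} in general preserve (trivial) cofibrations — that is a left-adjoint property — so even granting your claim the deduction would not follow, and the extra qualifications about ``homotopically constant objects'' and level-wise checks cannot rescue it. The correct (and simpler) argument, which is what the paper uses, is just that $\iota$ is left Quillen and hence preserves (trivial) cofibrations outright. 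Once that is fixed, the rest of your argument — level-wise (trivial) cofibrations in the injective model structure on the functor category, then \cref{wcobyHmLQ}, then $\diag$ — goes through exactly as in the paper.
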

 
 \begin{proof}
 By \cref{prop:computationpushprod} applied once to $X\hookrightarrow Y$ and once to the identity $Y\hookrightarrow Y$, we have the following isomorphisms.
 \begin{tz}
 \node[](1) {$\Hom_{\Ch\PP}(0,m)$}; 
 \node[below of=1](2) {$\Hom_{\Sh_m Y}(0,m)$};
 \node[right of=1,xshift=4.7cm](1') {$\diag( \colim^{H_m}_{(\tndnec{\repD}{0}{m})^{\op}} \iota G(X\hookrightarrow Y))$}; 
 \node[below of=1'](2') {$\diag( \colim^{H_m}_{(\tndnec{\repD}{0}{m})^{\op}} \iota G(Y\hookrightarrow Y))$}; 
 \draw[->] (1) to node[above,la]{$\cong$} (1');
 \draw[->] (2) to node[below,la]{$\cong$} (2');
 \draw[->] (1) to (2); 
 \draw[->] (1') to (2');
 \end{tz}
 As $X\hookrightarrow Y$ is a (trivial) cofibration in $\MSThnsset$ and $\iota\colon \MSThnsset\to \MSThndiag$ is left Quillen by \cref{diagiotaQE}, then $\iota (X\hookrightarrow Y)$ is also a (trivial) cofibration in $\MSThndiag$. As (trivial) cofibrations are defined level-wise in $(\MSThndiag)^{(\tndnec{\repD}{0}{m})^{\op}}_\inj$, it is straightforward to check by unpacking the definitions that 
 \[ \iota G(X\hookrightarrow X)\to \iota G(X\hookrightarrow Y)\] 
 is a (trivial) cofibration in $(\MSThndiag)^{(\tndnec{\repD}{0}{m})^{\op}}_\inj$. By \cref{wcobyHmLQ}, the functor \[ \colim^{H_m}_{(\tndnec{\repD}{0}{m})^{\op}} (-)\colon (\MSThndiag)^{(\tndnec{\repD}{0}{m})^{\op}}_\inj\to \MSThndiag \]
 is left Quillen, and so 
 \[ \colim^{H_m}_{(\tndnec{\repD}{0}{m})^{\op}} \iota G(X\hookrightarrow Y)\to \colim^{H_m}_{(\tndnec{\repD}{0}{m})^{\op}} \iota G(Y\hookrightarrow Y) \]
 is a (trivial) cofibration in $\MSThndiag$. Finally, by \cref{diagiotaQE}, we have that the functor $\diag\colon \MSThndiag\to \MSThnsset$ is left Quillen, and so we conclude that the map 
 \[ \diag( \colim^{H_m}_{(\tndnec{\repD}{0}{m})^{\op}} \iota G(X\hookrightarrow Y))\to \diag( \colim^{H_m}_{(\tndnec{\repD}{0}{m})^{\op}} \iota G(Y\hookrightarrow Y)) \]
 is a (trivial) cofibration in $\MSThnsset$, as desired.
 \end{proof}

\subsection{Study of \texorpdfstring{$\Sh_m X\to \Sigma_m X$}{Sh X->Sigma X}} \label{Study of Sigmam}

We now show that the categorification of $L(\Sp\times X)$ is $\Sigma_m X$. Then we construct and study a $\Thnsset$-functor $\Sh_m X\to \Sigma_m X$, which will be shown in \cref{C preserves weak equivalences} to be a retract of the image under $\Ch$ of the map $L(\Sp\times X)\hookrightarrow L(\repD\times X)$.

\begin{lemma} \label{lem:Sh1vsSigma1}
Let $X$ be a connected $\Thn$-space. There is a natural isomorphism in $\Thncat$ 
\[ \Sh_1 X\cong\Sigma X. \]
\end{lemma}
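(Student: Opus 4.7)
The plan is to unpack both sides of the claimed isomorphism using the description of $\Ch$ on the $P$-type objects given by \cref{prop:computationpushprod}, applied in the simplest possible case. Note that $L(\repD[1]\times X)$ is exactly $\PP[1][X]$ when one takes $X\hookrightarrow Y$ to be the identity $X\hookrightarrow X$, and that $X$ is connected by assumption, so \cref{prop:computationpushprod} applies. This will allow us to describe $\Sh_1 X=\Ch L(\repD[1]\times X)$ as a directed $\Thnsset$-enriched category with object set $\{0,1\}$ and a single nontrivial hom $\Thn$-space $\Hom_{\Sh_1 X}(0,1)$, and then to identify this with $X$. Since $\Sigma X$ is by definition the directed $\Thnsset$-enriched category with object set $\{0,1\}$ and $\Hom_{\Sigma X}(0,1)=X$, this will complete the proof.

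More concretely, by \cref{prop:computationpushprod}, since for $m=1$ the range $0<j-i<m$ is empty, the only relevant computation is
\[
\Hom_{\Sh_1 X}(0,1)\;\cong\;\diag\bigl(\textstyle\colim^{H_1}_{(\tndnec{\repD[1]}{0}{1})^{\op}}\iota\, G(X\hookrightarrow X)\bigr).
\]
I would then observe that the category $\tndnec{\repD[1]}{0}{1}$ is the terminal category: the only totally non-degenerate necklace from $0$ to $1$ in $\repD[1]$ is $\repD[1]$ itself (cf.~\cref{rem:NecFmposet}). The weighted colimit therefore collapses to the tensor $H_1(\repD[1])\odot\iota G(X\hookrightarrow X)(\repD[1])$. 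By definition, $H_1(\repD[1])=\Hom_{\CL\repD[1]}(0,1)=\repS[0]$, and $G(X\hookrightarrow X)(\repD[1])=X$ using the clause $T=\repD[m]$ in the definition of $G$. Hence the weighted colimit reduces to $\iota X$.

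It then remains to check that $\diag\circ\iota\cong\id$. This is a formal consequence of \cref{subsec:MSdiag}: with $\pi\colon\Delta\times\Delta\to\Delta$ one of the projections and $\delta\colon\Delta\to\Delta\times\Delta$ the diagonal, one has $\pi\circ\delta=\id_\Delta$, so $\delta^*\circ\pi^*=\id$ and therefore $\diag\circ\iota=(\delta^*)_*\circ(\pi^*)_*=\id$ as functors $\Thnsset\to\Thnsset$. Chaining these identifications yields $\Hom_{\Sh_1 X}(0,1)\cong X$, and the naturality in $X$ follows because every step — the pushout description of $L(\repD[1]\times X)$, the functoriality of $G$ in its argument $X\hookrightarrow X$, and the identification $\diag\circ\iota=\id$ — is natural. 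The proof is then essentially routine and I do not anticipate a serious obstacle; the only subtle point is bookkeeping the bisimplicial direction introduced by the $\CL$-necklace formula, which is precisely what $\diag\circ\iota=\id$ takes care of.
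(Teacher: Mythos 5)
Your proof is correct, but it takes a genuinely different route from the paper's. The paper proves this lemma directly: it applies the colimit-preserving functor $\CL_*$ to the pushout of \cref{RmkPushout} describing $L(\repD[1]\times X)$, identifies the result level-wise as $\CL_*(L(\repD[1]\times X))_{\defThn,\defS}\cong\Sigma(X_{\defThn,\defS})$ using that $\CL$ and $\Sigma$ both preserve colimits, and concludes by applying $\diag_*$. You instead invoke the heavy machinery of \cref{prop:computationpushprod} (which, importantly, is proved in \cref{Study of CPP}, before \cref{Study of Sigmam} where the present lemma lives, and does not depend on it — so there is no circularity), specialize to $m=1$ and the identity monomorphism $X\hookrightarrow X$, observe that $\tndnec{\repD[1]}{0}{1}\cong\Cube_1$ is terminal so the weighted colimit collapses to $H_1(\repD[1])\odot\iota G(X\hookrightarrow X)(\repD[1])=\repS[0]\odot\iota X\cong\iota X$, and finish with $\diag\circ\iota=\id$ as in \cref{subsec:MSdiag}. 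Both arguments are valid. The paper's is more elementary and self-contained — it sidesteps the necklace/weighted-colimit apparatus entirely, which is arguably preferable for what is meant to be the base case anchoring the rest of \cref{Study of Sigmam}. Your approach trades that directness for uniformity: it shows the general framework of \cref{prop:computationpushprod} specializes correctly to $m=1$, which is a worthwhile sanity check but means you are re-deriving a $2$-object enriched category from a substantially larger body of results. One small point: since a directed $\Thnsset$-enriched category with objects $\{0,1\}$ has all compositions uniquely determined, matching $\Hom_{\Sh_1 X}(0,1)\cong X=\Hom_{\Sigma X}(0,1)$ really does determine the isomorphism of enriched categories; you implicitly rely on this, and it would be worth saying explicitly.
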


\begin{proof}
We need to show that, if $X\in \Thnsset$ is connected, then $\Sh_1 X=\Ch(L(\repD[1]\times X))$ is isomorphic to $\Sigma X$. We first compute $\CL_*(L(\repD[1]\times X))$. For this, we apply the colimit-preserving functor $\CL_*\colon \sThnsset\to \sCat^{\DThnop}$ to the pushout in $\sThnsset$ from \cref{RmkPushout} describing $L(\repD[1]\times X)$. At $\defThn\in \Thn$ and $\defS\geq 0$, as $\CL$ commutes with colimits, we have 
\[ \textstyle (\CL_* X)_{\defThn,\defS}\cong \CL(X_{\defThn,\defS}) \cong \CL(\coprod_{X_{\defThn,\defS}} \repD[0])\cong \coprod_{X_{\defThn,\defS}} [0], \]
and we have
\[ \textstyle (\CL_* (\repD[1]\times X))_{\defThn,\defS}\cong \CL(\repD[1]\times X_{\defThn,\defS}) \cong \CL(\coprod_{X_{\defThn,\defS}} \repD[1])\cong \coprod_{X_{\defThn,\defS}} \CL \repD[1] \cong \coprod_{X_{\defThn,\defS}} \Sigma \repS[0]. \]
Hence $\CL_*(L(\repD[1]\times X))_{\defThn,\defS}$ is the below pushout in $\sCat$.
\begin{tz}
\node[right of=3,xshift=2cm](1) {$\coprod_2\coprod_{X_{\defThn,\defS}} [0]$};
\node[below of=1](2) {$\coprod_2 [0]$}; 
\node[right of=1,xshift=2.3cm](3) {$\coprod_{X_{\defThn,\defS}} \Sigma \repS[0]$}; 
\node[below of=3](4) {$\CL_*(L(\repD[1]\times X))_{\defThn,\defS}$}; 
\pushout{4};

\draw[->] (1) to (3);
\draw[->] (1) to (2);
\draw[->] (3) to (4);
\draw[->] (2) to (4);
\end{tz}
As $\Sigma\colon \sset\to {}^{\{0,1\}/}\sCat$ commutes with colimits, it takes the coproduct $X_{\defThn,\defS}=\coprod_{X_{\defThn,\defS}}\repS[0]$ to the above pushout, and so
\[ \textstyle \CL_*(L(\repD[1]\times X))_{\defThn,\defS}\cong \Sigma( \coprod_{X_{\defThn,\defS}}\repS[0])\cong \Sigma (X_{\defThn,\defS}). \]
This shows that $\CL_*(L(\repD[1]\times X))$ is the $\Thnssset$-enriched category $\Sigma (\iota X)$ and, by applying $\diag_*$, we get that $\Sh_1 X=\diag_*\CL_*(L(\repD[1]\times X))$ is the $\Thnsset$-enriched category $\Sigma X$, as desired. 
\end{proof}

\begin{cor} \label{rem:computeSigma}
Let $m\geq 1$ and $X$ be a connected $\Thn$-space. Then there is a natural isomorphism in $\Thncat$
\[ \Ch(L(\Sp\times X))\cong \Sigma_m X. \]
\end{cor}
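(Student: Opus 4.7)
The plan is to leverage the colimit-preservation properties of $(-)\times X$, $L$, and $\Ch$, together with \cref{lem:Sh1vsSigma1}, which already handles the case $m=1$.

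First I would write the spine as the iterated pushout
\[ \Sp \cong \repD[1]\amalg_{\repD[0]}\cdots\amalg_{\repD[0]}\repD[1] \]
with $m$ copies of $\repD[1]$. Since taking products in the presheaf category $\sThnsset$ commutes with colimits in each variable, this yields an iterated pushout presentation of $\Sp\times X$ in terms of $\repD[1]\times X$ and $\repD[0]\times X\cong X$. Applying the left adjoint $L\colon\sThnsset\to\pcatThn$, which preserves colimits, gives
\[ L(\Sp\times X)\cong L(\repD[1]\times X)\amalg_{L(X)}\cdots\amalg_{L(X)}L(\repD[1]\times X). \]

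Next I would identify $L(X)$. Since $L$ is the reflection onto the full subcategory of bi-simplicial $\Thn$-spaces whose $0$-simplex part is discrete, a direct check (in the spirit of \cref{RmkPushout} specialized to the degenerate case $m=0$) yields $L(X)\cong \pi_0 X$. At this point the hypothesis that $X$ is connected enters crucially: $\pi_0 X\cong [0]$, so the gluings in the iterated pushout are along the terminal $\Thnsset$-enriched category.

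Finally I would apply $\Ch$, which is also a left adjoint and so preserves colimits, and invoke \cref{lem:Sh1vsSigma1} to identify each factor $\Ch L(\repD[1]\times X)=\Sh_1 X$ with $\Sigma X$. This produces a chain of natural isomorphisms
\[ \Ch(L(\Sp\times X))\cong \Sigma X\amalg_{[0]}\cdots\amalg_{[0]}\Sigma X = \Sigma_m X, \]
matching the definition of $\Sigma_m X$; naturality in $X$ is automatic from the functoriality of each step. The only real subtlety is the identification $L(X)\cong \pi_0 X$ together with the observation that connectedness is genuinely needed: without it the pushouts on the right would be glued along the possibly non-terminal $\pi_0 X$, and the conclusion would fail.
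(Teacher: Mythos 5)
Your argument is correct and follows essentially the same route as the paper: write $\Sp$ as the iterated pushout of $\repD[1]$'s over $\repD[0]$, apply the colimit-preserving composite $\Ch\circ L\circ ((-)\times X)$, and invoke \cref{lem:Sh1vsSigma1}. You are a bit more explicit than the paper in pinning down that the gluing object is $\Ch L(\repD[0]\times X)\cong\Ch(\pi_0 X)\cong[0]$ thanks to connectedness of $X$, a step the paper leaves implicit by writing $\amalg_{[0]}$ directly; this is a welcome clarification, not a divergence.
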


\begin{proof}
Given $X\in \Thnsset$ connected, since $\Ch$ commutes with colimits and $\Sh_1=\Ch( L (\repD[1]\times (-)))$, we have a natural isomorphism in $\Thncat$
    \[ \Ch(L(\Sp\times X))=\Ch(L((\repD[1] \amalg_{\repD[0]} \ldots \amalg_{\repD[0]} \repD[1])\times X)) \cong \Sh_1 X\amalg_{[0]}\ldots \amalg_{[0]} \Sh_1 X. \]
    As on connected objects $\Sh_1$ coincides with $\Sigma$ by \cref{lem:Sh1vsSigma1} and $\Sigma_m=\Sigma\amalg_{[0]} \ldots \amalg_{[0]} \Sigma$, we have
    \[ \Sh_1 X\amalg_{[0]}\ldots \amalg_{[0]} \Sh_1 X\cong\Sigma X\amalg_{[0]}\ldots \amalg_{[0]} \Sigma X\cong \Sigma_m X \]
    and so we get the desired result.
\end{proof}

\begin{rmk}\label{homstrictasweighted}
Let $m\geq 1$ and $X$ be a connected $\Thn$-space. By \cref{homofSigmam,homstrictasweighted0} there are natural isomorphisms in $\Thnsset$
\begin{align*}
    \Hom_{\Sigma_m X}(0,m) &\cong X^{\times m} \cong \colim^{\repS[0]}_{(\tndnec{\repD}{0}{m})^{\op}} G(X\hookrightarrow X) \\ &\cong \diag(\colim^{\repS[0]}_{(\tndnec{\repD}{0}{m})^{\op}} \iota G(X\hookrightarrow X)).
\end{align*}
\end{rmk}

We now build the desired $\Thnsset$-enriched functor $\Sh_m X\to \Sigma_m X$ and show that it is a Dwyer-Kan equivalence.

\begin{prop} \label{prop:defnofShtoSigma}
Let $m\geq 0$ and $X$ be a connected $\Thn$-space. Then there is a natural $\Thnsset$-enriched functor $\Sh_mX\to\Sigma_mX$
such that 
\begin{itemize}[leftmargin=0.6cm]
\item it is the identity on objects,
\item for $0<j-i<m$ the following diagram commutes
\begin{tz}
 \node[](1) {$\Hom_{\Sh_m X}(i,j)$}; 
 \node[right of=1,xshift=2.7cm](2) {$\Hom_{\Sh_{j-i} X}(i,j)$};
 \node[below of=1](1') {$\Hom_{\Sigma_{m} X}(i,j)$}; 
 \node[below of=2](2') {$\Hom_{\Sigma_{j-i} X}(i,j)$}; 
 \draw[->] (1) to (1');
 \draw[->] (2) to (2');
 \draw[->] (1) to node[above,la]{$\cong$}  (2); 
 \draw[->] (1') to node[below,la]{$\cong$} (2');
 \end{tz}
 for full subcategories $\Sh_{j-i} X\subseteq \Sh_m X$ and $\Sigma_{j-i} X\subseteq \Sigma_m X$ spanned by the objects $i, i+1, \ldots, j$,
 \item the following diagram commutes
 \begin{tz}
 \node[](1) {$\Hom_{\Sh_m X}(0,m)$}; \node[below of=1](2) {$\Hom_{\Sigma_m X}(0,m)$};
 \node[right of=1,xshift=4.6cm](1') {$\diag( \colim^{H_m}_{(\tndnec{\repD}{0}{m})^{\op}} \iota G(X\hookrightarrow X))$}; 
 \node[below of=1'](2') {$\diag( \colim^{\repS[0]}_{(\tndnec{\repD}{0}{m})^{\op}} \iota G(X\hookrightarrow X))$}; 
 \draw[->] (1) to node[above,la]{$\cong$} (1');
 \draw[->] (2) to node[below,la]{$\cong$} (2');
 \draw[->] (1) to (2); 
 \draw[->] (1') to (2');
 \end{tz}
 where the right-hand map is induced by the unique map $H_m\to \repS[0]$ and the horizontal maps are the isomorphisms from \cref{prop:computationpushprod} applied to the identity at $X$ and from \cref{homstrictasweighted}.
 \end{itemize}
\end{prop}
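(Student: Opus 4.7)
The plan is to construct the functor uniformly via weighted colimit presentations and then verify compatibility with composition. Define the functor to be the identity on objects $\{0, 1, \ldots, m\}$. By \cref{prop:computationpushprod} applied to the identity $X \hookrightarrow X$, together with the full subcategory inclusions spanned by $\{i, i+1, \ldots, j\}$, each hom $\Thn$-space $\Hom_{\Sh_m X}(i, j)$ is naturally identified with $\diag(\colim^{H_{j-i}}_{(\tndnec{\repD[j-i]}{0}{j-i})^{\op}} \iota G(X \hookrightarrow X))$, while by \cref{homofSigmam} and \cref{homstrictasweighted} the corresponding $\Hom_{\Sigma_m X}(i, j)$ is the analogous diagonal of the $\repS[0]$-weighted colimit of the same functor. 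Since $\repS[0]$ is terminal, there is a unique natural transformation $H_{j-i} \to \repS[0]$, and we take the induced map of weighted colimits as the value of our functor on $\Hom_{\Sh_m X}(i, j) \to \Hom_{\Sigma_m X}(i, j)$.

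With this uniform definition, the two commutativity conditions listed in the statement are satisfied automatically: for $0 < j-i < m$ the weighted-colimit identification is manifestly compatible with restriction to the full subcategories spanned by $\{i, i+1, \ldots, j\}$, and for the pair $(0, m)$ one recovers by construction the specified map induced by $H_m \to \repS[0]$.

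The main verification is compatibility with composition. For every triple $0 \leq i < j < k \leq m$, composition in $\Sh_m X$ arises on the weighted colimit presentation from three natural pieces of data: the necklace concatenation $(T, U) \mapsto T \vee U$ between categories of totally non-degenerate necklaces, the canonical identification $G(X \hookrightarrow X)(T) \times G(X \hookrightarrow X)(U) \cong G(X \hookrightarrow X)(T \vee U)$ coming from the partition $B(T \vee U) = B(T) \sqcup B(U)$ (see \cref{Bonarrows}), and the Cordier--Porter concatenation map $\Hom_{\CL T}(\alpha, \omega) \times \Hom_{\CL U}(\alpha, \omega) \to \Hom_{\CL (T \vee U)}(\alpha, \omega)$ on the weights $H_*$. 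After terminalizing every weight to $\repS[0]$, the Cordier--Porter piece collapses and the $G(X \hookrightarrow X)$-factor reduces precisely to the canonical product isomorphism $X^{\times (j-i)} \times X^{\times (k-j)} \cong X^{\times (k-i)}$, which is exactly the composition in $\Sigma_m X$ from \cref{homofSigmam}.

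The hard part will be executing this compatibility check cleanly through the several layers of discrete fibrations (\cref{prop:discretefib,prop:Qdiscfib}), weighted colimits (\cref{homweights}), and the diagonal functor, coordinating the universal property of the terminal weight with the combinatorics of necklace concatenation. Naturality in $X$ is then immediate, since every ingredient---the weighted colimit descriptions, the functor $G(X \hookrightarrow X)$, and the terminalization $H_{j-i} \to \repS[0]$---is strictly natural in the $\Thn$-space $X$.
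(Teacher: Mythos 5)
Your construction of the hom maps is the same as the paper's: identify each $\Hom_{\Sh_m X}(i,j)$ with a diagonal of a weighted colimit over (a reindexed copy of) $\tndnec{\repD[j-i]}{0}{j-i}$ and take the map induced by terminalizing the weight $H_{j-i}\to\repS[0]$. Your outline of why this should be compatible with composition is also morally right: composition in $\Ch L(\repD\times X)$ arises from necklace concatenation, which under the discrete fibration of \cref{prop:discretefib} decomposes $G(X\hookrightarrow X)(T\vee U)\cong G(X\hookrightarrow X)(T)\times G(X\hookrightarrow X)(U)$ (via $B(T\vee U)=B(T)\amalg B(U)$), while terminalizing the $H$-weight collapses the Cordier--Porter factor and leaves exactly the product structure of \cref{homofSigmam}.

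The genuine issue is that the central verification is not carried out: you write ``the hard part will be executing this compatibility check cleanly through the several layers of discrete fibrations, weighted colimits, and the diagonal functor,'' and then stop. As stated, the proposal is an outline with an acknowledged open step, not a proof. The paper handles this by building the functor by induction on $m$ (via \cref{lem:Sh1vsSigma1} for the base cases $m=0,1$), which is precisely what makes the verification tractable: by the inductive hypothesis all compositions $\Hom(i,j)\times\Hom(j,k)\to\Hom(i,k)$ with $k-i<m$ are already known to commute (they live in a full directed subcategory isomorphic to $\Sh_{k-i}X\to\Sigma_{k-i}X$), so the only new compatibilities to check at stage $m$ are those landing in $\Hom(0,m)$, one for each inner $j$. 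In your uniform formulation you would need to chase every composition through the layered colimit description simultaneously, which is a substantially larger bookkeeping task and is exactly the step you flag but do not do. To complete the argument along the paper's lines, set it up inductively and reduce to the compositions involving $\Hom(0,m)$, where the terminalization $H_m\to\repS[0]$ and the product decomposition of $G(X\hookrightarrow X)$ under concatenation give the commutativity directly.
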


\begin{proof}
The desired $\Thnsset$-enriched functor can be constructed by induction on $m\geq 0$. If $m=0$, it is the identity at $[0]$ and, if $m=1$, it coincides with the isomorphism from \cref{lem:Sh1vsSigma1}. If $m>1$, the construction is fully determined by the conditions.
\end{proof}

\begin{prop} \label{prop:homstrictvscoh}
 Let $m\geq 1$ and $X$ be a connected $\Thn$-space. Then the $\Thnsset$-enriched functor $\Sh_m X\to \Sigma_m X$ from \cref{prop:defnofShtoSigma} induces a weak equivalence in $\MSThnsset$
 \[ \Hom_{\Sh_m X}(0,m)\to \Hom_{\Sigma_m X}(0,m).\]
 \end{prop}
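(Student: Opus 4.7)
The plan is to chain together left Quillen reductions until the claim is reduced to a very concrete levelwise contractibility check. By \cref{prop:defnofShtoSigma}, the map in question is naturally identified with
\[
\diag\!\left(\colim^{H_m}_{(\tndnec{\repD}{0}{m})^{\op}} \iota G(X\hookrightarrow X)\right) \longrightarrow \diag\!\left(\colim^{\repS[0]}_{(\tndnec{\repD}{0}{m})^{\op}} \iota G(X\hookrightarrow X)\right),
\]
induced by the unique map $H_m \to \repS[0]$ of weighting functors. This is the form I would work with.

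Next, I would strip off the two outer left Quillen functors. By \cref{diagiotaQE}, $\diag\colon \MSThndiag \to \MSThnsset$ is left Quillen, and every object of $\MSThndiag$ is cofibrant (its cofibrations are the levelwise monomorphisms), so Ken Brown's lemma says $\diag$ preserves all weak equivalences. Hence it suffices to establish the statement in $\MSThndiag$. Then by \cref{GXXLQE}, the functor
\[
\colim^{(-)}_{(\tndnec{\repD}{0}{m})^{\op}} \iota G(X\hookrightarrow X)\colon (\MSspace)^{(\tndnec{\repD}{0}{m})^{\op}}_\inj \longrightarrow \MSThndiag
\]
is itself left Quillen, and all objects of $(\MSspace)^{(\tndnec{\repD}{0}{m})^{\op}}_\inj$ are cofibrant for the same reason as above. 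Another application of Ken Brown's lemma therefore reduces the problem to showing that $H_m \to \repS[0]$ is a weak equivalence in $(\MSspace)^{(\tndnec{\repD}{0}{m})^{\op}}_\inj$.

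Finally, weak equivalences in this injective model structure are detected levelwise, so it remains to verify that for every totally non-degenerate necklace $T = \repD[m_1] \vee \ldots \vee \repD[m_t]$ in $\tndnec{\repD}{0}{m}$, the map $H_m(T) = \Hom_{\CL T}(\alpha,\omega) \to \repS[0]$ is a weak equivalence in $\MSspace$. Since $\CL$ preserves colimits, $\Hom_{\CL T}(\alpha,\omega)$ decomposes as the product $\prod_{i=1}^{t} \Hom_{\CL[m_i]}(0,m_i)$, and by \cref{defn:CL} each factor equals $(\repS[1])^{m_i-1}$. Thus $H_m(T)$ is a finite product of copies of the contractible simplicial set $\repS[1]$, and is therefore itself contractible, as required.

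I do not anticipate any serious obstacle: the identification of the induced map as being governed by $H_m \to \repS[0]$ is already packaged into the commuting square of \cref{prop:defnofShtoSigma}, and the levelwise contractibility of $H_m$ is immediate from the Cordier--Porter formula. The proof is essentially a bookkeeping exercise in left Quillen preservation combined with one explicit contractibility computation on necklaces.
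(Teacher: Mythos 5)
Your proof is correct and follows essentially the same route as the paper's: reduce via the two left Quillen functors $\diag$ and $\colim^{(-)}_{(\tndnec{\repD}{0}{m})^{\op}}\iota G(X\hookrightarrow X)$ to showing that $H_m\to\repS[0]$ is a levelwise weak equivalence, which holds because $H_m$ takes contractible values. The only cosmetic differences are that you make the all-objects-cofibrant/Ken Brown justification explicit where the paper leaves it implicit, and you compute the contractibility of $H_m(T)$ directly from the Cordier--Porter formula rather than citing \cite[Corollary~3.10]{DuggerSpivakRigidification} as the paper does.
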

 
 \begin{proof}
 By \cref{prop:defnofShtoSigma}, we have the following isomorphisms.
 \begin{tz}
 \node[](1) {$\Hom_{\Sh_m X}(0,m)$}; 
 \node[below of=1](2) {$\Hom_{\Sigma_m X}(0,m)$};
 \node[right of=1,xshift=4.6cm](1') {$\diag( \colim^{H_m}_{(\tndnec{\repD}{0}{m})^{\op}} \iota G(X\hookrightarrow X))$}; 
 \node[below of=1'](2') {$\diag( \colim^{\repS[0]}_{(\tndnec{\repD}{0}{m})^{\op}} \iota G(X\hookrightarrow X))$}; 
 \draw[->] (1) to node[above,la]{$\cong$} (1');
 \draw[->] (2) to node[below,la]{$\cong$} (2');
 \draw[->] (1) to (2); 
 \draw[->] (1') to (2');
 \end{tz}
 Since the values of $H_m$ are contractible by \cite[Corollary 3.10]{DuggerSpivakRigidification}, then $H_m\to \repS[0]$ is a weak equivalence in $(\MSspace)^{(\tndnec{\repD}{0}{m})^{\op}}_\inj$. Hence, by \cref{GXXLQE}, the functor \[ \colim^{(-)}_{(\tndnec{\repD}{0}{m})^{\op}} \iota G(X\hookrightarrow X)\colon (\MSspace)^{(\tndnec{\repD}{0}{m})^{\op}}_\inj\to \MSThndiag \]
 preserves weak equivalences, and so 
 \[ \colim^{H_m}_{(\tndnec{\repD}{0}{m})^{\op}} \iota G(X\hookrightarrow X)\to \colim^{\repS[0]}_{(\tndnec{\repD}{0}{m})^{\op}} \iota G(X\hookrightarrow X) \]
 is a weak equivalence in $\MSThndiag$. Finally, by \cref{diagiotaQE}, we have that the functor $\diag\colon \MSThndiag\to \MSThnsset$ preserves weak equivalences, and so we conclude that the map 
 \[ \diag( \colim^{H_m}_{(\tndnec{\repD}{0}{m})^{\op}} \iota G(X\hookrightarrow X))\to \diag( \colim^{\repS[0]}_{(\tndnec{\repD}{0}{m})^{\op}} \iota G(X\hookrightarrow X)) \]
 is a weak equivalence in $\MSThnsset$, as desired.
 \end{proof}

\begin{cor} \label{SigmaandSh}
Let $m\geq 1$ and $X$ be a connected $\Thn$-space. Then the $\Thnsset$-enriched functor from \cref{prop:defnofShtoSigma} defines a weak equivalence in $\MSThncat$
\[ \Sh_m X\xrightarrow{\simeq}\Sigma_m X.\]
\end{cor}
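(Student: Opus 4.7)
By the characterization of weak equivalences in $\MSThncat$ recalled in \cref{subsec:MSenriched}, it suffices to check two conditions: that the induced functor on homotopy categories is essentially surjective on objects, and that the induced map on every hom $\Thn$-space is a weak equivalence in $\MSThnsset$. The functor from \cref{prop:defnofShtoSigma} is the identity on the common object set $\{0,1,\ldots,m\}$, so the first condition is automatic, and the proof reduces to a case analysis of the hom-space condition over pairs $(i,j) \in \{0,1,\ldots,m\}^2$.

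My first step is to observe that both $\Sh_m X$ and $\Sigma_m X$ are directed. For $\Sigma_m X$ this is part of its definition. For $\Sh_m X = \Ch L(\repD \times X)$, the object set is $\{0,1,\ldots,m\}$ by \cref{PP0} (applied to $Y = X$ and the identity $X \hookrightarrow X$), and the vanishing $\Hom_{\Sh_m X}(i,j) = \emptyset$ for $j < i$ follows from \cref{cor:computationshomC1ordered} combined with the observation in the proof of \cref{prop:PP1ordered} that the relation $\preceq$ on $L(\repD \times X)_{-,\theta,k}$ is the linear order $0 \le 1 \le \cdots \le m$, leaving no totally non-degenerate necklaces from $i$ to $j$ when $j < i$. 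Consequently, for every pair $(i,j)$ with $j \le i$ both hom $\Thn$-spaces agree (being $\emptyset$ or $\repS[0]$) and the induced map is the identity, hence trivially a weak equivalence.

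It remains to treat the pairs $(i,j)$ with $i < j$. When $0 < j - i < m$, the middle commutative diagram in \cref{prop:defnofShtoSigma} identifies the relevant map (up to the displayed isomorphisms) with the full-range hom map $\Hom_{\Sh_{j-i} X}(i,j) \to \Hom_{\Sigma_{j-i} X}(i,j)$ of the analogous functor on the full subcategories spanned by $\{i, i+1, \ldots, j\}$; applying \cref{prop:homstrictvscoh} with $j-i$ in place of $m$ (and relabeling $i \mapsto 0$, $j \mapsto j-i$) shows this is a weak equivalence in $\MSThnsset$. The remaining case $(i,j) = (0,m)$ is precisely the content of \cref{prop:homstrictvscoh}. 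I do not expect a genuine obstacle here: all of the analytic work on hom $\Thn$-spaces is already packaged in \cref{prop:homstrictvscoh}, and the role of the present corollary is merely to promote that local weak equivalence to a global weak equivalence of $\Thnsset$-enriched categories through the recognition criterion. The only mildly subtle point is verifying directedness of $\Sh_m X$ below the diagonal, which is handled by the necklace computation above.
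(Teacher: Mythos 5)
Your argument is correct and follows the same route as the paper: reduce to checking hom $\Thn$-spaces via directedness of both sides, then use \cref{prop:homstrictvscoh} for the hom between $0$ and $m$. The only difference is stylistic — the paper handles $0<j-i<m$ by induction on $m$ (descending to a full subcategory $\Sh_{m-1}X\subseteq\Sh_mX$), whereas you invoke \cref{prop:homstrictvscoh} directly at scale $j-i$ through the commuting square of \cref{prop:defnofShtoSigma}; also your re-derivation of the directedness of $\Sh_m X$ is correct but redundant, as it is already part of \cref{prop:computationpushprod} applied to the identity $X\hookrightarrow X$.
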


\begin{proof}
We show this by induction on $m$. If $m=1$, then $\Sh_1 X\cong \Sigma X$ by \cref{lem:Sh1vsSigma1}. If $m>1$, first observe that $\Sh_m X$ and $\Sigma_m X$ are directed $\Thnsset$-enriched categories with set of objects $\{0,1,\ldots,m\}$ and the map $\Ob(\Sh_m X)\to \Ob(\Sigma_m X)$ is the identity. So it is enough to show that 
\[ \Hom_{\Sh_m X}(i,j)\to \Hom_{\Sigma_m X}(i,j) \]
is a weak equivalence in $\MSThnsset$, for all $0\leq i<j\leq m$. If $i=0$ and $j=m$, this is the content of \cref{prop:homstrictvscoh}. If $0<j-i<m$, using the isomorphisms from \cref{prop:defnofShtoSigma} for corresponding subcategories $\Sh_{m-1} X\subseteq \Sh_m X$ and $\Sigma_{m-1} X\subseteq \Sigma_m X$, we can conclude by induction. 
\end{proof}

\section{The homotopy coherent categorification is a Quillen equivalence} \label{Section 4}

The goal of this section is to prove the main theorem. Precisely, we show that $\Ch$ preserves cofibrations, respectively weak equivalences, in \cref{C preserves cofibrations}, respectively \cref{C preserves weak equivalences}, so that the adjunction $\Ch\dashv \Nh$ is a Quillen pair. Finally, in \cref{C is a Quillen equivalence}, we show that $\Ch\dashv \Nh$ is further a Quillen equivalence.

\subsection{\texorpdfstring{$\Ch$}{C} preserves cofibrations} \label{C preserves cofibrations}

In order to show that the functor $\Ch$ is left Quillen, we first prove that it preserves cofibrations. 
 
\begin{thm} \label{thm:Chprescof}
The functor $\Ch\colon \pcatinj\to \MSThncat$ preserves cofibrations. 
\end{thm}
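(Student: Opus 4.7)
The plan is to reduce to the generating cofibrations and then use the technical machinery developed in Section 3 together with the pushout lemma (\cref{pushoutlemma}).

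Since $\Ch$ is a left adjoint and cofibrations are generated by a set and closed under pushouts, transfinite compositions, and retracts, it is enough to check that $\Ch$ sends each generating cofibration from \cref{rem:gencof} to a cofibration in $\MSThncat$. The first generating cofibration $\emptyset\to \repD[0]$ is sent to $\emptyset\to [0]$, which is a cofibration in $\MSThncat$ since $[0]$ is cofibrant.

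For the remaining generating cofibrations, recall that they all have the form $\inc\colon\PP\hookrightarrow L(\repD\times Y)$ where $X\hookrightarrow Y$ is the monomorphism $(\partial\repThn\hookrightarrow\repThn)\widehat{\times}(\partial\repS\hookrightarrow\repS)$; by \cref{rmk:monoconnectedtarget} the target $Y=\repThn\times\repS$ is connected, so the results of Section~3 apply. I want to show $\Ch(\inc)\colon\Ch\PP\to \Ch L(\repD\times Y)=\Sh_m Y$ is a cofibration in $\MSThncat$ by invoking \cref{pushoutlemma}. For this I need to verify the three hypotheses of that lemma: (i) both $\Ch\PP$ and $\Sh_m Y$ are directed with object set $\{0,1,\ldots,m\}$; (ii) for $0<j-i<m$ the hom $\Thn$-spaces agree and $\Ch(\inc)$ is the identity on them; and (iii) the induced map $\Hom_{\Ch\PP}(0,m)\to\Hom_{\Sh_m Y}(0,m)$ is a cofibration in $\MSThnsset$.

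Hypothesis (i) and half of hypothesis (ii) (the identification of the middle hom $\Thn$-spaces) are precisely the content of \cref{prop:computationpushprod}, applied once to $X\hookrightarrow Y$ and once to the identity at $Y$. The identification is natural in the map $X\hookrightarrow Y$, so the induced $\Thnsset$-enriched functor $\Ch(\inc)$ is the identity on objects and, under the natural identifications, the identity on hom $\Thn$-spaces $\Hom_{\Ch\PP}(i,j)\cong\Hom_{\Sh_m Y}(i,j)$ for $0<j-i<m$. Hypothesis (iii) is exactly \cref{prop:lasthom}, since $X\hookrightarrow Y$ is a monomorphism (hence a cofibration in $\MSThnsset$). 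Then by the ``moreover'' part of \cref{pushoutlemma} applied to $F=\Ch(\inc)$, the functor $\Ch(\inc)$ is a cofibration in $\MSThncat$, which completes the proof.

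The main conceptual work is not in this argument but already done in Section~3, where the hom $\Thn$-spaces of $\Ch\PP$ were computed via necklace calculus (\cref{cor:computationshomC1ordered}) and identified with a weighted colimit. The present theorem is essentially a packaging step, assembling those computations through the pushout lemma.
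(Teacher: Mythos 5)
Your proof is correct and follows the same route as the paper: reduce to the generating cofibrations from \cref{rem:gencof}, handle $\emptyset\to\repD[0]$ directly, and then use \cref{rmk:monoconnectedtarget}, \cref{prop:computationpushprod}, and \cref{prop:lasthom} to verify the hypotheses of \cref{pushoutlemma}. The only (minor) addition you make is spelling out why $\Ch\inc$ is the identity on the middle hom $\Thn$-spaces via naturality of the identifications, which the paper leaves implicit.
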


\begin{proof}
By \cref{rem:gencof}, a set of generating cofibrations in $\pcatinj$ is given by the map $\emptyset\to \repD[0]$ together with all maps of the form
\[L((\partial\repD\hookrightarrow \repD)\widehat{\times} (\partial\repThn\hookrightarrow \repThn)\widehat{\times}(\partial \repS\hookrightarrow \repS))\]
for $m\geq 1$, $\defThn\in \Thn$, and $k\geq 0$.

First observe that the image of the map $\emptyset\to \repD[0]$ under $\Ch$ is the $\Thnsset$-enriched functor $\emptyset\to [0]$, which is a cofibration in $\MSThncat$. 

Now, let $m\geq 1$, $\defThn\in \Thn$, and $\defS\geq 0$. If we write
\[ (X\hookrightarrow Y)\coloneqq (\partial\repThn\hookrightarrow \repThn)\widehat{\times}(\partial \repS\hookrightarrow\repS),\]
the image under $\Ch$ of the map $L((\partial\repD\hookrightarrow \repD)\widehat{\times} (X\hookrightarrow Y))$ is the $\Thnsset$-enriched functor
\[ \Ch\inc\colon\Ch\PP\to \Sh_m Y. \]
By \cref{rmk:monoconnectedtarget}, the map $X\hookrightarrow Y$ is a monomorphism in $\Thnsset$ with $Y$ connected. Hence, by \cref{prop:computationpushprod}, we have that $\Ob(\Ch\PP)=\{0,1,\ldots,m\}=\Ob(\Sh_m Y)$ and, for all $0<j-i<m$, we have that
\[ \Hom_{\Ch\PP}(i,j)=\Hom_{\Sh_m Y}(i,j).\]
Moreover, by \cref{prop:lasthom}, the map 
\[ \Hom_{\Ch\PP}(0,m)\to \Hom_{\Sh_m Y}(0,m) \]
 is a cofibration in $\MSThnsset$. Applying \cref{pushoutlemma}, we conclude that the $\Thnsset$-enriched functor $\Ch I$ is a cofibration in $\MSThncat$, as desired.
\end{proof}

\subsection{\texorpdfstring{$\Ch$}{C} preserves weak equivalences} \label{C preserves weak equivalences}

We now show that the functor $\Ch$ preserves weak equivalences. For this, we first prove that it sends Dwyer-Kan equivalences between fibrant objects to weak equivalences. 

 \begin{prop} \label{thm:Chpreswefibrant}
 The functor $\Ch\colon \pcatinj\to \MSThncat$ sends Dwyer-Kan equivalences between fibrant objects to weak equivalences.
 \end{prop}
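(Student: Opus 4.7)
The plan is to verify the two conditions defining a weak equivalence in $\MSThncat$ -- essential surjectivity of the induced functor on homotopy categories, and local weak equivalences on hom $\Thn$-spaces -- using the comparison results already established in \cref{subsec:homsofC,subsec:HoofC}.

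Let $f\colon W\to Z$ be a Dwyer-Kan equivalence between fibrant objects in $\pcatinj$. For essential surjectivity of $\Ho(\Ch f)$, I would invoke the natural isomorphism $\Ho(\Ch W)\cong \Ho W$ from \cref{lem:eqhtpycat}. Naturality of this isomorphism in $W$ gives a commutative square
\begin{tz}
\node[](1) {$\Ho(\Ch W)$};
\node[below of=1](2) {$\Ho(\Ch Z)$};
\node[right of=1,xshift=1.5cm](3) {$\Ho W$};
\node[below of=3](4) {$\Ho Z$};
\draw[->] (1) to node[above,la]{$\cong$} (3);
\draw[->] (2) to node[below,la]{$\cong$} (4);
\draw[->] (1) to node[left,la]{$\Ho(\Ch f)$} (2);
\draw[->] (3) to node[right,la]{$\Ho f$} (4);
\end{tz}
and since $\Ho f$ is an equivalence of categories by the definition of a Dwyer-Kan equivalence, so is $\Ho(\Ch f)$; in particular it is essentially surjective on objects.

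For the condition on hom $\Thn$-spaces, fix $a,b\in \Ob(\Ch W)= W_0$. By \cref{HomChigher}, there are natural zig-zags of weak equivalences in $(\MSspace)^{\Thnop}_\inj$:
\[ \Hom_{\Ch W}(a,b)\sim\Map_W(a,b), \qquad \Hom_{\Ch Z}(fa,fb)\sim\Map_Z(fa,fb).\]
Naturality in the input of $\pcatinj$ yields a commutative diagram (of zig-zags) in $(\MSspace)^{\Thnop}_\inj$ comparing these two zig-zags via $(\Ch f)_{a,b}$ on the left and $f_{a,b}$ on the right. Since $f$ is a Dwyer-Kan equivalence, the map $\Map_W(a,b)\to\Map_Z(fa,fb)$ is a weak equivalence in $\MSThnsset$, and hence also in $(\MSspace)^{\Thnop}_\inj$, because $\MSThnsset$ is a localization of $\injThnspace=(\MSspace)^{\Thnop}_\inj$. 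By the two-out-of-three property applied to the commuting zig-zag, the map
\[ (\Ch f)_{a,b}\colon \Hom_{\Ch W}(a,b)\to \Hom_{\Ch Z}(fa,fb) \]
is then a weak equivalence in $(\MSspace)^{\Thnop}_\inj$, and thus a weak equivalence in the localization $\MSThnsset$.

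Combining the two conditions, $\Ch f$ is a weak equivalence in $\MSThncat$, completing the proof. There is no real obstacle beyond assembling the inputs correctly; the only point worth double-checking is that weak equivalences in $\injThnspace$ descend to weak equivalences in the localized model structure $\MSThnsset$, which is immediate from the definition of a Bousfield localization.
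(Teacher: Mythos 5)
Your overall strategy is the same as the paper's: use \cref{lem:eqhtpycat} for the homotopy-category condition and \cref{HomChigher} plus two-out-of-three for the local condition. But there is a genuine logical error in the middle of the hom-space argument. You write that $\Map_W(a,b)\to\Map_Z(fa,fb)$, being a weak equivalence in $\MSThnsset$, is ``hence also'' a weak equivalence in $(\MSspace)^{\Thnop}_\inj=\injThnspace$ ``because $\MSThnsset$ is a localization of $\injThnspace$.'' This is the implication in the wrong direction: a left Bousfield localization has \emph{more} weak equivalences than the original model structure, so $\injThnspace$-weak equivalences are $\MSThnsset$-weak equivalences, not the other way around. (You even state the correct direction in your final remark, which contradicts the step you actually use.) As written, the step is unjustified.

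The fix is easy and is exactly what the paper does: do not try to promote the Dwyer--Kan comparison to an $\injThnspace$-weak equivalence. Instead, observe that the zig-zag from \cref{HomChigher} consists of $\injThnspace$-weak equivalences, hence in particular $\MSThnsset$-weak equivalences (the correct direction), and apply two-out-of-three in $\MSThnsset$ to conclude that $(\Ch f)_{a,b}$ is a weak equivalence in $\MSThnsset$. Alternatively, one could rescue your route by noting that $\Map_W(a,b)$ and $\Map_Z(fa,fb)$ are fibrant in $\MSThnsset$, and a weak equivalence between local objects in a left Bousfield localization is an underlying weak equivalence; but that extra input is unnecessary here, and your stated justification for it is incorrect.
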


\begin{proof}
Let $f\colon W\to Z$ be a Dwyer-Kan equivalence between fibrant objects in $\pcatinj$. By definition, the functor $\Ho f\colon \Ho W\to \Ho Z$ is an equivalence of categories and, for all $a,b\in W_0$, the map $\Map_W(a,b)\to \Map_Z(fa,fb)$ is a weak equivalence in $\MSThnsset$. By \cref{lem:eqhtpycat}, we obtain that the functor
\[\Ho\Ch f\colon \Ho \Ch W\to \Ho\Ch Z \]
is an equivalence of categories. By $2$-out-of-$3$, using \cref{HomChigher} and the fact that weak equivalences in $\injThnspace$ are in particular weak equivalences in $\MSThnsset$, we get that the map
\[\Hom_{\Ch W}(a,b)\to \Hom_{\Ch Z}(fa,fb)\]
is a weak equivalence in $\MSThnsset$. Hence the $\Thnsset$-enriched functor
$\Ch f\colon\Ch W\to\Ch Z$ is a weak equivalence in $\MSThncat$, as desired.
\end{proof}

We now aim to prove that the functor $\Ch$ sends fibrant replacements in $\pcatinj$ as constructed 
in \cref{rem:genanodyne} to weak equivalences in $\MSThncat$. For this, we first show that $\Ch$ sends the map $L((\Sp\hookrightarrow \repD)\widehat{\times}(\partial\repThn\hookrightarrow\repThn)\widehat{\times}(\partial\repS\hookrightarrow \repS))$ to a trivial cofibration. 

\begin{lemma} \label{lem:spinehom}
Let $m\geq 1$ and $X$ be a connected $\Thn$-space. Then the functor $\Ch$ sends the map $L((\Sp \hookrightarrow \repD)\times X)$ to a $\Thnsset$-enriched functor $\Sigma_mX\to \Sh_m X$ such that the induced map on hom $\Thn$-spaces
\[\Hom_{\Sigma_mX}(0,m)\to \HomSh(0,m) \]
is given by the diagonal of the leg of the weighted colimit
\[  H_m(\Sp)\times \iota G(X\hookrightarrow X)(\Sp)\to \colim^{H_m}_{(\tndnec{\repD}{0}{m})^{\op}} \iota G(X\hookrightarrow X). \]
\end{lemma}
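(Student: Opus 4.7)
The plan is to trace through the identifications from \cref{Section 3} specialized to $Y = X$. By \cref{rem:computeSigma}, $\Ch L(\Sp\times X) \cong \Sigma_m X$, and by definition $\Ch L(\repD \times X) = \Sh_m X$, so applying $\Ch$ to $L((\Sp \hookrightarrow \repD) \times X)$ yields a $\Thnsset$-enriched functor $\Sigma_m X \to \Sh_m X$. Using \cref{RmkPushout}, this functor is the identity on the object set $\{0,1,\ldots,m\}$, so the only content lies in its action on hom $\Thn$-spaces. For $0 < j - i < m$, the hom maps reduce via restriction to the full subcategories on $\{i,\ldots,j\}$ to the case $m' = j - i$ (an easier instance of the same computation), so the main task is to identify the induced map $\Hom_{\Sigma_m X}(0, m) \to \Hom_{\Sh_m X}(0,m)$.

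Since both $L(\Sp \times X)_{-,\theta,k}$ and $L(\repD \times X)_{-,\theta,k}$ are $1$-ordered by \cref{prop:PP1ordered} applied with $Y = X$, \cref{cor:computationshomC1ordered} identifies the induced map on hom $\Thn$-spaces with $\diag$ of the map of $\Thn$-bi-spaces
\[\colim_{T \in \tndnec{L(\Sp \times X)_{-,\star,\star}}{0}{m}} \Hom_{\CL T}(\alpha,\omega) \longrightarrow \colim_{T \in \tndnec{L(\repD \times X)_{-,\star,\star}}{0}{m}} \Hom_{\CL T}(\alpha,\omega)\]
induced by the functors between categories of totally non-degenerate necklaces obtained by post-composing a necklace along the inclusion $L(\Sp \times X) \hookrightarrow L(\repD \times X)$. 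The source above is $H_m(\Sp) \times \iota G(X\hookrightarrow X)(\Sp)$, since $\tndnec{L(\Sp \times X)_{-,\theta,k}}{0}{m}$ is discrete with underlying set $X_{\theta,k}^{\times m}$ by \cref{prop:Necofspine} and $H_m$ is constant on this category with value $\Hom_{\CL \Sp}(\alpha,\omega)$.

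The final step is to recognize the displayed map as the leg at $\Sp$. By \cref{prop:Necofspine,computefibers} combined with \cref{prop:Qdiscfib}, the inclusion of $\tndnec{L(\Sp \times X)_{-,\theta,k}}{0}{m}$ into $\tndnec{L(\repD \times X)_{-,\theta,k}}{0}{m}$ is the inclusion of the fiber of the discrete fibration $(Q_{-,\theta,k})_!$ over the object $\Sp \hookrightarrow \repD_{0,m}$, which under the discrete-fibration/presheaf correspondence becomes the inclusion of the summand $G(X \hookrightarrow X)(\Sp)_{\theta,k} = X_{\theta,k}^{\times m}$ into the category of elements of $G(X \hookrightarrow X)_{\theta,k}$. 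Feeding this through the colimit manipulations of \cref{prop:homasweighted1,lem:invertingweights,homweights}, the induced map on colimits becomes precisely the leg at $\Sp$ of the $\sset$-enriched weighted colimit
\[H_m(\Sp) \times \iota G(X\hookrightarrow X)(\Sp) \longrightarrow \colim^{H_m}_{(\tndnec{\repD}{0}{m})^{\op}} \iota G(X\hookrightarrow X)\]
in $\Thnssset$, and applying $\diag$ yields the claim. The main technical point will be verifying naturality of all these identifications with respect to the inclusion $L(\Sp \times X) \hookrightarrow L(\repD \times X)$, which ultimately follows from the functoriality of the constructions and the explicit coequalizer presentations used in \cref{Section 3}.
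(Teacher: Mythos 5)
Your proposal is correct and follows essentially the same route as the paper's proof: identify the source and target via \cref{rem:computeSigma}, express the induced hom map as the diagonal of a colimit map over totally non-degenerate necklaces via \cref{cor:computationshomC1ordered}, and then identify this with the leg of the weighted colimit using \cref{prop:Necofspine} and \cref{homweights}. The extra discussion of the $0<j-i<m$ hom maps is harmless but not needed, since the lemma only concerns the $(0,m)$ component.
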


\begin{proof}
By \cref{rem:computeSigma}, the image under $\Ch$ of the map $L((\Sp \hookrightarrow \repD)\times X)$ in $\pcatThn$ is a $\Thnsset$-enriched functor of the form $\Sigma_m X\to \Sh_m X$. Then, by \cref{cor:computationshomC1ordered}, we have that the map in $\Thnsset$  
\[ \Hom_{\Sigma_m}(0,m)\to \HomSh(0,m)\]
    is the diagonal of the map in $\Thnssset$
    \[\colim_{T\in \tndnec{L(\Sp\times X)_{-,\star,\star}}{0}{m}} \Hom_{\CL T}(\alpha,\omega)\to \colim_{T\in \tndnec{L(\repD\times X)_{-,\star,\star}}{0}{m}} \Hom_{\CL T}(\alpha,\omega)\]
    induced at $\defThn\in \Thn$ and $\defS\geq 0$ by the inclusion of categories 
 \[ \tndnec{L(\Sp\times X)_{-,\defThn,\defS}}{0}{m}\hookrightarrow \tndnec{L(\repD\times X)_{-,\defThn,\defS}}{0}{m}. \] 
 Under the isomorphisms from \cref{prop:Necofspine,homweights}, this map in $\Thnssset$ corresponds to the leg of the weighted colimit
    \[ \repS[0]\times \iota X^{\times m} \cong H_m(\Sp)\times \iota G(X\hookrightarrow X)(\Sp)\to \colim^{H_m}_{(\tndnec{\repD}{0}{m})^{\op}} \iota G(X\hookrightarrow X), \]
    as desired. 
\end{proof}

\begin{lemma} \label{preNerveSegal}
Let $m\geq 1$ and $X$ be a connected $\Thn$-space. Then the functor $\Ch$ sends the trivial cofibration in $\pcatinj$
\[ L((\Sp \hookrightarrow \repD)\times X)\]
to a trivial cofibration in $\MSThncat$.
\end{lemma}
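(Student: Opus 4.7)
By \cref{rem:computeSigma}, applying $\Ch$ to the map $L((\Sp \hookrightarrow \repD) \times X)$ yields a $\Thnsset$-enriched functor $\phi\colon \Sigma_m X \to \Sh_m X$ that is the identity on objects; we must show that $\phi$ is both a cofibration and a weak equivalence in $\MSThncat$.

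For the cofibration part, the plan is to argue that $L((\Sp \hookrightarrow \repD) \times X)$ is itself a cofibration in $\pcatinj$, and then invoke \cref{thm:Chprescof}. To this end, one writes $\Sp \hookrightarrow \repD$ as a transfinite composition of pushouts of simplicial boundary inclusions $\partial\repD[k] \hookrightarrow \repD[k]$ with $k \geq 2$, and decomposes $X$ cellularly with respect to cells of the form $(\partial\repThn\hookrightarrow \repThn)\widehat{\times} (\partial\repS\hookrightarrow \repS)$. Combining these decompositions via the pushout-product and applying the colimit-preserving functor $L$ exhibits $L((\Sp \hookrightarrow \repD) \times X)$ as a transfinite composition of pushouts of the generating cofibrations listed in \cref{rem:gencof}, hence as a cofibration. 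By \cref{thm:Chprescof}, $\phi$ is then a cofibration in $\MSThncat$.

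For the weak equivalence part, denote by $\psi\colon \Sh_m X \to \Sigma_m X$ the $\Thnsset$-enriched functor of \cref{prop:defnofShtoSigma}, which is a weak equivalence in $\MSThncat$ by \cref{SigmaandSh}. The claim is that $\psi\circ \phi = \id_{\Sigma_m X}$; combined with two-out-of-three, this forces $\phi$ to be a weak equivalence. Both functors are the identity on objects, so the composite is the identity on objects. For each pair $0\leq i<j\leq m$, the naturality of the constructions of $\phi$ (through $L$ and $\Ch$ applied to $(\Sp \hookrightarrow \repD) \times X$) and of $\psi$ (through the subcategory compatibility stipulated in \cref{prop:defnofShtoSigma}) allows one to restrict both functors to the $\Thnsset$-enriched subcategory spanned by $\{i, i+1, \ldots, j\}$, thereby identifying the hom-space component $(\psi\circ\phi)_{i,j}$ with $(\psi_{j-i}\circ\phi_{j-i})_{0,j-i}$. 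Thus, it suffices to prove the identity on $\Hom_{\Sigma_{m'} X}(0, m')$ for each $m'\geq 1$.

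This last check is the main calculation and the place where the necklace calculus is used in an essential way. By \cref{lem:spinehom}, the map $\phi_{0, m'}$ is the diagonal of the leg at $T = \Sp[m']$ of the weighted colimit $\colim^{H_{m'}}_{(\tndnec{\repD[m']}{0}{m'})^{\op}} \iota G(X \hookrightarrow X)$, while by \cref{prop:defnofShtoSigma} the map $\psi_{0, m'}$ is the diagonal of the map on weighted colimits induced by the unique natural transformation $H_{m'} \to \repS[0]$. By functoriality of weighted colimits in the weight, their composite equals the diagonal of the leg at $T = \Sp[m']$ of $\colim^{\repS[0]}_{(\tndnec{\repD[m']}{0}{m'})^{\op}} \iota G(X \hookrightarrow X)$; by the explicit description of the colimit cone $\gamma$ in the proof of \cref{homstrictasweighted0}, this leg is precisely the identity of $X^{\times m'}$. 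I expect the main obstacle not to be any single step but rather the careful bookkeeping between the $\Delta$-naturality that reduces everything to the hom from $0$ to $m'$ and the subsequent manipulation of legs of weighted colimits, all of which is built on the material already developed in \cref{Section 3}.
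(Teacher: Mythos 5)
The proposal is correct and takes essentially the same approach as the paper: both establish the cofibration part via \cref{thm:Chprescof} and the weak-equivalence part by showing the composite with the map $\Sh_m X\to\Sigma_m X$ of \cref{prop:defnofShtoSigma} is the identity, with the decisive step being the identification of the leg of the weighted colimit at $\Sp$ using \cref{lem:spinehom}, \cref{prop:defnofShtoSigma}, and the cone description from \cref{homstrictasweighted0}. Your phrasing via ``functoriality of weighted colimits in the weight'' and a direct reduction to the $(0,m')$-hom replaces the paper's induction on $m$, but the content is the same; the cellular-decomposition justification that $L((\Sp\hookrightarrow\repD)\times X)$ is a cofibration in $\pcatinj$ is also fine, just more explicit than the paper, which takes this for granted.
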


\begin{proof} 
By \cref{thm:Chprescof}, the cofibration $L((\Sp \hookrightarrow \repD)\times X)$ in $\pcatinj$ is sent by $\Ch$ to a cofibration in $\MSThncat$. It remains to show that it is also a weak equivalence in~$\MSThncat$. 

By \cref{lem:spinehom}, the image under $\Ch$ of $L((\Sp \hookrightarrow \repD)\times X)$ is a $\Thnsset$-enriched functor of the form $\Sigma_m X\to \Sh_m X$. We show by induction on $m\geq 1$ that its composite
\[ \Sigma_m X\to \Sh_m X\xrightarrow{\simeq} \Sigma_m X\] 
with the weak equivalence in $\MSThncat$ from \cref{SigmaandSh} is the identity. Then, by $2$-out-of-$3$, we can deduce that $\Sigma_m X\to \Sh_m X$ is a weak equivalence in $\MSThncat$, as desired. 

When $m=1$, this follows from \cref{lem:Sh1vsSigma1}. If $m>1$, recall that $\Sigma_m X$ and $\Sh_m X$ are directed $\Thnsset$-enriched categories with set of objects $\{0,1,\ldots,m\}$ and both $\Thnsset$-enriched functors act on objects as the identity. So it remains to show that, for all $0\leq i\leq j\leq m$, the following composite is the identity in $\Thnsset$.
\[ \Hom_{\Sigma_m X}(i,j)\to \Hom_{\Sh_m X}(i,j)\to \Hom_{\Sigma_m X}(i,j) \]
If $i=0$ and $j=m$, by \cref{prop:defnofShtoSigma,lem:spinehom}, the above composite can be identified with the diagonal of the following commutative triangle in $\Thnssset$.
\begin{tz}
    \node[](1) {$H_m(\Sp)\times \iota G(X\hookrightarrow X)(\Sp)$}; 
    \node[right of=1,xshift=5cm](2) {$\colim^{H_m}_{(\tndnec{\repD}{0}{m})^{\op}} \iota G(X\hookrightarrow X)$}; 
    \node[below of=2](3) {$\colim^{\repS[0]}_{(\tndnec{\repD}{0}{m})^{\op}} \iota G(X\hookrightarrow X)$}; 

    \draw[->] (1) to (2); 
    \draw[->] (2) to (3); 
    \draw[->] ($(1.south)+(2cm,0)$) to node[below,la]{$\cong$} ($(3.north)-(2.5cm,0)$);
\end{tz}
Hence it is the identity. Now, if $0<j-i<m$, we conclude by induction using the isomorphisms from \cref{prop:defnofShtoSigma}
 for corresponding subcategories $\Sh_{m-1} X\subseteq \Sh_m X$ and $\Sigma_{m-1} X\subseteq \Sigma_m X$.
\end{proof}

\begin{lemma} \label{prop:CpresSegal}
Let $m\geq 1$ and $X\hookrightarrow Y$ be a monomorphism in $\Thnsset$ of the form 
\[ (\partial \repThn\hookrightarrow \repThn)\widehat{\times} (\partial\repS\hookrightarrow \repS) \]
for $\defThn\in \Thn$ and $\defS\geq 0$. Then the functor $\Ch$ sends the trivial cofibration in $\pcatinj$
\[ L((\Sp \hookrightarrow\repD)\widehat{\times} (X\hookrightarrow Y)) \]
to a trivial cofibration in $\MSThncat$.
\end{lemma}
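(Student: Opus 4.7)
The plan is to combine \cref{preNerveSegal} with the stability of trivial cofibrations under pushouts. Since both $L$ and $\Ch$ preserve colimits, the image under $\Ch$ of the map $L((\Sp\hookrightarrow\repD)\widehat{\times}(X\hookrightarrow Y))$ is the map
\[
\Ch P \to \Ch L(\repD\times Y),
\]
where $P := L(\Sp\times Y) \amalg_{L(\Sp\times X)} L(\repD\times X)$ is the domain of the pushout-product. This map fits into a commutative triangle with the composite
\[
\Ch L(\Sp\times Y) \to \Ch P \to \Ch L(\repD\times Y),
\]
which is the image under $\Ch$ of $L((\Sp\hookrightarrow\repD)\times Y)$. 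By \cref{thm:Chprescof} the map $\Ch P \to \Ch L(\repD\times Y)$ is a cofibration, so it suffices to show that it is also a weak equivalence.

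First, I would extend \cref{preNerveSegal} to arbitrary (not necessarily connected) $\Thn$-spaces $Z$. Any $\Thn$-space decomposes into a coproduct of its connected components $Z = \coprod_{c\in\pi_0 Z} Z_c$. Since $L$ and $\Ch$ are left adjoints, they preserve coproducts, and using the pushout formula of \cref{RmkPushout} together with the distributivity of products over coproducts in $\sThnsset$ one checks that $L(\repD\times Z) \cong \coprod_c L(\repD\times Z_c)$, and similarly for $\Sp$. Since trivial cofibrations in $\MSThncat$ are closed under coproducts, the coproduct over $c$ of the trivial cofibrations provided by \cref{preNerveSegal} applied to each $Z_c$ shows that $\Ch$ sends $L((\Sp\hookrightarrow \repD)\times Z)$ to a trivial cofibration.

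Applying the extended result to $Z = X$, the map $\Ch L(\Sp\times X) \to \Ch L(\repD\times X)$ is a trivial cofibration. Since trivial cofibrations are stable under pushout in any model category, the pushout map $\Ch L(\Sp\times Y) \to \Ch P$ is also a trivial cofibration. On the other hand, $Y = \repThn\times\repS$ is connected by \cref{rmk:monoconnectedtarget}, so \cref{preNerveSegal} applied to $Y$ gives that the composite $\Ch L(\Sp\times Y) \to \Ch P \to \Ch L(\repD\times Y)$ is a trivial cofibration. By $2$-out-of-$3$, the map $\Ch P \to \Ch L(\repD\times Y)$ is a weak equivalence, and combined with the cofibration property established above, it is a trivial cofibration in $\MSThncat$, as required.

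The main obstacle I anticipate is the extension of \cref{preNerveSegal} to the disconnected case: one has to verify that the functor $L$, which glues the extremal vertices of $\repD$ into discrete $0$-simplices, respects the coproduct decomposition $Z = \coprod_c Z_c$. This should follow cleanly from the pushout description in \cref{RmkPushout} together with the fact that $\pi_0\colon \Thnsset\to \set$ preserves coproducts, but it is the only point in the argument requiring care.
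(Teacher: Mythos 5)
Your proof is correct and follows the same strategy as the paper's: realize $\Ch$ of the pushout-product as the pushout of $\Ch L((\Sp\hookrightarrow\repD)\times X)$ along $\Ch L(\Sp\times(X\hookrightarrow Y))$, apply \cref{preNerveSegal}, and conclude via 2-out-of-3 together with the cofibration statement from \cref{thm:Chprescof}. The one genuine difference is how you handle the potential failure of connectivity of $X$: you upgrade \cref{preNerveSegal} in advance to arbitrary $\Thn$-spaces by decomposing into connected components and invoking closure of trivial cofibrations under coproducts, whereas the paper instead lists the three exceptional monomorphisms ($\emptyset\hookrightarrow\repS[0]$, $\repS[0]\amalg\repS[0]\hookrightarrow\repS[1]$, $\repS[0]\amalg\repS[0]\hookrightarrow\repThn[1;0]$) and treats each by hand, using the same coproduct observation in the two nontrivial cases. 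Your formulation is a modest streamlining that avoids the case check and makes it transparent that the argument would work for any generating set of cofibrations with connected target; the paper's version is more explicit about which cases actually occur.
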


\begin{proof}
By \cref{thm:Chprescof}, the cofibration $L((\Sp  \hookrightarrow \repD)\widehat{\times} (X\hookrightarrow Y))$ in $\pcatinj$ is sent by $\Ch$ to a cofibration in $\MSThncat$. It remains to show that it is also a weak equivalence in $\MSThncat$. 

We first deal with the cases where $X\hookrightarrow Y$ is not one of the following maps in $\Thnsset$
\[ \emptyset\hookrightarrow \repS[0], \quad \repS[0]\amalg\repS[0]\hookrightarrow \repS[1], \quad \text{or} \quad \repS[0]\amalg\repS[0]\hookrightarrow \repThn[1;0] \]
so that $X$ and~$Y$ are both connected $\Thn$-spaces. In this case, using \cref{rem:computeSigma}, the functor $\Ch$ sends the pushout-product map $L((\Sp \hookrightarrow \repD)\widehat{\times} (X\hookrightarrow Y))$ to the 
canonical $\Thnsset$-enriched functor
\[\Sigma_m Y\amalg_{\Sigma_m X}  \Sh_m X\to\Sh_m Y.\]
This $\Thnsset$-enriched functor is the unique dashed arrow that fits into the following commutative diagram in $\MSThncat$,
\begin{tz}
\node[](1) {$\Sigma_m X$}; 
\node[below of=1](2) {$\Sigma_m Y$}; 
\node[right of=1,xshift=1.6cm](3) {$\Sh_m X$}; 
\node[below of=3](4) {$\Sigma_m Y\amalg_{\Sigma_m X}  \Sh_m X$}; 
\pushout{4};

\draw[right hook->] (1) to node[above,la]{$\simeq$} (3);
\draw[->] (1) to (2);
\draw[->] (3) to (4);
\draw[right hook->] (2) to node[above,la]{$\simeq$} (4);

\node[below right of=4,xshift=1.2cm](5) {$\Sh_m Y$}; 

\draw[->,bend left=25] (3) to (5); 
\draw[right hook->,bend right=15] (2) to node[below,la]{$\simeq$} (5);

\draw[->,dashed] (4) to node[below,la,xshift=-2pt]{$\exists!$} (5);
\end{tz}
where the top and bottom horizontal $\Thnsset$-enriched functors are the trivial cofibrations from \cref{preNerveSegal}, and the middle $\Thnsset$-enriched functor is a trivial cofibration as a pushout of a trivial cofibration. By $2$-out-of-$3$, it follows that the dashed $\Thnsset$-enriched functor is a weak equivalence in $\MSThncat$, as desired.

If instead $X=\repS[0]\amalg\repS[0]$ and $Y=\repThn[1;0]$ or $Y=\repS[1]$, one could adjust the argument above. The key fact is to observe that, in this case, the top horizontal $\Thnsset$-enriched functor in the relevant diagram is replaced by the coproduct of trivial cofibrations in $\MSThncat$
\[ \Sigma_m \repS[0]\amalg\Sigma_m\repS[0]\xrightarrow{\simeq} \Sh_m \repS[0]\amalg \Sh_m \repS[0],\] 
which is a trivial cofibration, too.

Finally, if $X=\emptyset$ and $Y=\Delta[0]$, one could also adjust the argument above noticing that the top horizontal $\Thnsset$-enriched functor in the relevant diagram is the identity at $\emptyset$.
\end{proof}

We now show that $\Ch$ sends the map $L((\partial\repD\hookrightarrow \repD)\widehat{\times} (X\hookrightarrow Y))$ to a trivial cofibration, where $X\hookrightarrow Y$ is a trivial cofibration in $\MSThnsset$. For this, we first need to identify a generating set of trivial cofibrations $X\hookrightarrow Y$ in $\MSThnsset$ with connected $Y$. 

\begin{rmk} \label{indecomposables}
    Every object $Y$ in $\Thnsset$ can be written as a coproduct $Y\cong\coprod_{[y]\in \pi_0 Y} Y_{[y]}$ in~$\Thnsset$, where $Y_{[y]}$ is the fiber of $Y\to \pi_0 Y$ at $[y]\in \pi_0 Y$ and is connected.
\end{rmk}

\begin{lemma} \label{lemmaGenCofs}
There exists a set $\cJ$ of generating 
trivial cofibrations in $\MSThnsset$ such that every map $X\hookrightarrow Y$ in $\cJ$ has $Y$ connected.
\end{lemma}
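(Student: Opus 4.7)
The plan is to start with any set $\cJ_0$ of generating trivial cofibrations for $\MSThnsset$---which exists since the model structure is combinatorial---and replace each map by its connected-component factors. Given a map $X \hookrightarrow Y$ in $\cJ_0$, I would use \cref{indecomposables} to write $Y \cong \coprod_{[y] \in \pi_0 Y} Y_{[y]}$ with each $Y_{[y]}$ connected; the monomorphism $X \hookrightarrow Y$ then decomposes canonically as $\coprod_{[y]} (X_{[y]} \hookrightarrow Y_{[y]})$, where $X_{[y]} \coloneqq X \times_Y Y_{[y]}$. The candidate set is
\[ \cJ \coloneqq \{ X_{[y]} \hookrightarrow Y_{[y]} \mid (X \hookrightarrow Y) \in \cJ_0,\ [y] \in \pi_0 Y\}, \]
whose elements are monomorphisms with connected targets.

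The two things to check are that each map in $\cJ$ is a trivial cofibration and that $\cJ$ still detects fibrations. The second point is formal: a lifting problem of a map $p\colon E \to B$ against $X \hookrightarrow Y$ splits by the universal property of coproducts into independent lifting problems against the $X_{[y]} \hookrightarrow Y_{[y]}$, so the right lifting property with respect to $\cJ_0$ coincides with the right lifting property with respect to $\cJ$. Hence $\cJ$ is a generating set of trivial cofibrations as soon as each of its maps is one.

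For the first point, every map in $\cJ$ is a cofibration since it is a monomorphism. To see that each $X_{[y]} \hookrightarrow Y_{[y]}$ is also a weak equivalence, I would use that $\MSThnsset$ is an excellent (in particular simplicial) model structure, so weak equivalences between cofibrant objects are detected by mapping spaces into fibrant objects. For any fibrant $Z$, the assumed weak equivalence $\coprod X_{[y]} \hookrightarrow \coprod Y_{[y]}$ induces a weak equivalence of simplicial sets $\mathrm{Map}(\coprod Y_{[y]}, Z) \to \mathrm{Map}(\coprod X_{[y]}, Z)$, and this identifies with the product map $\prod \mathrm{Map}(Y_{[y]}, Z) \to \prod \mathrm{Map}(X_{[y]}, Z)$. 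Since $\pi_n$ commutes with products of Kan complexes, each factor is itself a weak equivalence, and hence each $X_{[y]} \hookrightarrow Y_{[y]}$ is a weak equivalence in $\MSThnsset$. The one mild subtlety is this componentwise reflection of weak equivalences, which rests on the simplicial enrichment of $\MSThnsset$.
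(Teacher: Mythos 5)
Your decomposition of each generating trivial cofibration $X \hookrightarrow Y$ as $\coprod_{[y]\in\pi_0 Y}(X_{[y]}\hookrightarrow Y_{[y]})$, and the formal argument that $\cJ$ still generates, match the paper. Where you diverge is in showing that each factor $X_{[y]}\hookrightarrow Y_{[y]}$ is again a trivial cofibration: the paper observes that it is a \emph{retract} of $X\hookrightarrow Y$ in the arrow category (include a summand, then collapse the others back onto it), so no simplicial structure is invoked; you instead run the mapping-space criterion and use that weak equivalences of Kan complexes are reflected by products. Both are viable, but the retract argument is shorter and more elementary.

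The mapping-space route as written has a gap. The implication ``$\prod_i g_i$ is a weak equivalence of Kan complexes, hence each $g_i$ is'' fails in the presence of empty factors: with $g_1\colon\emptyset\to\Delta[0]$ and $g_2\colon\emptyset\to\emptyset$, the product is $\id_\emptyset$, a weak equivalence, but $g_1$ is not. This is not a contrived worry here: $\emptyset$ is fibrant in $\MSThnsset$ (the maps in $S_{\CSThn}$ all have nonempty source, so $\emptyset$ is local, and it is already fibrant in the injective structure), and $\Map(Y_{[y]},\emptyset)=\emptyset$ for every $[y]$ while $\Map(X_{[y]},\emptyset)=\Delta[0]$ precisely when $X_{[y]}=\emptyset$. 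So your argument silently allows a component with $X_{[y]}=\emptyset$ and $Y_{[y]}\neq\emptyset$, in which case $X_{[y]}\hookrightarrow Y_{[y]}$ is \emph{not} a weak equivalence. What is missing is the observation that every weak equivalence $X\to Y$ in $\MSThnsset$ induces a bijection $\pi_0 X\to\pi_0 Y$ — because the discrete $\Thn$-spaces are fibrant (each map in $S_{\CSThn}$ is a $\pi_0$-bijection, both sides being connected), so $\Hom(\pi_0 Y,S)\cong\Map(Y,\iota S)\to\Map(X,\iota S)\cong\Hom(\pi_0 X,S)$ is a bijection for every set $S$, and Yoneda gives the claim. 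Once you know each $X_{[y]}$ is nonempty, your product argument is fine (for $Z\neq\emptyset$ all factors are nonempty since a nonempty $\Thn$-space admits a point $\Delta[0]\times\Theta[0]\to Z$; for $Z=\emptyset$ all factors are $\id_\emptyset$). It is worth noting that the paper's retract argument secretly needs the same nonemptiness, since the collapse $X\to X_{[y]}$ also requires a chosen point of $X_{[y]}$, so this is a shared omission rather than a flaw unique to your approach.
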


\begin{proof}
Let $\cJ'$ be a generating set of trivial cofibrations in $\MSThnsset$, and $f\colon X\hookrightarrow Y$ be a map in~$\cJ'$. Using \cref{indecomposables}, the map $f$ can be rewritten as a coproduct
\[ \textstyle f\colon X\cong \coprod_{[y]\in \pi_0 Y} f^{-1}(Y_{[y]})\hookrightarrow \coprod_{[y]\in \pi_0 Y} Y_{ [y]}\cong Y.  \]
For every $[y]\in \pi_0 Y$, observe that the map $f^{-1}(Y_{[y]})\hookrightarrow Y_{[y]}$ is a retract of $f\colon X\hookrightarrow Y$, hence a trivial cofibration in $\MSThnsset$. By setting
\[\cJ\coloneqq\{f^{-1}(Y_{[y]})\hookrightarrow Y_{[y]} \mid (f\colon X\hookrightarrow Y)\in\cJ', [y]\in \pi_0 Y \}\]
we see that $\cJ$ generates the same class as $\cJ'$, namely the class of trivial cofibrations of $\MSThnsset$. Moreover, note that $\cJ$ is a set as it is indexed by the set $\coprod_{X\hookrightarrow Y\in \cJ'} \pi_0 Y$.
\end{proof}

\begin{lemma} \label{prop:Cpresinjtrivcof}
Let $m\geq 1$ and $X\hookrightarrow Y$ be a trivial cofibration in $\MSThnsset$. Then the functor $\Ch$ sends the trivial cofibration in $\pcatinj$
\[ L((\partial\repD\hookrightarrow\repD)\widehat{\times} (X\hookrightarrow Y)) \]
to a trivial cofibration in $\MSThncat$.
\end{lemma}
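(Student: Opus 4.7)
The plan is to prove this via a saturation-plus-reduction argument: reduce to generating trivial cofibrations with connected target, then invoke the results of \cref{Study of CPP} together with the pushout \cref{pushoutlemma}.

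First, by \cref{thm:Chprescof}, the image $\Ch L((\partial\repD\hookrightarrow\repD)\widehat{\times}(X\hookrightarrow Y))$ is already a cofibration in $\MSThncat$, so I only need to show it is a weak equivalence. To this end, I would introduce the class $\cC$ of monomorphisms $X\hookrightarrow Y$ in $\Thnsset$ for which the resulting map is a trivial cofibration in $\MSThncat$. The functor
\[ (X\hookrightarrow Y)\mapsto \Ch L((\partial\repD\hookrightarrow\repD)\widehat{\times}(X\hookrightarrow Y)) \]
preserves pushouts and transfinite compositions of monomorphisms in its argument, because the pushout-product with a fixed map commutes with such colimits in the remaining variable, and both $L$ and $\Ch$ are left adjoints. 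Combined with the fact that trivial cofibrations in $\MSThncat$ are closed under pushouts, transfinite compositions, and retracts, this shows $\cC$ is closed under the same operations, so it suffices to verify that $\cC$ contains a generating set of trivial cofibrations of $\MSThnsset$.

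Here I would invoke \cref{lemmaGenCofs} to pick a generating set $\cJ$ whose members have connected target. For $(X\hookrightarrow Y)\in\cJ$, the target $Y$ is connected, so \cref{prop:computationpushprod} gives an explicit description of $\Ch \PP$ as a directed $\Thnsset$-enriched category with object set $\{0,1,\dots,m\}$ and with $\Hom_{\Ch\PP}(i,j)=\Hom_{\Sh_m Y}(i,j)$ for $0<j-i<m$, matching the hypotheses of \cref{pushoutlemma}. Moreover, \cref{prop:lasthom} gives that the remaining map
\[ \Hom_{\Ch\PP}(0,m)\to \Hom_{\Sh_m Y}(0,m) \]
is a trivial cofibration in $\MSThnsset$. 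Applying \cref{pushoutlemma} then yields that $\Ch\inc\colon\Ch\PP\to\Sh_m Y$ is a trivial cofibration in $\MSThncat$, as required.

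The main obstacle is a conceptual one rather than a computational one: the description of $\Ch\PP$ provided by the necklace calculus in \cref{prop:computationpushprod} only applies when $Y$ is connected, yet a general trivial cofibration $X\hookrightarrow Y$ has no reason to have connected target. The connectedness hypothesis is essential for \cref{PP0} which guarantees that $\PP$ has the right object set $\{0,1,\dots,m\}$; without it, the combinatorial picture breaks. The saturation argument above resolves this by exploiting the freedom to choose a \emph{different} generating set (\cref{lemmaGenCofs}), reducing the statement to exactly the cases the necklace calculus can handle.
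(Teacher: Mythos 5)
Your proof is correct and takes essentially the same route as the paper: reduce to the generating set $\cJ$ of \cref{lemmaGenCofs} (where connectedness of the target is available), then apply \cref{prop:computationpushprod}, \cref{prop:lasthom}, and \cref{pushoutlemma}. The paper compresses your saturation argument into the phrase ``without loss of generality,'' and you have correctly identified and spelled out why that reduction is valid (the Leibniz construction together with the left adjoints $L$ and $\Ch$ carry the saturation of $\cJ$ into the saturated class of trivial cofibrations in $\MSThncat$).
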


\begin{proof}
Without loss of generality we can assume that $X\hookrightarrow Y$ belongs to $\cJ$, where $\cJ$ is a set of generating 
trivial cofibrations in $\MSThnsset$ as in \cref{lemmaGenCofs}. Then the functor $\Ch$ sends the map $L((\partial\repD\hookrightarrow\repD)\widehat{\times} (X\hookrightarrow Y))$ to the $\Thnsset$-enriched functor
\[\Ch\PP\to \Sh_m Y.\]
By construction of the set $\cJ$, the map $X\hookrightarrow Y$ is a monomorphism in $\Thnsset$ with $Y$ connected. Hence, by \cref{prop:computationpushprod}, we have that $\Ob(\Ch\PP)=\{0,1,\ldots,m\}=\Ob(\Sh_m Y)$ and, for all $0<j-i<m$, we have that
\[ \Hom_{\Ch\PP}(i,j)=\Hom_{\Sh_m Y}(i,j).\]
Moreover, by \cref{prop:lasthom}, the map 
\[ \Hom_{\Ch\PP}(0,m)\to \Hom_{\Sh_m Y}(0,m) \]
 is a trivial cofibration in $\MSThnsset$. Applying \cref{pushoutlemma}, we conclude that the $\Thnsset$-enriched functor $\Ch\PP\to \Sh_m Y$ is a trivial cofibration in $\MSThncat$, as desired.
\end{proof}

By assembling the above results, we get the following. 

\begin{prop} \label{thm:fibrepl}
Let $W$ be an object in $\pcatThn$. Then the functor $\Ch$ sends the fibrant replacement $W\to W^{\fib}$ in $\pcatinj$ to a weak equivalence in $\MSThncat$. 
\end{prop}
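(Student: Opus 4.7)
The plan is to leverage the explicit description of the fibrant replacement $W\to W^{\fib}$ from \cref{rem:genanodyne} as a transfinite composition of pushouts along maps in a specific set of generating trivial cofibrations, together with the analysis of $\Ch$ applied to those generators done in \cref{prop:CpresSegal,prop:Cpresinjtrivcof}.

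First I would recall that, by \cref{rem:genanodyne}, the map $W\to W^{\fib}$ is a transfinite composition of pushouts of maps of the two forms
\[ L((\Sp \hookrightarrow \repD)\widehat{\times} (\partial\repThn\hookrightarrow \repThn)\widehat{\times}(\partial \repS\hookrightarrow \repS)) \]
and
\[ L((\partial\repD\hookrightarrow \repD)\widehat{\times} (X\hookrightarrow Y)) \]
for $m\geq 1$, $\defThn\in \Thn$, $\defS\geq 0$, and $X\hookrightarrow Y$ in a generating set of trivial cofibrations for $\MSThnsset$. Since $\Ch\colon \pcatThn\to \Thncat$ is a left adjoint by \cref{defadjunction}, it preserves colimits, hence sends transfinite compositions of pushouts to transfinite compositions of pushouts in $\Thncat$. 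So the image $\Ch W\to \Ch W^{\fib}$ is a transfinite composition in $\Thncat$ of pushouts of the images of these generators under $\Ch$.

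Next I would invoke \cref{prop:CpresSegal} to conclude that $\Ch$ sends the first type of generator to a trivial cofibration in $\MSThncat$, and \cref{prop:Cpresinjtrivcof} to conclude the same for the second type of generator. Since the class of trivial cofibrations in the model structure $\MSThncat$ is closed under pushouts and transfinite compositions, the map $\Ch W\to \Ch W^{\fib}$ is itself a trivial cofibration in $\MSThncat$. In particular it is a weak equivalence, which is the desired conclusion.

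There is no real obstacle here, as the hard work has already been carried out in \cref{prop:CpresSegal,prop:Cpresinjtrivcof}; this proposition simply assembles those building blocks using the fact that $\Ch$ is a left adjoint together with standard stability properties of trivial cofibrations in a model category.
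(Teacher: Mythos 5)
Your proposal is correct and follows essentially the same argument as the paper: cite the decomposition of $W\to W^{\fib}$ from \cref{rem:genanodyne} as a transfinite composition of pushouts of the two generating families, use that $\Ch$ is a left adjoint to transfer this structure, apply \cref{prop:CpresSegal,prop:Cpresinjtrivcof} to each generator, and close with the stability of trivial cofibrations under pushouts and transfinite composition. No discrepancies to report.
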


\begin{proof}
If $\cJ$ is a set of generating trivial cofibrations for $\MSThnsset$, by \cref{rem:genanodyne}, a fibrant replacement $W\to W^{\fib}$ is obtained as a transfinite composition of pushouts of maps of the form
\[L((\Sp  \hookrightarrow \repD)\widehat{\times} (\partial\repThn\hookrightarrow \repThn)\widehat{\times}(\partial \repS\hookrightarrow \repS))\]
for $m\geq 1$, $\defThn\in \Thn$, and $\defS\geq 0$, and of the form
\[ L((\partial\repD\hookrightarrow \repD)\widehat{\times} (X\hookrightarrow Y)) \]
for $m\geq 1$ and $X\hookrightarrow Y\in \cJ$. By \cref{prop:CpresSegal,prop:Cpresinjtrivcof}, we have that $\Ch$ sends every such map to a trivial cofibration in $\MSThncat$. As $\Ch$ commutes with colimits, the $\Thnsset$-enriched functor $\Ch W\to \Ch(W^{\fib})$ is a transfinite composition of pushouts of trivial cofibrations in $\MSThncat$, and so is also a trivial cofibration in $\MSThncat$. 
\end{proof}

We can now deduce the desired result. 

\begin{thm} \label{Cpreswe}
The functor $\Ch\colon \pcatinj\to \MSThncat$ preserves weak equivalences. 
\end{thm}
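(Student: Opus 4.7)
The plan is to assemble this as a straightforward consequence of the preceding three propositions, using the definition of weak equivalence in $\pcatinj$ via fibrant replacement (\cref{weinpcat}) as the pivot.

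Given a weak equivalence $f\colon W\to Z$ in $\pcatinj$, I would first apply the fibrant replacement construction of \cref{rem:genanodyne} to obtain a commutative square
\begin{tz}
\node[](1) {$W$};
\node[right of=1,xshift=1cm](2) {$Z$};
\node[below of=1](3) {$W^{\fib}$};
\node[below of=2](4) {$Z^{\fib}$};
\draw[->] (1) to node[above,la]{$f$} (2);
\draw[->] (1) to (3);
\draw[->] (2) to (4);
\draw[->] (3) to node[below,la]{$f^{\fib}$} (4);
\end{tz}
in $\pcatThn$, where by \cref{weinpcat} the map $f^{\fib}$ is a Dwyer-Kan equivalence between fibrant objects.

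Next I would apply $\Ch$, which preserves colimits and hence sends the above square to a commutative square in $\Thncat$. The vertical maps $\Ch W\to \Ch(W^{\fib})$ and $\Ch Z\to \Ch(Z^{\fib})$ are weak equivalences in $\MSThncat$ by \cref{thm:fibrepl}, since that proposition establishes exactly that $\Ch$ sends the specific fibrant replacement morphism of \cref{rem:genanodyne} to a weak equivalence. Meanwhile, $\Ch f^{\fib}$ is a weak equivalence in $\MSThncat$ by \cref{thm:Chpreswefibrant}, since $f^{\fib}$ is a Dwyer-Kan equivalence between fibrant objects of $\pcatinj$.

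The conclusion that $\Ch f$ is a weak equivalence in $\MSThncat$ then follows by 2-out-of-3 applied to the image square in $\MSThncat$, using that three of its four edges (both verticals and the bottom horizontal) have already been shown to be weak equivalences. There is no genuine obstacle here: all the conceptual and computational work has been absorbed into \cref{thm:Chpreswefibrant} (which rested on the hom-space comparison \cref{HomChigher} and the homotopy-category comparison \cref{lem:eqhtpycat}) and into \cref{thm:fibrepl} (which rested on the trivial cofibration analyses \cref{prop:CpresSegal,prop:Cpresinjtrivcof}); the present theorem is purely a bookkeeping step that glues them together.
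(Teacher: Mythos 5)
Your proof is correct and takes essentially the same approach as the paper: pass to the (functorial) fibrant replacement square, use \cref{thm:fibrepl} for the two vertical legs and \cref{thm:Chpreswefibrant} for the bottom horizontal, and conclude by 2-out-of-3. The aside that $\Ch$ preserves colimits is harmless but unnecessary here, since any functor sends a commutative square to a commutative square.
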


\begin{proof}
Let $W\to Z$ be a weak equivalence in $\pcatinj$. By \cref{weinpcat}, this means that the induced map $W^{\fib}\to Z^{\fib}$ between fibrant replacements is a Dwyer-Kan equivalence. Then, we have a commutative square in $\MSThncat$,
\begin{tz}
\node[](1) {$\Ch W$}; 
\node[below of=1](2) {$\Ch(W^{\fib})$}; 
\node[right of=1,xshift=1cm](3) {$\Ch Z$}; 
\node[below of=3](4) {$\Ch(Z^{\fib})$}; 
\draw[->] (1) to node[left,la]{$\simeq$} (2); 
\draw[->] (3) to node[right,la]{$\simeq$} (4); 
\draw[->] (2) to node[above,la]{$\simeq$} (4); 
\draw[->] (1) to (3);
\end{tz}
where the vertical $\Thnsset$-enriched functors are weak equivalences in $\MSThncat$ by \cref{thm:fibrepl}, and the bottom horizontal one is a weak equivalence in $\MSThncat$ by \cref{thm:Chpreswefibrant}. Hence $\Ch W\to \Ch Z$ is also a weak equivalence in $\MSThncat$ by $2$-out-of-$3$, and this shows the desired result. 
\end{proof}

\subsection{\texorpdfstring{$\Ch$}{C} is a Quillen equivalence} \label{C is a Quillen equivalence}

By assembling \cref{thm:Chprescof,Cpreswe}, the functor $\Ch$ preserves cofibrations and weak equivalences and so it is a left Quillen functor. Hence we have the following. 

\begin{thm} \label{QPChNh}
The adjunction 
\begin{tz}
\node[](1) {$\MSThncat$}; 
\node[right of=1,xshift=3.1cm](2) {$\pcatinj$}; 

\draw[->] ($(1.east)-(0,5pt)$) to node[below,la]{$\Nh$} ($(2.west)-(0,5pt)$);
\draw[->] ($(2.west)+(0,5pt)$) to node[above,la]{$\Ch$} ($(1.east)+(0,5pt)$);

\node[la] at ($(1.east)!0.5!(2.west)$) {$\bot$};
\end{tz}
is a Quillen pair.
\end{thm}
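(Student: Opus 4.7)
The plan is to directly invoke the two theorems established in the immediately preceding subsections. Since the adjunction $\Ch \dashv \Nh$ was already constructed as a composite of two genuine adjunctions in \cref{defadjunction}, the only task remaining is to verify that $\Ch$ is a left Quillen functor, equivalently, that it preserves both cofibrations and trivial cofibrations.

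First, I would note that the preservation of cofibrations is immediate from \cref{thm:Chprescof}. Second, the preservation of trivial cofibrations follows by combining \cref{thm:Chprescof} and \cref{Cpreswe}: a trivial cofibration in $\pcatinj$ is by definition a cofibration that is also a weak equivalence, so applying $\Ch$ yields a map that is both a cofibration in $\MSThncat$ (by \cref{thm:Chprescof}) and a weak equivalence in $\MSThncat$ (by \cref{Cpreswe}), hence a trivial cofibration. Together these two properties say precisely that $\Ch$ is left Quillen, so $\Ch \dashv \Nh$ is a Quillen pair.

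There is essentially no obstacle at this final step: all the difficult work has already been carried out in the proofs of \cref{thm:Chprescof} (which relies on the necklace calculus of \cref{Section 3}, the structural analysis of $\Ch\PP$ as a directed $\Thnsset$-enriched category via \cref{prop:computationpushprod}, and the pushout criterion of \cref{pushoutlemma}) and of \cref{Cpreswe} (which passes through the identification of homotopy categories in \cref{lem:eqhtpycat}, the comparison \cref{HomChigher} of hom $\Thn$-spaces with mapping $\Thn$-spaces, and the careful verification in \cref{prop:CpresSegal,prop:Cpresinjtrivcof} that the explicit generators of trivial cofibrations used in the fibrant replacement of \cref{rem:genanodyne} are sent to trivial cofibrations in $\MSThncat$). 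The final assembly into a Quillen pair is thus completely formal.
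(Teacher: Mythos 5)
Your proof is correct and takes essentially the same approach as the paper: the paper likewise invokes \cref{thm:Chprescof} for preservation of cofibrations and \cref{Cpreswe} for preservation of weak equivalences, concluding that $\Ch$ is left Quillen.
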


The goal of this section is to show that the Quillen pair $\Ch\dashv \Nh$ is in fact a Quillen equivalence. For this, we first compare it to the Quillen equivalence $c\dashv N$ recalled in \cref{subsec:strictnerve}.

\begin{prop} \label{NsimeqNh}
Let $\cC$ be a fibrant $\MSThnsset$-enriched category. The natural canonical map
\[N\cC\to \Nh\cC\]
is a weak equivalence in $\injsThnspace$, and so a weak equivalence in $\injsThnsset$ and $\pcatproj$.
\end{prop}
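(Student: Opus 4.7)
The plan is to reduce the comparison to a classical fact about the simplicial category $\sCat$ and the counit $\CL[m]\to[m]$, exploiting the explicit factorization $\Nh = \NL_* \circ ((\delta_*)_*)_*$.

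First I would unpack $(\Nh\cC)_m$ as a $\Thn$-space. Using that $(\delta_* Y)_{m,k} = \Hom_\sset(\Delta[m]\times\Delta[k], Y) = (Y^{\Delta[k]})_m$, the $\Thnssset$-enriched category $((\delta_*)_*)_*\cC$, regarded as a functor $\ThnSop \to \sCat$ constant on objects, sends $(\theta, k)$ to the $\Delta[k]$-cotensor $\cC(\theta)^{\Delta[k]}$ in $\sCat$, where $\cC(\theta)$ denotes the $\sset$-enriched category with the same objects as $\cC$ and hom spaces $\Hom_\cC(a,b)(\theta)$. Post-composing with $\NL$ yields
\[(\Nh\cC)_{m,\theta,k} = \Hom_{\sCat}(\CL[m], \cC(\theta)^{\Delta[k]}) = \map_{\sCat}(\CL[m], \cC(\theta))_k,\]
while $(N\cC)_m$ unpacks to $(N\cC)_{m,\theta,k} = \map_{\sCat}([m], \cC(\theta))_k$, and the canonical map $N\cC \to \Nh\cC$ is induced level-wise by pre-composition with the counit $\CL[m]\to[m]$ in $\sCat$.

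The essential model-categorical input is then the following. The map $\CL[m]\to[m]$ is a Dwyer–Kan equivalence between cofibrant objects in $\sCat_{\text{Bergner}}$, both being free simplicial categories on explicit generating graphs. Since $\cC$ is fibrant in $\MSThncat$, each $\Hom_\cC(a,b)$ is a complete Segal $\Thn$-space, so each simplicial set $\Hom_\cC(a,b)(\theta)$ is a Kan complex, and therefore $\cC(\theta)$ is Bergner-fibrant. The usual derived-hom calculus for the simplicial model category $\sCat_{\text{Bergner}}$ then makes $\map_{\sCat}([m], \cC(\theta)) \to \map_{\sCat}(\CL[m], \cC(\theta))$ a weak equivalence in $\MSspace$ for every $(m,\theta)$. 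Rephrased, at each level $m$ the map $(N\cC)_m \to (\Nh\cC)_m$ is a pointwise weak equivalence of $\Thn$-spaces in $\injThnspace$, hence a weak equivalence in the localization $\MSThnsset$; taking this level-wise in $m$ gives the weak equivalence in $\injsThnsset$.

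To promote this to a weak equivalence in $\injsThnspace$, I would use that the same analysis produces, at each fixed $\theta$, a bi-simplicial equivalence whose "spatial" $k$-direction slices are Kan equivalences, and then re-foliate: the identifications $(N\cC)_0 = \Ob\cC = (\Nh\cC)_0$ and the isomorphism $(N\cC)_1 \cong (\Nh\cC)_1$ (since $\CL[1] = [1]$) let one compare the two Segal objects direction-by-direction. The final implication to $\pcatproj$ then follows from the Dwyer–Kan characterization of weak equivalences: \cref{lem:eqhtpycat} already identifies the homotopy categories, and the mapping $\Thn$-spaces $\Map_{N\cC}(a,b)$ and $\Map_{\Nh\cC}(a,b)$ both coincide with $\Hom_\cC(a,b)$ by the agreement at levels $0$ and $1$ combined with the Segal property. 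The main obstacle I expect is precisely the re-foliation step for the $\injsThnspace$ statement: turning a pointwise equivalence detected in one of the two natural simplicial directions into one detected in the other typically requires either a Bousfield–Friedlander-style argument or leveraging cofibrant generators of $\pcatinj$ from \cref{rem:gencof} to reduce to representable checks where both sides were computed explicitly in \cref{Section 3}.
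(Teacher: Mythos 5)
Your computation of $(\Nh\cC)_{m,\theta,k} \cong \Hom_{\sCat}(\CL[m],\cC(\theta)^{\Delta[k]})$ and $(N\cC)_{m,\theta,k} \cong \Hom_{\sCat}([m],\cC(\theta)^{\Delta[k]})$ is correct, as is the identification of the comparison map with precomposition along the counit $\CL[m]\to[m]$. However, the central model-categorical input -- ``the usual derived-hom calculus for the simplicial model category $\sCat_{\text{Bergner}}$'' -- does not apply, because the Bergner model structure on $\sCat$ is \emph{not} a simplicial model category with respect to the hom-wise cotensor $\cB\mapsto\cB^K$. Concretely, for Bergner-fibrant $\cB$, the matching map $\cB^{\Delta[1]}\to\cB^{\partial\Delta[1]} = \cB\times_{\Ob\cB}\cB$ is not a Bergner fibration: it is the identity on objects, and on homotopy categories it induces the diagonal $(\Ho\cB)(a,b)\to(\Ho\cB)(a,b)\times(\Ho\cB)(a,b)$ on hom-sets, which fails the isofibration condition as soon as $\Ho\cB$ has a non-identity isomorphism. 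Thus $\cB^{\Delta[\bullet]}$ is not a Reedy fibrant simplicial resolution of $\cB$, and the general invariance of homotopy function complexes under weak equivalence of cofibrant sources cannot be invoked. The asserted equivalence $\map_\sCat([m],\cC(\theta))\to\map_\sCat(\CL[m],\cC(\theta))$ \emph{is} true, but establishing it requires a dedicated argument -- for instance an explicit simplicial contraction of $\CL[m]$ onto $[m]$ built from the ``max'' operation on its hom cubes, or a Segal-type induction on $m$ -- not a blanket appeal to simplicial model category theory.

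Separately, your concluding ``re-foliation'' paragraph stems from a misreading of the model structures. A level-wise (in $(m,\theta)$) weak equivalence in $\MSspace$ is precisely a weak equivalence in $\injsThnspace$, which is the \emph{strongest} of the three conclusions in the statement; so if your central claim were established, the whole proposition would follow immediately and no further re-slicing would be needed. For contrast, the paper's proof avoids unwinding $\Nh$ altogether: since $N$ and $\Nh$ are right Quillen into $\pcatproj$ and $\pcatinj$ respectively, $N\cC$ and $\Nh\cC$ are fibrant, hence level-wise fibrant in $\MSThnsset$ and Segal; combining the equalities $(N\cC)_0=\Ob\cC=(\Nh\cC)_0$ and $(N\cC)_1=\Mor\cC=(\Nh\cC)_1$ (the latter a consequence of \cref{lem:Sh1vsSigma1}) with $2$-out-of-$3$ applied to the Segal maps gives the level-$m$ comparison without any analysis of $\sCat_{\text{Bergner}}$.
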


\begin{proof}
    By \cref{projQuillen,QPChNh}, the following functors are right Quillen
    \[ N\colon \MSThncat\to \pcatproj \quad \text{and} \quad \Nh\colon \MSThncat\to \pcatinj, \]
    and so, as $\cC$ is fibrant in $\MSThncat$, then $N\cC$ is fibrant in $\pcatproj$ and $\Nh\cC$ is fibrant in $\pcatinj$. In particular, they both satisfy the Segal condition and are such that, for every $m\geq 0$, the $\Thn$-spaces $(N\cC)_m$ and $(\Nh\cC)_m$ are fibrant in $\MSThnsset$.

    Next, observe that there is a canonical map $N\cC\to \Nh\cC$ induced by the $\Thnsset$-enriched functors
\[ \Sh_m (\repThn\times\repS)\to \Sigma_m (\repThn\times\repS) \]
from \cref{prop:defnofShtoSigma}, for $m\geq 0$, $\defThn\in \Thn$, and $\defS\geq 0$. At $m=0,1$, this map induces equalities
\[(N\cC)_0=\Ob\cC=(\Nh\cC)_0\quad\text{ and }\quad(N\cC)_1=\Mor\cC=(\Nh\cC)_1.\]
Given $m>1$, there is a commutative diagram in $\MSThnsset$ 
\begin{tz}
\node[](1) {$(N\cC)_m$}; 
\node[right of=1,xshift=3.3cm](2) {$(N\cC)_1\times^{(h)}_{(N\cC)_0}\ldots \times^{(h)}_{(N\cC)_0} (N\cC)_1$}; 
\node[below of=1](3) {$(\Nh\cC)_m$}; 
\node[below of=2](4) {$(\Nh\cC)_1\times^{(h)}_{(\Nh\cC)_0}\ldots \times^{(h)}_{(\Nh\cC)_0} (\Nh\cC)_1$}; 

\draw[->] (1) to (3); 
\draw[->] (1) to node[above,la]{$\cong$} (2);
\draw[->] (2) to node[right,la]{$\cong$} (4);
\draw[->] (3) to node[below,la]{$\simeq$} (4);
\end{tz}
where the horizontal maps are weak equivalences as $N\cC$ and $\Nh\cC$ satisfy the Segal condition. Then by $2$-out-of-$3$, the left-hand map is also a weak equivalence in $\MSThnsset$. Since the $\Thn$-spaces $(N\cC)_m$ and $(\Nh\cC)_m$ are fibrant in $\MSThnsset$, the map $(N\cC)_m\to (\Nh\cC)_m$ is in fact a weak equivalence in $\injThnspace$. This shows that $N\cC\to \Nh\cC$ is a weak equivalence in $\injsThnspace$.
\end{proof}

We can deduce from this result the desired Quillen equivalence.

\begin{thm} \label{ThmMain}
The adjunction 
\begin{tz}
\node[](1) {$\MSThncat$}; 
\node[right of=1,xshift=3.1cm](2) {$\pcatinj$}; 

\draw[->] ($(1.east)-(0,5pt)$) to node[below,la]{$\Nh$} ($(2.west)-(0,5pt)$);
\draw[->] ($(2.west)+(0,5pt)$) to node[above,la]{$\Ch$} ($(1.east)+(0,5pt)$);

\node[la] at ($(1.east)!0.5!(2.west)$) {$\bot$};
\end{tz}
is a Quillen equivalence.
\end{thm}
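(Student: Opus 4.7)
The plan is to promote the Quillen pair $\Ch \dashv \Nh$ established in \cref{QPChNh} to a Quillen equivalence by comparing it to the Quillen equivalence $c \dashv N \colon \pcatproj \leftrightarrows \MSThncat$ of \cref{projQuillen}, using the comparison natural transformation $N \to \Nh$ from \cref{NsimeqNh} as a bridge.

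Concretely, I will use the criterion that a Quillen pair $(F \dashv G)$ is a Quillen equivalence if and only if $G$ reflects weak equivalences between fibrant objects and, for every cofibrant object $X$, the derived unit $X \to G((FX)^{\fib})$ is a weak equivalence. \emph{Reflection of weak equivalences between fibrant objects}: given $F\colon \cC \to \cD$ between fibrant objects of $\MSThncat$ with $\Nh F$ a weak equivalence in $\pcatinj$, \cref{NsimeqNh} provides weak equivalences $N\cC \xrightarrow{\sim} \Nh\cC$ and $N\cD\xrightarrow{\sim}\Nh\cD$ in $\pcatproj$. Since by \cref{prop:idQE} the two model structures $\pcatproj$ and $\pcatinj$ have the same weak equivalences, two-out-of-three applied to the naturality square shows that $NF$ is a weak equivalence in $\pcatproj$; since $c \dashv N$ is a Quillen equivalence by \cref{projQuillen}, $N$ reflects weak equivalences between fibrant objects, so $F$ is a weak equivalence in $\MSThncat$.

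\emph{Derived unit}: Given a cofibrant $W$ in $\pcatinj$, take a cofibrant replacement $q\colon W'\to W$ in $\pcatproj$; this is in particular a weak equivalence in $\pcatinj$, and $W'$ is automatically cofibrant in $\pcatinj$ too since projective cofibrations are injective cofibrations. Replacing $W$ by $W'$ it suffices to show the derived unit is a weak equivalence for $W'$. Fibrantly replace $\Ch W'$ in $\MSThncat$ as $\Ch W'\to (\Ch W')^{\fib}$ and consider the diagram
\begin{tz}
\node[](1){$W'$};
\node[right of=1,xshift=2cm](2){$N((\Ch W')^{\fib})$};
\node[below of=2](3){$\Nh((\Ch W')^{\fib})$};
\draw[->] (1) to (2);
\draw[->] (1) to (3);
\draw[->] (2) to node[right,la]{$\sim$} (3);
\end{tz}
where the bottom diagonal map is the derived unit of $\Ch \dashv \Nh$ at $W'$, and the right vertical map is a weak equivalence by \cref{NsimeqNh} since $(\Ch W')^{\fib}$ is fibrant in $\MSThncat$. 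It remains to argue that the top map is a weak equivalence in $\pcatproj$ (hence in $\pcatinj$): this is exactly the derived unit of $c \dashv N$ at $W'$ once one identifies $N((\Ch W')^{\fib})$ with $N((cW')^{\fib})$ in the homotopy category, which follows from the fact that $\Ch$ and $c$, as left adjoints to right Quillen functors that are connected by the natural weak equivalence $N\to \Nh$ of \cref{NsimeqNh}, have naturally isomorphic total left derived functors $\Ho(\pcatproj)\to \Ho(\MSThncat)$.

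The main obstacle is the last identification, connecting the derived unit for $\Ch \dashv \Nh$ to the derived unit for $c \dashv N$. The cleanest way to handle it is to argue directly on homotopy categories: the right derived functor $R\Nh\colon \Ho(\MSThncat)\to \Ho(\pcatinj)$ fits in a diagram with $RN\colon \Ho(\MSThncat)\to \Ho(\pcatproj)$ and the equivalence $\Ho(\pcatproj)\simeq \Ho(\pcatinj)$ of \cref{prop:idQE}, and \cref{NsimeqNh} exhibits a natural isomorphism between $R\Nh$ and the composite of $RN$ with this equivalence. Since $RN$ is an equivalence by \cref{projQuillen}, so is $R\Nh$; combined with the Quillen pair structure of \cref{QPChNh}, this yields the desired Quillen equivalence.
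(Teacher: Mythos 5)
Your final paragraph is essentially the paper's proof: the triangle of right Quillen functors $\Nh$, $\id$, and $N$ commutes up to natural isomorphism at the level of homotopy categories by \cref{NsimeqNh}, and since $\id$ and $N$ are Quillen equivalences by \cref{prop:idQE,projQuillen}, the right derived functor $R\Nh$ is an equivalence by two-out-of-three, which together with \cref{QPChNh} gives the Quillen equivalence. The earlier portion of your write-up, using the ``reflects weak equivalences between fibrant objects plus derived unit'' criterion, is a valid alternative criterion, and the reflection half goes through as you describe; but the derived-unit half as written is circular, since identifying $N((\Ch W')^{\fib})$ with $N((cW')^{\fib})$ requires knowing $L\Ch\cong Lc$, which in turn rests on $RN\cong R\Nh$ from \cref{NsimeqNh}---at which point one has already done the homotopy-category argument. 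You correctly recognize this and pivot to the cleaner version, so the net result is a correct proof by the same route as the paper, reached after a detour that should simply be cut.
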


\begin{proof}
We have a triangle of right Quillen functors from \cref{prop:idQE,projQuillen,QPChNh}
\begin{tz}
\node[](1) {$\MSThncat$}; 
\node[above right of=1,xshift=3cm,yshift=-.3cm](2) {$\pcatinj$}; 
\node[below right of=1,xshift=3cm,yshift=.3cm](3) {$\pcatproj$}; 
\draw[->] ($(1.east)+(0,.2cm)$) to node[above,la]{$\Nh$} ($(2.west)-(0,5pt)$); 
\draw[->] (2) to node[right,la]{$\id$} node[left,la]{$\simeq$} (3); 
\draw[->] ($(1.east)-(0,.2cm)$) to node[above,la]{$\simeq$} node[below,la]{$N$} ($(3.west)+(0,5pt)$); 
\end{tz}
which commutes up to isomorphism at the level of homotopy categories by \cref{NsimeqNh}. Moreover, the functor $N$ and $\id$ are Quillen equivalences by \cref{projQuillen,prop:idQE}. Hence, by $2$-out-of-$3$, we conclude that $\Nh$ is also a Quillen equivalence. 
\end{proof}

\section{Projective cofibrancy results} \label{Projective cofibrancy results}

In this section we provide the proofs for some technical facts that have been used in the previous sections. As a preliminary tool, in
\cref{subsec:Cube} we give an alternative combinatorial description of the category $\tndnec{\repD}{0}{m}$ as the category $\Cube_m$. Then in \cref{Projective cofibrancy of GX}, respectively \cref{Projective cofibrancy of H}, we show that the functor 
\[ \textstyle H_m\colon \tndnec{\repD}{0}{m}\to \MSspace, \]
respectively the functor
\[ \textstyle \iota G(X\hookrightarrow X)\colon (\tndnec{\repD}{0}{m})^{\op}\to \MSThndiag, \]
is projectively cofibrant.

\subsection{Combinatorics of necklaces} \label{subsec:Cube}

Recall that $\tndnec{\repD}{0}{m}$ is the category of totally non-degenerate necklaces in $\repD_{0,m}$. By \cref{rem:NecFmposet}, its objects are monomorphisms $T\hookrightarrow \repD_{0,m}$ with $T$ a necklace and its morphisms are monomorphisms over $\repD_{0,m}$, and so it is a poset. We now describe this category in a more combinatorial way.

\begin{defn}
Let $m\geq 1$. We define the category $\Cube_m$ to be the poset such that
\begin{itemize}[leftmargin=0.6cm]
    \item its objects are pairs $(I,S)$ of subsets $I\subseteq S\subseteq \{1,\ldots,m-1\}$, 
    \item there is a morphism $(I',S')\to (I,S)$ if and only if $I'\subseteq I$ and $S=S'\cup I$.
\end{itemize}
By convention, the category $\Cube_1$ is the terminal category.
\end{defn}

\begin{rmk} \label{genmorCube}
    The category $\Cube_m$ is generated by two different kinds of morphisms, namely 
    \[ (I\setminus \{j\},S)\to (I,S) \quad \text{and} \quad (I\setminus \{j\},S\setminus \{j\})\to (I,S) \]
    for every object $(I,S)\in \Cube_m$ and every element $j\in I$.
\end{rmk}

\begin{prop}
For $m\geq 1$, there are assignments
\begin{align*}
(T\hookrightarrow \repD_{0,m})& \mapsto (I_T,S_T) \\
(T_{(I,S)}\hookrightarrow \repD_{0,m}) &\mapsfrom (I,S)
\end{align*}
that define an isomorphism of categories
\[\tndnec{\repD}{0}{m} \cong \Cube_m.\]
\end{prop}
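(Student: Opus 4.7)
The plan is to exhibit mutually inverse assignments on objects and then verify the order-preserving property. The main difficulty will be expressing set-theoretic subsimplicial-set inclusions of totally non-degenerate necklaces in terms of the combinatorial data of pairs $(I, S)$; the subtle ingredient is a connectedness argument showing that every non-extreme joint of the larger necklace must appear as a vertex of the smaller one.

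By \cref{tndvsmono} combined with \cref{examplesof1ordered}, every totally non-degenerate necklace $T\hookrightarrow \repD_{0, m}$ is a monomorphism and is hence determined by its vertex set $V_T\subseteq \{0, \ldots, m\}$ together with its joint set $J_T\subseteq V_T$, both of which contain $0$ and $m$. I would set $S_T \coloneqq V_T\setminus\{0, m\}$ and $I_T\coloneqq S_T\setminus J_T$, which gives $I_T\subseteq S_T\subseteq \{1, \ldots, m-1\}$. Conversely, given $(I, S)\in \Cube_m$, the necklace $T_{(I, S)}$ is defined to have joint set $\{0\}\cup(S\setminus I)\cup\{m\}$ with bead between consecutive joints $j_k < j_{k+1}$ the simplex on vertices $\{j_k\}\cup(S\cap(j_k, j_{k+1}))\cup\{j_{k+1}\}$. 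These assignments are manifestly mutually inverse on objects.

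By \cref{rem:NecFmposet}, a morphism in $\tndnec{\repD}{0}{m}$ is precisely an inclusion $U\subseteq T$ of subsimplicial sets of $\repD_{0, m}$, so the task reduces to proving that $U\subseteq T$ is equivalent to the conjunction $I_U\subseteq I_T$ and $S_T = S_U\cup I_T$. For the forward direction, the inclusion of vertex sets gives $S_U\subseteq S_T$, and \cref{Bonarrows} says that each bead $B$ of $U$ maps into a unique bead $B'$ of $T$; any interior vertex of $B$ then lies strictly between the endpoints of $B'$ and so cannot be a joint of $T$, yielding $I_U\subseteq I_T$. The equation $S_T = S_U\cup I_T$ then reduces to showing that every non-extreme joint $j$ of $T$ belongs to $V_U$, which is the key connectedness step: since $j$ is a joint of $T$, every simplex of $T$ has its vertices either all in $[0, j]$ or all in $[j, m]$; if $j\notin V_U$, then each bead of $U$ would lie strictly on one side of $j$, but the first bead contains $0$ and the last contains $m$, forcing a consecutive pair of beads of $U$ to lie on opposite sides of $j$ yet share a common joint, a contradiction.

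Conversely, assume $I_U\subseteq I_T$ and $S_T = S_U\cup I_T$. The identity $S_T\setminus I_T = S_U\setminus I_T\subseteq S_U\setminus I_U$ shows that the non-extreme joints of $U$ refine those of $T$. Any bead $B$ of $U$ running between consecutive joints $j< j'$ of $U$ thus lies within a single interval $[p, q]$ between consecutive joints of $T$, whose associated bead $B'$ of $T$ has vertex set $\{p, q\}\cup(S_T\cap(p, q))$; since $S_U\subseteq S_T$, the vertices $\{j, j'\}\cup(S_U\cap(j, j'))$ of $B$ all lie in $V_{B'}$, exhibiting $B$ as a face of $B'$. Assembling these inclusions bead by bead yields $U\subseteq T$, completing the proof.
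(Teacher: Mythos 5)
Your proof is correct and follows essentially the same route as the paper: both pass through the identification of a totally non-degenerate necklace in $\repD_{0,m}$ with its vertex set and joint set, define $(I_T,S_T)$ from these, and reverse-engineer $T_{(I,S)}$ from a pair. The one substantive difference is that the paper, when checking functoriality, simply asserts $J_T\subseteq J_{T'}$ for a monomorphism $T'\hookrightarrow T$ without proof, whereas you supply the "connectedness" argument (every simplex of $T$ lies entirely on one side of any joint of $T$, so consecutive beads of the sub-necklace can't straddle it) — this is exactly the nontrivial content hidden in that assertion, so your version is a bit more careful at the step the paper glosses over. You also structure the verification as a bijection on objects plus a two-way equivalence of the order relations, rather than constructing two functors and checking them mutually inverse, which is a natural simplification given that both sides are posets.
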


\begin{proof}
We first construct the functor $\tndnec{\repD}{0}{m}\to \Cube_m$. Given $f\colon T\hookrightarrow \repD_{0,m}$ in $\tndnec{\repD}{0}{m}$, we set $(I_T,S_T)$ to be the object of $\Cube_m$ given by
\[ S_T\coloneqq \{ f(v)\mid v\in T_0\}\setminus \{0,m\}\subseteq \{1,\ldots,m-1\}=\repD_0\setminus \{0,m\} \]
and $I_T\coloneqq S_T\setminus J_T$, where 
\[ J_T\coloneqq \{ f(v)\mid v \text{ is a joint in } T\}\setminus \{0,m\}\subseteq S_T. \] 
Then, given a map $g\colon T'\hookrightarrow T$ in $\tndnec{\repD}{0}{m}$, as $g$ is a monomorphism by \cref{rem:NecFmposet}, we have that $S_{T'}\subseteq S_{T}$ and $J_{T}\subseteq J_{T'}$; thus $I_{T'}\subseteq I_T$. It remains to show that $S_{T}=S_{T'}\cup I_{T}$. For this, it is enough to see that each element of $S_{T}$ that is not in $S_{T'}$ is in $I_{T}$, i.e., is not the image of a joint of~$T$. But since $J_{T}\subseteq J_{T'}\subseteq S_{T'}$, then any image of a joint in $T$ is contained in $S_{T'}$. Hence we get a map $(I_{T'}, S_{T'})\to (I_T,S_T)$ in $\Cube_m$.

We now construct the functor $\Cube_m\to \tndnec{\repD}{0}{m}$. Given a pair $(I,S)$ in $\Cube_m$, we set $T_{(I,S)}\hookrightarrow \repD_{0,m}$ to be the necklace such that $T_{(I,S)}$ has set of vertices $S\cup \{0,m\}$ and set of joints $(S\setminus I)\cup \{0,m\}$. Then, given a map $(I',S')\to (I,S)$, we need to show that there is an induced monomorphism $T_{(I',S')}\hookrightarrow T_{(I,S)}$. Indeed, as $I'\subseteq I$ and $S=S'\cup I$, we have that $S'\subseteq S$ and
\[ S\setminus I= (S'\cup I)\setminus I\subseteq (S'\cup I')\setminus I'= S'\setminus I'.\]
Hence the set of vertices of $T_{(I',S')}$ is contained in that of $T_{(I,S)}$, and the set of joints of $T_{(I,S)}$ is contained in that of $T_{(I',S')}$. In particular, this says that every bead of~$T_{(I',S')}$ is sent in a bead of~$T_{(I,S)}$ and so there is a monomorphism $T_{(I',S')}\hookrightarrow T_{(I,S)}$. 

Clearly, the two constructions are inverse to each other and so we get the desired isomorphism of categories.
\end{proof}

We now aim to give a description of the bead functor $B\colon \tndnec{\repD}{0}{m}\to \set$ from \cref{Bonarrows} as a functor $\Cube_m\to \set$. 

\begin{defn}
    We construct a functor \[ \cB\colon \Cube_m\to \set. \]
    Given an object $(I,S)$ in $\Cube_m$, as $S\setminus I\subseteq \{1,\ldots,m-1\}$, write $S\setminus I=\{ s_1<s_2<\ldots<s_{t-1}\}$, and set $s_0\coloneqq 0$ and $s_t\coloneqq m$. We define $\cB(I,S)$ to be the set
    \[\cB(I,S)\coloneqq \left\{ \{ s\in S\mid s_{i-1}\leq s\leq s_{i}\} \mid 1\leq i\leq t \right\}\]
    and we refer to its elements as \emph{interval in $S$}. 
    
    Given a morphism $(I',S')\to (I,S)$ in $\Cube_m$, there is an induced assignment $\cB(I',S')\to \cB(I,S)$ sending an interval in $S'$ to the interval in $S$ that contains it. This is well-defined as $S'\subseteq S$ and $S\setminus I\subseteq S'\setminus I'$.
\end{defn}

\begin{lemma} \label{bISvsbeads}
For $m\geq 1$, the following diagram of categories commutes up to isomorphism. 
    \begin{tz}
\node[](1) {$\tndnec{\repD}{0}{m}$}; 
\node[right of=1,xshift=2cm](2) {$\Cube_m$}; 
\node(3) at ($(1.east)!0.5!(2.west)-(0,1.2cm)$) {$\set$}; 

\draw[->] (1) to node[above,la]{$\cong$} (2); 
\draw[->] ($(1.south east)-(5pt,0)$) to node[left,la,xshift=-5pt,pos=0.6]{$B$} (3); 
\draw[->] ($(2.south west)+(5pt,0)$) to node[right,la,xshift=5pt,pos=0.6]{$\cB$} (3);
\end{tz}
\end{lemma}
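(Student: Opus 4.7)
The plan is to construct a natural isomorphism $\eta\colon B \Rightarrow \cB\circ (\tndnec{\repD}{0}{m}\xrightarrow{\cong}\Cube_m)$ by matching each bead of a necklace with the set of vertices landing inside it. Under the isomorphism of the previous proposition, a totally non-degenerate necklace $f\colon T\hookrightarrow \repD_{0,m}$ is sent to $(I_T,S_T)$, where $S_T$ records the images in $\{1,\ldots,m-1\}$ of the vertices of $T$ and $S_T\setminus I_T = J_T$ records the images of its internal joints. Enumerate $S_T\setminus I_T=\{s_1<\cdots<s_{t-1}\}$ and set $s_0\coloneqq 0$, $s_t\coloneqq m$, so that the joints of $T$ are mapped onto $\{s_0,s_1,\ldots,s_t\}$; the beads of $T$ are then indexed canonically by the $t$ arcs running between consecutive joints.

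The first step is to define, for each such $T$, the map $\eta_T\colon B(T)\to \cB(I_T,S_T)$ sending the $i$-th bead (the one whose image in $\repD$ spans from $s_{i-1}$ to $s_i$) to the interval $\{s\in S_T \mid s_{i-1}\leq s\leq s_i\}$. This is a bijection because the intervals in $\cB(I_T,S_T)$ are precisely the fibers of the surjection from the vertex set of $T$ onto its set of beads obtained by sending a vertex to the unique bead whose image in $\repD[m]$ contains it (with the convention that each joint $s_i$ appears in both adjacent intervals, matching the identification of the last vertex of one bead with the first vertex of the next).

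The second step is naturality. Given a map $g\colon U\hookrightarrow T$ in $\tndnec{\repD}{0}{m}$, corresponding to a morphism $(I_U,S_U)\to (I_T,S_T)$ in $\Cube_m$, the map $B(g)\colon B(U)\to B(T)$ of \cref{Bonarrows} sends each bead of $U$ to the unique bead of $T$ in which it is embedded. Because $g$ is a monomorphism of necklaces, every joint of $T$ is a joint of $U$, so $S_T\setminus I_T\subseteq S_U\setminus I_U$, and hence each interval of $S_U$ is contained in a unique interval of $S_T$; this is by definition the assignment $\cB(g)$. Translating beads to intervals via $\eta$ then makes the naturality square commute, yielding the desired natural isomorphism.

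The main obstacle is purely notational: carefully tracking the correspondence between the joints $s_0<\cdots<s_t$ of $T$ viewed inside $\repD[m]$, the $t$ beads of $T$, and the intervals comprising $\cB(I_T,S_T)$. Once these bookkeeping identifications are pinned down, both the bijectivity of $\eta_T$ and its naturality follow immediately from the very definitions of $B$ and $\cB$.
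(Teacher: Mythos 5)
Your proposal is correct and follows essentially the same approach as the paper: you identify the joints of $T$ with the set $(S_T\setminus I_T)\cup\{0,m\}$ and match each bead with the interval of $S_T$ spanned by its vertices, which is exactly the canonical isomorphism the paper describes. You additionally spell out the naturality check (using that a monomorphism of necklaces reverses the inclusion of joint sets) that the paper leaves implicit.
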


\begin{proof}
    Given a necklace $T\hookrightarrow \repD_{0,m}$, we have a canonical natural isomorphism of sets
    \[ B(T)\cong\cB(I_T,S_T),\]
    which can be constructed using the fact that the set $(S_T\setminus I_T)\cup \{0,m\}$ corresponds to the set of joints of $T$ and so an element of $\cB(I_T,S_T)$ corresponds to the data of all vertices of $\repD$ contained in a bead of $T$.
\end{proof}

Using this, we can now describe the functor $G(X\hookrightarrow X)\colon (\tndnec{\repD}{0}{m})^{\op}\to \Thnsset$ introduced in \cref{Auxiliary results about weighted colimits} as a functor $\Cube^{\op}_m\to \Thnsset$.

\begin{defn}
Let $m\geq 1$ and $X$ be a connected $\Thn$-space. We define a functor \[ \cG(X)\colon \Cube_m^{\op}\to \Thnsset \]
given on objects by 
\[ \textstyle (I,S)\mapsto \prod_{\cB(I,S)} X \] 
and on a morphism $(I',S')\to (I,S)$ by the map 
\[ \textstyle \prod_{\cB(I,S)} X\to \prod_{\cB(I',S')} X \] induced by pre-composition along the induced map $\cB(I',S')\to \cB(I,S)$. 
\end{defn}

\begin{prop}
Let $m\geq 1$ and $X$ be a connected $\Thn$-space. The following triangle of categories commutes up to isomorphism.
\begin{tz}
\node[](1) {$(\tndnec{\repD}{0}{m})^{\op}$}; 
\node[right of=1,xshift=2cm](2) {$\Cube_m^{\op}$}; 
\node(3) at ($(1.east)!0.5!(2.west)-(0,1.2cm)$) {$\Thnsset$}; 

\draw[->] (1) to node[above,la]{$\cong$} (2); 
\draw[->] ($(1.south east)-(5pt,0)$) to node[left,la,xshift=-5pt,pos=0.6]{$G(X\hookrightarrow X)$} (3); 
\draw[->] ($(2.south west)+(5pt,0)$) to node[right,la,xshift=5pt,pos=0.6]{$\cG(X)$} (3);
\end{tz}
\end{prop}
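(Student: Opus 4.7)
The plan is to reduce this to the set-level statement established in \cref{bISvsbeads} by observing that both functors factor through a common ``product'' construction. Let $P_X \colon \set^{\op} \to \Thnsset$ denote the functor sending a set $S$ to the product $\prod_S X$ and a function $f\colon S \to S'$ to the pre-composition map $\prod_{S'} X \to \prod_S X$. By direct unpacking, $\cG(X)$ is, by construction, the composite $P_X \circ \cB^{\op}$.

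Next, I would verify that $G(X\hookrightarrow X) = P_X \circ B^{\op}$. Here one must be careful with the case distinction in the definition of $G(X \hookrightarrow Y)$, but that distinction collapses in the present special case $X = Y$. On objects, the value $\prod_{B(\repD)}X \cong X$ at the necklace $\repD\hookrightarrow \repD_{0,m}$ coincides with the single product factor. On morphisms, the map $X\hookrightarrow Y$ appearing in the second case of the definition becomes the identity, so both branches of the case distinction reduce to the pre-composition map $B(f)^*\colon \prod_{B(T)}X \to \prod_{B(U)}X$, which is precisely $P_X(B(f))$ after applying the opposite convention.

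With these two identifications in hand, the proof is an immediate consequence of \cref{bISvsbeads}: the latter provides a natural isomorphism of functors $B \cong \cB \circ (\text{iso})$, where $(\text{iso})\colon \tndnec{\repD}{0}{m} \xrightarrow{\cong} \Cube_m$ is the isomorphism established earlier in \cref{subsec:Cube}. Taking opposites is a $2$-functor and therefore preserves natural isomorphisms, giving $B^{\op}\cong \cB^{\op}\circ (\text{iso})^{\op}$; whiskering on the left with $P_X$ then yields the required natural isomorphism making the triangle of the proposition commute up to isomorphism.

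I do not anticipate any real obstacle here; the only genuine content is verifying that the case distinction in $G(X\hookrightarrow Y)$ becomes uniform when $X = Y$, which is straightforward bookkeeping, and the rest is functorial whiskering of a natural isomorphism already available from \cref{bISvsbeads}.
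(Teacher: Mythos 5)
Your proof is correct and is essentially the same as the paper's, which simply cites \cref{bISvsbeads} as the immediate source of the commuting triangle; you have fleshed out the terse citation by explicitly factoring both $G(X\hookrightarrow X)$ and $\cG(X)$ through $\prod_{(-)} X\colon \set^{\op}\to\Thnsset$ and whiskering the natural isomorphism $B\cong\cB$ from that lemma. One small remark you gloss over but that is harmless: the definition of $G(X\hookrightarrow Y)$ only lists the morphism cases $(U,T\neq\repD)$ and $(U\neq\repD,\,T=\repD)$, but since every map in $\tndnec{\repD}{0}{m}$ is a monomorphism the case $U=\repD,\,T\neq\repD$ cannot occur, so the definition is complete and your identification $G(X\hookrightarrow X)=P_X\circ B^{\op}$ is well-founded.
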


\begin{proof}
    This follows directly from \cref{bISvsbeads}.
\end{proof}

We now aim for a more combinatorial description of the functor $H_m\colon \tndnec{\repD}{0}{m}\to \sset$ introduced in \cref{Auxiliary results about weighted colimits} as a functor $\Cube_m\to \sset$. 

\begin{rmk}
    Given a morphism $(I',S')\to (I,S)$ in $\Cube_m$, there is a partition of $I$ as
    \[ I= I'\amalg (I\cap (S'\setminus I'))\amalg ((I\cup S')\setminus S'). \]
\end{rmk}

\begin{defn} \label{def:Hm}
Let $m\geq 1$. We define a functor 
\[ \cH_m\colon \Cube_m\to \sset \]
given on objects by the map
\[ \textstyle (I,S)\mapsto \prod_I \repS[1] \] 
and on a morphism $(I',S')\to (I,S)$ by 
\begin{tz}
\node[](0){$\text{}$};
    \node[right of=0,xshift=4.5cm](1) {$\cH_m(I',S')=\prod_{I'}\repS[1]$}; 
    \node[below of=1,xshift=.05cm](2) {$\cH_m(I,S)=\prod_I\repS[1]$};
    \punctuation{2}{.};
    
    \draw[->] ($(1.south)-(.8cm,0)$) to ($(2.north)-(.85cm,0)$);
    \draw[->] ($(1.south)+(.85cm,0)$) to node[right,la]{$\prod_{I'} \repS[1]\times \prod_{I\cap (S'\setminus I')}\langle 1\rangle \times \prod_{(I\cup S')\setminus S'}\langle 0 \rangle$} ($(2.north)+(.8cm,0)$);
    \end{tz}
\end{defn}

By a \cite[Corollary~3.10]{DuggerSpivakRigidification}, we have the following computations for the hom spaces of the categorification of necklaces.

\begin{lemma} \label{lem:homnecklaces}
    Let $T=\repD[m_1]\vee \ldots \vee \repD[m_t]$ be a necklace with $t\geq 1$ and $m_i\geq 1$ for $1\leq i\leq t$. Then there is a natural isomorphism in $\sset$
    \[ \textstyle \Hom_{\CL T}(\alpha,\omega)\cong \prod_{i=1}^t \prod_{[1,m_i-1]}\repS[1]\cong \prod_{\coprod_{i=1}^t [1,m_i-1]}\repS[1]. \]
\end{lemma}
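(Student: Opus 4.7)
The plan is to decompose $\CL T$ via the pushout presentation of a necklace and reduce to the single-bead computation already encoded in \cref{defn:CL}. By construction, the necklace $T=\repD[m_1]\vee\ldots\vee\repD[m_t]$ is the iterated pushout
\[ T \cong \repD[m_1]\amalg_{\repD[0]}\repD[m_2]\amalg_{\repD[0]}\ldots\amalg_{\repD[0]}\repD[m_t] \]
in $\Dset$, where the $i$-th gluing identifies the terminal vertex $m_i\in\repD[m_i]$ with the initial vertex $0\in\repD[m_{i+1}]$. Since $\CL\colon\Dset\to\sCat$ is a left adjoint, it preserves this pushout, so that $\CL T$ is the corresponding iterated pushout of $\CL[m_1],\ldots,\CL[m_t]$ in $\sCat$ glued along single-object inclusions $[0]\to \CL[m_i]$ and $[0]\to\CL[m_{i+1}]$ picking out joint vertices.

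Next I would produce the composition map
\[ \mu\colon \prod_{i=1}^t \Hom_{\CL[m_i]}(0,m_i) \to \Hom_{\CL T}(\alpha,\omega) \]
whose $i$-th factor corresponds to composing through the joints of $T$. By \cref{defn:CL}, each factor equals $\prod_{[1,m_i-1]}\repS[1]$, so it suffices to establish that $\mu$ is an isomorphism. For this, I would use \cref{DS:computationhoms} applied to the $1$-ordered simplicial set $T$: since $T$ itself is a necklace with $\alpha,\omega$ as its endpoints, the identity $T\to T_{\alpha,\omega}$ is a terminal object of $\catnec{T}{\alpha}{\omega}$ (every necklace mapping to $T_{\alpha,\omega}$ factors through the inclusion of its image, which in turn factors through $T$). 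Hence the colimit collapses to a single term, giving
\[ \Hom_{\CL T}(\alpha,\omega) \cong \Hom_{\CL T}(\alpha,\omega). \]
To get the product decomposition, I would apply the same colimit formula with the totally non-degenerate necklace $\overline T = \Sp[m_1]\vee\ldots\vee\Sp[m_t]\hookrightarrow T_{\alpha,\omega}$, or argue directly via the pushout presentation above, exploiting that each $\CL[m_i]$ is directed: a morphism $\alpha\to\omega$ in the pushout admits a unique factorization through the sequence of joint vertices, producing the inverse to $\mu$.

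Combining, the composite yields the desired natural isomorphism
\[ \Hom_{\CL T}(\alpha,\omega) \cong \prod_{i=1}^t \prod_{[1,m_i-1]}\repS[1] \cong \prod_{\coprod_{i=1}^t [1,m_i-1]}\repS[1]. \]

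The main obstacle is verifying that the pushout-decomposition produces the product structure on hom spaces in the second step; the cleanest route is to note that the categories $\CL[m_i]$ are directed with unique joint vertices in common, so in the enriched pushout any zig-zag from $\alpha$ to $\omega$ is forced to pass through each joint exactly once and in order, yielding the unique factorization.
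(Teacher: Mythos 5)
The paper does not give an argument for this lemma at all: it cites Dugger--Spivak's Corollary 3.10 directly. You reconstruct the fact from first principles, so yours is a genuinely different (and more self-contained) route. Your central observation is sound: $\CL$, being a left adjoint, preserves the wedge decomposition $T\cong\repD[m_1]\amalg_{\repD[0]}\ldots\amalg_{\repD[0]}\repD[m_t]$, and directedness of the $\CL[m_i]$ forces any morphism $\alpha\to\omega$ in the resulting pushout of $\sset$-enriched categories to factor uniquely through the joint vertices, in order; this yields the product of the bead hom spaces and hence the stated formula via \cref{defn:CL}. To make it airtight you would need to spell out why pushouts of directed $\sset$-enriched categories along a shared terminal/initial object behave this way on hom spaces (this is essentially the computation underlying \cref{homofSigmam} and is in the same spirit as \cref{pushoutlemma}); that verification is exactly what citing Dugger--Spivak lets the paper avoid.

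The middle of your proposal, however, is an unproductive detour. You invoke \cref{DS:computationhoms}, note that the identity $T\to T_{\alpha,\omega}$ is terminal in $\catnec{T}{\alpha}{\omega}$, and conclude $\Hom_{\CL T}(\alpha,\omega)\cong\Hom_{\CL T}(\alpha,\omega)$ --- a tautology. The subsequent suggestion to re-run the colimit formula over the spine necklace $\Sp[m_1]\vee\ldots\vee\Sp[m_t]$ does not help either, since the colimit in \cref{DS:computationhoms} still collapses at the terminal object (the identity), not at the spine. None of this contributes to the argument, and the spine necklace plays no special role here. You should cut that passage and rely solely on the pushout argument, which is the part that actually carries the proof.
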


\begin{prop}
For $m\geq 1$, the following triangle of categories commutes up to isomorphism
\begin{tz}
\node[](1) {$\tndnec{\repD}{0}{m}$}; 
\node[right of=1,xshift=2cm](2) {$\Cube_m$}; 
\node(3) at ($(1.east)!0.5!(2.west)-(0,1.2cm)$) {$\sset$}; 

\draw[->] (1) to node[above,la]{$\cong$} (2); 
\draw[->] ($(1.south east)-(5pt,0)$) to node[left,la,xshift=-5pt,pos=0.6]{$H_m$} (3); 
\draw[->] ($(2.south west)+(5pt,0)$) to node[right,la,xshift=5pt,pos=0.6]{$\cH_m$} (3);
\end{tz}
\end{prop}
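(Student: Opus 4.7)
The plan is to unpack both functors directly on objects using Lemma \ref{lem:homnecklaces}, and then reduce the compatibility on morphisms to checking two generating classes of maps in $\Cube_m$.

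On objects, given $(I,S)\in\Cube_m$ with $S\setminus I=\{s_1<\ldots<s_{t-1}\}$ and $s_0=0$, $s_t=m$, the necklace $T_{(I,S)}$ has beads $\repD[s_i-s_{i-1}]$ for $1\leq i\leq t$, and the interior vertices of the $i$-th bead are precisely the elements of $S$ strictly between $s_{i-1}$ and $s_i$. The disjoint union of interior vertices over all beads is exactly $S\setminus(S\setminus I)=I$. Hence by \cref{lem:homnecklaces}
\[
H_m(T_{(I,S)})=\Hom_{\CL T_{(I,S)}}(\alpha,\omega)\cong\prod_{i=1}^{t}\prod_{[1,s_i-s_{i-1}-1]}\repS[1]\cong\prod_I\repS[1]=\cH_m(I,S),
\]
and this identification is canonical in $(I,S)$.

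For functoriality, by \cref{genmorCube} it suffices to check agreement on the two generating types of morphisms: (a) $(I\setminus\{j\},S)\to(I,S)$ and (b) $(I\setminus\{j\},S\setminus\{j\})\to(I,S)$ for $j\in I$. The key observation is that in both cases, exactly one of the three blocks in the partition $I=I'\amalg(I\cap(S'\setminus I'))\amalg((I\cup S')\setminus S')$ appearing in \cref{def:Hm} is the singleton $\{j\}$, so $\cH_m$ of the generator inserts either $\langle 1\rangle$ (in case (a), where $I\cap(S'\setminus I')=\{j\}$) or $\langle 0\rangle$ (in case (b), where $(I\cup S')\setminus S'=\{j\}$) at the $j$-coordinate, and is the identity elsewhere.

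It remains to show that $\CL$ applied to the bi-pointed inclusion $T_{(I',S')}\hookrightarrow T_{(I,S)}$ does the same thing on $\Hom(\alpha,\omega)$. In case (a), $j$ is a joint of $T_{(I',S')}$ but an interior vertex of the corresponding bead of $T_{(I,S)}$; the inclusion is therefore the bi-pointed map that wedges two adjacent beads into a single bead at $j$. Using the description of $\CL$ on wedges of representables as a pushout along $[0]$, together with the composition formula in \cref{defn:CL} that inserts $\langle 1\rangle$ at the gluing coordinate, the induced map on $\Hom(\alpha,\omega)$ is precisely insertion of $\langle 1\rangle$ at position $j$. In case (b), the inclusion restricts to a coface map $d^{j}$ between the relevant beads (fixing joints), so by \cref{CLoncofaces} the induced map on $\Hom(\alpha,\omega)$ inserts $\langle 0\rangle$ at position $j$ and is the identity on the other coordinates.

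The main obstacle is bookkeeping: making sure the canonical reindexing of the bead-interior vertices to $I$ is compatible under both generating morphisms, so that the insertions occur at the position actually labelled $j$. Once this combinatorial identification is set up consistently, the verification on the two generators is immediate, and naturality under composition of morphisms in $\Cube_m$ follows from functoriality of $H_m$ and $\cH_m$.
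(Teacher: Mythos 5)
Your argument follows the same structure as the paper's proof: identify $\coprod_{i}[1,m_i-1]$ with $I_T$ to get the object-level isomorphism from \cref{lem:homnecklaces}, then use \cref{genmorCube} to reduce functoriality to the two generating morphisms, matching the bead-wedging/$\langle 1\rangle$-insertion behavior in case (a) and the coface/$\langle 0\rangle$-insertion behavior in case (b) via \cref{CLoncofaces}. The proof is correct and essentially identical in approach to the paper's.
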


\begin{proof}
Recall that $H_m$ sends a necklace $T=\repD[m_1]\vee \ldots \vee \repD[m_t]\hookrightarrow \repD_{0,m}$ to
    \[ \textstyle \Hom_{\CL T}(\alpha,\omega)\cong \prod_{\coprod_{i=1}^t [1,m_i-1]}\repS[1], \]
    where the isomorphism holds by \cref{lem:homnecklaces}. Note that the set $\coprod_{i=1}^t [1,m_i-1]$ can be made into a poset with the lexicographic order. Moreover, a direct computation shows that the posets $\coprod_{i=1}^t [1,m_i-1]$ and $I_T$ have the same cardinality, namely $\sum_{i=1}^t (m_i-1)$, and so there is a unique isomorphism $\coprod_{i=1}^t [1,m_i-1]\cong I_T$ preserving the order. This induces an isomorphism $\sset$
    \[\textstyle H_m(T)\cong \prod_{\coprod_{i=1}^t [1,m_i-1]}\repS[1]\cong \prod_{I_T} \repS[1]= \cH_m(I_T,S_T). \]
    It remains to show that this isomorphism is compatible with morphisms. 
    
    By \cref{genmorCube}, it is enough to check that it is compatible with the generating morphisms $(I_T\setminus \{j\},S_T)\to (I_T,S_T)$ and $(I_T\setminus \{j\}, S_T\setminus \{j\})\to (I_T,S_T)$ of $\Cube_m$, for all $j\in I_T$. Note that an element $j\in I_T$ corresponds to a vertex $\ell\in \repD[m_i]$ with $0<\ell<m_i$ for some $1\leq i\leq t$. 
    
    In the case $(I_T\setminus \{j\},S_T)\to (I_T,S_T)$, by definition of $\cH_m$, the induced map is given by 
    \begin{tz}
    \node[](1) {$\cH_m(I_T\setminus \{j\},S_T)=\prod_{I_T\setminus \{j\}}\repS[1]$}; 
    \node[below of=1,xshift=.15cm](2) {$\cH_m(I_T,S_T)=\prod_{I_T}\repS[1]$};
    \punctuation{2}{.};
    
    \draw[->] ($(1.south)-(1cm,0)$) to ($(2.north)-(1.15cm,0)$);
    \draw[->] ($(1.south)+(1.25cm,0)$) to node[right,la]{$(\prod_{I_T\setminus\{j\}} \repS[1])\times \langle 1\rangle$} ($(2.north)+(1.1cm,0)$);
    \end{tz} 
    Then, the necklace $U\hookrightarrow \repD_{0,m}$ corresponding to $(I_T\setminus \{j\},S_T)$ is the subnecklace of $T$ given by 
    \[ U\cong \repD[m_1]\vee \ldots \vee \repD[m_{i-1}] \vee \repD[\ell] \vee \repD[m_i-\ell]\vee \repD[m_{i+1}]\vee\ldots   \vee\repD[m_t] \]
    and the inclusion $U\hookrightarrow T$ is induced by $\repD[\ell]\vee \repD[m_i-\ell]\hookrightarrow \repD[m_i]$. The latter induces a map 
    \[ \Hom_{\CL[\ell]\amalg_{[0]} \CL[m_i-\ell]}(\alpha,\omega)\cong \Hom_{\CL[m_i]}(0,\ell)\times \Hom_{\CL[m_i]}(\ell,m_i)\to \Hom_{\CL[m_i]}(0,m_i)  \]
 which corresponds to the composition map of $\CL[m_i]$ as in \cref{defn:CL}. Hence the image under~$H_m$ of the inclusion $U\hookrightarrow T$ is given by 
 \begin{tz}
    \node[](1) {$H_m(U)=\prod_{(\coprod_{i=1}^t [1,m_i-1])\setminus \{\ell\}} \repS[1]$}; 
    \node[below of=1,xshift=-.4cm](2) {$H_m(T)=\prod_{\coprod_{i=1}^t [1,m_i-1]} \repS[1]$};
    \punctuation{2}{.};
    
    \draw[->] ($(1.south)-(2.1cm,0)$) to ($(2.north)-(1.7cm,0)$);
    \draw[->] ($(1.south)+(.5cm,0)$) to node[right,la]{$(\prod_{(\coprod_{i=1}^t [1,m_i-1])\setminus \{\ell\}} \repS[1]) \times \langle 1\rangle$} ($(2.north)+(.9cm,0)$);
    \end{tz}
    This shows that the isomorphisms are compatible with this first type of generating morphisms. 

    In the case $(I_T\setminus \{j\}, S_T\setminus \{j\})\to (I_T,S_T)$, by definition of $\cH_m$, the induced map is given by 
    \begin{tz}
    \node[](1) {$\cH_m(I_T\setminus \{j\},S_T\setminus\{j\})=\prod_{I_T\setminus \{j\}}\repS[1]$}; 
    \node[below of=1,xshift=.6cm](2) {$\cH_m(I_T,S_T)=\prod_{I_T}\repS[1]$};
    \punctuation{2}{.};
    
    \draw[->] ($(1.south)-(.7cm,0)$) to ($(2.north)-(1.3cm,0)$);
    \draw[->] ($(1.south)+(1.6cm,0)$) to node[right,la]{$(\prod_{I_T\setminus\{j\}} \repS[1])\times \langle 0\rangle$} ($(2.north)+(1cm,0)$);
    \end{tz} 
    Then, the necklace $U\hookrightarrow \repD_{0,m}$ corresponding to $(I_T\setminus \{j\},S_T\setminus \{j\})$ is the subnecklace of $T$ given by 
    \[ U\cong \repD[m_1]\vee \ldots \vee \repD[m_{i-1}] \vee \repD[m_i-1]\vee \repD[m_{i+1}]\ldots  \ldots \vee\repD[m_t]\]
    and the inclusion $U\hookrightarrow T$ is induced by the coface map $d^\ell\colon \repD[m_i-1]\to \repD[m_i]$. The latter induces a map 
    \[ \Hom_{\CL[m_i-1]}(0,m_i-1) \to \Hom_{\CL[m_i]}(0,m_i) \]
    as described in \cref{CLoncofaces}. Hence the image under $H_m$ of the inclusion $U\hookrightarrow T$ is given by 
 \begin{tz}
    \node[](1) {$H_m(U)=\prod_{(\coprod_{i=1}^t [1,m_i-1])\setminus \{\ell\}} \repS[1]$}; 
    \node[below of=1,xshift=-.4cm](2) {$H_m(T)=\prod_{\coprod_{i=1}^t [1,m_i-1]} \repS[1]$};
    \punctuation{2}{.};
    
    \draw[->] ($(1.south)-(2.1cm,0)$) to ($(2.north)-(1.7cm,0)$);
    \draw[->] ($(1.south)+(.5cm,0)$) to node[right,la]{$(\prod_{(\coprod_{i=1}^t [1,m_i-1])\setminus \{\ell\}} \repS[1]) \times \langle 0\rangle$} ($(2.north)+(.9cm,0)$);
    \end{tz}
    This shows that the isomorphisms are compatible with this second type of generating morphisms, and concludes the proof. 
\end{proof}

\subsection{Projective cofibrancy of \texorpdfstring{$\cG(X)$}{G(X)}} \label{Projective cofibrancy of GX}

In this section, we aim to show that the functor $\cG(X)$ is cofibrant in $(\MSThnsset)_{\proj}^{\Cube_m^{\op}}$. For this, all the results in this section are towards proving that $\cG(X)$ satisfies the left lifting property against all trivial fibrations in $(\MSThnsset)_{\proj}^{\Cube_m^{\op}}$. 

Let $\cP_m\coloneqq \cP( \{1,\ldots,m-1\})$ be the poset of subsets of $\{1,\ldots,m-1\}$ ordered by inclusion. Then there is an embedding
\[ \sigma\colon \cP_m\hookrightarrow \Cube_m, \quad I\mapsto (I,\{1,\ldots,m-1\}) \]
which admits a retraction 
\[ r\colon \Cube_m\to \cP_m, \quad (I,S)\mapsto I\cup (\{1,\ldots,m-1\}\setminus S). \]
Note that $r$ is well-defined since, given a morphism $(I',S')\to (I,S)$ in $\Cube_m$, then there is a morphism $r(I',S')\to r(I,S)$ in $\cP_m$ as, using that $S=S'\cup I$, we have
\[ I'\cup (\{1,\ldots,m-1\}\setminus S') \subseteq I\cup (\{1,\ldots,m-1\}\setminus S') =I\cup (\{1,\ldots,m-1\}\setminus S).\] 

It is straightforward to check that $r \sigma=\id_{\cP_m}$. Moreover, we have a natural transformation $\alpha\colon \id_{\Cube_m}\to \sigma r$ given at $(I,S)$ by the morphism in $\Cube_m$ 
\[ (I,S)\to (I\cup (\{1,\ldots,m-1\}\setminus S), \{1,\ldots,m-1\})=\sigma r(I,S) \]
which exists as $I\subseteq I\cup (\{1,\ldots,m-1\}\setminus S)$ and $\{1,\ldots,m-1\}= S\cup I\cup (\{1,\ldots,m-1\}\setminus S)$.

\begin{lemma} \label{beadsofrIS}
Let $m\geq 1$ and $(I,S)$ be an object in $\Cube_m$. Then the component $(I,S)\to \sigma r(I,S)$ of $\alpha$ induces a natural isomorphism of sets
\[ \cB(I,S)\cong \cB(\sigma r(I,S)). \]
\end{lemma}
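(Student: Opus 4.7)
The plan is to unpack the definitions of $\sigma$, $r$, and $\cB$ and verify the claim by a direct combinatorial computation. First I would expand $\sigma r(I,S) = (I', S')$ where $S' = \{1,\ldots,m-1\}$ and $I' = I\cup(\{1,\ldots,m-1\}\setminus S)$, and then compute the complement $S'\setminus I'$. The key observation will be the identity $S'\setminus I' = S\setminus I$: an element $j\in\{1,\ldots,m-1\}$ lies in $S'\setminus I'$ if and only if $j\notin I$ and $j\notin\{1,\ldots,m-1\}\setminus S$, which is exactly the condition $j\in S\setminus I$.

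Once this is in hand, both sets $\cB(I,S)$ and $\cB(\sigma r(I,S))$ are indexed by the same sequence $s_0=0<s_1<\ldots<s_{t-1}<s_t=m$ coming from $S\setminus I=S'\setminus I'$. The $i$-th element of $\cB(I,S)$ is $\{s\in S\mid s_{i-1}\leq s\leq s_i\}$, while the $i$-th element of $\cB(\sigma r(I,S))$ is $\{s\in S'\mid s_{i-1}\leq s\leq s_i\}$. By definition, the map $\cB(I,S)\to \cB(\sigma r(I,S))$ induced by the component $(I,S)\to\sigma r(I,S)$ sends an interval in $S$ to the unique interval in $S'$ containing it; since $S\subseteq S'$ and both enumerations use the same anchor sequence, this is precisely the bijection matching the $i$-th interval on the left with the $i$-th interval on the right.

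For naturality in $(I,S)$, one considers a morphism $(I_1,S_1)\to (I_2,S_2)$ in $\Cube_m$, which yields a morphism $\sigma r(I_1,S_1)\to \sigma r(I_2,S_2)$ because $r$ is functorial, and then checks the resulting square commutes. Both horizontal legs are the interval-indexing bijections just described, and both vertical legs are controlled by the same shrinking of the anchor sequences under the morphism $(I_1,S_1)\to (I_2,S_2)$, so commutativity reduces to the fact that the assignment ``interval containing $s$'' is natural. The main obstacle, if any, is purely bookkeeping with the set-theoretic identities among $I$, $S$, $I'$, $S'$ and their complements; no deeper input beyond the definition of $\cB$ and of $\alpha$ is needed.
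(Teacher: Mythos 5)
Your proof is correct and takes essentially the same approach as the paper's: both hinge on the identity $S' \setminus I' = S \setminus I$ (so the two anchor sequences coincide) and then observe that the canonical map matches the $i$-th interval in $S$ with the $i$-th interval in $\{1,\ldots,m-1\}$. The naturality check you add is a welcome bit of extra care that the paper leaves implicit.
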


\begin{proof}
First note that $\{1,\ldots,m-1\} \setminus (I\cup (\{1,\ldots,m-1\}\setminus S)) =  (\{1,\ldots,m-1\} \setminus I)\cap S=S\setminus I$. Write $S\setminus I=\{ s_1< \ldots < s_{t-1} \}$. Then we have 
\[ \cB(I,S) = \big\{ \{s\in S\mid s_{i-1}\leq s\leq s_i\} \mid 1\leq i\leq t \big\}, \]
\[ \cB(\sigma r(I,S)) = \big\{ \{s\in \{1,\ldots,m-1\} \mid s_{i-1}\leq s\leq s_i\} \mid 1\leq i\leq t \big\}. \]
So the map $(I,S)\to \sigma r(I,S)$ induces a canonical isomorphism between these sets given by 
\[ \{s\in S\mid s_{i-1}\leq s\leq s_i\}\mapsto \{s\in \{1,\ldots,m-1\} \mid s_{i-1}\leq s\leq s_i\}. \qedhere \]
\end{proof}

\begin{lemma} \label{Galphaisiso}
    Let $m\geq 1$ and $X$ be a connected $\Thn$-space. Then the natural transformation $\cG(X) \circ \alpha^{\op}\colon \cG(X) \circ \sigma^{\op} r^{\op}\to \cG(X)$ is an isomorphism in $(\Thnsset)^{\Cube_m^{\op}}$.
\end{lemma}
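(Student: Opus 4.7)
The plan is to unpack the definitions and then appeal directly to \cref{beadsofrIS}. For each object $(I,S)\in\Cube_m$, the component of $\cG(X)\circ \alpha^{\op}$ at $(I,S)$ is the map
\[ \cG(X)(\alpha_{(I,S)})\colon \cG(X)(\sigma r(I,S)) = \textstyle\prod_{\cB(\sigma r(I,S))} X \;\longrightarrow\; \prod_{\cB(I,S)} X = \cG(X)(I,S), \]
which by the definition of $\cG(X)$ is obtained by pre-composition along the induced map of sets $\cB(I,S)\to \cB(\sigma r(I,S))$. By \cref{beadsofrIS}, this map of indexing sets is a bijection, so the induced map of products is an isomorphism in $\Thnsset$. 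Since this holds componentwise, $\cG(X)\circ \alpha^{\op}$ is an isomorphism in $(\Thnsset)^{\Cube_m^{\op}}$.

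This is essentially a one-step verification: all the real content has been packaged into \cref{beadsofrIS}, where one checks that collapsing the ``extra'' vertices in $\{1,\ldots,m-1\}\setminus S$ into the ambient $\{1,\ldots,m-1\}$ does not alter the partition of $S$ into beads, because those added vertices all lie strictly between the same joints $s_{i-1},s_i\in S\setminus I$. There is no obstacle to anticipate: the functor $\cG(X)$ depends on $(I,S)$ only through the set $\cB(I,S)$, so any morphism in $\Cube_m$ that induces a bijection on beads is automatically sent by $\cG(X)$ to an isomorphism.
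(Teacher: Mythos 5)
Your proof is correct and takes essentially the same approach as the paper: unpack the component of $\cG(X)\circ\alpha^{\op}$ at $(I,S)$ as the precomposition along the map $\cB(I,S)\to\cB(\sigma r(I,S))$ and invoke \cref{beadsofrIS} to see that this indexing map is a bijection. Your closing remark correctly identifies where the real content lives.
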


\begin{proof}
    The component at $(I,S)$ in $\Cube_m$ of $\cG(X)\circ \alpha^{\op}$ is given by the map 
    \[ \textstyle \prod_{\cB(\sigma r(I,S))} X\to \prod_{\cB(I,S)} X \]
    induced by pre-composing with the isomorphism of sets $\cB(I,S)\cong \cB(\sigma r(I,S))$ from \cref{beadsofrIS}. Hence, this is an isomorphism.
\end{proof}

\begin{cor} \label{cor:nattransfoutofGX}
    Let $m\geq 1$, $X$ be a connected $\Thn$-space, and $F\colon \Cube^{\op}_m\to \Thnsset$ be a functor. There is a natural isomorphism of sets 
    \[ (\Thnsset)^{\Cube_m^{\op}}(\cG(X),F)\cong (\Thnsset)^{\cP_m^{\op}}(\cG(X)\circ \sigma^{\op},F\circ \sigma^{\op}). \] 
\end{cor}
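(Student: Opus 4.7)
The plan is to build the desired isomorphism from two pieces: the natural isomorphism $\cG(X) \cong r^*\sigma^*\cG(X)$ from \cref{Galphaisiso} (where $r^*$ and $\sigma^*$ denote precomposition along $r^{\op}$ and $\sigma^{\op}$ respectively), and the hom-set bijection of an adjunction $r^* \dashv \sigma^*$ between the functor categories $(\Thnsset)^{\cP_m^{\op}}$ and $(\Thnsset)^{\Cube_m^{\op}}$ induced by an adjunction $r \dashv \sigma$ at the level of small categories.

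First I would check that $(r,\sigma,\alpha)$ exhibits $r$ as left adjoint to $\sigma$, with unit $\alpha\colon \id_{\Cube_m} \to \sigma r$ and counit the identity (since $r\sigma = \id_{\cP_m}$). The triangle identities $\alpha\sigma = \id_\sigma$ and $r\alpha = \id_r$ are forced by the poset structure of $\Cube_m$ and $\cP_m$: at $J \in \cP_m$, the component $\alpha_{\sigma(J)}$ has source $\sigma(J) = (J,\{1,\ldots,m-1\})$ and target $\sigma r\sigma(J) = \sigma(J)$, so in the poset $\Cube_m$ it must be the identity; similarly $r(\alpha_{(I,S)})$ has source and target both equal to $r(I,S)$ in $\cP_m$, hence is the identity.

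Precomposition then yields an adjunction $r^* \dashv \sigma^*$ between $(\Thnsset)^{\cP_m^{\op}}$ and $(\Thnsset)^{\Cube_m^{\op}}$, whose unit is the identity (as $r\sigma = \id$) and whose counit at $F \in (\Thnsset)^{\Cube_m^{\op}}$ is $F \circ \alpha^{\op}\colon r^*\sigma^*F \to F$. Instantiating the induced hom-set bijection at $G \coloneqq \sigma^*\cG(X)$ and composing with the isomorphism from \cref{Galphaisiso} gives the chain
\[
(\Thnsset)^{\Cube_m^{\op}}(\cG(X), F) \cong (\Thnsset)^{\Cube_m^{\op}}(r^*\sigma^*\cG(X), F) \cong (\Thnsset)^{\cP_m^{\op}}(\sigma^*\cG(X), \sigma^*F),
\]
which is the desired statement, with naturality in $F$ inherited from naturality of both constituents. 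The only potential obstacle is confirming the direction of the induced adjunction on functor categories (whether $r^* \dashv \sigma^*$ or $\sigma^* \dashv r^*$), which is resolved unambiguously by writing out the unit and counit explicitly as above.
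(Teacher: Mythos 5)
Your proof is correct, and it is in substance the same argument as the paper's, with a cleaner conceptual packaging. The paper writes down the two maps directly — $\beta \mapsto \beta \circ \sigma^{\op}$ in one direction and $\gamma \mapsto (F\circ\alpha^{\op}) \circ (\gamma\circ r^{\op}) \circ (\cG(X)\circ\alpha^{\op})^{-1}$ in the other — and asserts that they are inverse using $r\sigma = \id_{\cP_m}$ and the naturality of $\alpha$. You recognize that these two maps are exactly the hom-set bijection of the precomposition adjunction $r^* \dashv \sigma^*$ (transported along the isomorphism from \cref{Galphaisiso}), which in turn is induced by the poset adjunction $r \dashv \sigma$ with unit $\alpha$ and identity counit. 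Your verification of the triangle identities via the poset structure is correct, and the direction of the induced adjunction ($r^* \dashv \sigma^*$, not the reverse) is also correct — it follows from $\sigma^{\op} \dashv r^{\op}$ and the standard fact that if $f \dashv g$ then $g^* \dashv f^*$ on functor categories. Nothing is missing; what your framing buys is that the "inverse to each other" verification, which the paper leaves to the reader, is subsumed by the general theory of adjunctions and needs no further checking.
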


\begin{proof}
    We define maps in both directions by sending $\beta\colon \cG(X)\to F$ to $\beta\circ \sigma^{\op}\colon \cG(X)\circ \sigma^{\op} \to F\circ \sigma^{\op}$, and by sending $\gamma\colon \cG(X)\circ \sigma^{\op} \to F\circ \sigma^{\op}$ to the composite 
    \[ \cG(X)\xrightarrow{(\cG(X)\circ \alpha^{\op})^{-1}}\cG(X)\circ \sigma^{\op} r^{\op} \xrightarrow{\gamma\circ r^{\op}} F\circ \sigma^{\op} r^{\op} \xrightarrow{F\circ \alpha^{\op}} F, \]
    where $\cG(X)\circ \alpha^{\op}$ is invertible by \cref{Galphaisiso}. The fact that these constructions are inverse to each other is a consequence of the relation $r \sigma=\id_{\cP_m}$ and the naturality of $\alpha$.
\end{proof}

The following is a straightforward verification.

\begin{lemma} \label{lem:Pm12cofinal}
    Let $m\geq 1$. Write $\cP_m^{1,2}$ and $\cP_m^{\geq 1}$ for the sub-posets of $\cP_m$ given by
    \[ \cP_m^{1,2}=\{ I\subseteq \{1,\ldots,m-1\}\mid |I|=1,2 \} \quad \text{and} \quad \cP_m^{\geq 1}=\{ I\subseteq \{1,\ldots,m-1\}\mid |I|\geq 1\}. \]
    Then the inclusion $\cP_m^{1,2}\hookrightarrow \cP_m^{\geq 1}$ is cofinal, and so $(\cP_m^{1,2})^{\op}\hookrightarrow (\cP_m^{\geq 1})^{\op}$ is final.
\end{lemma}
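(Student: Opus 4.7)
The plan is to verify the ``cofinal'' condition directly using the criterion from \cite[Chapter~IX]{MacLane}: for every $J \in \cP_m^{\geq 1}$, we need to check that the slice category $(\cP_m^{1,2})_{/J}$, whose objects are the subsets $I \subseteq J$ with $|I| \in \{1,2\}$ and whose morphisms are inclusions, is non-empty and connected. The claim about the opposite functor being final then follows immediately by passing to opposites, since a comma category in $(\cP_m^{\geq 1})^{\op}$ is the opposite of a slice in $\cP_m^{\geq 1}$, and non-emptiness and connectedness are self-dual properties.

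Non-emptiness will be easy: since $|J| \geq 1$ by assumption, we may pick any element $j \in J$ and observe that the singleton $\{j\}$ is an object of the slice.

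For connectedness, the key step will be to construct an explicit zig-zag in $(\cP_m^{1,2})_{/J}$ between any two objects $I_1, I_2$. I will pick elements $j_1 \in I_1$ and $j_2 \in I_2$ and exhibit the zig-zag
\[ I_1 \supseteq \{j_1\} \subseteq \{j_1, j_2\} \supseteq \{j_2\} \subseteq I_2,\]
noting that the middle term $\{j_1, j_2\}$ has cardinality $1$ or $2$ (depending on whether $j_1=j_2$), hence lies in $\cP_m^{1,2}$, and that all sets appearing are subsets of $J$, so the zig-zag lives in the slice over $J$.

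I do not anticipate a genuine obstacle: this is a short combinatorial verification. The only mild subtlety is being careful that ``cofinal'' in the paper's convention corresponds to the limit-preserving (slice-connected) condition while ``final'' corresponds to the colimit-preserving (coslice-connected) condition, as used in \cref{prop:final} via \cite[Theorem~IX.3.1]{MacLane}; the duality between these two notions is exactly what licenses the ``and so'' in the statement.
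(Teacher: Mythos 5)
Your proof is correct: since the paper leaves this as a ``straightforward verification,'' a direct check of the comma-category criterion is exactly what is intended. The zig-zag $I_1 \supseteq \{j_1\} \subseteq \{j_1,j_2\} \supseteq \{j_2\} \subseteq I_2$ indeed lies entirely in the slice $(\cP_m^{1,2})_{/J}$, and your remark on the duality between cofinal and final matches the convention used in \cref{prop:final} and \cref{lem:colimitascoeq}.
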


\begin{lemma} \label{lem:colimitascoeq}
    Let $m\geq 1$, $X$ be a connected $\Thn$-space, and $I\subseteq \{1,\ldots,m-1\}$. Then there is an isomorphism in $\Thnsset$
    \[ \textstyle \colim_{I\subsetneq J\in \cP_m} \cG(X)(\sigma J)\cong \mathrm{coeq}\left(\coprod_{\substack{I\subsetneq J\in \cP_m \\
    |J|=|I|+2}} \cG(X)(\sigma J)\rightrightarrows \coprod_{\substack{I\subsetneq J\in \cP_m \\
    |J|=|I|+1}} \cG(X)(\sigma J) \right). \]
\end{lemma}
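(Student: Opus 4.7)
The plan is to identify the indexing poset with one of a form treated by \cref{lem:Pm12cofinal} and apply cofinality. Concretely, the sub-poset $\{J\in \cP_m \mid I\subsetneq J\}\subseteq \cP_m$ is isomorphic to $\cP_{m-|I|}^{\geq 1}$ via the bijection $J\mapsto J\setminus I$, after identifying $\{1,\ldots,m-1\}\setminus I$ with $\{1,\ldots,m-|I|-1\}$ by the unique order-preserving bijection. Under this isomorphism, the subposet spanned by elements of size $|I|+1$ or $|I|+2$ corresponds precisely to $\cP_{m-|I|}^{1,2}$.

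The functor $\cG(X)\circ \sigma^{\op}\colon \cP_m^{\op}\to\Thnsset$ is contravariant on $\cP_m$, equivalently covariant on $\cP_m^{\op}$. The desired colimit is thus a colimit over the opposite poset $\{J\mid I\subsetneq J\}^{\op}\cong (\cP_{m-|I|}^{\geq 1})^{\op}$ of the corresponding covariant functor. By \cref{lem:Pm12cofinal}, the inclusion $(\cP_{m-|I|}^{1,2})^{\op}\hookrightarrow (\cP_{m-|I|}^{\geq 1})^{\op}$ is final, so this colimit may be computed by restricting the diagram to the two-layer subposet spanned by objects of size $|I|+1$ and $|I|+2$.

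Within this restricted subposet, the only nontrivial morphisms are the inclusions $I\cup\{j\}\hookrightarrow I\cup\{j,k\}$ in $\cP_m$, which by contravariance yield structure maps $\cG(X)(\sigma(I\cup\{j,k\}))\to \cG(X)(\sigma(I\cup\{j\}))$ in $\Thnsset$. The colimit of a diagram in $\Thnsset$ of this shape (two layers with all arrows going from the top to the bottom layer) is exactly the displayed coequalizer: the two parallel arrows encode, for each fixed $J'=I\cup\{j,k\}$ of size $|I|+2$, the two ways of choosing a size-$(|I|+1)$ subset of $J'$ containing $I$.

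The main point to verify carefully is the variance bookkeeping: since $\cG(X)$ is contravariant, one must be explicit that finality on the \emph{opposite} indexing category is the correct condition for this colimit, which is what is provided by \cref{lem:Pm12cofinal}. Once this is set up, the identification of the restricted two-layer colimit with the coequalizer is the standard presentation of a colimit of a poset-indexed diagram as a coequalizer $\coprod_{\mathrm{morphisms}}\rightrightarrows\coprod_{\mathrm{objects}}$, collapsed here because the source-of-morphisms coproduct has exactly two maps per object $J'$ of size $|I|+2$ and so is rewritten as two parallel arrows out of $\coprod_{|J|=|I|+2}\cG(X)(\sigma J)$.
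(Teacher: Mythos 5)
Your proof matches the paper's own argument step for step: identify the poset of proper supersets of $I$ with $\cP_{m-|I|}^{\geq 1}$ (and the two-layer subposet with $\cP_{m-|I|}^{1,2}$) via $J\mapsto J\setminus I$, invoke \cref{lem:Pm12cofinal} to restrict along a final subcategory, and then express the restricted colimit as the stated coequalizer. Your extra care about the variance (colimiting over the \emph{opposite} poset because $\cG(X)$ is contravariant) and about how the generic $\coprod_{\mathrm{mor}}\rightrightarrows\coprod_{\mathrm{ob}}$ presentation collapses to the displayed two-term coequalizer is exactly what the paper leaves implicit when citing the dual of \cite[Theorem~V.2.2]{MacLane}.
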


\begin{proof}
    Note that we have isomorphisms of posets 
    \[ \{I\subsetneq J\in \cP_m\mid |J|=|I|+1,|I|+2\}\cong \cP_{m-|I|}^{1,2}\quad \text{and}\quad \{I\subsetneq J\in \cP_m\} \cong \cP_{m-|I|}^{\geq 1}. \]
    Hence, by \cref{lem:Pm12cofinal}, the inclusion 
    \[ \{I\subsetneq J\in \cP_m\mid |J|=|I|+1,|I|+2\}^{\op}\hookrightarrow \{I\subsetneq J\in \cP_m \}^{\op} \]
    is final. Using the formula for colimits in terms of coequalizers as in the dual of \cite[Theorem~V.2.2]{MacLane}), we obtain the desired result.
\end{proof}

\begin{rmk} \label{rmk:descrmaps}
Let $m\geq 1$ and $I\subseteq \{1,\ldots,m-1\}$. Write $\{1,\ldots,m-1\}\setminus I=\{s_1<\ldots<s_{t-1}\}$ and set $s_0=0$, $s_t=m$. Recall that 
\[ \cB(\sigma I)=\{ [s_{i-1},s_i]\subseteq \{1,\ldots,m-1\} \mid 1\leq i\leq t\}. \] 
Then, for $1\leq j\leq t-1$, the map
\[ \cB(\sigma I)\to \cB(\sigma (I\amalg \{s_j\})) \]
is given by
\[ [s_{i-1},s_i] \mapsto \begin{cases}
[s_{i-1},s_i] & \text{if} \; 1\leq i\leq t, i\neq j,j+1 \\
[s_{j-1},s_{j+1}] & \text{if} \; i=j,j+1.
\end{cases} \]
\end{rmk}

\begin{lemma} \label{lem:pullbacksofGX}
    Let $m\geq 1$, $X$ be a connected $\Thn$-space, and $I\subseteq \{1,\ldots,m-1\}$. For all $j_0,j_1\in \{1,\ldots,m-1\}\setminus I$, there is a pullback square in $\Thnsset$
    \begin{tz}
\node[](1) {$\cG(X)(\sigma(I\amalg \{j_0,j_1\}))$}; 
\node[below of=1](2) {$\cG(X)(\sigma(I\amalg \{j_0\}))$}; 
\node[right of=1,xshift=2.9cm](3) {$\cG(X)(\sigma(I\amalg \{j_1\}))$}; 
\node[below of=3](4) {$\cG(X)(\sigma I)$}; 
\pullback{1};
\punctuation{4}{.};

\draw[->] (1) to (3);
\draw[->] (1) to (2);
\draw[->] (3) to (4);
\draw[->] (2) to (4);
\end{tz}
\end{lemma}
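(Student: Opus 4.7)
The plan is to reduce the pullback claim in $\Thnsset$ to an elementary pushout of index sets. Since $\cG(X)(\sigma J) = \prod_{\cB(\sigma J)} X$ can be rewritten as $\Hom_{\Thnsset}(\coprod_{\cB(\sigma J)} \repS[0], X)$, and the contravariant functor $\Hom_{\Thnsset}(-, X)$ sends colimits to limits, the desired pullback square in $\Thnsset$ will follow from the corresponding pushout square
\begin{tz}
\node[](1) {$\coprod_{\cB(\sigma I)} \repS[0]$};
\node[right of=1,xshift=3cm](2) {$\coprod_{\cB(\sigma(I\amalg\{j_1\}))} \repS[0]$};
\node[below of=1](3) {$\coprod_{\cB(\sigma(I\amalg\{j_0\}))} \repS[0]$};
\node[right of=3,xshift=3cm](4) {$\coprod_{\cB(\sigma(I\amalg\{j_0,j_1\}))} \repS[0].$};
\pushout{4};
\draw[->] (1) to (2);
\draw[->] (1) to (3);
\draw[->] (2) to (4);
\draw[->] (3) to (4);
\end{tz}
Since the functor $S\mapsto \coprod_S \repS[0]$ from $\set$ to $\Thnsset$ is a left adjoint and hence preserves colimits, this reduces in turn to checking that the analogous square of index sets $\cB(\sigma -)$ is a pushout in $\set$.

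To verify this pushout in $\set$, I will use the explicit description of the transition maps given in \cref{rmk:descrmaps}. Writing $\{1,\ldots,m-1\}\setminus I=\{s_1<\ldots<s_{t-1}\}$ with $s_0=0$ and $s_t=m$, and setting without loss of generality $j_0=s_{\ell_0}$ and $j_1=s_{\ell_1}$ with $\ell_0<\ell_1$, each map $\cB(\sigma I)\to \cB(\sigma(I\amalg\{j_k\}))$ is the surjection that identifies the two adjacent beads $[s_{\ell_k-1},s_{\ell_k}]$ and $[s_{\ell_k},s_{\ell_k+1}]$ while acting as the identity on all other beads. Consequently, the pushout $\cB(\sigma(I\amalg\{j_0\}))\sqcup_{\cB(\sigma I)}\cB(\sigma(I\amalg\{j_1\}))$ is the quotient of $\cB(\sigma I)$ by the equivalence relation generated by both identifications.

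It remains to check that this agrees with $\cB(\sigma(I\amalg\{j_0,j_1\}))$, which splits into two easy cases. If $\ell_1>\ell_0+1$ the two identifications involve disjoint pairs of beads and merge them independently, while if $\ell_1=\ell_0+1$ they share the bead $[s_{\ell_0},s_{\ell_0+1}]$, so the three beads $[s_{\ell_0-1},s_{\ell_0}]$, $[s_{\ell_0},s_{\ell_0+1}]$, $[s_{\ell_0+1},s_{\ell_0+2}]$ fuse into a single one. In either case the resulting quotient matches $\cB(\sigma(I\amalg\{j_0,j_1\}))$, completing the argument. There is no substantive obstacle here: once the pullback in $\Thnsset$ is reformulated as a pushout of indexing sets, only this mild case analysis is needed.
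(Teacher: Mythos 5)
Your argument is correct and follows the paper's strategy: reduce the pullback to a pushout of indexing sets, then verify that pushout combinatorially via \cref{rmk:descrmaps}. One small notational point: $\Hom_{\Thnsset}(-,X)$ should be the internal hom so that $\cG(X)(\sigma J)$ lands in $\Thnsset$ rather than $\set$ (the paper sidesteps the detour entirely by observing directly that $\prod_{(-)}X\colon\set^{\op}\to\Thnsset$ sends colimits to limits); your case analysis on whether $j_0$ and $j_1$ are adjacent spells out the final step that the paper treats as clear.
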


\begin{proof}
Recall that $\cG(X)(\sigma I)=\prod_{\cB(\sigma I)} X$. Hence, to show that the desired square is a pullback, as $\prod_{(-)} X\colon \set^{\op}\to \Thnsset$ sends colimits in $\set$ to limits in $\Thnsset$, it is enough to show that the following square is a pushout of sets.
\begin{tz}
\node[](1) {$\cB(\sigma I)$}; 
\node[below of=1](2) {$\cB(\sigma(I\amalg \{j_0\}))$}; 
\node[right of=1,xshift=2.9cm](3) {$\cB(\sigma(I\amalg \{j_1\}))$}; 
\node[below of=3](4) {$\cB(\sigma(I\amalg \{j_0,j_1\}))$}; 
\pushout{4};

\draw[->] (1) to (3);
\draw[->] (1) to (2);
\draw[->] (3) to (4);
\draw[->] (2) to (4);
\end{tz}
Now, as $j_0, j_1\in \{1,\ldots,m-1\}\setminus I$, by \cref{rmk:descrmaps}, for $\epsilon=0,1$, the map
\[ \cB(\sigma I)\to \cB(\sigma(I\amalg \{j_\epsilon\})) \]
identifies the intervals with end point $j_\epsilon$ and starting point $j_\epsilon$, and the map 
\[ \cB(\sigma(I\amalg \{j_\epsilon\}))\to \cB(\sigma(I\amalg \{j_0,j_1\})) \]
identifies the intervals with end point $j_{|\epsilon-1|}$ and starting point $j_{|\epsilon-1|}$. It is then clear from these descriptions that the above square is a pushout.
\end{proof}

\begin{lemma} \label{lem:colimcofib}
    Let $m\geq 1$, $X$ be a connected $\Thn$-space, and $I\subseteq \{1,\ldots,m-1\}$. Then the induced map
    \[ \colim_{I\subsetneq J\in \cP_m} \cG(X)(\sigma J)\to \cG(X)(\sigma I)\]
    is a monomorphism in $\Thnsset$.
\end{lemma}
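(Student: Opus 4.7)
The plan is to reduce the claim to a set-theoretic statement levelwise, and then identify the relevant colimit as a union of subsets of $\cG(X)(\sigma I)$. Colimits and monomorphisms in $\Thnsset$ are both detected levelwise, so it suffices to fix $\defThn \in \Thn$ and $\defS \geq 0$, set $Y \coloneqq X_{\defThn,\defS}$, and show that the induced map of sets
\[ \textstyle\colim_{I\subsetneq J\in \cP_m} Y^{\cB(\sigma J)}\to Y^{\cB(\sigma I)} \]
is injective. By \cref{lem:colimitascoeq}, the domain is the coequalizer of two parallel arrows indexed by the $J$'s with $|J|=|I|+2$ into the coproduct over the $J$'s with $|J|=|I|+1$, and the target receives this diagram via the structure maps of $\cG(X)$.

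The first observation I would make is that each leg $\iota_J\colon Y^{\cB(\sigma J)} \to Y^{\cB(\sigma I)}$, for $I \subsetneq J$, is injective. Indeed, enlarging $I$ by one element merges two consecutive intervals into one by \cref{rmk:descrmaps}, so iteratively $\cB(\sigma I) \to \cB(\sigma J)$ is a surjection of sets, and pre-composition with a surjection induces an injection on $Y^{(-)}$. Write $A_J \subseteq Y^{\cB(\sigma I)}$ for the image of $\iota_J$. Evaluating \cref{lem:pullbacksofGX} at $(\defThn,\defS)$ then identifies pairwise pullbacks with pairwise intersections:
\[ A_{I \amalg \{j_0\}} \cap A_{I \amalg \{j_1\}} \,=\, A_{I \amalg \{j_0,j_1\}} \]
for all $j_0,j_1\in \{1,\ldots,m-1\}\setminus I$.

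The last step is the familiar computation of a union of subsets as a coequalizer. The colimit $C$ is the quotient of $\coprod_{|J|=|I|+1} A_J$ by the generating relations $(x,J_1) \sim (x,J_2)$ for $x \in A_{J_1 \cup J_2}$ (which is precisely the identification forced by the $|J|=|I|+2$ summands via injectivity of the legs), and the map $C \to Y^{\cB(\sigma I)}$ sends $[x,J]$ to the underlying element $x$. Given $[x,J_1]$ and $[y,J_2]$ with the same image $x = y$, the element $x$ lies in $A_{J_1}\cap A_{J_2} = A_{J_1 \cup J_2}$; so either $J_1 = J_2$ and we are done, or $|J_1 \cup J_2| = |I|+2$ and the generating relation identifies $[x,J_1] = [x,J_2] = [y,J_2]$. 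Injectivity follows.

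The only real obstacle is the bookkeeping of assembling the coequalizer description with the pullback-is-intersection identification so that the generic union-via-coequalizer argument applies. Because the combinatorial work has already been absorbed into \cref{lem:colimitascoeq,rmk:descrmaps,lem:pullbacksofGX}, the final verification is purely formal.
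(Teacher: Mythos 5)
Your proof is correct and takes essentially the same route as the paper's: both reduce to the coequalizer description from \cref{lem:colimitascoeq}, observe that the legs into $\cG(X)(\sigma I)$ are monomorphisms (pre-composition along the surjective bead map), and invoke the pullback square of \cref{lem:pullbacksofGX} to identify pairwise intersections of images. You additionally work levelwise and spell out the final ``union as coequalizer'' verification, which the paper leaves implicit in the phrase ``Hence they are identified in the coequalizer.''
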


\begin{proof}
    By \cref{lem:colimitascoeq}, we have an isomorphism in $\Thnsset$
    \[ \textstyle \colim_{I\subsetneq J\in \cP_m} \cG(X)(\sigma J)\cong \mathrm{coeq}\left(\coprod_{\substack{I\subsetneq J\in \cP_m \\
    |J|=|I|+2}} \cG(X)(\sigma J)\rightrightarrows \coprod_{\substack{I\subsetneq J\in \cP_m \\
    |J|=|I|+1}} \cG(X)(\sigma J) \right). \]
    Then the fact that the map
    \[ \textstyle\mathrm{coeq}\left(\coprod_{\substack{I\subsetneq J\in \cP_m \\
    |J|=|I|+2}} \cG(X)(\sigma J)\rightrightarrows \coprod_{\substack{I\subsetneq J\in \cP_m \\
    |J|=|I|+1}} \cG(X)(\sigma J) \right) \to \cG(X)(\sigma I) \]
    is a monomorphism in $\Thnsset$ follows from the following observations. First we have that, for $j\in \{1,\ldots,m-1\}\setminus I$, the map 
    \[ \cG(X)(\sigma (I\amalg \{j\}))\to \cG(X)(\sigma I) \]
    is a monomorphism in $\Thnsset$ as it is the image under $\prod_{(-)} X\colon \set^{\op}\to \Thnsset$ of the epimorphism $\cB(\sigma I)\to \cB(\sigma(I\amalg \{j\}))$ described in \cref{rmk:descrmaps}. Then, for $j_0,j_1\in \{1,\ldots,m-1\}\setminus I$, by \cref{lem:pullbacksofGX}, the intersection of the images of the monomorphisms 
    \[ \cG(X)(\sigma (I\amalg \{j_0\}))\hookrightarrow \cG(X)(\sigma I) \quad \text{and} \quad \cG(X)(\sigma (I\amalg \{j_1\}))\hookrightarrow \cG(X)(\sigma I) \]
    is precisely the image of the monomorphism $\cG(X)(\sigma (I\amalg \{j_0,j_1\}))\hookrightarrow \cG(X)(\sigma I)$. Hence they are identified in the coequalizer.
\end{proof}

\begin{thm} \label{GXprojcof}
    Let $m\geq 1$ and $X$ be a connected $\Thn$-space. Then we have that the functor $\cG(X)\colon \Cube^{\op}_m\to \MSThnsset$ is projectively cofibrant.
\end{thm}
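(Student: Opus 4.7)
The plan is to reduce projective cofibrancy of $\cG(X)$ on $\Cube_m^{\op}$ to a Reedy argument on the restricted indexing category $\cP_m^{\op}$, where \cref{lem:colimcofib} directly applies. The retraction data $(\sigma, r, \alpha)$ already set up in this subsection will allow restriction along $\sigma$ to fully control lifting problems against $\cG(X)$, and the Reedy structure on $\cP_m^{\op}$ then handles the cofibrancy verification combinatorially.

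First, I would transport lifting problems through the bijection of \cref{cor:nattransfoutofGX}. Given a projective trivial fibration $p\colon F \to F'$ in $(\MSThnsset)^{\Cube_m^{\op}}_{\proj}$ and a natural transformation $\cG(X)\to F'$, the restriction $p\circ \sigma^{\op}\colon F\circ\sigma^{\op}\to F'\circ\sigma^{\op}$ is level-wise a trivial fibration, hence a projective trivial fibration in $(\MSThnsset)^{\cP_m^{\op}}_{\proj}$. The bijection of \cref{cor:nattransfoutofGX} is implemented in one direction by pre-composition with $\sigma^{\op}$, so it is natural with respect to post-composition by $p$. Thus a lift of the original natural transformation against $p$ exists if and only if the transported lifting problem against $p\circ\sigma^{\op}$ can be solved, reducing us to showing that $\cG(X)\circ\sigma^{\op}$ is projectively cofibrant in $(\MSThnsset)^{\cP_m^{\op}}_{\proj}$.

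For this, I would equip $\cP_m^{\op}$ with the direct Reedy structure given by the degree function $I\mapsto (m-1)-|I|$, under which every non-identity morphism strictly raises degree. For a direct Reedy category, projective cofibrancy coincides with Reedy cofibrancy, which amounts to requiring that the latching map $L_I F\to F(I)$ be a cofibration in $\MSThnsset$ for every $I$; see for instance \cite{Hirschhorn}. For $F=\cG(X)\circ\sigma^{\op}$, the latching object at $I$ is exactly $\colim_{I\subsetneq J\in \cP_m} \cG(X)(\sigma J)$, and the latching map to $\cG(X)(\sigma I)$ is a monomorphism by \cref{lem:colimcofib}. Since $\MSThnsset$ is a localization of $\injThnspace$ and thus shares its cofibrations, which are the monomorphisms, these latching maps are cofibrations, yielding Reedy cofibrancy and hence the desired projective cofibrancy.

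The only subtle point to double-check is the naturality of the bijection in \cref{cor:nattransfoutofGX} used in the first step, which is immediate from its construction via pre-composition with $\sigma^{\op}$; beyond this, the argument is a standard combination of Reedy theory and the combinatorial input supplied by \cref{lem:colimcofib}. I do not expect serious obstacles here, as the technical heart of the theorem has already been isolated in the preceding lemmas of this subsection.
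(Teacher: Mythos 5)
Your proposal is correct and follows the same overall route as the paper: both first transport lifting problems along the retraction data $(\sigma,r,\alpha)$ via \cref{cor:nattransfoutofGX} and then reduce to \cref{lem:colimcofib}. The only difference is packaging of the second step: the paper constructs the lift on $\cP_m^{\op}$ explicitly by downward induction on $|I|$, solving one lifting problem against a trivial fibration in $\MSThnsset$ at each stage, whereas you invoke the standard coincidence of Reedy and projective cofibrancy over a direct category and identify the latching objects of $\cG(X)\circ\sigma^{\op}$ with the colimits $\colim_{I\subsetneq J\in\cP_m}\cG(X)(\sigma J)$ appearing in \cref{lem:colimcofib}. These are the same argument — the paper's reverse induction is precisely the proof that Reedy cofibrancy over a direct category gives the lifting property against levelwise trivial fibrations — so there is no mathematical gap, only a trade-off between a self-contained computation and an appeal to standard Reedy theory. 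One point worth stating explicitly if you take the Reedy route is that the relevant latching category at $I$ in $\cP_m^{\op}$ is indeed the poset $\{J\in\cP_m : I\subsetneq J\}$ with its $\cP_m^{\op}$-morphisms, so that the latching map really is the one treated in \cref{lem:colimcofib}; you assert this without comment, and while it is correct, it is the one place where the variance conventions could trip up a reader.
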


\begin{proof}
    Let $\rho\colon F\to G$ be a trivial fibration in $(\MSThnsset)^{\Cube^{\op}_m}_\proj$, i.e., for all $(I,S)$ in $\Cube_m$, the map $\rho_{(I,S)}\colon F(I,S)\to G(I,S)$ is a trivial fibration in $\MSThnsset$. We show that there is a lift $\gamma$ in the below left diagram in $(\Thnsset)^{\Cube^{\op}_m}$, which is equivalent by \cref{cor:nattransfoutofGX} to showing that there is a lift in the below right diagram in $(\Thnsset)^{\cP_m^{\op}}$.
    \begin{tz}
    \node[](1) {$\cG(X)$}; 
    \node[right of=1,xshift=.4cm](2) {$G$}; 
    \node[above of=2](3) {$F$}; 

    \draw[->] (1) to node[below,la]{$\beta$} (2); 
    \draw[->] (3) to node[right,la]{$\rho$} (2);
    \draw[->,dashed] (1) to node[above,la,xshift=-4pt]{$\gamma$} (3);

    \node[right of=2,xshift=1.5cm](1) {$\cG(X)\circ\sigma^{\op}$}; 
    \node[right of=1,xshift=1.2cm](2) {$G\circ \sigma^{\op}$}; 
    \node[above of=2](3) {$F\circ \sigma^{\op}$}; 

    \draw[->] (1) to node[below,la]{$\beta\circ \sigma^{\op}$} (2); 
    \draw[->] (3) to node[right,la]{$\rho\circ \sigma^{\op}$} (2);
    \draw[->,dashed] (1) to node[above,la,xshift=-4pt]{$\gamma\circ \sigma^{\op}$} ($(3.south west)$);
    \end{tz}
   To this end, for $I\subseteq \{1,\ldots,m-1\}$, we construct the components $\gamma_{\sigma I}$ by reverse induction on $|I|\leq m-1$ in such a way that the below right diagram commutes and, for every $I\subsetneq J\in \cP_m$, the below left diagram commutes.
\begin{tz}
\node[](1) {$\cG(X)(\sigma J)$}; 
\node[below of=1](2) {$\cG(X)(\sigma I)$}; 
\node[right of=1,xshift=1.6cm](3) {$F(\sigma J)$}; 
\node[below of=3](4) {$F(\sigma I)$}; 

\draw[->] (1) to node[above,la]{$\gamma_{\sigma J}$} (3);
\draw[->] (1) to (2);
\draw[->] (3) to (4);
\draw[->] (2) to node[below,la]{$\gamma_{\sigma I}$} (4);

\node[right of=4,xshift=1.5cm](1) {$\cG(X)(\sigma I)$}; 
    \node[right of=1,xshift=1cm](2) {$G(\sigma I)$}; 
    \node[above of=2](3) {$F(\sigma I)$}; 

    \draw[->] (1) to node[below,la]{$\beta_{\sigma I}$} (2); 
    \draw[->] (3) to node[right,la]{$\rho_{\sigma I}$} (2);
    \draw[->] (1) to node[above,la,xshift=-4pt]{$\gamma_{\sigma I}$} (3);
\end{tz}
 If $|I|=m-1$, then $I=\{1,\ldots,m-1\}$. As there exists no $J\supsetneq I$ and $\cG(X)(\sigma I)= X$ is cofibrant in $\MSThnsset$, we get a lift $\gamma_{\sigma \{1,\ldots,m-1\}}$ satisfying the desired conditions. Now suppose that $|I|<m-1$ and assume that the components of $\gamma\circ \sigma^{\op}$ have already been constructed for all $J\subseteq \{1,\ldots,m-1\}$ with $|J|>|I|$ and satisfy the induction hypothesis. 
    Then there is an induced map $\colim_{I\subsetneq J\in \cP_m} \cG(X)(\sigma J)\to F(\sigma I)$ in the following diagram given by the universal property of colimit.  
    \begin{tz}
    \node[](1') {$\cG(X)(\sigma J)$};
    \node[below of=1'](0) {$\colim_{I\subsetneq J\in \cP_m} \cG(X)(\sigma J)$};
    \node[below of=0](1) {$\cG(X)(\sigma I)$}; 
    \node[right of=1,xshift=2cm](2) {$G(\sigma I)$}; 
    \node[above of=2](3) {$F(\sigma I)$}; 
    \node[above of=3](2') {$F(\sigma J)$};

    \draw[->] (1') to (0); 
    \draw[->] (1') to node[above,la]{$\gamma_{\sigma J}$} (2'); 
    \draw[->] (2') to (3);
    \draw[right hook->] (0) to (1);
    \draw[->,dashed] (0) to node[above,la]{$\exists !$} (3);
    \draw[->] (1) to node[below,la]{$\beta_{\sigma I}$} (2); 
    \draw[->] (3) to node[right,la]{$\rho_{\sigma I}$} (2);
    \draw[->,dashed] (1) to node[above,la]{$\gamma_{\sigma I}$} (3);
    \end{tz}
    As $\colim_{I\subsetneq J\in \cP_m} \cG(X)(\sigma J)\hookrightarrow \cG(X)(\sigma I)$ is a cofibration in $\MSThnsset$ by \cref{lem:colimcofib}, there is a lift in the above diagram. This builds the desired lift $\gamma\circ \sigma^{\op}$.
\end{proof}

As a consequence of \cite[Remark A.2.8.6]{htt}, the Quillen equivalence $\iota\dashv (\pi_*)_*$ from \cref{diagiotaQE} induces by post-composition the following Quillen equivalence.

 \begin{prop} \label{iotastarLQP}
 The adjunction
 \begin{tz}
\node[](1) {$(\MSThnsset)^{\Cube_m^{\op}}_\proj$}; 
\node[right of=1,xshift=3.6cm](2) {$(\MSThndiag)^{\Cube_m^{\op}}_\proj$}; 

\draw[->] ($(2.west)-(0,5pt)$) to node[below,la]{$((\pi_*)_*)_*$} ($(1.east)-(0,5pt)$);
\draw[->] ($(1.east)+(0,5pt)$) to node[above,la]{$\iota_*$} ($(2.west)+(0,5pt)$);

\node[la] at ($(1.east)!0.5!(2.west)$) {$\bot$};
\end{tz}
is a Quillen equivalence.
 \end{prop}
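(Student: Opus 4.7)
The plan is essentially to invoke the black-box lifting result from \cite[Remark~A.2.8.6]{htt} (as the author hints), applied to the Quillen equivalence of \cref{diagiotaQE} and the small indexing category $\Cube_m^{\op}$. The Remark in \cite{htt} asserts, in general, that for any Quillen equivalence of combinatorial model categories $L \dashv R$ and any small category $\cA$, post-composition yields a Quillen equivalence between the projective model structures on the diagram categories indexed by $\cA$, provided both projective model structures exist (combinatoriality guarantees this).

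The concrete steps I would carry out are as follows. First, I would verify that the hypotheses of \cite[Remark~A.2.8.6]{htt} apply: the model structures $\MSThnsset$ and $\MSThndiag$ are combinatorial (as they are obtained by left Bousfield localization from injective model structures on presheaf categories, which are themselves combinatorial), and the functor $\iota\colon \MSThnsset\to \MSThndiag$ is a left Quillen equivalence by \cref{diagiotaQE}, with right adjoint $(\pi_*)_*$. Second, I would note that the functor $\iota_*\colon (\MSThnsset)^{\Cube_m^{\op}}\to (\MSThndiag)^{\Cube_m^{\op}}$ is defined by post-composition with $\iota$, and similarly for $((\pi_*)_*)_*$, so that the adjunction displayed in the statement is indeed obtained by the standard post-composition procedure. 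Third, I would conclude by directly citing the Remark to obtain that this post-composition adjunction is a Quillen equivalence between the projective model structures $(\MSThnsset)^{\Cube_m^{\op}}_\proj$ and $(\MSThndiag)^{\Cube_m^{\op}}_\proj$.

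There is essentially no obstacle here of a mathematical nature; the entire content has been absorbed into \cref{diagiotaQE} (the base Quillen equivalence) and the general categorical fact from \cite{htt}. The only minor subtlety is to ensure that the indexing category $\Cube_m^{\op}$ poses no issue, but since it is just a small (indeed finite) poset, combinatoriality of the projective model structures is automatic and no further assumption is required. Thus the proof reduces to a single line of citation, analogous in form to the proof of \cref{diagiotaQEinj} earlier in the paper.
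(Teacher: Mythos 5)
Your proposal matches the paper's argument: the result is cited directly as a consequence of \cite[Remark~A.2.8.6]{htt} applied to the Quillen equivalence $\iota\dashv(\pi_*)_*$ of \cref{diagiotaQE}. Your verification of the hypotheses (combinatoriality and smallness of $\Cube_m^{\op}$) is a sound elaboration of what the paper leaves implicit.
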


 The fact that $\iota_*$ is left Quillen together with \cref{GXprojcof} gives the following.

\begin{cor} \label{iotaGXprojcof}
    Let $m\geq 1$ and $X$ be a connected $\Thn$-space. Then we have that the functor $\iota\cG(X)\colon \Cube_m^{\op}\to \MSThndiag$ is projectively cofibrant.
\end{cor}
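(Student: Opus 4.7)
The plan is to simply combine the two preceding results in this subsection. By \cref{GXprojcof}, the functor $\cG(X)\colon \Cube_m^{\op} \to \MSThnsset$ is cofibrant in the projective model structure $(\MSThnsset)^{\Cube_m^{\op}}_{\proj}$. By \cref{iotastarLQP}, the functor
\[ \iota_*\colon (\MSThnsset)^{\Cube_m^{\op}}_{\proj} \to (\MSThndiag)^{\Cube_m^{\op}}_{\proj} \]
is a left Quillen functor, and in particular preserves cofibrant objects. Since $\iota_* \cG(X) = \iota \cG(X)$ by definition of post-composition, this gives the desired projective cofibrancy of $\iota \cG(X)$ in $(\MSThndiag)^{\Cube_m^{\op}}_{\proj}$.

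There is no real obstacle here: all the substantive work is done in \cref{GXprojcof} (the inductive lifting argument against trivial projective fibrations, using the combinatorial description of $\Cube_m$ and \cref{lem:colimcofib}) and in establishing that $\iota$ is left Quillen between the respective localized model structures. The corollary is a formal consequence.
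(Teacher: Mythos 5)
Your proposal is correct and matches the paper's own proof exactly: combine \cref{GXprojcof} (projective cofibrancy of $\cG(X)$) with \cref{iotastarLQP} ($\iota_*$ is left Quillen, hence preserves cofibrant objects), noting that $\iota_*\cG(X)=\iota\cG(X)$. Nothing to add.
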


\subsection{Projective cofibrancy of \texorpdfstring{$\cH_m$}{Hm}} \label{Projective cofibrancy of H}

To prove that the functor $\cH_m$ is cofibrant in $(\MSspace)_\proj^{\Cube_m}$, we apply the following criterion; see the statement at \cite{GarnerMathoverflow}, there attributed to \cite{DuggerUniversal}.

\begin{thm} \label{thm:criterionprojcof}
Let $F\colon \cD\to \sset$ be a functor. For every $\defS\geq 0$, write $F_k$ for the composite
\[ F_k\colon \cD\xrightarrow{ F}\sset \xrightarrow{(-)_\defS} \set, \]
and suppose that the following conditions are satisfied: 
\begin{rome}[leftmargin=0.6cm]
\item the functor $F_\defS$ can be written as a coproduct of representables in $\set^{\cD}$
\[ \textstyle F_k\cong \coprod_{i\in I} \cD(d^\defS_i,-), \]
where $\{d^\defS_i\}_{i\in I}$ is a family of objects in $\cD$,  
\item the functor $F_\defS$ splits as a coproduct in $\set^{\cD}$
\[ F_\defS\cong N_\defS\amalg D_\defS \] where $N_\defS\colon \cD\to \set$ 
 (resp.~$D_\defS\colon \cD\to \set$) are functors such that, for every $d\in \cD$, the set $N_\defS(d)$ (resp.~$D_\defS(d)$) consists exactly of the non-degenerate (resp.~degenerate) $k$-simplices of~$F(d)$.
\end{rome}
Then $F$ is cofibrant in $(\MSspace)^{\cD}_\proj$.
\end{thm}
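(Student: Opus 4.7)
The plan is to exhibit $F$ as a sequential cellular colimit whose attaching maps are coproducts of generating projective cofibrations of $(\MSspace)^\cD_\proj$, namely maps of the form $\cD(d,-)\times(\partial\repS[n]\hookrightarrow\repS[n])$.

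First, I would set up the skeletal filtration level-wise by $(\mathrm{sk}_n F)(d)\coloneqq \mathrm{sk}_n(F(d))$. Since every simplicial map sends degenerate simplices to degenerate simplices, it preserves the skeletal filtration, so each $\mathrm{sk}_n F$ is a subfunctor of $F$, and $F\cong\colim_n\mathrm{sk}_n F$ in $\sset^\cD$. It therefore suffices to show that each inclusion $\mathrm{sk}_{n-1}F\hookrightarrow \mathrm{sk}_nF$ is a projective cofibration.

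Next, I would combine conditions (i) and (ii) to express $N_n$ itself as a coproduct of representables. Condition (ii) says that $N_n\colon\cD\to\set$ is a functor, hence that morphisms of $\cD$ send non-degenerate simplices to non-degenerate ones (and similarly for $D_n$). Under the presentation $F_n\cong\coprod_{i\in I^n}\cD(d_i^n,-)$ from (i), for each index $i$ the identity $\id_{d_i^n}$ picks out an $n$-simplex $\sigma_i$ of $F(d_i^n)$; by the functoriality of $N_n$ and $D_n$, the entire representable summand $\cD(d_i^n,-)$ lands in $N_n$ or in $D_n$ according to whether $\sigma_i$ is non-degenerate or degenerate. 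Partitioning $I^n=I_N^n\amalg I_D^n$ accordingly yields $N_n\cong\coprod_{i\in I_N^n}\cD(d_i^n,-)$.

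The main step is then to show that for every $n\geq 0$ the square
\[
\begin{tikzcd}
\coprod_{i\in I_N^n}\cD(d_i^n,-)\times\partial\repS[n] \ar[r]\ar[d] & \mathrm{sk}_{n-1}F \ar[d] \\
\coprod_{i\in I_N^n}\cD(d_i^n,-)\times\repS[n] \ar[r] & \mathrm{sk}_nF
\end{tikzcd}
\]
is a pushout in $\sset^\cD$, with horizontal maps obtained via the Yoneda correspondence from the non-degenerate simplices $\sigma_i$. Since colimits are computed level-wise, this reduces to the classical presentation at each $d\in\cD$ of $\mathrm{sk}_n(F(d))$ as a pushout of $\mathrm{sk}_{n-1}(F(d))$ along the coproduct of boundary inclusions indexed by the non-degenerate $n$-simplices of $F(d)$. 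The main subtlety is verifying the naturality of the attaching square in $\cD$, which is precisely what is supplied by the functoriality of $N_n$ from (ii). Once the square is established, $\mathrm{sk}_{n-1}F\hookrightarrow\mathrm{sk}_nF$ is exhibited as a pushout of a coproduct of generating projective cofibrations, and assembling these through $F\cong\colim_n\mathrm{sk}_n F$ shows that $\emptyset\to F$ is a transfinite composition of projective cofibrations, hence itself one.
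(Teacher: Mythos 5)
The paper does not actually prove this theorem — it cites the statement from \cite{GarnerMathoverflow}, where it is attributed to \cite{DuggerUniversal}, and applies it as a black box — so there is no in-paper proof to compare against.

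Your argument is correct and is the standard skeletal-filtration proof of the criterion (essentially Dugger's). Two small points worth tightening. First, the step where each representable summand $\cD(d_i^n,-)$ is shown to land wholly in $N_n$ or wholly in $D_n$ can be phrased crisply in terms of connectedness: $\cD(d_i^n,-)$ is a connected object of $\set^{\cD}$ (its category of elements is the coslice $d_i^n/\cD$, which has an initial object $\id_{d_i^n}$), so any map $\cD(d_i^n,-)\to N_n\amalg D_n$ factors through exactly one summand; this is equivalent to, and perhaps cleaner than, the functoriality argument you gave. Second, it is worth making explicit that the top horizontal map in your attaching square actually lands in $\mathrm{sk}_{n-1}F$: the map $\cD(d_i^n,-)\times\repS[n]\to F$ sends a pair $(f,\tau)$ to $F(f)(\sigma_i\circ\tau)$, and whenever $\tau\colon[k]\to[n]$ factors through a proper face the simplex $\sigma_i\circ\tau$ lies in $\mathrm{sk}_{n-1}(F(d_i^n))$, hence its image lies in $\mathrm{sk}_{n-1}(F(d))$. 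With those details spelled out, the proof is complete, and the reduction to the classical pushout presenting $\mathrm{sk}_n(F(d))$ from $\mathrm{sk}_{n-1}(F(d))$ is exactly as you describe.
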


To apply \cref{thm:criterionprojcof} to $F=\cH_m$ and $\cD=\Cube_m$, we first want to verify Condition (i). 

\begin{notation}
Let $m\geq 1$ and $\defS\geq 0$. Denote by $0\colon [\defS]\to [1]$ (resp.~$1\colon [\defS]\to [1]$) the constant maps in $\Delta$ at $0$ (resp.~$1$). We write
\[ \Dnc([\defS],[1])\coloneqq \Delta([\defS],[1])\setminus\{0,1\} \]
for the subset of $\Delta([\defS],[1])$ consisting of the non-constant maps. With this notation, we denote by $(\cF_m)_\defS$ the presheaf in $\set^{\Cube_m}$ given by
\[ \textstyle (\cF_m)_\defS\coloneqq \coprod_{(I',S')\in \Cube_m} \coprod_{\prod_{I'} \Dnc([\defS],[1])} \Cube_m((I',S'),-)\]
\end{notation}

We now show that $(\cH_m)_\defS$ can be written as the coproduct of representables in $\set^{\Cube_m}$ given by~$(\cF_m)_\defS$.

\begin{prop} \label{prop:cond(i)}
    Let $m\geq 1$ and $\defS\geq 0$. Then there is an isomorphism in $\set^{\Cube_m}$
    \[ \textstyle (\cH_m)_\defS\cong (\cF_m)_\defS. \]
\end{prop}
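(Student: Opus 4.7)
The plan is to exhibit $(\cH_m)_\defS$ and $(\cF_m)_\defS$ as two encodings of the same combinatorial data and to check that the natural candidate map between them is a level-wise bijection. Specifically, I will define $\Phi \colon (\cF_m)_\defS \to (\cH_m)_\defS$ by Yoneda, using on each summand the inclusions
\[ \textstyle \prod_{I'} \Dnc([\defS],[1]) \hookrightarrow \prod_{I'} \Delta([\defS],[1]) = (\cH_m)_\defS(I',S'), \]
and show that each component $\Phi_{(I,S)}$ is a bijection.

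First, I use the canonical partition $\Delta([\defS],[1]) = \Dnc([\defS],[1]) \sqcup \{0, 1\}$ to encode each $f \in (\cH_m)_\defS(I,S) = \prod_I \Delta([\defS],[1])$ by the triple $(I'_f, I_{1,f}, \tilde f)$ with $I'_f \coloneqq f^{-1}(\Dnc([\defS],[1]))$, $I_{1,f} \coloneqq f^{-1}(1)$, and $\tilde f \coloneqq f|_{I'_f}$. Conversely, any triple $(I', I_1, \tilde f)$ with $I' \subseteq I$, $I_1 \subseteq I \setminus I'$, and $\tilde f \colon I' \to \Dnc([\defS],[1])$ reconstructs a unique $f$ via $f|_{I'} = \tilde f$, $f \equiv 1$ on $I_1$, and $f \equiv 0$ on $I \setminus (I' \cup I_1)$.

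Second, since $\Cube_m$ is a poset, the hom-set $\Cube_m((I',S'),(I,S))$ is a singleton exactly when $I' \subseteq I$ and $S = S' \cup I$, and is empty otherwise. These conditions, combined with $I' \subseteq S' \subseteq \{1,\ldots,m-1\}$, force $S' \setminus I = S \setminus I$ and leave $(S' \cap I) \setminus I'$ free to be any subset $I_1 \subseteq I \setminus I'$; this yields a bijection between morphisms $(I',S') \to (I,S)$ in $\Cube_m$ and pairs $(I', I_1)$ of the above type, with inverse $(I', I_1) \mapsto (I', I' \cup I_1 \cup (S \setminus I))$. Combined with the previous step, this gives a bijection $(\cF_m)_\defS(I,S) \cong (\cH_m)_\defS(I,S)$, which agrees with $\Phi_{(I,S)}$ because \cref{def:Hm} says $\cH_m$ applied to the morphism $(I',S') \to (I,S)$ sends $\tilde f$ to the function equal to $\tilde f$ on $I'$, to $1$ on $I \cap (S' \setminus I') = I_1$, and to $0$ on $I \setminus S' = I \setminus (I' \cup I_1)$.

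The main, though minor, obstacle is to verify naturality of $\Phi$, i.e., compatibility with morphisms $(I,S) \to (J,T)$ in $\Cube_m$. Applying $\cH_m$ to such a morphism extends $f \colon I \to \Delta([\defS],[1])$ to $g \colon J \to \Delta([\defS],[1])$ with $g|_I = f$, $g \equiv 1$ on $(J \cap S) \setminus I$, and $g \equiv 0$ on $J \setminus S$. A direct calculation yields $I'_g = I'_f$ and $I_{1,g} = I_{1,f} \cup ((J \cap S) \setminus I)$, from which one computes
\[ S'_g = I'_g \cup I_{1,g} \cup (T \setminus J) = I'_f \cup I_{1,f} \cup (S \setminus I) = S'_f, \]
using $T \setminus J = S \setminus J$ and $((J \cap S) \setminus I) \cup (S \setminus J) = S \setminus I$ (which holds since $I \subseteq J$). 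This shows that the triple $((I'_f, S'_f), \tilde f)$ maps under the transition of $(\cF_m)_\defS$ to the same triple now viewed via the composite morphism $(I'_f, S'_f) \to (I,S) \to (J,T)$, establishing naturality and completing the proof.
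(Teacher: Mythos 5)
Your proof is correct and follows essentially the same route as the paper: your $\Phi$ is exactly the paper's $\beta_{(I,S)}$ from \cref{constr:beta}, and the component-wise bijection you extract in the second paragraph via the encoding $f \mapsto (I'_f, I_{1,f}, \tilde f)$ is exactly the paper's $\alpha_{(I,S)}$ from \cref{constr:alpha}. Your Yoneda framing is a mild but real organizational improvement over the paper, which writes down $\alpha$ and $\beta$ on components and simply asserts naturality: since $(\cF_m)_\defS$ is by definition a coproduct of representables, a map out of it is freely specified by elements and is automatically a natural transformation. For that same reason your third paragraph is redundant — once $\Phi$ is defined via Yoneda and its components are shown to be bijections, it is a natural isomorphism with no further verification required — though the computation you perform there is correct and serves as a consistency check of the formula for $\cH_m$ on morphisms.
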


To show this, we need to construct for each $(I,S)\in \Cube_m$ an isomorphism 
\[ \textstyle (\cH_m)_\defS(I,S)\cong (\cF_m)_\defS(I,S)=\coprod_{(I',S')\in \Cube_m} \coprod_{\prod_{I'} \Dnc([\defS],[1])} \Cube_m((I',S'),(I,S)) \]
that is natural in $(I,S)$.  

\begin{rmk} 
Recall that, for $(I,S)\in \Cube_m$, we have
\[ \textstyle (\cH_m)_\defS(I,S)=(\prod_{I} \repS[1])_\defS=\prod_{I} \Delta([\defS],[1]).  \]
Moreover, as $\Cube_m$ is a poset, the set $\Cube_m((I',S'),(I,S))$ is either a point or empty, for all $(I',S')\in \Cube_m$. In particular, we can identify an element of $(\cF_m)_\defS(I,S)$ with a tuple $((I',S'),(\tau_i)_{i\in I'})$ with $(I',S')$ an object of $\Cube_m$ such that $\Cube_m((I',S'),(I,S))=\{*\}$ and $(\tau_i)_{i\in I'}$ an element of $\prod_{I'} \Dnc([\defS],[1])$. Hence
\[  \textstyle (\cF_m)_\defS(I,S)\cong \left\{((I',S'), (\tau_i)_{i\in I'})\in \Cube_m\times  \prod_{I'} \Dnc([\defS],[1]) \mid I'\subseteq I,\; S=S'\cup I \right\}. \]
\end{rmk}

We construct a natural map $ \alpha_{(I,S)}\colon (\cH_m)_\defS(I,S)\to (\cF_m)_\defS(I,S)$ and an inverse $\beta_{(I,S)}$ of $\alpha_{(I,S)}$.

\begin{constr} \label{constr:alpha}
Let $m\geq 1$, $\defS\geq 0$, and $(I,S)\in\Cube_m$. Given a tuple $(\sigma_i)_{i\in I}\in \prod_{I} \Delta([\defS],[1])$, we define $(I^\sigma, S^\sigma)$ to be the object of $\Cube_m$ given by 
\[ I^\sigma\coloneqq \{i\in I\mid \sigma_i\in \Dnc([\defS],[1]) \} \quad \text{and} \quad S^\sigma\coloneqq S\setminus \{i\in I\mid \sigma_i =0 \}. \]
Observe that $I^\sigma\subseteq S^\sigma$, $I^\sigma\subseteq I$, and $S=S^\sigma\cup I$. We further set $(\tau^\sigma_i)_{i\in I^\sigma}$ to be the tuple in $\prod_{I^\sigma} \Dnc([\defS],[1])$ given by $\tau^\sigma_i\coloneqq \sigma_i$ for all $i\in I^\sigma$; note that this is well-defined by definition of $I^\sigma$. We then define $\alpha_{(I,S)}$ to be the map
\[ \alpha_{(I,S)}\colon (\cH_m)_\defS(I,S)\to (\cF_m)_\defS(I,S), \quad (\sigma_i)_{i\in I} \mapsto ((I^\sigma,S^\sigma), (\tau^\sigma_i)_{i\in I^\sigma}). \]
These assignments assemble into a natural transformation $\alpha\colon (\cH_m)_\defS\to (\cF_m)_\defS$.
\end{constr}

\begin{constr} \label{constr:beta}
Let $m\geq 1$ and $\defS\geq 0$, and consider an object $(I,S)\in \Cube_m$. Given a tuple $((I',S'),(\tau_i)_{i\in I'})\in (\cF_m)_k(I,S)$, we define $(\sigma^\tau_i)_{i\in I}\in \prod_I \Delta([\defS],[1])$ to be given at $i\in I$ by 
\[ \sigma^\tau_i\coloneqq \begin{cases} 
    \tau_i & \text{if} \; i \in I' \\
    1 & \text{if} \; i \in I\cap (S'\setminus I') \\
    0 & \text{if} \; i \in (I\cup S')\setminus S'.
    \end{cases} \]
We then define $\beta_{(I,S)}$ to be the map
\[ \beta_{(I,S)}\colon (\cF_m)_\defS(I,S)\to (\cH_m)_\defS(I,S), \quad ((I',S'),(\tau_i)_{i\in I'})\mapsto (\sigma^\tau_i)_{i\in I}. \]
\end{constr}

\begin{proof}[Proof of \cref{prop:cond(i)}]
    A direct computation shows that, for all $(I,S)\in \Cube_m$, the maps $\alpha_{(I,S)}$ and $\beta_{(I,S)}$ are inverse to each other, and so the natural transformation $\alpha\colon (\cH_m)_\defS\xrightarrow{\cong} (\cF_m)_\defS$ provides the desired natural isomorphism.
\end{proof}

We now prove Condition (ii) of \cref{thm:criterionprojcof}. For this, we first study the non-degenerate simplices of $\prod_I \repS[1]$. 

\begin{lemma} \label{nondegsimplicesofprodD[1]}
    Let $\defS\geq 0$ and $I$ be a finite set. A $\defS$-simplex in the product $\prod_I \repS[1]$, i.e., a tuple $(\sigma_i)_{i\in I}\in \prod_I \Delta([\defS],[1])$, is non-degenerate if and only if $\Dnc([\defS],[1])\subseteq \{\sigma_i\mid i\in I\}$. 
\end{lemma}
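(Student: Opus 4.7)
The plan is to reduce the lemma to a direct combinatorial characterization of when a tuple of monotone maps $(\sigma_i\colon [k]\to[1])_{i\in I}$ factors through a common codegeneracy. First I would enumerate the monotone maps $[k]\to[1]$: besides the two constant maps $0$ and $1$, the set $\Dnc([k],[1])$ consists of exactly $k$ maps $\tau_0,\tau_1,\ldots,\tau_{k-1}$, where $\tau_\ell$ is uniquely characterized by $\tau_\ell(\ell)=0$ and $\tau_\ell(\ell+1)=1$ (equivalently, $\tau_\ell^{-1}(0)=\{0,\ldots,\ell\}$). The key observation is that, since each $\sigma_i$ is monotone with codomain $[1]$, we have $\sigma_i(\ell)=\sigma_i(\ell+1)$ if and only if $\sigma_i\neq\tau_\ell$.

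Next, I would invoke the standard characterization of (non-)degenerate simplices in a product: a tuple $(\sigma_i)_{i\in I}\in\prod_I\Delta([k],[1])$ is degenerate in $\prod_I\repS[1]$ if and only if every $\sigma_i$ factors through a common non-identity surjection $[k]\twoheadrightarrow [k']$. Since any such surjection is a composite of codegeneracies $s^\ell\colon[k]\twoheadrightarrow[k-1]$ (which collapses $\ell$ and $\ell+1$), this is equivalent to the existence of some $\ell\in\{0,\ldots,k-1\}$ such that every $\sigma_i$ factors through $s^\ell$, i.e., $\sigma_i(\ell)=\sigma_i(\ell+1)$ for all $i\in I$.

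Combining the two points, $(\sigma_i)_{i\in I}$ is degenerate if and only if there exists $\ell\in\{0,\ldots,k-1\}$ with $\sigma_i\neq\tau_\ell$ for every $i\in I$, i.e., $\tau_\ell\notin\{\sigma_i\mid i\in I\}$. Negating, $(\sigma_i)_{i\in I}$ is non-degenerate if and only if for every $\ell\in\{0,\ldots,k-1\}$ there is some $i\in I$ with $\sigma_i=\tau_\ell$, which says exactly that $\Dnc([k],[1])=\{\tau_0,\ldots,\tau_{k-1}\}\subseteq\{\sigma_i\mid i\in I\}$. The $k=0$ case is handled by noting that $\Dnc([0],[1])=\emptyset$ and every $0$-simplex of a simplicial set is non-degenerate, so the biconditional holds trivially.

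I do not expect any serious obstacle: the argument is entirely bookkeeping once one identifies $\tau_\ell$ as the unique monotone map $[k]\to[1]$ that distinguishes $\ell$ from $\ell+1$. The only care needed is in stating precisely the factorization criterion for degenerate simplices in a product (which reduces to the componentwise factorization through a codegeneracy), since this is the step that turns the question into the elementary statement about monotone maps to $[1]$.
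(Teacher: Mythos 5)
Your proof is correct and follows essentially the same route as the paper: identify the non-constant monotone maps $[k]\to[1]$ by the index $\ell$ where they jump, observe that degeneracy of $(\sigma_i)_{i\in I}$ means all $\sigma_i$ factor through a common codegeneracy $s^\ell$ (equivalently $\sigma_i(\ell)=\sigma_i(\ell+1)$ for all $i$), and negate. The paper uses the notation $\rho_\ell$ where you write $\tau_\ell$, but the argument is the same; your remark on the $k=0$ case is a fine (if unnecessary) addition.
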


\begin{proof}
    A non-constant map $\sigma\colon [\defS]\to [1]$ is uniquely determined by an integer $0\leq \ell<k$ such that $\sigma(i)=0$ for $0\leq i\leq \ell$ and $\sigma(i)=1$ for $\ell+1\leq i\leq k$. In other words, it is uniquely determined by an integer $0\leq \ell<k$ such that $\sigma(\ell) \neq \sigma(\ell+1)$. We denote the map associated to $0\leq \ell<k$ by $\rho_\ell\colon [\defS]\to [1]$, and so we have $\Dnc([\defS],[1])=\{\rho_\ell\mid 0\leq \ell <k\}$.

    Now, by definition, a $\defS$-simplex in $\prod_I \repS[1]$, i.e., a tuple $(\sigma_i)_{i\in I}\in \prod_I \Delta([\defS],[1])$, is degenerate if and only if there is $0\leq \ell < \defS$ and  $(\sigma'_i)_{i\in I} \in \prod_I \Delta([\defS-1],[1])$ with $s_\ell\sigma'_i = \sigma_i$, i.e., $\sigma_i(\ell) = \sigma_i(\ell+1)$ for all $i \in I$. Hence, by negation, we get that $(\sigma_i)_{i\in I}\in \prod_I \Delta([\defS],[1])$ is non-degenerate if and only if, for all $0\leq \ell<\defS$, there exists an $i \in I$ such that $\sigma_i(\ell) \neq \sigma_i(\ell+1)$. By the above arguments, this is equivalent to saying that, for all $0\leq \ell<\defS$, there exists an $i \in I$ such that $\sigma_i=\rho_\ell$. Hence, this proves that $\Dnc([\defS],[1])=\{\rho_\ell\mid 0\leq \ell <k\}\subseteq \{\sigma_i\mid i\in I\}$.
\end{proof}

\begin{notation}
    Let $m\geq 1$ and $\defS\geq 0$. For $(I',S')\in \Cube_m$, we define subsets of $\prod_{I'} \Dnc([\defS],[1])$
    \begin{align*}
         N_k(I') &\coloneqq \textstyle \left\{ (\tau_i)_{i\in I'}\in \prod_{I'} \Dnc([\defS],[1])\mid \Dnc([\defS],[1])\subseteq \{\tau_i\mid i\in I'\} \right\}, \\
         D_k(I') &\textstyle\coloneqq (\prod_{I'} \Dnc([\defS],[1]))\setminus N_k(I'). 
        \end{align*}  
        With these notations, we denote by $(\cN_m)_\defS$ and $(\cD_m)_\defS$ the sub-presheaves of $(\cF_m)_\defS$ in $\set^{\Cube_m}$ given by
        \begin{align*}
          (\cN_m)_\defS &\coloneqq \textstyle\coprod_{(I',S')\in \Cube_m} \coprod_{N_k(I')} \Cube_m((I',S'),-), \\
          (\cD_m)_\defS & \coloneqq \textstyle \coprod_{(I',S')\in \Cube_m} \coprod_{D_k(I')} \Cube_m((I',S'),-). 
     \end{align*}
     We also write $(\cH_m)_\defS(I,S)^{\mathrm{nd}}$ (resp.~$(\cH_m)_\defS(I,S)^{\mathrm{deg}}$ for the subsets of non-degenerate (resp.~degenerate) $k$-simplices of $\cH_m(I,S)=\prod_I \repS[1]$.
\end{notation}

We show that $(\cH_m)_\defS$ splits as non-degenerate and degenerate simplices as follows.

\begin{prop} \label{prop:cond(ii)}
    Let $m\geq 1$ and $\defS\geq 0$. Then there is an isomorphism in $\set^{\Cube_m}$
    \[ \textstyle (\cH_m)_\defS\cong (\cN_m)_\defS\amalg (\cD_m)_\defS, \]
    and, at every object $(I,S)$ in $\Cube_m$, 
    it restricts to isomorphisms 
    \[ (\cH_m)_\defS(I,S)^{\mathrm{nd}}\cong (\cN_m)_\defS(I,S) \quad \text{and} \quad (\cH_m)_\defS(I,S)^{\mathrm{deg}}\cong (\cD_m)_\defS(I,S). \]
\end{prop}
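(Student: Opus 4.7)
The plan is to leverage the explicit natural isomorphism $\alpha\colon (\cH_m)_\defS \xrightarrow{\cong} (\cF_m)_\defS$ already produced in \cref{prop:cond(i)} (via \cref{constr:alpha}), and then verify by direct bookkeeping that it restricts as required. The first isomorphism in the statement is essentially formal: by construction $N_\defS(I')$ and $D_\defS(I')$ are complementary subsets of $\prod_{I'}\Dnc([\defS],[1])$, so distributing the inner coproduct in the definition of $(\cF_m)_\defS$ over this partition yields $(\cF_m)_\defS \cong (\cN_m)_\defS \amalg (\cD_m)_\defS$ in $\set^{\Cube_m}$. Composing with $\alpha$ then provides the desired splitting of $(\cH_m)_\defS$.

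For the restriction claim at a fixed object $(I,S)\in \Cube_m$, I would unpack \cref{constr:alpha}. Given $(\sigma_i)_{i\in I}\in (\cH_m)_\defS(I,S)$, its image under $\alpha_{(I,S)}$ is $((I^\sigma,S^\sigma),(\tau^\sigma_i)_{i\in I^\sigma})$ with $I^\sigma = \{i\in I\mid \sigma_i\in\Dnc([\defS],[1])\}$ and $\tau^\sigma_i = \sigma_i$. This element lies in $(\cN_m)_\defS(I,S)$ precisely when $(\tau^\sigma_i)_{i\in I^\sigma}\in N_\defS(I^\sigma)$, i.e.\ when
\[ \Dnc([\defS],[1])\subseteq \{\tau^\sigma_i\mid i\in I^\sigma\} = \{\sigma_i\mid i\in I\}\cap \Dnc([\defS],[1]), \]
which is equivalent to $\Dnc([\defS],[1])\subseteq \{\sigma_i\mid i\in I\}$. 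By \cref{nondegsimplicesofprodD[1]}, this latter condition is exactly the characterization of non-degeneracy of $(\sigma_i)_{i\in I}$ as a $\defS$-simplex of $\prod_I\repS[1]$.

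Thus $\alpha_{(I,S)}$ sends $(\cH_m)_\defS(I,S)^{\mathrm{nd}}$ into $(\cN_m)_\defS(I,S)$, and since $\alpha_{(I,S)}$ is a bijection (by \cref{prop:cond(i)}) and both $(\cH_m)_\defS(I,S)^{\mathrm{nd}}\amalg (\cH_m)_\defS(I,S)^{\mathrm{deg}}$ and $(\cN_m)_\defS(I,S)\amalg (\cD_m)_\defS(I,S)$ are partitions of the respective sets, the restriction on non-degenerate simplices is automatically a bijection $(\cH_m)_\defS(I,S)^{\mathrm{nd}}\cong (\cN_m)_\defS(I,S)$, and by taking complements one obtains $(\cH_m)_\defS(I,S)^{\mathrm{deg}}\cong (\cD_m)_\defS(I,S)$, as desired.

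No step here looks delicate: the real combinatorial content sits inside \cref{prop:cond(i)} and \cref{nondegsimplicesofprodD[1]}, both already proved. The only thing worth double-checking is the elementary bookkeeping identity $\{\sigma_i\mid i\in I^\sigma\} = \{\sigma_i\mid i\in I\}\cap \Dnc([\defS],[1])$, which is immediate from the definition of $I^\sigma$. Hence the proof reduces to writing out these three short observations.
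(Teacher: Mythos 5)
Your proof is correct and follows essentially the same route as the paper: split $(\cF_m)_\defS$ using the partition $\prod_{I'}\Dnc([\defS],[1]) = N_\defS(I')\amalg D_\defS(I')$, transport along $\alpha$ from \cref{prop:cond(i)}, and check via \cref{nondegsimplicesofprodD[1]} that $\alpha_{(I,S)}$ matches non-degenerate simplices with $(\cN_m)_\defS(I,S)$. The paper merely factors the last verification into a separate lemma (\cref{lem:restrofalpha}) and checks the restriction of both $\alpha$ and its inverse $\beta$, whereas you verify one direction and conclude by the bijection-plus-partition argument; both are fine.
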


\begin{rmk} \label{rem:HisotoNUD}
Using \cref{prop:cond(i)} and the fact that, by definition, for every $(I',S')\in \Cube_m$, we have $\prod_{I'} \Dnc([\defS],[1]) = N_\defS(I')\amalg D_\defS(I')$, there are isomorphisms in $\set^{\Cube_m}$
    \[ (\cH_m)_\defS\cong (\cF_m)_\defS\cong (\cN_m)_\defS\amalg (\cD_m)_\defS.  \]
    Recall that the first natural isomorphism has component at an object $(I,S)\in \Cube_m$ the map $\alpha_{(I,S)}\colon (\cH_m)_\defS(I,S)\to (\cF_m)_\defS(I,S)$ from \cref{constr:alpha} with inverse $\beta_{(I,S)}$ from \cref{constr:beta}, and note that the second isomorphism is just a re-ordering of the coproduct.
\end{rmk}

\begin{lemma} \label{lem:restrofalpha}
    Let $m\geq 1$ and $\defS\geq 0$. Given an object $(I,S)$ in $\Cube_m$, the inverse assignments 
\[ \alpha_{(I,S)}\colon (\cH_m)_\defS(I,S)\to (\cF_m)_\defS(I,S) \quad \text{and} \quad \beta_{(I,S)}\colon (\cF_m)_\defS(I,S)\to (\cH_m)_\defS(I,S)\]
restrict to assignments  
\[ \alpha_{(I,S)}\colon (\cH_m)_\defS(I,S)^{\mathrm{nd}}\to (\cN_m)_\defS(I,S) \quad \text{and} \quad \beta_{(I,S)}\colon (\cN_m)_\defS(I,S)\to (\cH_m)_\defS(I,S)^{\mathrm{nd}}. \]
\end{lemma}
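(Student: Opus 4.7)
The plan is to verify the two restriction claims directly using the combinatorial characterization of non-degenerate simplices of $\prod_I \repS[1]$ established in \cref{nondegsimplicesofprodD[1]}: a tuple $(\sigma_i)_{i\in I}\in\prod_I\Delta([\defS],[1])$ is non-degenerate if and only if $\Dnc([\defS],[1])\subseteq\{\sigma_i\mid i\in I\}$. With this in hand, proving that $\alpha_{(I,S)}$ and $\beta_{(I,S)}$ restrict appropriately becomes a routine unpacking of the formulas in \cref{constr:alpha,constr:beta}.

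For the restriction of $\alpha_{(I,S)}$, I would start with $(\sigma_i)_{i\in I}\in(\cH_m)_\defS(I,S)^{\mathrm{nd}}$ and recall that $I^\sigma=\{i\in I\mid \sigma_i\in\Dnc([\defS],[1])\}$ and $\tau^\sigma_i=\sigma_i$ for $i\in I^\sigma$. Given any $\rho\in\Dnc([\defS],[1])$, by \cref{nondegsimplicesofprodD[1]} there is some $i\in I$ with $\sigma_i=\rho$; since $\rho$ is non-constant, this $i$ lies in $I^\sigma$, and then $\tau^\sigma_i=\rho$. Hence $\Dnc([\defS],[1])\subseteq\{\tau^\sigma_i\mid i\in I^\sigma\}$, so $(\tau^\sigma_i)_{i\in I^\sigma}\in N_\defS(I^\sigma)$ and the image lands in $(\cN_m)_\defS(I,S)$.

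For the restriction of $\beta_{(I,S)}$, I would start with $((I',S'),(\tau_i)_{i\in I'})\in(\cN_m)_\defS(I,S)$, so $\Dnc([\defS],[1])\subseteq\{\tau_i\mid i\in I'\}$ and $I'\subseteq I$ by the definition of a morphism in $\Cube_m$. The formula for $\sigma^\tau_i$ gives $\sigma^\tau_i=\tau_i$ whenever $i\in I'$, so for any $\rho\in\Dnc([\defS],[1])$ there is some $i\in I'\subseteq I$ with $\sigma^\tau_i=\tau_i=\rho$. Thus $\Dnc([\defS],[1])\subseteq\{\sigma^\tau_i\mid i\in I\}$, and by \cref{nondegsimplicesofprodD[1]} the tuple $(\sigma^\tau_i)_{i\in I}$ is a non-degenerate $\defS$-simplex of $\prod_I\repS[1]$, as required. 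Since $\alpha_{(I,S)}$ and $\beta_{(I,S)}$ are already known to be mutually inverse bijections on the full sets, these two one-sided restrictions automatically combine into mutually inverse bijections between the subsets. There is no real obstacle here; the proof is a direct check, with the only thing to be careful about being the bookkeeping of which index set each tuple lives on when passing between $I$ and $I^\sigma$ (resp.\ $I'$).
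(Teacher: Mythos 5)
Your proof is correct and does exactly the "straightforward" check that the paper's one-line proof alludes to: unpack the definitions of $\alpha_{(I,S)}$, $\beta_{(I,S)}$, and $N_\defS(I')$, and invoke the characterization of non-degenerate simplices of $\prod_I\repS[1]$ from the cited lemma. Same approach, just spelled out in full.
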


\begin{proof}
    This is straightforward from the definition of $(\cN_m)_\defS$ and the characterization of non-degenerate $\defS$-simplices of $\prod_I \repS[1]$ from \cref{nondegsimplicesofprodD[1]}.
\end{proof}

\begin{proof}[Proof of \cref{prop:cond(ii)}]
    By \cref{rem:HisotoNUD}, we have an isomorphism $(\cH_m)_\defS\cong (\cN_m)_\defS\amalg (\cD_m)_\defS$, which, at every object $(I,S)\in \Cube_m$, restricts by \cref{lem:restrofalpha} to an isomorphism 
    \[(\cH_m)_\defS(I,S)^{\mathrm{nd}}\cong (\cN_m)_\defS(I,S). \]
    Hence, it also restricts at every object $(I,S)\in \Cube_m$ to an isomorphism 
    \[(\cH_m)_\defS(I,S)^{\mathrm{deg}}\cong (\cD_m)_\defS(I,S). \]
    This shows the desired result.
\end{proof}

Finally, by \cref{prop:cond(i),prop:cond(ii)}, the functor $\cH_m$ satisfies the condition of \cref{thm:criterionprojcof}, and so we get the following.

\begin{thm} \label{Hmprojcof}
    Let $m\geq 1$. The functor $\cH_m\colon \Cube_m\to \MSspace$ is projectively cofibrant.
\end{thm}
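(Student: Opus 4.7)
My plan is to invoke the criterion \cref{thm:criterionprojcof} with $F = \cH_m$ and $\cD = \Cube_m$. That criterion reduces projective cofibrancy to two statements that must be checked at each simplicial level $k \geq 0$: (i) the set-valued functor $(\cH_m)_k$ decomposes as a coproduct of representables $\Cube_m((I_i, S_i), -)$; and (ii) this decomposition further splits as $(\cH_m)_k \cong N_k \amalg D_k$, where $N_k(I,S)$ (respectively $D_k(I,S)$) consists exactly of the non-degenerate (respectively degenerate) $k$-simplices of $\cH_m(I,S) = \prod_I \Delta[1]$.

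Condition (i) is precisely the content of \cref{prop:cond(i)}, which supplies a natural isomorphism $(\cH_m)_k \cong (\cF_m)_k$, and $(\cF_m)_k$ is by construction a coproduct of representables indexed by pairs $((I', S'), (\tau_i)_{i \in I'})$. The combinatorial heart is the explicit bijection $\alpha_{(I,S)}$ of \cref{constr:alpha}: given a tuple $(\sigma_i)_{i \in I} \in \prod_I \Delta([k],[1])$, one records the subset $I^\sigma \subseteq I$ of non-constant coordinates and the subset $S^\sigma \subseteq S$ obtained by deleting those indices where $\sigma_i$ is the constant map $0$.

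Condition (ii) is then \cref{prop:cond(ii)}: the splitting $(\cF_m)_k = (\cN_m)_k \amalg (\cD_m)_k$ is built into the definitions of $N_k(I')$ and $D_k(I')$, so the only thing to verify is that $\alpha_{(I,S)}$ carries non-degenerate $k$-simplices into $(\cN_m)_k(I,S)$. This uses \cref{nondegsimplicesofprodD[1]}: a tuple $(\sigma_i)_{i \in I}$ is non-degenerate in $\prod_I \Delta[1]$ if and only if every non-constant map $[k] \to [1]$ appears among the $\sigma_i$, which matches the defining condition of $N_k(I^\sigma)$.

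With both hypotheses verified, \cref{thm:criterionprojcof} immediately yields that $\cH_m$ is cofibrant in $(\MSspace)^{\Cube_m}_\proj$. Since all the combinatorics has been absorbed into \cref{prop:cond(i),prop:cond(ii)}, this is a one-line assembly and there is no genuine obstacle at this stage. If one were constructing the argument from scratch, the only nontrivial step would be discovering the right indexing of the coproduct in (i)—namely, parametrizing by non-constant tuples over sub-objects $(I', S') \hookrightarrow (I, S)$ of $\Cube_m$—but that insight has already been isolated in the preceding constructions.
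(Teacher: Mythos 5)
Your proof is correct and takes exactly the same route as the paper: invoke \cref{thm:criterionprojcof} with $F=\cH_m$, $\cD=\Cube_m$, and verify the two hypotheses via \cref{prop:cond(i),prop:cond(ii)}. The additional commentary you give on $\alpha_{(I,S)}$ and \cref{nondegsimplicesofprodD[1]} is accurate but just unpacks what those cited propositions already establish.
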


\bibliographystyle{amsalpha}
\bibliography{ref2}

\end{document}